\def\bS {\mathbb{S}}
\newcommand{\tx}[1]{\mathrm{#1}}
\newcommand{\bs}[1]{\boldsymbol{#1}}
\newcommand{\vd}{\mathrm{d}}
\newcommand{\udr}{\,r\vd r}
\newcommand{\uln}[1]{{\underline{ #1 }}}
\definecolor{deepgreen}{cmyk}{1,0,1,0.5}
\newcommand{\A}{\mathcal{A}}
\newcommand{\B}{\mathcal{B}}
\newcommand{\E}{\mathcal{E}}
\newcommand{\G}{\mathcal{G}}
\newcommand{\HH}{\mathcal{H}}
\newcommand{\LL}{\mathcal{L}}
\newcommand{\Es}{\mathscr{E}}
\newcommand{\Hs}{\mathscr{H}}
\newcommand{\Vs}{\mathscr{V}}
\newcommand{\Ks}{\mathcal{K}}
\newcommand{\N}{\mathbb{N}}
\newcommand{\R}{\mathbb{R}}
\newcommand{\Sp}{\mathbb{S}}
\newcommand{\Z}{\mathbb{Z}}
\newcommand{\al}{\alpha}
\newcommand{\ga}{\gamma}
\newcommand{\de}{\delta}
\newcommand{\om}{\omega}
\newcommand{\la}{\lambda}
\newcommand{\lam}{\lambda} 
\newcommand{\te}{\theta}
\newcommand{\s}{\sigma}
\newcommand{\si}{\varsigma}
\newcommand{\De}{\Delta}
\newcommand{\La}{\Lambda}
\newcommand{\Lam}{\Lambda}
\newcommand{\p}{\partial}
\newcommand{\na}{\nabla}
\newcommand{\Rmnum}[1]{\expandafter\@slowromancap\romannumeral #1@}
\newcommand{\ti}{\widetilde}
\newcommand{\ba}{\overline}
\newcommand{\U}{\underline}
\newcommand{\ang}[1]{\left\langle{#1}\right\rangle}
\newcommand{\abs}[1]{\left\lvert{#1}\right\rvert}
\newcommand{\ant}[1]{\begin{align*}\begin{split} #1 \end{split}\end{align*}}
\newcommand{\EQ}[1]{\begin{equation}\begin{split} #1 \end{split}\end{equation}}
\newcommand{\pmat}[1]{\begin{pmatrix} #1 \end{pmatrix}}
\newcommand{\Del}[1]{}
\numberwithin{equation}{section}
\newtheorem{thm}{Theorem}[section]
\newtheorem{cor}[thm]{Corollary}
\newtheorem{lem}[thm]{Lemma}
\newtheorem{prop}[thm]{Proposition}
\theoremstyle{remark}
\newtheorem{claim}[thm]{Claim}
\newtheorem{rem}[thm]{Remark}
\newtheorem{defn}[thm]{Definition}
\newcommand{\mand}{{\ \ \text{and} \ \  }}
\newcommand{\mif}{{\ \ \text{if} \ \ }}
\newcommand{\mas}{{\ \ \text{as} \ \ }}
\newcommand{\uD}{\operatorname{D}}
\newcommand{\rdr}{\, r\mathrm{d}r}
\newcommand{\rest}{\!\!\restriction}
\definecolor{green}{rgb}{0,0.8,0} 
\newcommand{\ud}{\mathrm{d}}
\newcommand{\eps}{\epsilon}
\newcommand{\bfd}{{\bf d}}
\newcommand{\calA}{\mathcal A}
\newcommand{\calB}{\mathcal B}
\newcommand{\calL}{\mathcal L}
\newcommand{\app}{\operatorname{app}}
\newcommand{\ula}{\underline{\lambda}}
\begin{document}

\title[Asymptotic expansion of two-bubble wave maps]{An asymptotic expansion of two-bubble wave maps}
\author{Jacek Jendrej}
\author{Andrew Lawrie}

\begin{abstract}
This is the first part of a two-paper series that establishes the uniqueness and regularity of a threshold energy wave map that does not scatter in both time directions. 

Consider the $\Sp^2$-valued equivariant energy critical wave maps equation on $\R^{1+2}$, with equivariance class $k \ge 4$.  It is known that every topologically trivial  wave map with energy less than twice that of the unique $k$-equivariant harmonic map $\bs Q_k$ scatters in both time directions. 
We study maps with precisely the threshold energy $\E =  2 \E(\bs Q_k)$.   

In this paper, we give a refined construction of a wave map with threshold energy that converges to a superposition of two harmonic maps (bubbles),  asymptotically decoupling in scale. We show that this two-bubble solution possesses $H^2$ regularity. We give a precise dynamical description of the modulation parameters as well as an expansion of the map into profiles. In the next paper in the series, we show that this solution is unique (up to the natural invariances of the equation) relying crucially on the detailed properties of the solution constructed here. Combined with our earlier work~\cite{JL1}, we can now give an exact description of every threshold wave map.

\end{abstract}

\thanks{J.Jendrej was supported by  ANR-18-CE40-0028 project ESSED.  A. Lawrie was supported by NSF grant DMS-1700127 and a Sloan Research Fellowship}

\maketitle

\section{Introduction}

This paper concerns wave maps  from the Minkowski space $\R^{1+2}_{t, x}$ into the two-sphere $\bS^2$, with k-equivariant symmetry. These are formal critical points of the Lagrangian action, 
\EQ{
\calA( \Psi)  = \frac{1}{2} \int_{\R^{1+2}_{t, x}} \Big( {-} \abs{\p_t \Psi(t, x)}^2 + \abs{\na \Psi(t, x)}^2 \Big) \, \ud x  \ud t, 
}
restricted to the class of maps  $\Psi: \R^{1+2}_{t, x} \to \Sp^2 \subset \R^3$ that take the form, 
\ant{
\Psi(t, r, \theta) = (u(t, r), k \te ) \hookrightarrow ( \sin u (t, r)\cos k\theta , \sin u(t, r) \sin  k \theta, \cos u(t, r)) \in \Sp^2 \subset \R^3,
}
for some fixed $k \in \N$. Here $u$ is the colatitude measured from the north pole of the sphere and the metric on $\Sp^2$ is given by $ds^2 = d u^2+ \sin^2 u\,  d \om^2$.  
We note that $(r, \te)$ are polar coordinates on $\R^2$, and $u(t, r)$ is  radially symmetric.

Wave maps are known  as nonlinear $\s$-models in high energy physics literature, see for example,~\cite{MS, GeGr17}. They satisfy a canonical example of a geometric wave equation -- it  simultaneously generalizes the free scalar wave equation to manifold valued maps and the classical harmonic maps equation to Lorentzian domains. The $2d$ case considered here is of particular interest, as the static solutions given by finite energy harmonic maps are amongst the simplest examples of topological solitons;  other examples include kinks in scalar field equations,  vortices in Ginzburg-Landau equations, magnetic monopoles, Skyrmions, and Yang-Mills instantons; see~\cite{MS}. 
Wave maps under $k$-equivariant symmetry possess intriguing features from the point of view of nonlinear dynamics, for example, bubbling harmonic maps, multi-soliton solutions, etc.,  in the relatively simple setting of a geometrically natural scalar semilinear wave equation. 
%
For a more thorough presentation of the physical or geometric content of wave maps, see e.g.,~\cite{MS, ShSt00, GeGr17}.


The Cauchy problem for $k$-equivariant wave maps is given by
\EQ{ \label{eq:wmk}
\p_{t}^2 u -   \p_{r}^2 u  - \frac{1}{r}  \p_r u + k^2  \frac{\sin 2 u}{2r^2} &= 0, \\ 
(u(t_0), \partial_t u(t_0)) &= (u_0, \dot u_0), \quad t_0 \in \R.
}
We will often use the notation, 
\EQ{ \label{eq:f-def} 
f( u):= k^2 \frac{\sin (2 u)}{2}.
}
The conserved energy is 
\EQ{ \label{eq:energy} 
\E( \bs u(t)) := 2 \pi \int_0^\infty \frac{1}{2}  \left((\p_t u)^2  + (\p_r u)^2 + k^2 \frac{\sin^2 u}{r^2} \right) \, r \, \ud r,
}
where we have used bold font to denote the vector 
$
\bs u (t) := (u(t), \p_t u(t)).
$
We will write vectors with two components as  $\bs v = (v, \dot v)$, noting that the notation $\dot v$ will not, in general, refer to a time derivative of $v$ but rather just to the second component of $\bs v$. 
We remark that~\eqref{eq:wmk} and the conserved energy~\eqref{eq:energy}  are invariant under the scaling
\EQ{ \label{eq:uscale} 
\bs u(t,  \cdot) \mapsto \bs u(t/ \la,  \cdot)_{\la}  = (u(t/ \la,  \cdot/ \la), \la^{-1} \p_t u( t/ \la, \cdot/ \la)), \qquad \lam >0.
}
which makes this problem energy critical.

 It follows from~\eqref{eq:energy} that any regular $k$-equivariant initial data $\bs u_0$  of finite energy  must satisfy $\lim_{r \to 0}u_0( r) = m\pi$ and  $\lim_{r \to \infty}u_0(r)=n\pi$  for some $m,n \in \Z$. Since the smooth wave map flow depends continuously on the initial data these integers are fixed over any time interval $t\in I$ on which the solution is defined. This splits the energy space into disjoint classes indexed by the pair $(m, n)$ and it is natural to consider the Cauchy  problem~\eqref{eq:wmk} within a fixed class. These classes are related to the topological degree of the full map $\Psi(t): \R^2 \to  \Sp^2$. In particular, $k$-equivariant wave maps with $(m, n) = (0, 0)$ correspond to topologically trivial maps $\Psi$, whereas those with $(m, n) = (0, 1)$  are degree $= k$ maps. 
 
 The unique (up to scaling) $k$-equivariant harmonic map is given explicitly by 
\EQ{ \label{eq:Q-def} 
Q(r) := 2 \arctan (r^k),
}
 and we write, $\bs Q := (Q, 0)$. We note that $Q(r)$ has degree $=k$ and  it is a standard fact that $\bs Q$  minimizes the energy amongst all degree $k$ maps (see, e.g.,~\cite{JL1}) and in particular amongst $k$-equivariant maps with $(m, n) = (0, 1)$. It is not hard to show that
$
 \E( \bs Q ) = 4  \pi k .
 $

 In this paper we  consider topologically trivial $k$-equivariant wave maps, i.e., those with data $\bs u_0$ that satisfies $\lim_{r \to 0} u_0(r)  =\lim_{r \to \infty} u_0(r) = 0$. The natural function space in which to consider such solutions in the energy space, which comes with the norm, 
 \EQ{
  \| \bs u_0 \|_{\HH}^2 := \|u_0 \|_{H}^2 + \| \dot u_0 \|_{L^2}^2 := \int_0^\infty  \Big((\p_r u_0(r))^2 +  k^2 \frac{u_0(r)^2}{r^2}  \Big) \, r \, \ud r +  \int_0^\infty \dot u_0(r)^2 \, r \, \ud r .
 }
 Denoting by $\LL_0 := -\De + k^2 r^{-2}$ we remark that the $H$ norm of a smooth function $u_0$ can also expressed as 
$
  \| u_0  \|_H^2 = \ang{ \LL_0 u_0 \mid u_0},
 $
 where $\ang{f \mid g} := (2\pi)^{-1} \ang{ f \mid g}_{L^2( \R^2)}$ is the $L^2$ inner product. We use $\LL_0$ to define spaces of higher regularity, and we let $\HH^2$ denote the norm 
\EQ{
\| \bs u_0 \|_{\HH^2}^2:= \| u_0 \|_{H^2}^2 +   \| \dot u_0 \|_H^2  := \ang{ \LL_0 u_0 \mid \LL_0 u_0}  + \ang{\LL_0 \dot u_0 \mid \dot u_0}.
}
We also require the following weighted norm, 
\EQ{
\| \bs u_0 \|_{\bs \La^{-1}  \HH} := \| (r \p_r u_0, ( r \p_r + 1) \dot u_0) \|_{\HH} .
}
While $\bs Q \not \in \HH$, this solution to~\eqref{eq:wmk} still plays a significant role in the dynamics of solutions in $\HH$;  for example,  superpositions of two bubbles, i.e.,  $Q(r/ \lam) - Q(r/ \mu)$ for $ \lam \neq \mu$, are elements of $H$.

 \subsection{Sub-threshold theorems and bubbling}
 The regularity theory for energy critical wave maps has been extensively studied; \cite{CTZcpam, CTZduke, STZ92, STZ94, KlaMac93, KlaMac94, KlaMac95, KlaMac97, KlaSel97, KlaSel02, Tat98, Tao1, Tao2, Tat01, Kri04}. Recently, the focus has been on the nonlinear dynamics of solutions with large energy. A remarkable~\emph{sub-threshold theorem} was established in~\cite{ST1, ST2, KS, Tao37}: every wave map with energy less than that of the first nontrivial harmonic map is globally regular on $\R^{1+2}$ and scatters to a constant map. The role of the minimal harmonic map in the formulation of the sub-threshold theoem was first clarified by fundamental work of Struwe~\cite{Struwe}, who showed that the smooth equivariant wave map flow can only develop a singularity by concentrating energy at the tip of a light cone via the bubbling off of at least one non-trivial finite energy harmonic map.  Bubbling wave maps were first constructed in a series of influential works by  Krieger, Schlag, Tataru~\cite{KST}, Rodnianski, Sterbenz~\cite{RS}, and Rapha\"el, Rodnianski~\cite{RR}, with the latter work yielding a stable blow-up regime;  see also the recent work~\cite{KrMiao-Duke}  for stability properties of the solutions from~\cite{KST}, as well as~\cite{JLR1} for a classification of blowup solutions with a given radiation profile, and~\cite{Pil-19} for a construction of a new class of singular solutions that blow up in infinite time. In particular, all of these works demonstrate that blow up by bubbling can occur for maps with energy slightly above the ground-state harmonic map, which shows the sharpness of the sub-threshold theorem. 
 
The sub-threshold theorem can be refined by taking into account the topological degree of the map. Only topologically trivial maps can scatter to a constant map and it was shown in~\cite{CKLS1, LO1} that the correct threshold that ensures scattering is $\E < 2\E( \bs Q)$ (rather than $\E(\bs Q)$). The reasoning behind the number $2 \E(\bs Q)$ is as follows. The topological degree counts (with orientation) the number of times a map `wraps around' $\Sp^2$. If a harmonic map of degree $k$ bubbles off from a wave map $\Psi(t)$,  then, in order for $\Psi(t)$  to be degree zero,  it must also `unwrap' $k$ times away from the bubble. The minimum energy required for each wrapping  is $4 \pi k = \E(\bs Q)$. Thus the energy required for a degree zero map to form a bubble is  $\E \ge  8 \pi k   =  2\E(\bs Q)$. 


 
\subsection{Main result: the existence and regularity of  two-bubble wave maps}  \label{s:refined} 
 
We consider topologically trivial $k$-equivariant maps with precisely the threshold energy $\E =  2 \E(\bs Q)$. Building on the work~\cite{JJ-AJM} of the first author and our work~\cite{JL1}, we can now give an exact description of every such map.  Together with the companion work~\cite{JL2-uniqueness}, we show that for equivariance classes $k \ge 4$,  there is a \emph{unique} (up to the natural invariances up the equation) threshold wave map that does not scatter in both time directions. 
 
 Let $\bs u(t) : [T_0, \infty) \to \HH$ be a solution to~\eqref{eq:wmk} with $\E(\bs u) = 2 \E(\bs Q)$. We say $\bs u(t)$ is a \emph{two-bubble in
 forward time} if there exist $\iota \in \{+1, -1\}$
 and continuous functions $\lambda(t), \mu(t) > 0$ such that
 \EQ{ \label{eq:2-bub-def} 
\lim_{t \to \infty} \| ( u(t) - \iota( Q_{\la(t)} -  Q_{\mu(t)}), \p_t u(t))\|_{\HH} = 0, \quad \lambda(t) \ll \mu(t)\text{ as }t \to \infty.
}
The notion of a \emph{two-bubble in the backward time direction} is defined similarly. Here $Q_{\nu}$ denotes the scaling 
$
Q_{\nu}(r) := Q( r/ \nu). 
$
 In~\cite{JJ-AJM} the first author constructed a two-bubble in forward time.
   Later, in~\cite{JL1} we showed  that the solution constructed in~\cite{JJ-AJM} must also be global and scattering in backwards time.  In this paper we give a refined construction of a two-bubble in forward time, deducing additional regularity properties of the solution, namely that it lies in the space $ \HH \cap \HH^2 \cap \bs \Lam^{-1} \HH$,   along with a dynamical description of the modulation parameters, and an expansion of the solution into profiles. This refined construction will serve as a key ingredient in the companion paper~\cite{JL2-uniqueness} where we show that there is, in fact, only one two bubble in forward time.

We begin by introducing some notation needed to state Theorem~\ref{t:refined} below.  We define, 
\EQ{ \label{eq:q-rho-ga} 
\rho_k &:= \Big( \frac{8k}{\pi}  \sin( \pi/k) \Big)^{\frac{1}{2}} , \quad 
 \gamma_k := \frac{k}{2} \rho_k^2, \quad q_k:=  \Big( \frac{k-2}{2} \rho_k\Big)^{-\frac{2}{k-2}}
 }
 We remark that $\rho_k^2 = 16 k \| \Lam Q \|_{L^2}^{-2}$.  
 Given a radial function $w: \R^2  \to \R$ we denote the $H$ and $L^2$  re-scalings as follows 
\EQ{ \label{eq:scale} 
w_\la(r) := w(r/ \la), \quad
w_{\ula}(r)  := \frac{1}{\la} w (r/ \la)
}
The corresponding infinitesimal generators  are given by 
\EQ{ \label{eq:LaLa0} 
&\La w:= -\frac{\partial}{\partial \lambda}\bigg|_{\lambda = 1} w_\la = r \p_r w  \quad (H  \,  \textrm{scaling}) \\
& \La_0 w:= -\frac{\partial}{\partial \lambda}\bigg|_{\lambda = 1} w_{\ula} = (1 + r \p_r ) w  \quad (L^2  \textrm{scaling})
}
Next, we define $C^{\infty}(0, \infty)$ functions $A, B, \ti B$ as the unique solutions to the equations, 
 \EQ{ 
 \LL A &= - \La_0 \La Q,  \quad 
 0=  \ang{A \mid \La Q} \\ 
 \LL B &= \gamma
 _k  \La Q - 4 r^{k-2} [\La Q]^2,  \quad 
0 =  \ang{B \mid \La Q},  \\
  \LL \ti B &= - \gamma_k \La Q + 4r^{-k-2} [\La Q]^2,  \quad 
  0 =  \ang{ \ti B \mid \La Q} 
  } 
  where here $\LL := -\De + r^{-2} f'(Q)$ is the operator obtained via linearization about $Q$. 
  These are constructed in Lemma~\ref{l:ABB} below, and here we note that 
  \EQ{
  A(r), B(r) &= O( r^k) \mas r \to 0, \quad \ti B(r) = O( r^k \abs{\log r}) \mas r \to 0 \\
  A(r), B(r), \ti B(r) &= O( r^{-k +2}) \mas r \to \infty
  }
  Next, given a time interval $J \subset \R$ and  a quadruplet of $C^1$ functions $(\mu(t), \lam(t), a(t), b(t))$ on $J$ we define  a refined  $2$-bubble ansatz, $$\bs \Phi(\mu(t), \lam(t), a(t), b(t), r) = (\Phi(\mu(t), \lam(t), a(t), b(t), r), \dot \Phi(\mu(t), \lam(t), a(t), b(t), r))$$ by 
  \EQ{ \label{eq:Phi-def1} 
 \Phi(\mu, \lam, a, b)&:= (Q_{\la} + b^2 A_\la + \nu^k B_\la ) - ( Q_\mu + a^2 A_\mu + \nu^k \ti B_\mu) \\
\dot \Phi(\mu, \lam, a, b)&:= b \La Q_{\U \la}  + b^3 \La A_{\U \la}  - 2 \gamma_k b \nu^k A_{\U \la}  + b \nu^k \La B_{\U \la} - k b \nu^k B_{\U \la} - k a \nu^{k+1} B_{\U \lam}  \\
& \quad  +  a \La Q_{\U \mu}+ a^3 \La A_{\U \mu}  + 2 \ti \ga_k a \nu^k A_{ \U\mu} + a \nu^k \La \ti B_{\U \mu} + k b \nu^{k-1} \ti B_{ \U\mu}  + k a \nu^k \ti B_{\U \mu} 
  }
  where we have introduced the notation, 
$
  \nu:= \lam/ \mu.
  $
  To ensure that $\dot \Phi \in L^2$, we now restrict to the setting $k \ge 4$. See Remark~\ref{r:k=3} below for a discussion of the cases $k =2, 3$. We establish the following theorem.

\begin{thm}[Existence and regularity of a two-bubble wave map] \label{t:refined} 
Fix any equivariance class $k \ge 4$. There exists a global-in-time solution $\bs u_c(t) \in \HH$ to~\eqref{eq:wmk} that is a \emph{two-bubble in forward time} with the following additional properties: 
\begin{itemize} 
\item The solution $\bs u_c(t)$ lies in the space $\HH \cap \HH^2 \cap \bs \Lam^{-1} \HH$, and scatters freely in negative time. 
\item There exists $T_0>0$, a quadruplet of  $C^1( [T_0, \infty))$ functions $(\mu_c(t), \lam_c(t), a_c(t), b_c(t))$, and $\bs w_c(t)  \in \HH \cap \HH^2 \cap \bs \Lam^{-1} \HH$ so that on the time interval $[T_0, \infty)$ the solution $\bs u_c(t)$ admits a decomposition, 
\EQ{
\bs u_c(t)  = \bs \Phi(\mu_c(t), \lam_c(t), a_c(t), b_c(t)) + \bs w_c(t) 
}
where $\bs \Phi$ is defined in~\eqref{eq:Phi-def1} and the functions $(\mu_c(t), \lam_c(t), a_c(t), b_c(t))$ satisfy, 
\EQ{ \label{eq:mod-est} 
\lam_c(t) &= q_k t^{-\frac{2}{k-2}} (1 + O( t^{-\frac{4}{k-2} + \eps}) ) \mas t \to \infty, \\
\mu_c(t) &= 1 -\frac{k}{2(k+2)} q_k^2 t^{-\frac{4}{k-2}}  + O(t^{-\frac{6}{k-2}+ \eps}) \mas t \to \infty , \\
b_c(t) & = q_k  \frac{2}{k-2} t^{- \frac{k}{k-2}}( 1+  O( t^{-\frac{4}{k-2} + \eps}) ) , \mas t \to \infty ,\\  
a_c(t) & =  \frac{2k}{(k-2)(k+2)} q_k^2 t^{-\frac{k+2}{k-2}} ( 1 + O( t^{-\frac{4}{k-2} + \eps}) )  \mas t \to \infty ,
}
where $\eps>0$ is any fixed small constant. We also have, 
\EQ{ \label{eq:mod'} 
\abs{ \lam_c'(t) + b_c(t)}  &\lesssim  t^{-\frac{2}{k-2}(2k-1)}   \mas t \to \infty,  \\
 \abs{\mu_c'(t)  - a_c(t)} & \lesssim  t^{-\frac{2}{k-2}(2k-1)}   \mas t \to \infty .
}
Finally,  $\bs w_c(t)$ satisfies, 
\EQ{ \label{eq:w-est} 
 \| \bs w_c(t) \|_{\HH}^2 & \lesssim \lam_c(t)^{3k-2}, \\
 \| \bs w_c(t) \|_{\HH^2}^2 & \lesssim \lam_c(t)^{3k-4}, \\
 \| \bs \Lam \bs w_c(t)  \|_{\HH}^2 & \lesssim \lam_c(t)^{2k-2} , 
}
uniformly in $t \ge T_0$. 
\end{itemize} 
\end{thm}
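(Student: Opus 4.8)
The plan is to construct the solution $\bs u_c(t)$ as a limit of solutions that approximate the two-bubble dynamics on larger and larger time intervals, using the now-classical compactness/bootstrap scheme for constructing minimal (threshold) elements. First I would carry out the \emph{formal derivation of the modulation ODEs}: plugging the refined ansatz $\bs\Phi(\mu,\lam,a,b)$ into \eqref{eq:wmk}, expanding in powers of $\nu = \lam/\mu$ and the small parameters $a,b$, and using the defining equations for $A,B,\ti B$ to cancel the leading error terms, one is led to a reduced dynamical system of the schematic form $\lam' \approx -b$, $\mu' \approx a$, $b' \approx -\gamma_k \nu^k/\lam + \ldots$, $a' \approx \gamma_k\nu^k/\mu + \ldots$ (the coefficients come from the interaction integral $\ang{r^{-2}(f(Q_\la - Q_\mu) - f(Q_\la) + f(Q_\mu)) \mid \La Q_{\la}}$, etc.). Solving this system gives the explicit asymptotics \eqref{eq:mod-est}; the exponents $t^{-2/(k-2)}$ etc. are exactly what make the nonlinear interaction balance the inertia. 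The role of the quadratic corrections $b^2 A_\la$, $a^2 A_\mu$ and the $\nu^k$ corrections $B_\la, \ti B_\mu$ is precisely to upgrade the error $\bs w_c$ from size $\lam^{k-1}$ (crude two-bubble ansatz) down to the sharper bound $\lam^{(3k-2)/2}$ in \eqref{eq:w-est}, which is what $\HH^2$ and $\bs\Lam^{-1}\HH$ control will need.

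Next I would set up the \emph{bootstrap on a finite interval} $[T_0, T_n]$. Take a sequence $T_n \to \infty$, solve \eqref{eq:wmk} backward from $t = T_n$ with data exactly $\bs\Phi$ evaluated at $T_n$ with the modulation parameters given by the explicit solution of the reduced ODE system, and decompose the solution $\bs u_n(t) = \bs\Phi(\mu_n(t),\lam_n(t),a_n(t),b_n(t)) + \bs w_n(t)$ using the modulation/orthogonality conditions (the four conditions $\ang{\bs w_n \mid \cdot} = 0$ against the directions generated by $\p_\la, \p_\mu, \p_a, \p_b$ of the ansatz, which are solvable by the implicit function theorem since the relevant Gram matrix is nondegenerate — this uses the coercivity of $\LL$ on the orthogonal complement of $\La Q$, cf.\ the sub-threshold/virial machinery and the fact that $\E(\bs u) = 2\E(\bs Q)$ forces no extra instability). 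The heart is the energy estimate: one defines a mixed energy-virial functional adapted to the two scales — something like $\mathcal I_n(t) = \|\bs w_n\|_{\HH}^2 + (\text{localized virial corrections at scales } \lam_n \text{ and } \mu_n)$ — and shows $\frac{d}{dt}\mathcal I_n \lesssim (\text{good powers of } \lam_n) + (\text{small}) \mathcal I_n$, which closed with the terminal condition $\mathcal I_n(T_n) = 0$ gives $\|\bs w_n(t)\|_{\HH}^2 \lesssim \lam_n(t)^{3k-2}$ uniformly in $n$. The $\HH^2$ and $\bs\Lam^{-1}\HH$ bounds are obtained by running the analogous energy estimates for the differentiated/weighted equations satisfied by $\bs w_n$ — here one must check the source terms generated by commuting $\LL_0$ (resp.\ $\bs\Lam$) through the ansatz are of acceptable size, which is where the explicit decay rates of $A,B,\ti B$ at $0$ and $\infty$ recorded after Lemma~\ref{l:ABB} enter.

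Finally, with uniform bounds in hand, I would extract $\bs u_c(t)$ as a subsequential weak limit of $\bs u_n(t)$ (local well-posedness and the uniform estimates give convergence in $\HH$ on compact time intervals of $[T_0,\infty)$; the modulation parameters converge to $(\mu_c,\lam_c,a_c,b_c)$ and the limit inherits all the bounds \eqref{eq:mod-est}--\eqref{eq:w-est} and the decomposition). That $\bs u_c(t)$ is genuinely a two-bubble in forward time follows from \eqref{eq:w-est} together with $\lam_c(t)/\mu_c(t) \to 0$; that it scatters in negative time is then exactly the conclusion of \cite{JL1} applied to this solution (since it is a threshold non-scattering-forward solution, the companion classification forces backward scattering), or alternatively one shows directly that at some finite negative time the solution has sub-threshold energy concentrated away from any light cone. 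The estimates \eqref{eq:mod'} are a byproduct of the modulation equations: $\lam' + b$ and $\mu' - a$ are precisely the modulation-parameter error terms, which the orthogonality conditions force to be quadratically small in $\|\bs w_c\|$ plus higher-order ansatz corrections, hence $\lesssim t^{-2(2k-1)/(k-2)}$.

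\medskip

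The main obstacle I expect is the \emph{two-scale energy estimate with the sharp power $\lam^{3k-2}$}. A naive two-bubble ansatz only gives error $\sim \lam^{k-1}$ (i.e.\ $\|\bs w\|^2 \sim \lam^{2k-2}$), and the whole point of the refined ansatz is to subtract off the next-order interaction terms; making the virial/energy functional see this cancellation requires carefully choosing the localization scales (a cutoff at an intermediate scale $\sqrt{\lam\mu}$, or separate cutoffs near each bubble) and tracking cross terms between the $\lam$-bubble and $\mu$-bubble corrections, where small mismatches in the exponents could destroy the bound. Closely related, and also delicate, is verifying that the $\HH^2$-level energy estimate closes: the linearized operator $\LL$ is not positive on all of $\HH$ (it has $\La Q$ in its kernel after scaling), so the higher-regularity coercivity has to be set up relative to the time-dependent orthogonality conditions, and one must control the commutator terms $[\LL_0, \text{modulation}]$ which a priori look borderline. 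Getting all the error exponents to line up simultaneously in the $\HH$, $\HH^2$, and $\bs\Lam^{-1}\HH$ estimates — which are coupled, since each feeds into the source terms of the others — is the technical crux.
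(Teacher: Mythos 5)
Your proposal follows essentially the same route as the paper: refined ansatz with quadratic and $\nu^k$ corrections built from $A,B,\ti B$, formal derivation of the reduced ODE system, modulation with four orthogonality conditions and the implicit function theorem, mixed energy--virial functionals at the $\HH$, $\HH^2$, and $\bs\Lam^{-1}\HH$ levels working directly with $\LL$, a bootstrap backward from terminal data $\bs\Phi_{\app}(t_n)$ along $t_n\to\infty$, and a weak-limit extraction, with backward scattering imported from \cite{JL1}. One small correction to your expectation about the ``technical crux'': the paper needs a virial correction localized only at the inner scale $\lambda$ (the operator $\calA_0(\lambda)$), not at an intermediate scale $\sqrt{\lambda\mu}$ or at both scales, because the outer bubble barely moves ($\mu'/\mu\sim t^{-(k+2)/(k-2)}$ is integrable) while $\lambda'/\lambda\sim t^{-1}$ is not, and the crucial sign structure then comes from $b(t)>0$ at late times combined with the localized Pohozaev coercivity of Lemma~\ref{l:opA}.
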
 

 \begin{rem} 
We highlight that $\bs u_c(t)$ lies in $ \HH^2$, i.e., it is smoother than a general finite energy solution.  
In the companion paper~\cite{JL2-uniqueness} we will exploit the fact that the trajectory of $\bs u_c(t)$ naturally yields a two-dimensional forward-invariant submanifold of $\HH$ for~\eqref{eq:wmk}.  We expect that this manifold is smooth.  In fact, the method used to obtain $\HH^2$ regularity can likely be iterated to obtain higher regularity of $\bs  u_c(t)$, but we do not pursue this here. 

\end{rem} 

\begin{rem}
In the companion paper~\cite{JL2-uniqueness} we will show that if $\bs u(t)$ is \emph{any other finite energy} 2-bubble in forward time, then there exist $(t_0, \mu_0) \in \R \times (0,\infty)$ so that 
\EQ{
\bs u(t) =  \bs u_{c, t_0, \mu_0, \pm}(t) := \pm \Big(u_c( t- t_0, r/ \mu_0), \frac{1}{\mu_0} \p_t u_c(t- t_0, r/ \mu_0) \Big), 
  }
  i.e., $\bs u_c(t)$ is \emph{unique} up to sign, time translation, and scale; see~\cite[Theorem 1.1]{JL2-uniqueness}. Together with the main result in our earlier work~\cite{JL1} this completes an exact classification of every wave map with threshold energy: any such map either scatters in both time directions, is equal to $\bs u_{c, t_0, \mu_0, \pm}(t)$, or is equal to the solution obtained by time reversal of this map; see~\cite[Theorem 1.4]{JL2-uniqueness}.   We remark that the fact that $\bs u_c(t)$  scatters in backwards time  follows from the main theorem in~\cite{JL1}.
\end{rem} 

\begin{rem}[Comparing with the construction in~\cite{JJ-AJM}]  
 After we prove uniqueness in~\cite{JL2-uniqueness} it is clear that the solution $\bs u_c(t)$ constructed here is the same as the one found in the first author's work~\cite{JJ-AJM}. However, the information about $\bs u_c(t)$ from~\cite{JJ-AJM} is not sufficient to prove uniqueness via the technique introduced in~\cite{JL2-uniqueness}, which relies heavily on all of the refined properties of $\bs u_c(t)$, the modulation parameters, $(\mu_c(t), \lam_c(t), a_c(t), b_c(t))$, and the ansatz error $\bs w_c(t)$ stated in Theorem~\ref{t:refined}. The construction from~\cite{JJ-AJM} does not yield these refined properties. 
\end{rem} 

\begin{rem} \label{r:k=3}
The theorem can be proved in the case $k=3$ by a nearly identical argument after introducing suitable cutoffs in the definition of $\dot \Phi$. We chose not to include this here to keep the  exposition as simple as possible.   
The analogous theorem is expected to hold also in the case $k=2$, however more care is required,  see~\cite{JL1}. 
 
 While the main purpose of the refined construction in Theorem~\ref{t:refined} is the proof of uniqueness of $\bs u_c(t)$ in~\cite{JL2-uniqueness} we note  that  Theorem~\ref{t:refined}, and in particular the techniques introduced here to prove it, are rather general and should be of independent interest/use. 
\end{rem} 

\begin{rem}[Strong vs. weak soliton interactions] 
One can  compare/contrast  Theorem~\ref{t:refined} with other multi-soliton results such as the landmark works of Merle~\cite{Merle90}, Martel~\cite{Martel05}, and Martel, Merle~\cite{MM06}, which  constructed $N$-soliton solutions to g-KdV and NLS with distinct, nontrivial velocities;   see also~\cite{MaMeTs, CMM11, CM}.  
 We refer to the multi-solitons in all of those works as \emph{weakly interacting} since the leading order dynamics are given/determined by the internal motion of each individual soliton.   We emphasize here a distinction with Theorem~\ref{t:refined}: the bubbles in  $\bs u_c(t)$ are  \emph{strongly interacting} in the sense that the dynamics are driven by nonlinear interactions between the two bubbles, whereas in the weakly interacting regime the soliton interactions are negligible to main order.  
 There have been several recent $2$-soliton constructions in the strongly interacting regime; see e.g., ~\cite{MR18, JJ-APDE, moi-gkdv, Ngu-17, JM-19, JKL1}. To distinguish from these, here we emphasize the $\HH^2$ regularity of $\bs u_c(t)$, and the fine description of the asymptotics, which are novel. 
\end{rem}

\subsection{An outline of the proof} 



As is typical in constructions of solutions with nontrivial dynamics, the first order of business is to obtain a suitable ansatz $\bs \Phi(\mu, \la, a, b)$, which is defined in~\eqref{eq:Phi-def1}. This is carried out  in Sections~\ref{s:formal},~\ref{s:modulation}. Next, we perform a modulation analysis for solutions to~\eqref{eq:wmk} near the ansatz, i.e., given such a solution $\bs u(t)$ on a time interval $J$ we find a unique set of modulations parameters $(\mu(t), \lam(t), a(t), b(t))$ and a unique $\bs w(t) \in \HH \cap \HH^2 \cap \bs \Lam^{-1} \HH$  (see Lemma~\ref{l:mod2}) so that 
\EQ{
\bs u(t) &= \bs \Phi( \mu(t),\lam(t), a(t), b(t)) + \bs w(t) \\ 
0 &=\ang{ w(t) \mid \Lam Q_{\U{\mu(t)}}} = \ang{ w(t) \mid  \Lam Q_{\U{\lam(t)}}} = \ang{ \dot w(t) \mid \Lam Q_{\U{\mu(t)}}} = \ang{ \dot w(t)  \mid \Lam Q_{\U{\lam(t)}}} 
}
Differentiation of the orthogonality conditions above leads to dynamical estimates on the modulation parameters, (see Lemma~\ref{l:modc2}), which only become useful if we can also obtain a priori control of the size of $\|\bs w(t) \|_{\HH}$.  For now one can think of $(\mu, \lam, a, b)$ as approximately solving the formal system, 
\EQ{ \label{eq:mod-formal-intro} 
 \mu' = a, \quad \lam' = - b, \quad a'  =  - \gamma_k \lam^{k} \mu^{-k -1}, \quad b' =  -\gamma_k \lam^{k-1} \mu^{-k}
}
which leads to~\eqref{eq:mod-est}; see Lemma~\ref{l:modc2}. 

The key step is to establish energy-type and higher regularity estimates on $\bs w(t)$, which will then be used to close the estimates obtained in the modulation analysis. We devise modified linear energy functionals, 
\EQ{
\HH_1(t)&:= \frac{1}{2} \ang{ \dot w \mid \dot w} + \frac{1}{2} \ang{ \LL_{\Phi} w \mid w} - b(t) \ang{\calA_0(\lam) w \mid  \dot w} \\
\HH_2(t)& := \frac{1}{2} \ang{  \LL_\Phi \dot w \mid \dot w} + \frac{1}{2} \ang{ \LL_{\Phi}^2 w \mid w} - b(t)  \ang{ \A_0(\la) w \mid \LL_\la \dot w}  + 2  b(t)\ang{\A_0(\la) \dot w \mid \LL_\la w} \\
}
where above $\LL_{\Phi} := -\De  + r^{-2} f'( \Phi)$ is the linearized operator about the ansatz $\Phi(\mu,\lam, a, b)$ and $\LL_{\lam}:= -\De + r^{-2} f'(Q_\lam)$ denotes the linearized operator about $Q_\lam$. Of note above are the virial corrections $\calA_0(\lam)$ which can be thought of roughly as the rescaled $L^2$-scaling operator $\frac{1}{\lam} \Lam_0$, localized to scale $\lam$; see Lemma~\ref{l:opA}. Note that the terms involving this virial correction are small compared to the energy-type terms as long as $\abs{b(t)} \ll 1$, hence they are perturbative with respect to the size of the energy functionals. However, these corrections are crucial towards proving monotonicity formulae for $\HH_1, \HH_2$ as they are designed to (mostly) cancel terms of critical size but indeterminate sign that arise when time derivatives hit the potential. To clarify this, consider a toy model system of the form, 
\EQ{
\p_t  w &= \dot w \\
\p_t \dot w &= \LL_{\lam (t)} w + f(t) , \quad \ang{ w \mid  \Lam Q_{ \U \lam}} = 0 
}
where the orthogonality condition is included to ensure coercivity of the linear energy functional, and $\LL_{\lam(t)} w := -\De w + r^{-2} f'(Q_{\lam(t)}) w$ is the operator obtained by linearization about $Q_{\lam(t)}$.  A direct computation shows that 
\EQ{
\frac{\ud}{\ud t} \Big( \frac{1}{2} \ang{ \dot w \mid \dot w} + \frac{1}{2} \ang{ \LL_{\lam} w \mid w} \Big)  = \ang{ \dot w \mid f} + \frac{1}{2} \ang{ [ \p_t, \LL_\la] w \mid w} 
}
and thus the last term above obstructs an estimate on the $\HH$ norm of $\bs w(t)$ in terms of  the forcing $f(t)$. Indeed, 
\EQ{
[\p_t, \LL_\lam] w =  -\frac{\lam'}{\lam} r^{-2} f''(Q_\lam) \Lam Q_{\lam} w
}
which, since $\abs{\lam'(t)/ \lam(t)} =O( t^{-1})$ (for $\lam(t)$ that  is to leading order a power of $t$, as in our case), means that $ \abs{\ang{ [ \p_t, \LL_\la] w \mid w} } = O( t^{-1} \| \bs w \|_{\HH}^2)$, and hence cannot be absorbed on the left after integration. This is where the virial correction comes into play. First, note that the above can be rewritten as, 
\EQ{
[\p_t, \LL_\lam] w =  -\frac{\lam'}{\lam} r^{-2} f''(Q_\lam) \Lam Q_{\lam} w = -\frac{\lam'}{\lam} r^{-1}  \p_r ( f'(Q_\lam) - k^2) w
}
and an integration by parts reveals that, 
\EQ{ \label{eq:comm-intro} 
 \frac{1}{2} \ang{ [ \p_t, \LL_\la] w \mid w}  =  - \frac{\lam'}{\lam} \frac{1}{2} \ang{ r^{-1} \p_r ( f'(Q_\lam) - k^2) \mid w^2} = \frac{\lam'}{\lam} \ang{ r^{-2}( f'(Q_\lam) - k^2) w \mid \Lam w} 
}
Note the presence of the virial operator $\Lam$ above. The goal is to add a perturbative correction to the energy functional that will cancel this term, up coercive or lower order terms.   Consider the derivative of a correction of the form, 
\EQ{
\frac{\ud}{\ud t} \Big(\frac{\lam'}{\lam} \ang{ ``\Lam_0" w \mid \dot w} \Big) &=  \frac{\lam'' \lam - (\lam')^2}{\lam^2} \ang{ ``\Lam_0" w \mid \dot w} + \frac{\lam'}{\lam} \ang{ ``\Lam_0" \p_t w \mid \dot w} +  \frac{\lam'}{\lam} \ang{ ``\Lam_0" w \mid \p_t \dot w}  \\
& =    \frac{o(1)}{t} \| \bs w \|_{\HH}^2 +  O(t^{-1}) \|  \bs w \|_{\HH} \| f \|_{L^2} + \frac{\lam'}{\lam} \ang{ ``\Lam_0" \dot w \mid \dot w}  -  \frac{\lam'}{\lam} \ang{ ``\Lam_0" w \mid \LL_\lam w} 
}
where above $``\Lam_0"$ refers to a localized (to scale $\lam$) version of $\Lam_0$, to ensure this operator is bounded from $L^2 \to \HH$. Note that $\Lam_0$ is  anti-symmetric (see~\eqref{eq:ibpLa}) so the third term on the right above is $=0$. Finally, the last term produces the desired cancellation, up to a coercive term. To see this, we write, 
\EQ{
 -\frac{\lam'}{\lam} \ang{ ``\Lam_0" w \mid \LL_\lam w} &= -\frac{\lam'}{\lam} \ang{ ``\Lam_0" w \mid \LL_0 w}   -\frac{\lam'}{\lam} \ang{ ``\Lam_0" w \mid r^{-2} ( f'(Q_\lam) - k^2)  w}\\
 & =  -\frac{1}{2} \frac{\lam'}{\lam} \ang{ [\LL_0, ``\Lam_0"] w \mid w}  -\frac{\lam'}{\lam} \ang{  w \mid r^{-2} ( f'(Q_\lam) - k^2)  w} \\
 & \quad   -\frac{\lam'}{\lam} \ang{ ``\Lam" w \mid r^{-2} ( f'(Q_\lam) - k^2)  w}
}
where we have noted the identity $\Lam_0 w = \Lam w + w$ above. We note that the commutator identity $[\LL_0, \La]w = 2 \LL_0 w$ (see~\eqref{eq:comm-LL0}) leads to a localized Pohozaev-type coercivity estimate for the first two terms on the right above and these can be dismissed; see~\eqref{eq:A-pohozaev} from Lemma~\ref{l:opA}. Finally, the last term above precisely cancels the last term on the right of~\eqref{eq:comm-intro}!  We remark that this technique is inspired by a related ``Morawetz"-corrected energy functional introduced by Rapha\"el and Szeftel in~\cite{RaSz11}. 


We follow this rough outline to prove a priori energy estimates for $\HH_1(t), \HH_2(t)$ in terms of the modulation parameters, as well as a less refined weighted energy estimate (in $\bs \Lam^{-1} \HH$, but this will not require a virial correction). Then, given a sequence $t_n \to \infty$, $t_1 \gg 1$, we use a standard bootstrap argument (see e.g., ~\cite{Martel05, JJ-AJM}) to close uniform estimates on the time interval $[t_1, \infty)$ for a sequence of solutions with  initial data of the form $\bs u_0(t_n)  = \bs \Phi( \mu_n, \lam_n, a_n, b_n)$. Here,  the initial values of the parameters are chosen via the formal system~\eqref{eq:mod-formal-intro}. The desired solution $\bs u_c(t)$ is then obtained by passing to a suitable limit. 

\begin{rem}
In the context of singularity development for~\eqref{eq:wmk}, Rodnianski and Sterbenz~\cite{RS} and Rapha\"el and Rodnianski~\cite{RR} also make use of $H^2$-type estimates in their schemes (see the Morawetz type estimates in~\cite{RS} and energy-type estimates in~\cite{RR}). However, in both~\cite{RS, RR} the analysis relies on the remarkable Bogomol'nyi structure of  $\LL$. Indeed the linearized operator $\LL$ admits a factorization $\LL = \calB^* \B$ into two first order operators. While $\LL$ has nontrivial kernel (spanned by $\Lam Q$) one can show via direct computation that the operator $\ti \LL := \B \B^*$ is \emph{unconditionally coercive}, which suggests the strategy of applying $\B$ to the equation for the error $\varepsilon$ after extracting the ansatz ($Q_b$ in the Merle-Rapha\"el parlance) and working with the resulting differentiated equation. 

In contrast, we emphasize that the technique used here to prove energy-type and $H^2$-type estimates does not make use of the Bogomol'nyi structure, i.e.,  we work directly with $\LL$ rather than $\ti \LL$. This technique should be useful in other settings. 
\end{rem}

\section{Preliminaries} 
For radial functions $u, v$ on $L^2(\R^2)$, we write $u = u(r), v = v(r)$ and we use the notation, 
\EQ{
\ang{ u \mid v} := \frac{1}{2 \pi} \ang{ u\mid v}_{L^2( \R^2)} = \int_0^\infty u(r)  \ba{v(r)} \,r \, \ud r .
}
Let $\LL_0$ denote the operator 
\EQ{ \label{eq:LL0-def} 
\LL_0 w := -\De w + \frac{k^2}{r^2} w .
}
We define the function space $H$ as the completion of $C^\infty_0((0, \infty))$ functions $w$ under the norm 
\EQ{
\| w \|_{H}^2 := \ang{ \LL_0 w \mid w}  =  \int_0^\infty \Big(( \p_r w(r))^2  + k^2 \frac{w(r)^2}{r^2}  \Big) \,r \ud r
}
For the vector pair $\bs w =(w , \dot w)$ we define the norm $\HH$ by 
\EQ{
\| \bs w \|_{\HH}^2 := \| w \|_H^2 + \| \dot w \|_{L^2}^2
}
Next, we define the space $H^2$ via the norm, 
\EQ{
 \| w \|_{H^2}^2:= \ang{ \LL_0 w  \mid \LL_0 w} = \int_0^\infty\Big( ( \p_r^2 w(r))^2 + (2k^2 +1)\frac{( \p_r w(r))^2}{r^2}  + (k^4 - 4k^2) \frac{w(r)^2}{r^4} \Big) \, r \, \ud r 
}
And for the pair $ \bs w = (w,  \dot w)$ we define $\HH^2$ by 
\EQ{
\| \bs w \|_{\HH^2}^2:= \| w \|_{H^2}^2 +   \| \dot w \|_H^2 
}
We also require the following weighted norm, 
\EQ{
\| \bs w \|_{\bs \Lam^{-1} \HH} := \| (\La w, \La_0 \dot w) \|_{\HH} 
}
where $\Lam, \Lam_0$ are defined in~\eqref{eq:LaLa0}. 
It is a standard fact that the regularity of a solution $\bs u(t)$ to~\eqref{eq:wmk} in the space $\HH \cap \HH^2 \cap \bs \Lam^{-1} \HH$ is propagated by the flow. 

The  infinitesimal generators $\Lam, \Lam_0$ defined in~\eqref{eq:LaLa0} satisfy the integration by parts identities,
\EQ{ \label{eq:ibpLa} 
\ang{ \Lam f \mid g} &= - \ang{ f \mid \La g} - 2 \ang{ f \mid g} , \quad 
\ang{ \Lam_0 f \mid g} = -\ang{ f \mid \Lam_0 g} 
}

The operator $\LL_U$ obtained by linearization of~\eqref{eq:wmk} about the first component of finite energy map $\bs U = (U,  \dot U)$ plays an important role in the analysis. Given $g \in H$ we have, 
\EQ{  \label{eq:LU-def} 
\LL_U g := - \De g + k^2 \frac{\cos 2U}{r^2}  g
}
In fact, given any $\bs g = (g,\dot g) \in \HH$ we have 
\EQ{
\ang{ \uD^2 \E(\bs U) \bs g \mid \bs g } = \ang{ \LL_U g \mid g}_{L^2} + \ang{ \dot g \mid \dot g}_{L^2} =  \int_0^\infty \Big(\dot g^2 + (\p_r g)^2 + k^2 \frac{\cos 2U}{r^2} g^2 \, \Big) r \ud r
}

We record  the commutator identities, 
\EQ{ \label{eq:comm-LL0} 
[\LL_0, \La] w =   2  \LL_0 w  
}
\EQ{
[\LL_U, \La_0] h = [\LL_U, \La]h = 2 \LL_Uh + \frac{2 k^2 \sin 2 U  \La U}{r^2} h 
} 
For a time dependent function $h$ we also compute, 
\EQ{ \label{eq:comm-ptL} 
[\p_t, \LL_U] h = -\frac{2k^2\sin 2 U  \p_t U}{r^2}  h 
}

The most important instance of the operator $\LL_{U}$ is given by linearizing \eqref{eq:wmk} about $ U = Q_\la$. In this case we use the short-hand notation, 
\EQ{ \label{eq:LL-def} 
\LL_\la:= \LL_{Q_{\lam}} = (-\De + \frac{k^2}{r^2}) + \frac{1}{r^2} ( f'(Q) - k^2) 
}
We write $\LL := \LL_1$. 
We often use the notation $\LL= \LL_0 + P$, where $\LL_0$ is as in~\eqref{eq:LL0-def} and 
\EQ{ \label{eq:P-def} 
P(r)&:= \frac{1}{r^2}( f'(Q) - k^2)  = -\frac{2k^2\sin^2 Q}{r^2}  = -4k^2\frac{ r^{2k-2}}{ (1+ r^{2k})^2} 
}
We recall that 
\EQ{
\Lam Q(r) = k \sin Q = \frac{2k r^k}{1 + r^{2k}}
}
 is a zero energy eigenfunction for  $\LL$, that is,  
\EQ{
\LL \La Q = 0, \mand \La Q  \in L^2_{\textrm{rad}}(\R^2).
}
for all $k \ge 2$. When $k=1$, $\LL \La Q = 0$ holds but  $\La Q \not \in L^2$ due to slow decay as $r \to \infty$  and $0$ is referred to as a threshold resonance.
In fact, $\La Q$ spans the kernel of $\LL$; see~\cite{JL1} for more. 

We will also consider the operator $\LL^2$ which is given by the following formula, 
\EQ{ \label{eq:LL2-def} 
\LL^2 w := \LL_0^2 w + 2 P \LL_0 w  - 2 \p_r P \p_r w + (P^2 - \De P) w.
}
To ease notation we define 
\EQ{ \label{eq:K-def} 
\Ks w := 2 P \LL_0 w  - 2 \p_r P \p_r w + (P^2 - \De P) w
}
and write $\LL^2  = \LL_0^2 + \Ks$. 

We make note of the following functional inequalities. 
\begin{lem}  \label{l:wHH2} 
Suppose that $w \in H$. Then, 
\begin{align} 
 \| w \|_{L^{\infty}} &\lesssim \| w \|_H  \label{eq:winfty} 
 \end{align} 
 If $w \in H \cap H^2$, then, 
 \begin{align} 
 \| r^{-1} w  \|_{L^\infty}  + \| r^{-2} w \|_{L^2} &\lesssim  \| \p_r w \|_H \label{eq:w/rinfty} 
\end{align} 
as well as, 
\EQ{ \label{eq:w3}
\| r^{-2} w^3 \|_{L^2} \lesssim \| w \|_{H}^2 \| \p_r w \|_{H}  \\
\| r^{-3} w^3 \|_{L^2} \lesssim  \| w \|_{H} \| \p_r w \|_{H}^2
}
\end{lem}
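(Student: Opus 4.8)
The plan is to prove Lemma~\ref{l:wHH2} by reducing everything to one-dimensional Hardy-type and Sobolev-type inequalities on the half-line, exploiting the weight $r\,\mathrm dr$ coming from the two-dimensional radial reduction. First I would record the basic tool: for a $C_0^\infty((0,\infty))$ function $w$ one has the pointwise identity $w(r)^2 = -\int_r^\infty \partial_s (w(s)^2)\,\mathrm ds = -2\int_r^\infty w(s)\partial_s w(s)\,\mathrm ds$, and more usefully $w(r)^2 = 2\int_0^r w(s)\partial_s w(s)\,\mathrm ds$ together with the fact that, since $k^2\ge 1$, the $H$-norm controls both $\|\partial_r w\|_{L^2(r\,\mathrm dr)}$ and $\|r^{-1}w\|_{L^2(r\,\mathrm dr)}$. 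Then $\|w\|_{L^\infty}^2 \le 2\int_0^\infty |w||\partial_r w|\,\mathrm dr \le 2\big(\int_0^\infty r^{-1}w^2\,r\,\mathrm dr\big)^{1/2}\big(\int_0^\infty (\partial_r w)^2\,r\,\mathrm dr\big)^{1/2} \lesssim \|w\|_H^2$ by Cauchy--Schwarz, which is~\eqref{eq:winfty}; density of $C_0^\infty$ in $H$ finishes it.

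For~\eqref{eq:w/rinfty}, the point is that $w\in H\cap H^2$ forces $\partial_r w\in H$ with good decay, so I would apply~\eqref{eq:winfty} to $\partial_r w$ in a suitable form. Concretely, write $g:=r^{-1}w$; then $\partial_r g = r^{-1}\partial_r w - r^{-2}w$, and I claim $g\in H$ with $\|g\|_H \lesssim \|\partial_r w\|_H$. The term $\|r^{-2}w\|_{L^2(r\,\mathrm dr)}$ appearing there is exactly the Hardy inequality applied with the function $\partial_r w$ playing a role, or more directly: by the Hardy inequality on $(0,\infty)$ with measure $r\,\mathrm dr$, $\int_0^\infty r^{-2}w^2\,r\,\mathrm dr \lesssim \int_0^\infty (\partial_r w)^2\,r\,\mathrm dr$ (valid since the decay of $w$ at $0$ and $\infty$ is controlled by membership in $H\cap H^2$; one uses the already-known $\|w\|_{L^\infty}\lesssim\|w\|_H$ plus the $H^2$ bound to justify the boundary terms vanish). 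Iterating, $\int_0^\infty r^{-4}w^2\,r\,\mathrm dr \lesssim \int_0^\infty r^{-2}(\partial_r w)^2\,r\,\mathrm dr \lesssim \|\partial_r w\|_H^2$, which gives the $\|r^{-2}w\|_{L^2}$ bound; and then $\|r^{-1}w\|_{L^\infty}^2 = \|g\|_{L^\infty}^2 \lesssim \|g\|_H^2 \lesssim \|r^{-1}\partial_r w\|_{L^2(r\,\mathrm dr)}^2 + \|r^{-2}w\|_{L^2(r\,\mathrm dr)}^2 \lesssim \|\partial_r w\|_H^2$ using~\eqref{eq:winfty} for $g$ together with the Hardy estimates just established. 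One must be slightly careful that the constant $k^4-4k^2$ in the $H^2$ norm can be negative for small $k$, so the $\|r^{-2}w\|_{L^2}$ control should be extracted from $\|\partial_r w\|_H$ rather than directly from $\|w\|_{H^2}$, but since $\|\partial_r w\|_H \lesssim \|w\|_{H^2}$ is immediate from the definitions (modulo a Hardy step on the cross term), this is harmless.

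For the cubic estimates~\eqref{eq:w3}, I would simply interpolate the previous bounds. For the first one, $\|r^{-2}w^3\|_{L^2(r\,\mathrm dr)} = \|r^{-1}w\cdot r^{-1}w\cdot w\|_{L^2(r\,\mathrm dr)} \le \|r^{-1}w\|_{L^\infty}\,\|w\|_{L^\infty}\,\|r^{-1}w\|_{L^2(r\,\mathrm dr)} \lesssim \|\partial_r w\|_H\,\|w\|_H\,\|w\|_H$, where I used~\eqref{eq:w/rinfty} for the first factor, \eqref{eq:winfty} for the second, and the trivial bound $\|r^{-1}w\|_{L^2(r\,\mathrm dr)}\le\|w\|_H$ for the third. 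For the second, $\|r^{-3}w^3\|_{L^2(r\,\mathrm dr)} \le \|r^{-1}w\|_{L^\infty}^2\,\|r^{-1}w\|_{L^2(r\,\mathrm dr)} \lesssim \|\partial_r w\|_H^2\,\|w\|_H$. I expect the main obstacle to be the rigorous justification of the Hardy inequalities and vanishing of boundary terms in the borderline exponent cases (i.e.\ controlling $\|r^{-2}w\|_{L^2}$ by $\|\partial_r w\|_H$ cleanly for all $k\ge 2$, or at least $k\ge 4$ as needed), since naively the weights are exactly critical; the cleanest route is to work on the dense subspace $C_0^\infty((0,\infty))$, where all boundary terms in the integration by parts vanish identically, prove the inequalities there, and then pass to the closure, using the already-established $L^\infty$ bound to see that the limiting functions genuinely satisfy the pointwise decay needed for the final sup-norm statements.
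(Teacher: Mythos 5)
Your overall structure is close to the paper's, and the cubic estimates~\eqref{eq:w3} match exactly (H\"older with $\|r^{-1}w\|_{L^\infty}$, $\|w\|_{L^\infty}$, $\|r^{-1}w\|_{L^2(r\,\ud r)}$). But there is a genuine gap in your derivation of~\eqref{eq:w/rinfty}. You invoke ``the Hardy inequality on $(0,\infty)$ with measure $r\,\ud r$'' in the form $\int_0^\infty r^{-2}w^2\, r\,\ud r \lesssim \int_0^\infty (\partial_r w)^2\, r\,\ud r$, but this is precisely the endpoint 2D radial Hardy inequality and it is \emph{false}: for $w_n(r)=r^{1/n}$ near zero (cut off elsewhere) both sides scale like $1/n$, but the left side is $\sim n/2$ while the right is $\sim 1/(2n)$, so the ratio blows up. This is exactly why the term $k^2\int w^2/r^2\, r\,\ud r$ is built into the definition of $\|w\|_H^2$ rather than deduced from the gradient. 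Your subsequent ``iterated'' inequality $\int r^{-4}w^2\, r\,\ud r \lesssim \int r^{-2}(\partial_r w)^2\, r\,\ud r$ happens to be \emph{true}, but cannot be obtained by iterating the false step; it must be proved on its own, e.g.\ by writing $r^{-3}\,\ud r = -\tfrac12\,\ud(r^{-2})$, integrating by parts, and using Cauchy--Schwarz plus absorption. Once you prove it directly, your plan (establish $\|r^{-2}w\|_{L^2}$ first, then bound $\|r^{-1}w\|_{L^\infty}$ by applying~\eqref{eq:winfty} to $g=r^{-1}w$) does close up, but it is more convoluted than necessary. The paper's route is cleaner and avoids the density/boundary-term worries you flag at the end: it first proves $\|r^{-1}w\|_{L^\infty}\lesssim\|\partial_r w\|_H$ directly from $w(r_0)=\int_0^{r_0}\partial_r w\,\ud r$ and Cauchy--Schwarz (using $w(0)=0$ from $w\in H$), and then integrates $\int_{r_0}^\infty w^2 r^{-3}\,\ud r$ by parts, controlling the boundary term at $r_0$ by the just-established $L^\infty$ bound before taking the supremum in $r_0$ — so no density argument or borderline Hardy inequality is ever needed. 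One small typo as well: in your proof of~\eqref{eq:winfty} the first factor should read $\big(\int_0^\infty r^{-2}w^2\, r\,\ud r\big)^{1/2}$, not $\big(\int_0^\infty r^{-1}w^2\, r\,\ud r\big)^{1/2}$, since it is the potential part of $\|w\|_H$ that you are using.
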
 
\begin{proof}
See~\cite[Section 2.1]{JL1} for the the proof of~\eqref{eq:winfty}. Note that $w \in H$ implies that $w(r) \to 0$ as $r \to 0$. Thus, for any $r_0 >0$.  
\EQ{
w(r_0) =  \int_0^{r_0} w_r(r)  \ud r \le  \left( \int_0^{r_0} w_r^2(r) \, \frac{\ud r}{r} \right)^{\frac{1}{2}}\left( \int_0^{r_0} r\,   \ud r\right)^{\frac{1}{2}} \lesssim r_0 \| \p_r w \|_H
}
Therefore, 
$
\| r^{-1}  w \|_{L^\infty} \lesssim \|  \p_r w \|_H. 
$
On the other hand we have 
\EQ{
\int_{r_0}^\infty  w^2(r) r^{-3} \, \ud r  &= \frac{1}{2} r_0^{-2} w(r_0)^2  + \int_{r_0}^\infty  w_r(r) w(r)  r^{-2} \, \ud r  \\
& \le \frac{1}{2} r_0^{-2} w(r_0)^2 + 4 \int_{r_0}^\infty  w_r^2(r)  \, \frac{\ud r }{r}  + \frac{1}{2} \int_{r_0}^\infty  w^2(r) r^{-3} \, \ud r
}
Absorbing the last term on the right into the left-hand side yields, 
\EQ{
\int_{r_0}^\infty  w^2(r) r^{-3} \, \ud r  & \lesssim  \| r^{-1} w \|_{L^\infty}^2 +  \|  \p_r w \|_{H}^2
}
Taking the supremum in $r_0$ completes the proof of~\eqref{eq:w/rinfty}. The inequalities in~\eqref{eq:w3} are straightforward consequences of~\eqref{eq:winfty} and~\eqref{eq:w/rinfty}. 
\end{proof}

\section{Setting up the proof of Theorem~\ref{t:refined}: modulation analysis }   \label{s:reg}

\subsection{A formal computation}  \label{s:formal} 
We begin with a formal computation designed to guess the structure of the profiles $A, B, \ti B$ and the dynamical parameters $(\mu_c, \lam_c, a_c, b_c)$ in the statement of Theorem~\ref{t:refined}.  

From the classification result in~\cite{JL1}, every pure two-bubble solution that concentrates at $t= \infty$ takes the form (after rescaling $\mu_0$ in \cite[Theorem 1.6]{JL1} to $ \mu_0 =1$), 
\EQ{ \label{eq:0order} 
\bs u(t) = - \bs Q_{\mu(t)} + \bs Q_{\la(t)} + o_{\HH}(1) \mas t \to \infty
}
with $\lam(t)  \to 0 , \mu(t) \to 1$ as $t \to \infty$.  The goal of this section is to guess the structure of the error $o_{\HH}(1)$ above. We do this formally by searching for a solution that takes the form 
\EQ{
\bs u(t)  &= - \bs Q_{\mu(t)} + a(t) \bs{\ti U}_{\mu(t)}^{(1)} +   a(t)^2\bs{\ti U}^{(2)}_{\mu(t)} + a(t)^3\bs{\ti U}^{(3)}_{\mu(t)} + \dots \\
&\quad  + \bs Q_{\la (t)}  + b(t)\bs U^{(1)}_{\la(t)}+ b(t)^2\bs U^{(2)}_{\la(t)} + b(t)^3\bs U^{(3)}_{\la(t)} + \dots 
}
with $\la(t), a(t), b(t) \to 0$ and $\mu(t)\to 1$ as $t \to \infty$. In the arguments below we view the correction terms on the first line as being most relevant in the regions $r \gg \sqrt{\lam(t) \mu(t)}$ and the correction in the second line as relevant in the region $r \ll \sqrt{\lam(t) \mu(t)}$. 

From~\eqref{eq:0order} 
we see that 
 to leading order we have 
\EQ{
\p_t u(t) \approx \mu'(t) \Lam Q_{\U{\mu(t)}} - \la'(t) \La Q_{\U{\la(t)}}.
}
This leads us to set 
\EQ{
a(t) = \mu'(t), \quad b(t) = - \la'(t)
}
 and $\bs U^{(1)} = \bs{\ti U}^{(1)}= (0, \La Q)$ so that 
\EQ{
\p_t u(t) \approx a(t) \La Q_{\U{\mu(t)}}+  b(t) \La Q_{\U{\la(t)}}
}
To find $\bs U^{(2)}$ and $\bs{\ti U}^{(2)}$, we differentiate the above again in time, obtaining 
\EQ{ \label{eq:pt2u} 
 \p_{t}^2 u(t) 
 &  \approx b'(t) \La Q_{\U{\la(t)}} + \frac{b(t)^2}{ \la(t)}[ \La_0 \La Q]_{\U{\la(t)}} +  a' (t)\La Q_{\U \mu(t)} -\frac{a(t)^2}{\mu(t)} [\La_0 \La Q]_{\U \mu(t)} 
}
One the other hand, we need 
$
 \p_{t}^2 u = \De u  - \frac{1}{r^2} f(u). 
$
We find that, 
\begin{align} 
\De u &- \frac{1}{r^2} f(u) \approx -\LL_\la b^2 U_\la^{(2)}  + \LL_\mu a^2  \ti U_\mu^{(2)}   - 4 \mu^{-k}   r^{k-2} (\La Q_{\la})^2   - 4 \la^k r^{-k-2} (\La Q_{\mu})^2  \\
&   -\frac{1}{r^2}\Big( f( Q_{\la} - Q_\mu) - f(Q_\la) + f(Q_\mu) - 4  \mu^{-k} r^{k-2} (\La Q_{\la})^2- 4 \frac{\la^k}{r^{k+2}} (\La Q_{\mu})^2 \Big) \\
&  - \frac{1}{r^2} \Big( f( Q_{\la} - Q_\mu + b^2 U_\la^{(2)} - a^2 \ti U_\mu^{(2)}) - f(Q_\la - Q_\mu) - f'(Q_\la - Q_\mu) ( b^2 U_\la^{(2)} - a^2 \ti U_{\mu}^{(2)}) \Big) \\
&  -\frac{1}{r^2} \Big( f'( Q_\la - Q_\mu) - f'(Q_\la)\Big) b^2 U_\la^{(2)}  +\frac{1}{r^2} \Big ( f'(Q_\la - Q_\mu) - f'(Q_\mu) \Big) a^2 \ti U_{\mu}^{(2)} \\
& \approx -\LL_\la b^2 U_\la^{(2)}  + \LL_\mu a^2  \ti U_\mu^{(2)}   - 4 \mu^{-k}   r^{k-2} (\La Q_{\la})^2   - 4 \la^k r^{-k-2} (\La Q_{\mu})^2 \label{eq:rhs1} 
\end{align} 
where we have used the identity and approximation, 
\EQ{
-\frac{1}{r^2}(f(Q_{\la} - Q_\mu) - f(Q_\la) + f(Q_\mu)) 
 & = -\frac{\sin 2Q_\mu}{r^2} [\La Q_\la]^2 + \frac{\sin 2Q_\la}{r^2} [\La Q_\mu]^2 \\
 & \approx - 4  \mu^{-k} r^{k-2} (\La Q_{\la})^2 - \frac{4 \la^k}{r^k} [\La Q_{\mu}]^2 
}
Combining~\eqref{eq:pt2u} with~\eqref{eq:rhs1} we arrive at the requirement that, 
\EQ{
b' \La Q_{\U{\la}} &+ \frac{b^2}{ \la}[ \La_0 \La Q]_{\U{\la}} +  a' \La Q_{\U \mu} -\frac{a^2}{\mu} [\La_0 \La Q]_{\U \mu}  \\
& = -\LL_\la b^2 U_\la^{(2)}  + \LL_\mu a^2  \ti U_\mu^{(2)}   - 4 \mu^{-k}   r^{k-2} (\La Q_{\la})^2   - 4 \la^k r^{-k-2} (\La Q_{\mu})^2
}
which we impose by solving the two equations, 
\EQ{ \label{eq:ab-eq} 
b^2 \LL_\la U_\la^{(2)} = - \frac{b^2}{ \la}[ \La_0 \La Q]_{\U{\la}} - b' \La Q_{\U{\la}} - 4 \mu^{-k}   r^{k-2} (\La Q_{\la})^2 \\
a^2\LL_\mu   \ti U_\mu^{(2)} =  -\frac{a^2}{\mu} [\La_0 \La Q]_{\U \mu} +  a' \La Q_{\U \mu} + 4 \la^k r^{-k-2} (\La Q_{\mu})^2
} 
Consider the first equation above. After rescaling we arrive at 
\EQ{ \label{eq:Teq} 
 \LL U^{(2)}  
 & = - \La_0 \La Q - \frac{\la}{b^2} \left( b' \La Q + 4 {\la}^{k-1}\mu^{-k} [\La Q]^2 r^{k-2} \right) 
}
Solving this equation for $U^{(2)} \in L^2$ requires that the right-hand-side be orthogonal to the kernel of $\LL$, which is given by $\La Q$. Since 
\EQ{
\ang{- \La_0 \La Q \mid \La Q} = 0
}
we also need that 
\EQ{
\ang{b' \La Q + 4 {\la}^{k-1} \mu^{-k} [\La Q]^2 r^{k-2} \mid \La Q} = 0 
}
This leads to the equation  
\EQ{
b' = - {\la}^{k-1}\mu^{-k} \frac{4}{\| \La Q \|_{L^2}^2} \int_0^\infty [\La Q]^3 r^{k-1}   \, \ud r
}
Computing 
\EQ{
\int_0^\infty [\La Q]^3 r^{k-1}   \, \ud r = 8k^3 \int_0^\infty  \frac{r^k}{(r^{-k} + r^k)^3} \,  \frac{\ud r}{r} =  2 k^2
}
we arrive at the formal equation for $b'$, 
\EQ{
b' = -\gamma_k \lam^{k-1} \mu^{-k} 
}
where $\gamma_k$ is as in~\eqref{eq:q-rho-ga}.  The same argument for the second equation in~\eqref{eq:ab-eq} yields the following equation for $a'$, 
\EQ{
a' &= -\frac{1}{\mu} \frac{\la^k}{\mu^k}  \frac{4}{ \| \La Q \|_{L^2}^2} \int_0^\infty  (\La Q)^3 r^{-k-1} \, \ud r  \\
}
Computing, 
\EQ{
 \int_0^\infty  (\La Q)^3 r^{-k-1} \, \ud r = 8k^3 \int_0^\infty \frac{r^{-k}}{(r^{-k} + r^k)^3} \, \frac{\ud r}{r} = 2k^2
}
we arrive at the equation, 
\EQ{
a' = - \gamma_k \lam^k \mu^{-k-1} 
}
Putting this together we record the formal system for the parameters $(\mu, \lam, a, b)$, 
\EQ{ \label{eq:form-law} 
\lam'(t) &= -b(t) \\
\mu'(t) &= a(t)  \\ 
a'(t) &= -\gamma_k   \lam(t)^k  \mu(t)^{-k-1}  \\
b'(t) & = -\gamma_k \lam(t)^{k-1}  \mu(t)^{-k} 
}
\begin{rem} 
Note that~\eqref{eq:Teq} only determines $U^{(2)}, \ti U^{(2)}$ up to adding a multiples of $\La Q$. We will fix a particular choice in the course of the arguments below. 
\end{rem} 

In the proof of Theorem~\ref{t:refined} we will also need the next profiles $\bs U^{(3)}$ and $\bs{\ti  U}^{(3)}$. But these can be obtained directly from $U^{(2)}$ and $ \ti U^{(2)}$ as will be evident in the proof. We refer the reader back to the precise ansatz~\eqref{eq:Phi-def1}.

\subsection{Refinement of the error and modulation} \label{s:modulation} 
In this section we develop the precise ansatz to be used in the construction. 

In~\cite{JL1} we considered any solution $\bs u(t)$ with ~$\E = 2 \E( \bs Q)$ such that at some time $t_0>0$,  $\bs u(t_0)$ is close to the two-bubble manifold, i.e., 
\EQ{
 \inf_{\la, \mu>0} \left(\| \bs{u}(t_0) - ( \bs Q_{\la} - \bs Q_\mu) \|_{\HH}^2 +  (\la/ \mu)^k \right)  \ll 1
}
Then, we showed that one can uniquely define modulation parameters $\ti \la(t),  \ti \mu(t)$  and the error $\bs g(t)$ in a neighborhood of $t_0$, so that 
\EQ{
 \bs g(t) :=  \bs u(t) - ( \bs Q_{\ti \la(t)} - \bs Q_{\ti \mu(t)}) , \,\quad  \ang{\La Q_{\U{\ti \la(t)}} \mid g(t)} = \ang{\La Q_{\U{\ti \mu(t)}} \mid g(t)} = 0
}
However, the ansatz $\bs Q_{\ti \lam} - \bs Q_{\ti \mu} $ will not be sufficient for our purposes here and will need to be further refined. Motivated by the formal computations in the previous section, 
we define a refined distance to the two-bubble manifold taking into account the tangent space. 
Given a solution $\bs u(t)$ on a time interval $J$, define at each fixed time $t_0$ the refined distance,  
\begin{multline} \label{eq:d-def}  
\bfd_+( \bs u(t_0)) :=  \inf_{\la, \mu>0,  a, b \in \R} \bigg(\| (u(t_0) - ( Q_{\la} -  Q_\mu)\|_{H} +  (\lam/ \mu)^{-\frac{k}{2}} \|  \p_t u(t_0) - b \Lam Q_{\U \lam} - a \Lam Q_{\U \mu}) \|_{L^2}  \\
+ \Big( \frac{\la}{\mu}\Big)^k + a^2( 1+ \abs{a}(\lam/ \mu)^{-\frac{k}{2}})  + b^2( 1+ \abs{b}(\lam/ \mu)^{-\frac{k}{2}} )  \bigg)
\end{multline}

We first extract the next order profiles before settling on a choice of $\lam, \mu, a, b$ via suitable orthogonality conditions. Before defining them in Lemma~\ref{l:mod2}, we think informally of $\la(t), \mu(t)$ as perturbations of $\ti \lam(t), \ti \mu(t)$ above and $a, b$ as perturbations of $\lam', \mu'$.  We introduce notation.


\begin{defn}  \label{d:nu} 
Given functions  $\lambda(t), \mu(t)$ we define $\nu(t)$ to be their ratio. 
\EQ{ \label{eq:nudef} 
 \nu(t) := \frac{\la(t)}{\mu(t)}, \quad \nu'(t)= \frac{\la'(t)}{\mu(t)} -  \frac{\mu'(t)}{\mu(t)}  \nu(t)
   }
\end{defn} 



We now build a refined ansatz using $\mu(t), \la(t), a(t), b(t)$, which will be specified below. To this end, we set 
\EQ{
T(t, r) :=  A(r) + \frac{\nu^{k}(t)}{b^2(t)} B(r) , \quad 
\ti T(t, r) := A(r) +   \frac{\nu^k(t)}{ a^2(t)} \ti B(r) 
}
 where $A, B, \ti B$ solve 
\EQ{ 
 \LL A &= - \La_0 \La Q, \quad 
 0=  \ang{A \mid \La Q},    \label{eq:AQ} 
 }
 \EQ{
 \LL B &= \gamma
 _k  \La Q - 4 r^{k-2} [\La Q]^2,  \quad 
0 =  \ang{B \mid \La Q},  \label{eq:BQ} 
}
and 
\EQ{
  \LL \ti B &= -\ti \gamma_k \La Q + 4r^{-k-2} [\La Q]^2, \quad 
  0 =  \ang{ \ti B \mid \La Q}. \label{eq:tiBQ}
  } 
  Then, 
\EQ{ \label{eq:LLb2T} 
\LL_{\la} b^2 T_\la &= - \frac{b^2}{\la} \La_0 \La Q_{\U \la} + \gamma_k \frac{\nu^k}{\la} \La Q_{\U \la}  - 4  \frac{(r/ \mu)^k (\La Q_{\la})^2}{r^2} \\
 \LL_\mu a^2 \ti T_{\mu} &= - \frac{a^2}{\mu} \La_0  \La Q_{\U \mu}   - \ti \gamma_k \frac{\nu^k}{\mu}  \La Q_{\U \mu}  + 4 \frac{(r/ \la)^{-k} (\La Q_{\mu})^2}{r^2} 
}

The existence of such $A, B, \ti B$ is made precise in the following lemma. 
\begin{lem}  \label{l:ABB} Let $k \ge 4$. 
There exist $C^\infty(0, \infty)$ functions $A$ satisfying~\eqref{eq:AQ},  $B$ satisfying~\eqref{eq:BQ} and $\ti B$ satisfying~\eqref{eq:tiBQ}. Moreover $A, B, \ti B$ satisfy the estimates 
\EQ{ \label{eq:Aest} 
A(r) &= O( r^{k}), \quad \p_r A(r) = O(r^{k-1}), \quad \p_r^2 A(r) = O(r^{k-2})   \mas r \to 0  \\
A(r) &= O(r^{-k+2}), \quad \p_r A(r) = O(r^{-k+1}), \quad \p_r^2 A(r) = O(r^{-k})   \mas r \to \infty
}
\EQ{ \label{eq:Best} 
B(r) &= O( r^{k}), \quad \p_r B(r) = O(r^{k-1}), \quad \p_r^2 B(r) = O(r^{k-2})   \mas r \to 0  \\
B(r) &= O(r^{-k+2}), \quad \p_r B(r) = O(r^{-k+1}), \quad \p_r^2 B(r) = O(r^{-k})   \mas r \to \infty
}
\EQ{ \label{eq:tiBest} 
\ti B(r) &= O( r^{k}\abs{\log r}), \, \,  \p_r \ti B(r) = O(r^{k-1}\abs{\log r}), \, \,  \p_r^2  \ti B(r) = O(r^{k-2}\abs{\log r})   \mas r \to 0  \\
 \ti B(r) &= O(r^{-k+2}), \quad \p_r  \ti B(r) = O(r^{-k+1}), \quad \p_r^2 \ti  B(r) = O(r^{-k})   \mas r \to \infty
}
Moreover we have $A, \La A, \La_0 \La A, B,  \La B, \La_0 \La B \in L^2(\R^2)$ and $\ti B, \La \ti B, \La_0 \La \ti B  \in L^2(\R^2)$. 
\end{lem}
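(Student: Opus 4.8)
The plan is to solve each ODE $\LL f = g$ with $g$ a known explicit function built from $\La Q$, by exploiting the fact that $\LL$ has the explicit kernel element $\La Q$ and hence can be inverted by reduction of order. First I would record the second linearly independent solution of $\LL f = 0$: since $\La Q(r) = 2kr^k/(1+r^{2k})$ solves $\LL f = 0$, the Wronskian ansatz gives a second solution $\Theta(r) = \La Q(r) \int^r (s\,[\La Q(s)]^2)^{-1}\,\ud s$, and one computes from the explicit form of $\La Q$ that $\Theta(r) = O(r^{-k})$ as $r\to 0$ and $\Theta(r) = O(r^k \log r)$ as $r \to \infty$ (the logarithm appearing because $\La Q \sim 2k r^{-k}$ at infinity makes the integrand $\sim s^{2k-1}$, and integrating against the explicit rational function produces a log at the matching scale). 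Then the variation-of-parameters formula
\EQ{ \label{eq:varpar}
f(r) = -\La Q(r) \int_0^r \Theta(s) g(s)\, s\, \ud s + \Theta(r) \int_0^r \La Q(s) g(s)\, s\,\ud s + c\,\La Q(r)
}
gives the general solution, and the free constant $c$ is then uniquely fixed by imposing the orthogonality condition $\ang{f \mid \La Q} = 0$, which is a solvable linear equation for $c$ precisely because $\La Q \in L^2$.

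Next I would verify the asymptotics. For each of the three right-hand sides one has $g = -\La_0\La Q$, $g = \ga_k \La Q - 4r^{k-2}[\La Q]^2$, $g = -\ti\ga_k \La Q + 4r^{-k-2}[\La Q]^2$; using $\La Q = O(r^k)$, $\La_0 \La Q = O(r^k)$ near $0$ and $O(r^{-k})$ near infinity, one checks that in all three cases $g(r) = O(r^{k-2})$ near $r=0$ (for $\ti B$ there is an extra factor: $r^{-k-2}[\La Q]^2 = O(r^{k-2})$ still, but the $\Theta$-integral $\int_0^r \La Q \cdot g \cdot s\,\ud s$ behaves like $\int_0^r s^{2k-1}\,\ud s = O(r^{2k})$, and multiplying by $\Theta = O(r^{-k})$ gives $O(r^k)$ — for $\ti B$ one must be slightly more careful because the leading pieces of $g$ are $O(r^{k-2})$ and the integral $\int_0^r \Theta\, g\, s\,\ud s$ can produce a $\log$). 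A routine but careful bookkeeping in \eqref{eq:varpar}, term by term, yields $A(r), B(r) = O(r^k)$ and $\ti B(r) = O(r^k|\log r|)$ as $r\to 0$; differentiating \eqref{eq:varpar} once and twice (the derivatives of $\La Q$ and $\Theta$ are explicit, and $\p_r$ of the integral terms just brings down the integrand) gives the stated bounds on $\p_r$ and $\p_r^2$. Near $r = \infty$, the key point is the choice of the constant $c$: a priori $f$ could contain a component growing like $\Theta(r) \sim r^k\log r$ or like $\La Q \sim r^{-k}$; the orthogonality condition, together with the fact that the integrals $\int_0^\infty \La Q\, g\, s\,\ud s$ and $\int_0^\infty \Theta\, g\, s\, \ud s$ converge (which one checks from the decay of $g$), forces the $r^k\log r$ and $r^k$ components to cancel, leaving $f(r) = O(r^{-k+2})$; for the $B, \ti B$ equations the solvability condition $\ang{g \mid \La Q} = 0$ — which is exactly how $\ga_k$, $\ti\ga_k$ were defined in \eqref{eq:q-rho-ga} and the formal computation of Section~\ref{s:formal} — is what guarantees the dangerous $\Theta$-growth at infinity is absent. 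Smoothness on $(0,\infty)$ is immediate since $g \in C^\infty(0,\infty)$ and $\LL$ is an elliptic operator with smooth coefficients there.

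Finally, the $L^2(\R^2)$ membership of $A, \La A, \La_0\La A$ and the analogous statements for $B, \ti B$ follows by feeding the pointwise asymptotics into the defining weighted integrals: near $r=0$, $A = O(r^k)$ with $k \ge 4$ gives $\int_0 |A|^2\, r\,\ud r < \infty$, and $\La A = r\p_r A = O(r^k)$, $\La_0\La A = (1 + r\p_r)\La A = O(r^k)$ are even better; near $r = \infty$, $A = O(r^{-k+2})$ gives $\int^\infty |A|^2 r\,\ud r \lesssim \int^\infty r^{-2k+5}\,\ud r$, finite for $k \ge 4$ (in fact $k \ge 3$ suffices), and again $\La A, \La_0\La A = O(r^{-k+2})$ are no worse — here one uses that differentiation against the explicit decaying tail does not degrade the power. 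The $\ti B$ case is identical except that the $|\log r|$ factor at the origin is harmless against the $r^k$ with $k\ge 4$. The main obstacle, and the step requiring the most care, is the asymptotic analysis at $r \to \infty$ in \eqref{eq:varpar}: one must track the precise cancellation of the would-be $r^k\log r$ and $r^k$ growth, which hinges on the solvability conditions $\ang{\La_0\La Q \mid \La Q} = 0$ and $\ang{\ga_k\La Q - 4r^{k-2}[\La Q]^2 \mid \La Q} = 0$ (and its $\ti B$-analogue), i.e. precisely on the algebra that fixed $\ga_k$ and $\ti\ga_k$; everything else is bookkeeping with explicit rational integrands.
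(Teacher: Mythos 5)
Your route through a single second-order variation-of-parameters formula is genuinely different from the paper's. The paper uses the Bogomol'nyi factorization $\LL = \B^*\B$ (with $\B = -\p_r + \tfrac{k}{r}\cos Q$, $\B^* = \p_r + \tfrac{1}{r} + \tfrac{k}{r}\cos Q$, and $\B\La Q = 0$) to reduce $\LL f = g$ to two cascaded first-order ODEs, each solved by an integrating factor. The advantage of that route is that each first-order integral has only one possibly divergent endpoint, so for each piece of the source one can simply integrate from whichever end converges, and cutoffs glue the two choices. Your route instead builds the second homogeneous solution $\Theta$ directly and writes a single VOP formula; it works in principle but requires more care with integration limits, as explained below.

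Two corrections. First, your asymptotic for $\Theta$ at infinity is off. Writing out the primitive of $(s[\La Q(s)]^2)^{-1}$ exactly,
\[
\int \frac{(1+s^{2k})^2}{4k^2\, s^{2k+1}}\,\ud s = \frac{1}{4k^2}\Bigl(-\frac{1}{2k\,s^{2k}} + 2\log s + \frac{s^{2k}}{2k}\Bigr) + C,
\]
the dominant term at infinity is $s^{2k}/(8k^3)$, and multiplying by $\La Q(r)\sim 2k\,r^{-k}$ gives $\Theta(r) = O(r^k)$, with no logarithm. The $\log$ from the $2/s$ term is subdominant at both ends. Relatedly, $\int_0^\infty \Theta\, g\, s\,\ud s$ does \emph{not} converge (the integrand behaves like $s$ at infinity). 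Fortunately your argument does not actually need it to: what matters is that $\int_0^\infty \La Q\, g\, s\,\ud s$ both converges and vanishes, which is exactly the solvability condition $\ang{g\mid\La Q}=0$ that determines $\gamma_k,\ti\gamma_k$ (and for $A$ holds automatically from antisymmetry of $\La_0$). The orthogonality $\ang{f\mid\La Q}=0$ plays a separate role, fixing the free constant $c$; it does not and cannot suppress the $\Theta$-growth. You arrive at the right conclusion in your final sentence, but the middle paragraph conflates the two conditions.

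Second, and this is the real gap: for $\ti B$ the formula~\eqref{eq:varpar} with both integrals from $0$ does not parse. The source $g = -\ti\gamma_k\La Q + 4r^{-k-2}[\La Q]^2$ satisfies $g(s) = O(s^{k-2})$ as $s\to 0$, so with $\Theta(s)=O(s^{-k})$ the integrand $\Theta g s = O(s^{-1})$ is not integrable at the origin, and $\int_0^r \Theta g s\,\ud s$ is simply undefined. Nor can you integrate from infinity, since $\Theta g s = O(s)$ there. You flag this with ``one must be slightly more careful,'' but the fix requires actually changing the formula: integrate the $\Theta g s$ piece from a fixed base point $r_0>0$, and track the constant this introduces, absorbing it into the free $c\,\La Q$ term before imposing $\ang{\ti B\mid\La Q}=0$. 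Carrying this out is what produces the $r^k|\log r|$ bound near the origin in~\eqref{eq:tiBest} (the log comes from $\int_{r_0}^r s^{-1}\,\ud s$), rather than the $r^k$ one gets for $A,B$. The paper handles this by splitting $F = G_1+G_2 = G_3+G_4$ and choosing the direction of integration separately for each piece, gluing with cutoffs; you would need an equivalent device.

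The rest — the existence and uniqueness from the implicit choice of $c$, smoothness from elliptic regularity, and the $L^2$ memberships from feeding the pointwise asymptotics into the weighted integrals with $k\ge 4$ — is correct as written.
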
 
\begin{rem}  \label{r:k4} 
The assumption that $k \ge 4$ ensures that $A, B, \ti B \in L^2(\R^2)$, which simplifies the analysis in the rest of this section. 
\end{rem} 

\subsubsection{Basic estimates involving $\Lam Q, A, B, \ti B$}
We record a collection of estimates that are direct consequences of the previous lemma and the definitions of $\La Q, \La_0 \La Q$. This technical subsection can be skipped on first reading and referred back to as needed. 
\begin{lem}\label{l:ABBQ}  Let $A, B, \ti B$ be as in Lemma~\ref{l:ABB} and let $\nu= \lam/ \mu \ll 1$. Then, 
\EQ{ \label{eq:ABBQ} 
\abs{ \ang{\La Q_{{\U \la}} \mid \La Q_{{\U \mu}}}}  + \abs{ \ang{ \La_0 \La Q_{\U \la}  \mid \La Q_{\U \mu} }} + \abs{ \ang{ \La Q_{\U \la} \mid \La_0 \La Q_{\U \mu}}} &\lesssim \nu^{k-1}  \\
\abs{ \ang{ \La Q_{{\U \la}}  \mid A_{{\U \mu}}}}  + \abs{\ang{ \La Q_{\U \la}  \mid  \La A_{\U \mu}}} + \abs{ \ang{\La_0 \La Q_{\U \la} \mid A_{\U \mu}}} + \abs{ \ang{\La_0 \La Q_{\U \la} \mid  \La A_{\U \mu}}} &\lesssim   \nu^{k-1}  \\
\abs{ \ang{ \La Q_{{\U \mu}}  \mid A_{{\U \la}}}}  + \abs{\ang{ \La Q_{\U \mu}  \mid  \La A_{\U \la}}} + \abs{ \ang{\La_0 \La Q_{\U \mu} \mid A_{\U \la}}} + \abs{ \ang{\La_0 \La Q_{\U \mu} \mid  \La A_{\U \la}}} &\lesssim  \nu^{k-3} \\
\abs{ \ang{ \La Q_{{\U \mu}}  \mid B_{{\U \la}}}}  + \abs{\ang{ \La Q_{\U \mu}  \mid  \La B_{\U \la}}} + \abs{ \ang{\La_0 \La Q_{\U \mu} \mid B_{\U \la}}} + \abs{ \ang{\La_0 \La Q_{\U \mu} \mid  \La B_{\U \la}}} &\lesssim \nu^{k-3} \\
\abs{ \ang{ \La Q_{{\U \la}}  \mid \ti B_{{\U \mu}}}}  + \abs{\ang{ \La Q_{\U \la}  \mid  \La \ti B_{\U \mu}}} + \abs{ \ang{\La_0 \La Q_{\U \la} \mid \ti B_{\U \mu}}} + \abs{ \ang{\La_0 \La Q_{\U \la} \mid  \La \ti B_{\U \mu}}} &\lesssim  \nu^{k-1} \\
}
Analogous estimates hold for $\Lam_0 \Lam A, \Lam_0 \Lam B, \Lam_0 \Lam \ti B$ paired with $\Lam Q_\nu$ and $\Lam Q_{\nu^{-1}}$. 
\end{lem}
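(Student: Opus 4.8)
The plan is to reduce every pairing in the lemma to a one–dimensional integral in the single small parameter $\nu = \la/\mu$, and then estimate it by splitting the radial variable into three zones determined by the scales $1$ and $\nu^{-1}$.

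\textbf{Step 1 (pointwise bounds).} First I would record uniform pointwise bounds on all the functions involved. From the explicit formula $\Lam Q(r) = 2kr^k/(1+r^{2k})$ one has $\abs{\Lam Q(r)} + \abs{\Lam_0\Lam Q(r)} \lesssim \min(r^k, r^{-k})$. From Lemma~\ref{l:ABB}, using $\Lam w = r\p_r w$ and $\Lam_0\Lam w = 2 r\p_r w + r^2 \p_r^2 w$ together with the first– and second–derivative estimates stated there, one gets $\abs{A(r)} + \abs{\Lam A(r)} + \abs{\Lam_0\Lam A(r)} + \abs{B(r)} + \abs{\Lam B(r)} + \abs{\Lam_0\Lam B(r)} \lesssim \min(r^k, r^{-k+2})$, and the same bound for $\ti B$, $\Lam\ti B$, $\Lam_0\Lam\ti B$ except with an extra factor $\abs{\log r}$ in the range $r\le 1$.

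\textbf{Step 2 (reduction to a model integral).} A change of variables gives, for radial $f,g$,
\EQ{
\ang{ f_{\U\la} \mid g_{\U\mu}} = \nu \int_0^\infty f(s)\, g(\nu s)\, s\, \ud s, \qquad \ang{ f_{\U\mu} \mid g_{\U\la}} = \nu \int_0^\infty f(\nu s)\, g(s)\, s\, \ud s ,
}
and a similar identity with the prefactor $\nu$ replaced by $\nu^2$ or by $1$ for the pairings against the $H$–rescalings $\Lam Q_\nu$, $\Lam Q_{\nu^{-1}}$. In every case the pairing has the shape $c(\nu)\int_0^\infty \phi(s)\,\psi(\nu s)\,s\,\ud s$ with $\phi$ the factor concentrated at the smaller scale and $\psi$ the factor spread over the larger scale. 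In lines one, two and five of the lemma $\phi$ is $\Lam Q$ or $\Lam_0\Lam Q$, with decay exponent $a=k$; in lines three and four $\phi$ is $A$ or $B$ (possibly $\Lam$–differentiated), with decay exponent $a=k-2$; and $\psi$ always has decay exponent $b\in\{k,k-2\}$, by Step 1.

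\textbf{Step 3 (the model integral) and conclusion.} It then suffices to bound $J(\nu; a, b) := \int_0^\infty \min(s^k, s^{-a})\,\min\big((\nu s)^k, (\nu s)^{-b}\big)\, s\, \ud s$ for $\{a,b\}\subset\{k,k-2\}$, by splitting at $s=1$ and $s=\nu^{-1}$ (recall $\nu\ll1$). On $(0,1)$ the integrand is $\lesssim \nu^k s^{2k+1}$, giving $O(\nu^k)$ — times $\abs{\log\nu}$ when $\ti B$ enters, which is harmless. On $(1,\nu^{-1})$ it is $\lesssim \nu^k s^{k-a+1}$, giving $O(\nu^{a-2})$ since $a\le k$. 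On $(\nu^{-1},\infty)$ it is $\lesssim \nu^{-b} s^{1-a-b}$, which is integrable because $a+b>2$ for $k\ge4$, and contributes $O(\nu^{a-2})$ as well. Hence $J(\nu;a,b)\lesssim\nu^{a-2}$, and multiplying by the prefactor $\nu$ yields $\ang{\cdot\mid\cdot}\lesssim\nu^{a-1}$, which is $\nu^{k-1}$ in lines one, two and five and $\nu^{k-3}$ in lines three and four; the estimates paired with $\Lam Q_\nu$, $\Lam Q_{\nu^{-1}}$ follow by the identical computation after tracking the appropriate prefactor. This argument is essentially mechanical and there is no serious obstacle; the only points requiring a little care are keeping the $L^2$– versus $H$–rescaling normalizations straight in Step 2, checking convergence of the outer zone in Step 3 (which is precisely where $k\ge4$ — indeed already $k\ge3$ — is used), and absorbing the logarithm coming from the $r\to0$ asymptotics of $\ti B$.
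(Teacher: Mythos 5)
Your proposal is correct and is precisely the elementary scaling-and-decay computation the paper alludes to when it dismisses this lemma with ``these follow from straightforward computations.'' Steps 1--3 correctly reduce each pairing via the change of variables $\ang{f_{\U\la}\mid g_{\U\mu}}=\nu\int_0^\infty f(s)g(\nu s)\,s\,\ud s$ to the model integral with $a\in\{k,k-2\}$ and $b\in\{k,k-2\}$, and the three-zone estimate giving $J\lesssim\nu^{a-2}$ (hence $\nu^{a-1}$ after the prefactor) reproduces exactly the exponents $\nu^{k-1}$ and $\nu^{k-3}$ in \eqref{eq:ABBQ}, with the $\ti B$ logarithm absorbed and convergence in the outer zone requiring $a+b>2$, which holds for $k\ge 3$.
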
 

We also require the following estimates for nonlinear interactions between $\La Q, A, B, \ti B$. 

\begin{lem}  \label{l:ABBQnl} 
Let $A, B, \ti B$ be as in Lemma~\ref{l:ABB}, and let $\nu= \lam/ \mu \ll 1$. Then, 
\EQ{ \label{eq:ABBQpair} 
\abs{ \ang{\La Q_\mu \mid \frac{1}{r^2} A_\la^2}}  + \abs{ \ang{ \La Q_{\mu} \mid \frac{1}{r^2} B_\la^2}} &\lesssim   \nu^{k}  \\
  \abs{ \ang{\La Q_\la \mid \frac{1}{r^2} A_\mu^2}} +  \abs{ \ang{ \La Q_{\la} \mid \frac{1}{r^2} \ti  B_\mu^2}}  & \lesssim   \nu^{k}  \\
 \abs{ \ang{ [\La Q_{\mu}]^3 \mid \frac{1}{r^2} A_\la}} +   \abs{ \ang{ [\La Q_{\mu}]^3 \mid \frac{1}{r^2} B_\la}}  &\lesssim   \nu^{k-2} \\ 
 \abs{ \ang{ [\La Q_\mu]^2 \mid \frac{1}{r^2} \La Q_\la A_\la}} + \abs{ \ang{ [\La Q_\mu]^2 \mid \frac{1}{r^2} \La Q_\la B_\la}} & \lesssim \nu^{2k-2}  \\
 \abs{\ang{ \La Q_\mu A_\mu \mid \frac{1}{r^2}  [\La Q_\la]^2}} & \lesssim \nu^{2k}  \\
\abs{\ang{ \La Q_\mu  \ti B_\mu \mid \frac{1}{r^2}  [\La Q_\la]^2}}& \lesssim \nu^{2k-o(1)}  \\
 \abs{ \ang{ [\La Q_\mu]^2 A_\mu \mid \frac{1}{r^2}  \La Q_\la} } +  \abs{ \ang{ [\La Q_\mu]^2 \ti B_\mu \mid \frac{1}{r^2}  \La Q_\la} } &\lesssim \nu^k  \\
 \abs{\ang{ [\La Q_\la]^2 A_\la \mid \frac{1}{r^2} \La Q_\mu}}  + \abs{\ang{ [\La Q_\la]^2 B_\la \mid \frac{1}{r^2} \La Q_\mu}} &\lesssim  \nu^k  \\
 \abs{ \ang{ [\La Q_\la]^3 \mid \frac{1}{r^2} A_\mu}} & \lesssim \nu^k  \\
 \abs{ \ang{ [\La Q_\la]^3 \mid \frac{1}{r^2} \ti B_\mu}}  &\lesssim \nu^{k-o(1)}   
}
where the $o(1)$ above can be replaced with any small constant.
\end{lem}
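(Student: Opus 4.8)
The plan is to reduce each pairing to a scale-invariant one-dimensional integral of an explicit product of the fixed profiles $\La Q, A, B, \ti B$ (or their squares and cubes), rescaled so that the most concentrated factor sits at unit scale, and then to read off the stated power of $\nu$ by splitting the radial variable into the regimes $r\lesssim\la$, $\la\lesssim r\lesssim\mu$, $r\gtrsim\mu$ (with a further split at the transition point of the less concentrated factor) and inserting the pointwise bounds of Lemma~\ref{l:ABB} together with the explicit formula $\La Q(r) = \frac{2kr^k}{1+r^{2k}}$.

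To illustrate, consider the representative term $\ang{\La Q_\mu \mid r^{-2} A_\la^2}$. Since $r^{-2}\cdot r\,\ud r = \ud r/r$ is scale invariant, the substitution $r=\la s$ gives $\ang{\La Q_\mu \mid r^{-2}A_\la^2} = \int_0^\infty \La Q(\nu s)\,A(s)^2\,\frac{\ud s}{s}$, where $A(s)^2\,\ud s/s$ is an integrable density concentrated near $s\sim1$ (integrability at $s=0$ and $s=\infty$ coming from the $O(s^k)$ and $O(s^{-k+2})$ decay in Lemma~\ref{l:ABB}, the latter using $k\ge4$). Bounding $\La Q(\nu s)\le 2k\min\{(\nu s)^k,(\nu s)^{-k}\}$ and splitting the $s$-integral at $s=1$ and $s=\nu^{-1}$ produces the bound $\nu^k$, up to an extra $\abs{\log\nu}$ in the borderline case $k=4$. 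Every other entry follows the same template, rescaling by $\mu$ rather than $\la$ when that is the concentration scale, and replacing $\La Q$ by the relevant square or cube. The exponent of $\nu$ is dictated by which asymptotic regime of the spread-out profile governs the concentration scale of the remaining factor: its $r^k$ behavior at the origin yields $\nu^k$, its $r^{-k+2}$ behavior at infinity yields $\nu^{k-2}$, and in a mixed pairing such as $\ang{[\La Q_\mu]^2 \mid r^{-2}\La Q_\la A_\la}$ the rescaling by $\la$ produces $\int_0^\infty \La Q(\nu s)^2\,\La Q(s)A(s)\,\frac{\ud s}{s}$, whose dominant part comes from the intermediate range $1\ll s\ll\nu^{-1}$ and equals $\sim \nu^{2k}\int_1^{\nu^{-1}}s\,\ud s \sim \nu^{2k-2}$.

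The real content is the bookkeeping over all the entries. I expect the principal technical nuisance to be the pairings involving $\ti B$: since $\ti B(r)=O(r^k\abs{\log r})$ near the origin, whenever $\ti B_\mu$ is paired against a factor concentrated at scale $r\sim\la$ — so that $\ti B_\mu$ is evaluated at argument of size $\sim\nu$ — one picks up an additional $\abs{\log\nu}$ (or, in $\ang{\La Q_\mu\ti B_\mu\mid r^{-2}[\La Q_\la]^2}$, a $(\log\nu)^2$); these logarithmic losses are exactly what the $\nu^{k-o(1)}$ and $\nu^{2k-o(1)}$ in the statement encode, and one must check that they do \emph{not} appear in the entries asserted with a clean power. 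A secondary point requiring care is that, as the mixed example shows, for several pairings the largest contribution lives neither at $r\sim\la$ nor at $r\sim\mu$ but is spread over the intermediate annulus, so a factor cannot be estimated by its value at a single scale; it is the multi-region split, followed by taking the largest of the (possibly logarithmically corrected) powers of $\nu$ that result, that produces the sharp exponent.
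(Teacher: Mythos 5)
Your proposal is correct and is precisely the ``straightforward computation'' the paper has in mind: rescale so the concentrated factor sits at unit scale, split into the three regions $r\lesssim\la$, $\la\lesssim r\lesssim\mu$, $r\gtrsim\mu$, insert the pointwise bounds of Lemma~\ref{l:ABB} together with the explicit formula $\La Q(r)=2kr^k/(1+r^{2k})$, and take the largest of the resulting powers of $\nu$ (with an extra split at $r\sim\sqrt{\la\mu}$ where needed). Your worry that logarithms might also arise in some entries stated with a clean power is in fact justified: the intermediate-range integral for the fifth entry $\ang{\La Q_\mu A_\mu\mid r^{-2}[\La Q_\la]^2}$ produces $\nu^{2k}\int_\la^\mu \ud r/r\sim\nu^{2k}\abs{\log\nu}$, so that line should really carry the same $o(1)$ tolerance as the $\ti B$ lines --- though this is harmless, since every invocation of the lemma in the paper has ample margin in $\nu$.
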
 

\begin{lem} \label{l:ABBQL2}  
Let $A, B, \ti B$ be as in Lemma~\ref{l:ABB}, and let $\nu= \lam/ \mu \ll 1$. Then, 
\EQ{ \label{eq:ABBQL2} 
\| r^{-2} [\La Q_\mu]^2 A_\la \|_{L^2}+  \| r^{-2} [\La Q_\mu]^2 B_\la \|_{L^2}   & \lesssim \frac{\nu^{k-1}}{\la}\\
 \| r^{-2} [\La Q_{\la}]^2 A_\mu \|_{L^2} &\lesssim \frac{\nu^{k}}{\la}  \\
   \|r^{-2} \La Q_\la \La Q_\mu A_\mu \|_{L^2}  + \| r^{-2} \La Q_\la \La Q_\mu A_\la \|_{L^2} + \| r^{-2} \La Q_\la \La Q_\mu B_\la \|_{L^2}   & \lesssim \frac{\nu^k}{\la}  \\ 
\| r^{-2} [\La Q_{\la}]^2 \ti B_\mu \|_{L^2} + \| r^{-2} \La Q_\la \La Q_\mu \ti B_\mu \|_{L^2} & \lesssim \frac{\nu^{k-o(1)}}{\la}  \\
}
where the $o(1)$ above can be replaced with any small constant.
\end{lem}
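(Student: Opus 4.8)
\textbf{Proof plan for Lemma~\ref{l:ABBQL2}.}

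The plan is to estimate each weighted $L^2$-norm by splitting the radial integral at the geometric mean scale $r \sim \sqrt{\la \mu}$ and using the pointwise decay estimates for $A, B, \ti B$ from Lemma~\ref{l:ABB} together with the explicit formula $\La Q(r) = 2kr^k/(1+r^{2k})$, which gives $\La Q_\nu(r) = 2k(r/\nu)^k/(1+(r/\nu)^{2k})$, hence $\La Q_\nu(r) \sim (r/\nu)^k$ for $r \ll \nu$ and $\La Q_\nu(r) \sim (\nu/r)^k$ for $r \gg \nu$. Since all quantities are products of rescalings of the profiles $\La Q$, $A$, $B$, $\ti B$, the integrand on each region is a pure power of $r$ (up to at most a logarithm in the $\ti B$ cases), so each piece reduces to a one-variable power-counting computation. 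The first task is to set up these pointwise bounds carefully in the three regimes $r \lesssim \la$, $\la \lesssim r \lesssim \mu$, $r \gtrsim \mu$ for each factor, recalling that $A, B = O(r^k)$ near $0$ and $O(r^{-k+2})$ near $\infty$, while $\ti B = O(r^k|\log r|)$ near $0$ and $O(r^{-k+2})$ near $\infty$; after rescaling these become $A_\la(r) = O((r/\la)^k)$ for $r \lesssim \la$ and $O((\la/r)^{k-2})$ for $r \gtrsim \la$, and similarly for the others.

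Next I would carry out the estimate for, say, $\|r^{-2}[\La Q_\mu]^2 A_\la\|_{L^2}$: on $r \lesssim \la$ the factor $A_\la$ is $O((r/\la)^k)$ and $[\La Q_\mu]^2 = O((r/\mu)^{2k})$, so the integrand of the square is $r^{-4}(r/\mu)^{4k}(r/\la)^{2k} \cdot r$, which integrates over $r \in (0,\la)$ to something of order $\la^{-2}(\la/\mu)^{4k}(\la/\la)^{2k}$ up to the convergent power, giving a contribution $\ll \nu^{k-1}/\la$; on $\la \lesssim r \lesssim \mu$ one uses $A_\la(r) = O((\la/r)^{k-2})$ and $[\La Q_\mu]^2 = O((r/\mu)^{2k})$, and the resulting power of $r$ is positive (here the condition $k \ge 4$ is used to ensure the exponent arithmetic works and the relevant integrals converge at the endpoints), so the integral is dominated by the upper endpoint $r \sim \mu$, contributing $\sim \mu^{-1}(\la/\mu)^{k-2} = \nu^{k-2}/\mu \lesssim \nu^{k-1}/\la$ after using $\mu^{-1} = \nu \la^{-1}$; on $r \gtrsim \mu$ both $A_\la$ and $[\La Q_\mu]^2$ decay fast and the contribution is lower order. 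Summing the regions gives the claimed bound $\nu^{k-1}/\la$; the term with $B_\la$ is identical since $B$ obeys the same pointwise bounds as $A$. The remaining lines of~\eqref{eq:ABBQL2} follow by exactly the same region-splitting: for $\|r^{-2}[\La Q_\la]^2 A_\mu\|_{L^2}$ the roles of the concentration scales are swapped so that $A_\mu$ is the slowly varying factor near $r \sim \la$ with $A_\mu(r) = O((r/\mu)^k)$ there, producing the extra power of $\nu$ and the improved bound $\nu^k/\la$; the mixed terms $r^{-2}\La Q_\la \La Q_\mu(\cdot)$ are handled identically, noting $\La Q_\la \La Q_\mu$ is sharply localized near $r \sim \la$; and the $\ti B$ terms pick up an extra factor $|\log \nu|$ from the logarithm in~\eqref{eq:tiBest}, which is absorbed into the $\nu^{-o(1)}$ as stated.

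The main obstacle is bookkeeping rather than conceptual: one must be scrupulous about which of the three regions dominates each integral, and in the borderline cases (typically the middle region $\la \lesssim r \lesssim \mu$) check that the exponent of $r$ does not vanish — if it did, a logarithm of $\nu$ would appear — and that the integrals converge at $r = 0$ and $r = \infty$; this is precisely where the hypothesis $k \ge 4$ enters, guaranteeing the needed strict inequalities among exponents. A minor subtlety is the $\ti B$ estimates near $r = 0$: since $\ti B = O(r^k |\log r|)$ the rescaled bound is $\ti B_\mu(r) = O((r/\mu)^k |\log(r/\mu)|)$, and when integrated against the sharply localized $[\La Q_\la]^2$ near $r \sim \la$ this produces $|\log \nu|$, which is why those two lines carry the $o(1)$ loss; the statement explicitly allows replacing $o(1)$ by any fixed small constant, so no sharper tracking of the logarithm is needed.
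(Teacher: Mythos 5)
Your proposal is correct, and it fills in the details behind the paper's one-line remark that Lemmas 3.3--3.6 ``follow from straightforward computations.'' The region-splitting at the scales $\lambda$ and $\mu$ together with power-counting using the pointwise bounds $\Lambda Q(r)\sim \min(r^k, r^{-k})$, $A,B\sim \min(r^k, r^{2-k})$, $\tilde B\sim\min(r^k|\log r|, r^{2-k})$ is the natural and presumably intended argument; I verified the exponent arithmetic on the representative cases and your conclusions (middle region dominating for the first line, endpoint $r\sim\lambda$ dominating for the mixed $\Lambda Q_\lambda\Lambda Q_\mu$ lines, and the $|\log\nu|$ from $\tilde B_\mu$ sampled near $r\sim\lambda$) are all correct.

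One small caveat: the role of $k\ge 4$ in this particular lemma is not really to make the exponents strict; already for $k\ge 2$ the exponents in each region are nondegenerate. The hypothesis $k\ge 4$ is needed elsewhere (Remark~\ref{r:k4}) to ensure $A,B,\tilde B\in L^2$, i.e., to make the $r^{2-k}$ tail square-integrable; for the weighted products in~\eqref{eq:ABBQL2} the extra negative powers of $r$ already provide convergence for $k\ge 2$. This does not affect the validity of your argument, but the attribution of the convergence to $k\ge 4$ is slightly misplaced.
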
 

\begin{lem}  \label{l:ABBQL21} 
Let $A, B, \ti B$ be as in Lemma~\ref{l:ABB}, and let $\nu= \lam/ \mu \ll 1$. Then, 
\EQ{ \label{eq:ABBQL21} 
 \| r^{-1} [\La Q_\la]^2 \La Q_\mu  \|_{L^2} +
    \| r^{-1} \La Q_\la [\La Q_\mu]^2 \|_{L^2}  +
    \| r^{-1} [\La Q_\la]^2 A_\mu \|_{L^2}  &\lesssim \nu^k  \\
  \|r^{-1}[ \La Q_\la]^2 \ti B_\mu \|_{L^2} &\lesssim  \nu^{k-o(1)}  \\
   \| r^{-1} [\La Q_\mu]^2 A_\la \|_{L^2}    +
 \| r^{-1} [\La Q_\mu]^2B_\la \|_{L^2} &\lesssim \nu^{k-2}  \\
  \|r^{-1} \La Q_\la [A_\mu]^2 \|_{L^2}+ 
   \| r^{-1} \La Q_\la [\ti B_\mu]^2 \|_{L^2} & \lesssim  \nu^k  \\
 \| r^{-1} \La Q_\mu [A_{\la}]^2 \|_{L^2}+ 
 \| r^{-1} \La Q_\mu[ B_\la]^2 \|_{L^2}  &\lesssim  \nu^k 
}
where the $o(1)$ above can be replaced with any small constant. 
\end{lem}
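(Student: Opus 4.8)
\textbf{Proof proposal for Lemma~\ref{l:ABBQL21}.}

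The plan is to reduce every estimate to a one-dimensional weighted integral and then split the radial line into the three natural regions $r \lesssim \lam$, $\lam \lesssim r \lesssim \mu$ (which after our normalization $\mu \sim 1$ is just $\lam \lesssim r \lesssim 1$), and $r \gtrsim \mu$, using in each region the power-law behavior of $\La Q$, $A$, $B$, $\ti B$ at the origin and at infinity recorded in Lemma~\ref{l:ABB}. First I would write each quantity explicitly: e.g. $\| r^{-1} [\La Q_\la]^2 \La Q_\mu \|_{L^2}^2 = \int_0^\infty r^{-2} [\La Q_\la(r)]^4 [\La Q_\mu(r)]^2 \, r \, \ud r$, and recall that $\La Q_\la(r) = \La Q(r/\la)$ with $\La Q(s) \sim s^k$ as $s \to 0$ and $\La Q(s) \sim s^{-k}$ as $s \to \infty$, so that $\La Q_\la(r) \sim (r/\la)^k$ for $r \ll \lam$ and $\La Q_\la(r) \sim (r/\la)^{-k}$ for $r \gg \lam$; similarly $\La Q_\mu(r) \sim (r/\mu)^k$ for $r \ll \mu$. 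The profile $A_\mu$ obeys $A_\mu(r) \sim (r/\mu)^k$ near $0$ and $(r/\mu)^{-k+2}$ near $\infty$, while $\ti B_\mu$ has the extra $|\log(r/\mu)|$ factor near the origin, which is exactly the source of the $o(1)$ losses in the $\ti B$ estimates — any power $\nu^{-\eps}$ absorbs the logarithm.

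After substituting these asymptotics, each of the three regional integrals becomes an elementary monomial integral $\int r^{\alpha} \, \frac{\ud r}{r}$ over a dyadic range, which converges at the relevant endpoint precisely when $k \ge 4$ (the same hypothesis that underlies Lemma~\ref{l:ABB} and Remark~\ref{r:k4}), and contributes a power of $\lam$ (equivalently of $\nu$, since $\mu \sim 1$) determined by matching at $r \sim \lam$. The dominant contribution is always from the region $r \sim \lam$: for the first line, near $r \sim \lam$ one has $[\La Q_\la]^2 \sim 1$ and $\La Q_\mu \sim \lam^k$, giving the stated $\nu^k$; for the third line the roles of $\la$ and $\mu$ are swapped in the sense that $[\La Q_\mu]^2 A_\la$ concentrates where $A_\la \sim 1$, i.e. $r \sim \lam$, and there $[\La Q_\mu]^2 \sim \lam^{2k}$ while the $r^{-1}$ weight together with the $\La Q_\la$-free structure costs two powers, yielding $\nu^{k-2}$. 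One must be a little careful that the intermediate region $\lam \lesssim r \lesssim 1$ does not dominate: there $\La Q_\la(r) \sim (r/\la)^{-k}$ decays and $\La Q_\mu(r), A_\mu(r) \sim (r/\mu)^k$ or so, and the resulting integrand is a negative enough power of $r$ that the integral is controlled by its lower endpoint $r \sim \lam$, reproducing the same power; this is where $k \ge 4$ is used to guarantee the exponent has the right sign.

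I expect the main obstacle to be purely bookkeeping rather than conceptual: keeping track, for each of the (roughly a dozen) bilinear and trilinear expressions, of \emph{which} factor's transition region dominates and \emph{how many} powers of $r$ the weight $r^{-1}$ (or $r^{-2}$ after squaring) eats up, so that the endpoint matching produces exactly the advertised exponent $\nu^k$, $\nu^{k-2}$, or $\nu^{k-o(1)}$ and not something off by a power. A clean way to organize this, which I would adopt, is to prove once and for all a general lemma: for profiles $F_\la, G_\mu$ with $F(s) \sim s^{p_0}$ ($s\to0$), $s^{-p_\infty}$ ($s\to\infty$) and similarly for $G$, and a weight $r^{-m}$, one has $\| r^{-m} F_\la G_\mu \|_{L^2} \lesssim \nu^{\min(\cdots)}$ with the exponent an explicit function of the $p$'s and $m$, valid whenever the relevant exponents are nonzero; then each line of \eqref{eq:ABBQL21} is an instantiation, the $\log$ in $\ti B$ being handled by the remark that $|\log \nu| \lesssim \nu^{-\eps}$. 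This also makes transparent that the analogous estimates for $\Lam$-derivatives of the profiles (which only shift the power-law exponents by bounded amounts while preserving the leading behavior, per Lemma~\ref{l:ABB}) follow identically, should they be needed later.
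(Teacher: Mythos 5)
Your overall strategy --- radial regional decomposition and power-counting using the asymptotics of $\Lam Q$, $A$, $B$, $\ti B$ from Lemma~\ref{l:ABB} --- is exactly what the paper intends; the paper's ``proof'' is literally the single sentence ``These follow from straightforward computations,'' so your proposal is the natural elaboration of the same approach, and the suggestion to organize it once and for all as a bilinear interaction lemma parameterized by the endpoint exponents is a sensible way to control the bookkeeping.

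There is, however, a concrete error in the heuristic you use to predict the exponents, and it would lead you astray if you trusted it rather than doing the three-region integral. You assert that ``the dominant contribution is always from the region $r \sim \lam$,'' and you apply this to the third line by locating the concentration of $r^{-1}[\Lam Q_\mu]^2 A_\la$ at $r \sim \lam$ where $A_\la \sim 1$. That is false. Normalize $\mu=1$, so $\nu=\lam$ and one computes $\|r^{-1}[\Lam Q]^2 A_\la\|_{L^2}^2 = \int [\Lam Q]^4 [A_\la]^2 \,\frac{\ud r}{r}$. The shell at $r\sim \lam$ contributes $[\Lam Q(\lam)]^4 [A_\la(\lam)]^2 \sim \lam^{4k}$, which is \emph{much} smaller than the advertised $\nu^{2(k-2)}=\lam^{2k-4}$. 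The dominant contribution actually comes from $r\sim 1$ (equivalently $r\sim\mu$), where $[\Lam Q]^4 \sim 1$, $A_\la(r) \sim (r/\lam)^{-k+2}\sim \lam^{k-2}$, and the shell gives precisely $\lam^{2k-4}$. The same happens in the fourth and fifth lines: whenever the factors carrying the $\mu$-scale appear to higher total degree in the integrand than the $\lam$-scale factors (after accounting for the $r^{-1}$ weight), the transition region at $r\sim\mu$, not $r\sim\lam$, dominates. Your claim that in the intermediate region ``the integral is controlled by its lower endpoint $r\sim\lam$'' is likewise sign-dependent: for $[\Lam Q_\mu]^4[A_\la]^2$ the intermediate integrand scales like $\lam^{2k-4}r^{2k+3}$, a \emph{positive} power of $r$, controlled by the upper endpoint. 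A general bilinear lemma of the form you propose would handle this correctly only if its exponent formula explicitly compares the contributions at $r\sim\lam$, $r\sim\mu$, and the two intermediate tails --- i.e., it cannot take ``concentrates at the small scale'' as a hypothesis. With that correction, and with the observation (which you do make) that the logarithm in $\ti B$ near the origin is absorbed by $|\log\nu|\lesssim \nu^{-\eps}$ to produce the $\nu^{k-o(1)}$ lines, the computations go through.
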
 
\begin{proof}[Proof of Lemma~\ref{l:ABB}]
The existence of $A, B$ and the estimates~\eqref{eq:Aest} and~\eqref{eq:Best} is proved in~\cite[Lemma 5.1]{JJ-AJM} using the variation of constants formula. We proceed in a near identical fashion to find $\ti B$ satisfying~\eqref{eq:tiBest}, which is slightly more involved. 
Denote 
\EQ{ \label{eq:Fdef} 
F(r) = -\ti \gamma_k \La Q + 4r^{-k-2} [\La Q]^2
}
and note that $\ang{ \La Q \mid F} = 0$. Recall that $\LL$ factorizes as $
 \LL = \B^* \B
$
where 
\EQ{
\B = - \p_r + \frac{k}{r} \cos Q , \quad \B^* = \p_r + \frac{1}{r} + \frac{k}{r} \cos Q
}
To solve~\eqref{eq:tiBQ} we use the variation of constants formula twice. First, define 
\EQ{
G(r) = \frac{1}{r \La Q(r)} \int_0^r F(\rho) \La Q(\rho) \rho \,  \ud \rho = - \frac{1}{r \La Q(r)} \int_r^\infty F(\rho) \La Q(\rho) \rho \,  \ud \rho
}
where the second equality we use that $\ang{ \La Q \mid F} = 0$. Note that $G$ solves 
\EQ{
\calB^* G = F
}
Now we write $G$ as a sum two ways using the above and the definition of $F$ in~\eqref{eq:Fdef}. First define 
\EQ{
G_1(r) &:= - \ti \gamma_k \frac{1}{r \La Q(r)} \int_0^r  \La Q(\rho)^2 \rho \,  \ud \rho \\
G_2(r) &:=  4\frac{1}{r \La Q(r)} \int_0^r F(\rho) \La Q(\rho)^3 \rho^{-k-1} \,  \ud \rho \\
G_3(r) &:= \ti \gamma_k \frac{1}{r \La Q(r)} \int_r^\infty  \La Q(\rho)^2 \rho \,  \ud \rho \\
G_4(r) &:= -4\frac{1}{r \La Q(r)} \int_r^\infty F(\rho) \La Q(\rho)^3 \rho^{-k-1} \,  \ud \rho
}
so that $G = G_1 + G_2 = G_3 + G_4$ and thus letting $\chi_{ \le 1}(r)$ be a smooth cutoff with $\chi_{\le 1}(r) = 1$ if $r \le 1$ and $\chi_{\le 1}(r) = 0$ if $r \ge 2$ we have 
\EQ{
G(r) = \chi_{\le1}(r)  G_1(r) + \chi_{\le 1}(r) G_2(r) + (1-\chi_{\le 1}(r)) G_3(r) + (1- \chi_{\le 1}(r)) G_4(r) 
}
From their definitions we have 
\EQ{ \label{eq:Gest} 
G_1(r)  &= O( r^{k+1}) \mas r \to 0 , \quad 
G_2(r) = O(r^{k-1}) \mas r \to 0 \\
G_3(r) &= O(r^{-k+1}) \mas r \to \infty , \quad 
G_4(r) = O(r^{-3k -1}) \mas r \to \infty
}
Now using the variation of constants formula a second time we define 
\EQ{
 \ti B_1(r) &:=   \La Q(r) \int_0^r  \frac{\chi_{ \le1}(\rho)  G_1(\rho)}{\La Q( \rho)}  \, \ud \rho -  \La Q(r) \int_r^\infty  \frac{\chi_{ \le1}(\rho)  G_2(\rho)}{\La Q( \rho)}  \, \ud \rho  \\
 & \quad + \La Q(r) \int_0^r  \frac{(1-\chi_{\le 1}(\rho)) G_3(r)}{\La Q( \rho)}  \, \ud \rho -  \La Q(r) \int_r^\infty  \frac{(1- \chi_{\le 1}(\rho)) G_4(\rho)}{\La Q( \rho)}  \, \ud \rho
}
From the above and~\eqref{eq:Gest} we deduce the left-most bounds in~\eqref{eq:tiBest}. The equations $\B^* G = F$, $\calB B_1 = G$ and  $\B^* \B \ti B_1 = F$ can then be used to deduce the bounds in~\eqref{eq:tiBest} on the derivatives of $\ti B_1$. Finally to ensure the orthogonality in the second line of~\eqref{eq:tiBQ} we define 
\EQ{
\ti B(r) = \ti B_1(r)  - \frac{ \ang{\ti B_1 \mid \La Q}}{ \| \La Q \|_{L^2}^2} \La Q
}
Noting that $\ti B$ inherits the estimates~\eqref{eq:tiBest} from $\ti B_1$ and $\La Q$ completes the proof. 
\end{proof} 

\begin{proof}[Proofs of Lemma~\ref{l:ABBQ}, Lemma~\ref{l:ABBQnl}, Lemma~\ref{l:ABBQL2}, and Lemma~\ref{l:ABBQL21}] 
These follow from  straightforward computations. 
\end{proof} 


\subsubsection{The refined ansatz and modulation}


Given parameters $\la, \mu >0$ with $\nu := \lam/ \mu$ and $a, b \in \R$, define 
\EQ{
\bs \Phi  = \bs \Phi( \mu, \la, a, b) = (  \Phi( \mu, \lam, a, b), \dot \Phi( \mu, \lam, a, b))
}
via 
\EQ{ \label{eq:Phidef} 
\Phi &:= (Q_{\la} + b^2 A_\la + \nu^k B_\la ) - ( Q_\mu + a^2 A_\mu + \nu^k \ti B_\mu) \\
\dot \Phi &:= b \La Q_{\U \la}  + b^3 \La A_{\U \la}  - 2 \gamma_k b \nu^k A_{\U \la}  + b \nu^k \La B_{\U \la} - k b \nu^k B_{\U \la} - k a \nu^{k+1} B_{\U \lam}  \\
& \quad +  a \La Q_{\U \mu}+ a^3 \La A_{\U \mu}  + 2  \ga_k a \nu^k A_{ \U\mu} + a \nu^k \La \ti B_{\U \mu} + k b \nu^{k-1} \ti B_{ \U\mu}  + k a \nu^k \ti B_{\U \mu} 
}

Next, given a solution $\bs u(t)$ on a time interval $J$ on which $\bfd_+( \bs u(t)) \ll 1$ we fix parameters $(\mu(t), \lam(t), a(t), b(t))$ by modulation about $\bs \Phi$, i.e., by  imposing suitable orthogonality conditions. 


\begin{lem}[Modulation Lemma] \label{l:mod2} There exists $\eta_0 >0$ with the following property. Let $J  \subset \R$ be a time interval and let $\bs u(t) \in \HH$ be a solution to~\eqref{eq:wmk} on $J$. Assume that 
\EQ{ \label{eq:d+small} 
\bfd_+( \bs u(t)) \le \eta_0 \quad \forall \, t \in J
}
 Then, there exists unique $C^1(J)$ functions $\mu(t), \lam(t), a(t), b(t)$ so that, defining $\bs w(t)$ by 
\EQ{
 \bs w(t) := \bs u(t) - \bs \Phi(\mu(t), \la(t), a(t), b(t)) 
}
we have, for each $t \in J$, 
\begin{align}
 \ang{ \La Q_{\U{\la(t)}} \mid w} = 0  \label{eq:law} \\
 \ang{\La Q_{\U{\mu(t)}} \mid w} = 0  \label{eq:muw}  \\
 \ang{ \La Q_{\U{\la(t)} } \mid  \dot w} = 0    \label{eq:lawdot} \\
 \ang{\La Q_{\U{\mu(t)}} \mid \dot w} = 0  \label{eq:muwdot} 
\end{align} 
and so that 
\begin{multline}  \label{eq:w-d-bound} 
\|  w(t)\|_{H} + \nu(t)^{-\frac{k}{2}} \| \dot w(t) \|_{L^2}  +\nu(t)^{k}  \\
+ \abs{a(t)}^2( 1+ \abs{a(t)} \nu(t)^{-\frac{k}{2}}) + \abs{b(t)}^2(1+ \abs{b(t)} \nu(t)^{-\frac{k}{2}} )\lesssim \bfd_+( \bs u(t)) \lesssim  \eta_0.
\end{multline} 
\end{lem}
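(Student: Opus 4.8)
The plan is to realize $(\mu,\lam,a,b)$ as the solution of an implicit-function problem and to derive~\eqref{eq:w-d-bound} from the definition of $\bfd_+$ together with the (uniform-in-parameters) invertibility of the Jacobian of the orthogonality map. Concretely, given $t$ fix a near-minimizing quadruplet $(\tld\lam,\tld\mu,\tld a,\tld b)$ in the infimum defining $\bfd_+(\bs u(t))$, so that the right-hand side of~\eqref{eq:w-d-bound} is comparable to $\bfd_+(\bs u(t))$ when $\bs w$ is replaced by $\bs u(t)-\bs\Phi(\tld\mu,\tld\lam,\tld a,\tld b)$. One first checks, using Lemma~\ref{l:ABB} (in particular $A,B,\tld B\in L^2$ with the stated decay) and the fact that the extra terms in $\bs\Phi$ beyond $\bs Q_\lam-\bs Q_\mu$ and $b\Lam Q_{\U\lam}+a\Lam Q_{\U\mu}$ are quadratically small in $(a,b,\nu^{k/2})$, that $\|\bs u(t)-\bs\Phi(\tld\mu,\tld\lam,\tld a,\tld b)\|_{\HH}$ (with the $\nu^{-k/2}$ weight on the second slot) is still $\lesssim\bfd_+(\bs u(t))$. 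Thus the content of the lemma is the passage from this \emph{rough} quadruplet to the \emph{unique} one satisfying the four orthogonality conditions~\eqref{eq:law}--\eqref{eq:muwdot}.

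Second, I would set up the map
\EQ{
\Xi(\mu,\lam,a,b):=\big(\ang{\Lam Q_{\U\lam}\mid u-\Phi},\ \ang{\Lam Q_{\U\mu}\mid u-\Phi},\ \ang{\Lam Q_{\U\lam}\mid \dot u-\dot\Phi},\ \ang{\Lam Q_{\U\mu}\mid \dot u-\dot\Phi}\big),
}
and solve $\Xi=0$ by the implicit function theorem near $(\tld\mu,\tld\lam,\tld a,\tld b)$. The key computation is the Jacobian $\partial\Xi$. To leading order $\partial_\lam[\Lam Q_{\U\lam}]=-\lam^{-1}\Lam_0\Lam Q_{\U\lam}$ and $\partial_b\Phi$ has leading term $2bA_\lam$ in the first slot and $\Lam Q_{\U\lam}$ in the second; hence, after rescaling, the diagonal blocks are dominated by quantities like $\lam^{-1}\|\Lam_0\Lam Q\|\,\|\Lam Q\|$ and $\|\Lam Q\|_{L^2}^2$, which are nonzero constants, while the off-diagonal blocks coupling the $\lam$-centered and $\mu$-centered conditions are $O(\nu^{k-1})$ (this is exactly what Lemma~\ref{l:ABBQ} provides) and the contributions of the error $\bs w$ itself are $O(\bfd_+(\bs u(t)))$. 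So for $\eta_0$ small the Jacobian is uniformly invertible with a uniformly bounded inverse (after the appropriate diagonal rescaling reflecting the different homogeneities of the four parameters), and the implicit function theorem yields $C^1$ functions $(\mu(t),\lam(t),a(t),b(t))$, unique in a fixed neighborhood of the rough quadruplet, solving $\Xi=0$. The $C^1$ regularity in $t$ is inherited from the $C^1$ dependence of $\bs u(t)$ and of $\bs\Phi$ on its arguments, together with invertibility of the Jacobian.

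Third, the estimate~\eqref{eq:w-d-bound} follows: since $(\mu,\lam,a,b)$ stays within $O(\bfd_+)$ of the rough quadruplet (by the Lipschitz bound from the implicit function theorem), one has $\|\bs\Phi(\mu,\lam,a,b)-\bs\Phi(\tld\mu,\tld\lam,\tld a,\tld b)\|\lesssim\bfd_+(\bs u(t))$ in the weighted norm, hence $\|\bs w(t)\|\lesssim\bfd_+(\bs u(t))$; the parameter pieces $\nu^k$, $|a|^2(1+|a|\nu^{-k/2})$, $|b|^2(1+|b|\nu^{-k/2})$ are likewise controlled because they change by at most $O(\bfd_+)$ from their (controlled) rough values and $\bfd_+(\bs u(t))\le\eta_0$ is small. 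The reverse inequality $\bfd_+(\bs u(t))\lesssim\eta_0$ is just the hypothesis~\eqref{eq:d+small}.

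The main obstacle is the Jacobian analysis: one must organize the four parameters with their distinct scaling weights so that the linearization is a genuinely nondegenerate, uniformly bounded-below operator, controlling both the off-diagonal coupling terms (where Lemma~\ref{l:ABBQ}'s $\nu^{k-1}$ bounds and $k\ge4$ are used) and the $\bs w$-dependent corrections; the subtlety is that $a,b$ enter $\Phi$ both linearly (through $b\Lam Q_{\U\lam}$, $a\Lam Q_{\U\mu}$ in $\dot\Phi$) and quadratically (through $b^2A_\lam$, $a^2A_\mu$ in $\Phi$ and through $\nu^k$-weighted terms), so the relevant invertible block is the $(a,b)$-vs-(second-slot orthogonality) pairing, with the $(\lam,\mu)$-vs-(first-slot) pairing supplying the rest. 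Once the correct weighting is identified this is a routine application of the implicit function theorem, so I would present it via the quantitative inverse function theorem with explicit control of the operator norm of the inverse.
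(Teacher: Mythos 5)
Your proposal follows essentially the same route as the paper's proof: start from a near-minimizer in the infimum defining $\bfd_+$, rescale the parameters (logarithmically in $\lam,\mu$ and by $\nu^{-k/2}$ in $a,b$ and the second slot) so that the Jacobian of the orthogonality map is uniformly $O(1)$ and diagonally dominant, with off-diagonal coupling controlled by the $O(\nu^{k-1})$ bounds of Lemma~\ref{l:ABBQ}, and invoke the quantitative implicit function theorem (Remark~\ref{rem:IFT}) to obtain a unique quadruplet in a neighborhood of uniform size. One small imprecision: the dominant diagonal entry in the $(\lam,\text{first-slot})$ block comes from $\ang{\Lam Q_{\U\lam}\mid -\p_\lam\Phi}\approx\|\Lam Q\|_{L^2}^2$ (after the logarithmic rescaling), not from $\ang{\p_\lam(\Lam Q_{\U\lam})\mid u-\Phi}$, which is a $o(1)$ correction since $u-\Phi$ is small at the base point; this does not affect the conclusion.
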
 


\begin{rem} \label{rem:IFT}  We will use the following, less standard, version of the implicit function theorem in the proof of Lemma~\ref{l:mod2}. 

 \emph{
Let $X, Y, Z$ be Banach spaces and let $(x_0, y_0) \in X \times Y$,  and $\de_1, \de_2>0$. Consider a mapping  $G: B(x_0, \de_1) \times B(y_0, \de_2) \to Z$,  continuous in $x$ and $C^1$ in $y$. Suppose that $G(x_0, y_0) = 0$ and $(D_y G)(x_0, y_0)=: L_0$ has bounded inverse $L_0^{-1}$. Suppose in addition that 
\EQ{ \label{eq:IFT} 
&\| L_0 - D_y G(x, y) \|_{\LL(Y, Z)} \le \frac{1}{3 \| L_0^{-1} \|_{\LL(Z, Y)}}  \\ 
& \| G(x, y_0) \|_Z \le \frac{\de_2}{ 3 \| L_0^{-1} \|_{\LL(Z, Y)}} 
}
for all $\| x - x_0 \|_{X} \le \de_1$ and $\| y - y_0 \|_Y \le \de_2$. 
Then, there exists a continuous function $\si: B(x_0, \de_1)  \to B(y_0, \de_2)$ such that for all $x \in B(x_0, \de_1)$, $y = \si(x)$ is the unique solution of $G(x, \si(x)) = 0$ in $B(y_0, \de_2)$. 
}

The above is proved in the same fashion as the usual implicit function theorem, see, e.g.,~\cite[Section 2.2]{ChowHale}. The key point is that the bounds~\eqref{eq:IFT} give uniform control on the size of the open set on which the Banach contraction mapping theorem can be applied. 
\end{rem}

\begin{proof}[Proof of Lemma~\ref{l:mod2}] 
We sketch the proof. We suppress the dependence of all functions on $t \in J$ below as no constants appearing in the proof will depend on $t \in J$. Using the assumption~\eqref{eq:d+small} we can find $\la_0, \mu_0>0$ and $a_0, b_0 \in \R$  so that defining 
\EQ{
g_0 &:=  u - ( Q_{\la_0} - Q_{\mu_0})  \\
\dot g_0&:= u_t - (b_0 \Lam Q_{\U \lam_0} + a_0 \Lam Q_{\U \mu_0}) 
}
we have 
\EQ{ \label{eq:l0m0-bound} 
\| g_0 \|_{H} + \nu_0^{-\frac{k}{2}} \| \dot g_0 \|_{L^2}  
+ \nu_0^k+ a_0^2\big( 1+ \abs{a_0} \nu_0^{-\frac{k}{2}}\big)   + b_0^2\big( 1+ \abs{b_0}\nu_0^{-\frac{k}{2}}\big)   < 2 \bfd_+( \bs u) <  2 \eta_0
}
where we have set $\nu_0 := \la_0/ \mu_0$ as usual. 
Set, 
\EQ{ \label{eq:w0-def} 
w_0 &:= u - \Phi( \mu_0, \lam_0, a_0, b_0)  \\
\dot w_0 &:=  u_t  -   \dot \Phi( \mu_0, \lam_0, a_0, b_0)
}
and note that by~\eqref{eq:l0m0-bound} and the definition of $\bs \Phi$  
 we have 
\EQ{
\| (w_0,  \nu_0^{-\frac{k}{2}}\dot w_0) \|_{\HH} <  C_0 \bfd_+( \bs u) =: C_0\eta_1.
}
for some uniform constant $C_0$. 
Define functions $\bs F = (F, \dot F)$ by, 
\EQ{
F(h, \dot h,  \mu, \la, a, b):= h + \Phi( \mu_0, \lam_0, a_0, b_0) - \Phi( \mu, \lam, a, b) \\
\dot F(h, \dot h,  \mu, \la, a, b):= \dot h + \dot\Phi( \mu_0, \lam_0, a_0, b_0) - \dot \Phi(\mu, \lam,  a, b)
}
Note that $\bs F( 0, 0, \mu_0, \la_0, a_0, b_0) = (0, 0)$.
Next, define a function $G$,
\EQ{
G( h, \dot h, \mu, \la, a, b) &=  \Big( \frac{1}{\mu} \ang{ \La Q_{\U \mu}  \mid F(\bs h, \mu, \lam, a, b)}, \, \frac{1}{\lam} \ang{ \Lam Q_{\U \lam} \mid F( \bs h, \mu, \lam, a, b)} ,  \\
&\quad \quad  \,  \nu_0^{-\frac{k}{2}}\ang{ \Lam Q_{\U \mu} \mid  \dot F( \bs h, \mu, \lam, a ,b )}, \,\nu_0^{-\frac{k}{2}} \ang{ \Lam Q_{\U \lam} \mid  \dot F(  \bs h , \mu, \lam, a, b)} \Big) 
}
noting that $G(0, 0,  \mu_0, \la_0, a_0, b_0) = (0, 0, 0, 0)$. 
 We now introduce new variables 
\begin{align} 
\ell&:= \log \lam, \quad m := \log \mu \\
\ti a& := \nu_0^{-\frac{k}{2}} a , \quad \ti b :=  \nu_0^{-\frac{k}{2}} b, \\
\ti h&:= h, \quad  \dot {\ti  h}  :=  \nu_0^{-\frac{k}{2}} \dot h, \quad \bs{ \ti h} := ( \ti h, \dot{\ti h})
\end{align} 
Set 
\EQ{
\ti G(  \bs {\ti h}, m, \ell, \ti a,  \ti b) = G( \bs h, \mu, \lam, a, b), \quad  \bs{\ti F}( \bs{\ti  h}, m, \ell, \ti a,  \ti b) = \bs F( \bs h, \mu, \lam, a, b) 
}
Note that in the new variables, $\lam \p_\lam = \p_\ell$,  $\mu \p_\mu = \p_m$, $\nu_0^{\frac{k}{2}} \p_b = \p_{\ti b}$, and $\nu_0^{\frac{k}{2}} \p_a = \p_{\ti a}$.  A straightforward computation using~\eqref{eq:l0m0-bound}  gives, 
\EQ{
D_{m, \ell, \ti a, \ti b} G \rest_{ ( 0, 0, m_0, \ell_0, \ti a_0,  \ti b_0)}  = \pmat{ \kappa_k - o_{\eta_1}( 1) & o_{\eta_1}( 1) &o_{\eta_1}( 1) &o_{\eta_1}( 1) \\ o_{\eta_1}( 1) & \kappa_k - o_{\eta_1}( 1) & o_{\eta_1}( 1) & o_{\eta_1}( 1) \\ o_{\eta_1}( 1)& o_{\eta_1}( 1)&  \kappa_k- o_{\eta_1}( 1)&o_{\eta_1}( 1) \\   o_{\eta_1}( 1)&  o_{\eta_1}( 1)& o_{\eta_1}( 1)& \kappa_k- o_{\eta_1}( 1)} 
}
where $o_{\eta_1}(1) \to 0$  as $\eta_1 \to 0$. The matrix is diagonally dominant for $\eta_1$ small enough and hence invertible. We now apply the quantitative version of the implicit function theorem as Remark~\ref{rem:IFT}. First we need to verify the bounds~\eqref{eq:IFT}. The second estimate in~\eqref{eq:IFT} is clear since 
\EQ{
\Big| \ti G( \ti h, \dot{ \ti h}, m_0, \ell_0, &\ti a_0, \ti b_0) \Big| = \bigg| \Big( \frac{1}{\mu} \ang{ \La Q_{\U \mu}  \mid h }, \, \frac{1}{\lam} \ang{ \Lam Q_{\U \lam} \mid h} ,  \nu_0^{-\frac{k}{2}}\ang{ \Lam Q_{\U \mu} \mid  \dot h }, \,\nu_0^{-\frac{k}{2}} \ang{ \Lam Q_{\U \lam} \mid  \dot h} \Big)  \bigg| \\
& = \bigg| \Big( \frac{1}{\mu} \ang{ \La Q_{\U \mu}  \mid \ti  h }, \, \frac{1}{\lam} \ang{ \Lam Q_{\U \lam} \mid \ti  h} ,  \nu_0^{-\frac{k}{2}}\ang{ \Lam Q_{\U \mu} \mid \nu_0^{\frac{k}{2}} \dot{\ti h} }, \,\nu_0^{\frac{k}{2}} \ang{ \Lam Q_{\U \lam} \mid \nu_0^{-\frac{k}{2}} \dot{\ti h}} \Big)   \bigg|  \\
& \lesssim \| \ti h \|_{H} + \| \dot{ \ti h} \|_{L^2} 
}
For the first estimate in~\eqref{eq:IFT}, we define $L_0:= D_{m, \ell, \ti a, \ti b} G \rest_{ ( 0, 0, m_0, \ell_0, \ti a_0,  \ti b_0)}$.  A straightforward computation reveals that 
\EQ{
\abs{L_{0, ij}  - ( D_{m, \ell, \ti a, \ti b} G )_{ij} } = o_{\eta_1}(1) 
}
Applying Remark~\ref{rem:IFT} we  obtain a mapping $\varsigma: B_{\HH }(0, C_0 \eta_1) \to B_{\R^4}((m_0, \ell_0, \ti a_0,  \ti b_0),  C \eta_1)$ so that for all $( \ti h,  \dot{\ti h} , m_1, \ell_1,  \ti a_1, \ti b_1) \in B_{\HH}( 0, C_0 \eta_1)  \times B_{ \R^4}( (m_0, \ell_0, \ti a_0, \ti b_0), C \eta_1)$ we have 
\EQ{
G(  \ti h, \dot{\ti h}, m_1, \ell_1, \ti a_1, \ti b_1) = (0,0, 0, 0)  \Longleftrightarrow (m_1, \ell_1, \ti a_1, \ti b_1) = \varsigma(\ti h, \dot{\ti  h}) 
}
We conclude by setting 
\EQ{
&(m, \ell, \ti a,  \ti b)  := \varsigma( \ti w_0,  \dot {\ti w_0})  , \quad  \bs{ w}:= \ti{\bs F} (\bs{\ti w_0},  \varsigma( \ti w_0,  \dot {\ti w_0}) )
}
where $\ti {\bs w_0 }:= ( w_0, \nu_0^{-\frac{k}{2}} \dot w_0)$ with 
 $\bs w_0$ is as in~\eqref{eq:w0-def}. Undoing the change of variables, we then set 
\EQ{
 \la &= e^\ell, \quad \mu = e^m, \quad  a := \nu_0^{\frac{k}{2}} \ti a, \quad b := \nu_0^{\frac{k}{2}} \ti b
}
Note that by construction we have 
\EQ{ \label{eq:mod-bound} 
\abs{ \lam/ \lam_0 - 1} + \abs{ \mu/ \mu_0 - 1} + \nu_0^{-\frac{k}{2}} \abs{ a- a_0} + \nu_0^{-\frac{k}{2}} \abs{ b- b_0} \lesssim \eta_1
}
from which one can deduce that 
\EQ{
 \| w \|_H + \nu^{-\frac{k}{2}} \| \dot w \|_{L^2}  \lesssim \eta_1
}
and thus the bound~\eqref{eq:w-d-bound}. 
For the standard fact that $(\mu, \la, a, b)$ can be taken to be $C^1(J)$ functions, see~\cite[Remark 3.13]{JL1}. The coercivity estimate~\eqref{eq:coerce1} is standard given the existing literature, and one may follow the arguments in~\cite[Lemma 5.4]{JJ-AJM}; see Lemma~\ref{l:loc-coerce} for the main ingredient in the proof. 
\end{proof}


\subsubsection{Equations satisfied by $\bs w, \bs \Phi$} In this subsection we record the dynamical equation satisfied by $\bs w(t)$ and related identities satisfied by $\bs \Phi$. 

From~\eqref{eq:wmk} and the definition of $\bs w(t)$ in Lemma~\ref{l:mod2} we arrive at the equation satisfied by $\bs w(t)$. 
\EQ{ \label{eq:weq} 
\p_t w &= \dot w + (\dot \Phi - \p_t \Phi)   \\
\p_t \dot w &= \De w + \Big( -  \p_t \dot \Phi  + \De \Phi  - \frac{1}{r^2} f( \Phi) \Big)    - \frac{1}{r^2}  \Big ( f( \Phi +w) - f( \Phi) \Big) 
}
We also record several identities satisfied by  $( \Phi, \dot \Phi)$ related to the right-hand side of~\eqref{eq:weq}.  First,  
 using that $\mu \nu' =  \la' - \nu \mu'$, we have 
\EQ{  \label{eq:dotPhi-ptPhi}
\dot \Phi - \p_t \Phi 
&=  (b+ \la') \La Q_{\U \la}  + b^2( b + \la') \La A_{\U \la}  - 2 b \la ( b' + \gamma_k \frac{\nu^{k}}{\la}) A_{\U \la}  \\
 & \quad +  \nu^k( b + \la') \La B_{\U \la}  - k \nu^k( b + \la')  B_{ \U\la} - k \nu^{k+1}( a - \mu')  B_{ \U\la} \\
 & \quad +(a - \mu') \La Q_{\U \mu}  + a^2(  a - \mu') \La A_{\U \mu} + 2a \mu ( a' +   \ga_k \frac{\nu^k}{\mu}) A_{\U\mu}  \\
& \quad + \nu^k(   a- \mu') \La \ti B_{\U \mu}+ k \nu^{k-1} (  b + \la') \ti B_{\U \mu} + k \nu^{k} (  a- \mu') \ti B_{\U \mu}   
}
Next, note that using~\eqref{eq:LLb2T} we have 
\EQ{ \label{eq:eqPhi} 
-\De \Phi  + \frac{1}{r^2} f( \Phi) &=  \gamma_k \frac{\nu^k}{\lam} \Lam Q_{\U \lam} - \frac{b^2}{\lam} \Lam_0 \Lam Q_{\U \lam}  + \gamma_k  \frac{\nu^k}{\mu}  \Lam Q_{\U \mu} + \frac{a^2}{\mu} \Lam_0 \Lam Q_{\U \mu}  \\
 &\quad -\frac{1}{r^2} \Big( f(Q_\la - Q_\mu)  - f(Q_\la) + f(Q_\mu)  - 4  \big(\frac{r}{ \mu}\big)^k (\La Q_{\la})^2  -  4 \big(\frac{r}{ \la}\big)^{-k} (\La Q_{\mu})^2 \Big)\\
&\quad -\frac{1}{r^2} \Big( f(\Phi) - f(Q_\la - Q_\mu) -  f'(Q_\la- Q_\mu) (b^2 T_\la - a^2 \ti T_{\mu} \Big)  \\
&\quad -\frac{1}{r^2}  \Big(f'(Q_\la - Q_\mu) (b^2 T_\la - a^2 \ti T_\mu)  - f'(Q_\la) b^2 T_\la + f'(Q_\mu) a^2 \ti T_\mu\Big)
}
and hence,  
\begingroup
\allowdisplaybreaks
\begin{align} \label{eq:ptdotPhi} 
-&\p_t  \dot \Phi + \De \Phi  - \frac{1}{r^2} f( \Phi)  \\ 
& =  (b'+ \gamma_k \frac{\nu^k}{\la})\Bigg( -  \La Q_{\U \la} - 3   b^2 \La A_{\U \la} + 2 \gamma_k\nu^k  A_{\U \la}   -  \nu^k \La B_{\U \la}+  k \nu^k B_{\U \la} - k \nu^{k-1} \ti B_{\U \mu}\Bigg) \\
& \quad  + (a'  +  \gamma_k \frac{\nu^k}{\mu})\Bigg( -  \La Q_{\U \mu}   -3a^2 \La A_{\U \mu}   -  2  \gamma_k  \nu^k A_{\U \mu} +  \nu^k \La  \ti B_{\U \mu} + k \nu^{k+1} B_{\U \lam}  -  k \nu^k \ti B_{\U \mu}  \Bigg)  \\ 
& \quad + (b+\la') \Bigg( \frac{b}{\la}  \La_0 \La Q_{\U \la} +  \frac{b^3}{\la} \La_0 \La A_{\U \la} - 2 \gamma_k \frac{ b \nu^k}{\la}  \La_0 A_{\U \la} - k   \frac{ b \nu^{k}}{\la} \La_0 B_{ \U \la} +  \frac{b \nu^k}{\la} \La_0 \La B_{\U \la}  \\
& \qquad \qquad  \quad  + 2k \gamma_k \frac{ b \nu^{k}}{\la}  A_{\U \la}- k  \frac{ b\nu^k}{\lam}\La B_{\U\la}+   k^2\frac{b\nu^k}{\lam}  B_{ \U \la} - 2k \ga_k  \frac{a\nu^k}{\lam}  A_{\U \mu}- k  \frac{a\nu^k}{\lam}  \La \ti B_{\U \mu}  \\
& \qquad \qquad  \quad   - k(k-1) \frac{ b \nu^{k-1}}{\lam} \ti B_{\U \mu}  +  k(k+1) a \frac{\nu^{k+1}}{\lam} B_{\U \lam} - k a \frac{\nu^{k+1}}{\lam}  \La_0 B_{\U \lam}  -  k^2 a \frac{\nu^k}{\lam} \ti B_{\U \mu}   \Bigg)  \\ 
 & \quad + (a-\mu') \Bigg(-\frac{a}{\mu} \La_0 \La Q_{\U \mu}  - \frac{ a^3 \nu}{\lam}   \La_0 \La A_{\U \mu} - 2 \gamma_k \frac{a \nu^{k+1}}{\lam}  \La_0 A_{\U \mu}- \frac{a \nu^{k+1}}{\lam}  \La_0 \La \ti B_{\U \mu}  - k  \frac{ b \nu^{k}}{\lam}  \La_0  \ti B_{\U \mu}  \\
& \qquad \qquad  \quad   + 2k \gamma_k  \frac{b\nu^{k+1}}{\lam}   A_{\U \la} - k  \frac{b\nu^{k+1}}{\lam}  \La B_{\U\la} +  k^2 \frac{b \nu^{k+1}}{\la}  B_{ \U \la}   - 2k \ga_k \frac{a \nu^{k+1}}{\lam}  A_{\U \mu}  - k \frac{ a \nu^{k+1}}{\lam}  \La \ti B_{\U \mu} \\
& \qquad \qquad  \quad    - k(k-1) \frac{  b  \nu^{k}}{\lam}  \ti B_{\U \mu}  +   k(k+1) a \frac{\nu^{k+2}}{\lam} B_{\U \lam}  - k^2 a \frac{\nu^{k+1}}{\lam} \ti B_{\U \mu}  - k a \frac{\nu^{k+1}}{\lam}  \Lam_0 \ti B_{\U \mu} \Bigg)  \\
&\quad   + 3\gamma_k \frac{\nu^k}{\la}b^2 \La A_{\U \la}   -  \frac{b^4}{\la} \La_0 \La A_{\U \la}     -2 \gamma_k^2  \frac{\nu^{2k}}{\la}  A_{\U \la} + 2 \gamma_k\frac{ b^2 \nu^k}{\la}  \La_0 A_{\U \la}     +   \gamma_k  \frac{\nu^{2k}}{\lam} \La B_{\U \la} \\
&\quad    - 2k \gamma_k  \frac{b^2 \nu^k}{\lam}  A_{\U \la}  - 2k \gamma_k  \frac{a b\nu^{k+1}}{\lam}  A_{\U \la}  + k  \frac{b^2 \nu^k}{\lam}  \La B_{\U\la}  + k  \frac{ab \nu^{k+1}}{\lam}  \La B_{\U\la} \\
&\quad    - \frac{b^2 \nu^k}{\la} \La_0 \La B_{\U \la}   -  k \frac{\nu^{2k}}{\lam}  B_{\U \la} +  \frac{ k b^2 \nu^{k}}{\la} \La_0 B_{ \U \la} -  k^2 \frac{b^2 \nu^{k} }{\lam} B_{ \U \la}    -   k^2 \frac{a b  \nu^{k+1}}{\lam}  B_{ \U \la}  \\
& \quad    - k(k+1) ab \frac{\nu^{k+1}}{\lam} B_{\U \lam} + k a b\frac{\nu^{k+1}}{\lam}  \La_0 B_{\U \lam}  -   k(k+1) a^2 \frac{\nu^{k+2}}{\lam} B_{\U \lam}  -  k  \gamma_k \frac{\nu^{2k+2}}{\lam} B_{\U \lam}   \\ 
 &\quad   + 3  \gamma_k \frac{a^2\nu^k}{\mu} \La A_{\U \mu}    + \frac{a^4\nu}{\lam} \La_0 \La A_{\U \mu}   +   2  \gamma_k^2 \frac{\nu^{2k+1}}{\la} A_{\U \mu}   + 2k \ga_k \frac{ a b \nu^k}{\lam}  A_{\U \mu} +  2k \ga_k\frac{a^2 \nu^{k+1}}{\lam}  A_{\U \mu}\\
 &\quad  + 2 \gamma_k \frac{a^2 \nu^{k+1}}{\lam}   \La_0 A_{\U \mu}   -   \gamma_k  \frac{\nu^{2k+1}}{\lam} \La  \ti B_{\U \mu}  +  k  \frac{ ab \nu^k}{\lam}  \La \ti B_{\U \mu} +  k \frac{a^2 \nu^{k+1}}{\lam}  \La \ti B_{\U \mu}\\
& \quad   +  \frac{ a^2 \nu^{k+1}}{\lam}  \La_0 \La \ti B_{\U \mu}   + k \gamma_k \frac{\nu^{2k-1}}{\lam} \ti B_{\U \mu}  +  k \gamma_k  \frac{\nu^{2k+1}}{\lam} \ti B_{\U \mu}   + k^2 a b \frac{\nu^k}{\lam} \ti B_{\U \mu}  + k(k-1) \frac{b^2 \nu^{k-1}}{\lam}  \ti B_{\U \mu}   \\
& \quad + k(k-1) \frac{ab  \nu^{k}}{\lam}  \ti B_{\U \mu}  + k \frac{ a b \nu^{k}}{\lam}  \La_0  \ti B_{\U \mu}  +  k^2 a^2 \frac{\nu^{k+1}}{ \lam} \ti B_{\U \mu} + k a^2 \frac{ \nu^{k+1}}{\lam}  \Lam_0 \ti B_{\U \mu}\\
 &\quad -\frac{1}{r^2} \Big( f(Q_\la - Q_\mu)  - f(Q_\la) + f(Q_\mu)  - 4  \big(\frac{r}{ \mu}\big)^k (\La Q_{\la})^2  -  4 \big(\frac{r}{ \la}\big)^{-k} (\La Q_{\mu})^2 \Big)\\
&\quad -\frac{1}{r^2} \Big( f(\Phi) - f(Q_\la - Q_\mu) -  f'(Q_\la- Q_\mu) (b^2 T_\la - a^2 \ti T_{\mu} \Big)  \\
&\quad -\frac{1}{r^2}  \Big(f'(Q_\la - Q_\mu) (b^2 T_\la - a^2 \ti T_\mu)  - f'(Q_\la) b^2 T_\la + f'(Q_\mu) a^2 \ti T_\mu\Big)
\end{align} 
\endgroup

\subsubsection{Bounds on  $(\Phi, \dot \Phi)$ and related estimates} 
We require a collection of estimates. The reader may skip this technical subsection on the first pass and refer back when the estimates established here arise in later sections.

\begin{lem}    \label{l:fest1} 
Let $\bs \Phi = ( \Phi, \dot \Phi)$ be defined as in~\eqref{eq:Phidef} and let $(\mu, \lam, a, b)$ satisfy $ \nu:= \lam/ \mu \ll 1$ and $ \abs{a}, \abs{b} \ll 1$. 
Then, for $\al = 1, 2, 3$ we have 
\EQ{ \label{eq:ff'L2} 
 & \Big\| r^{-\al}  \Big( f(Q_\la - Q_\mu)  - f(Q_\la) + f(Q_\mu)  - 4  \big(\frac{r}{ \mu}\big)^k (\La Q_{\la})^2  -  4 \big(\frac{r}{ \la}\big)^{-k} (\La Q_{\mu})^2 \Big) \Big\|_{L^2}  \\
  &\qquad \qquad\lesssim \nu^{2k} \lam^{-\al +1}    \\
 &   \Big\| r^{-\al}  \Big( f(\Phi) - f(Q_\la - Q_\mu) -  f'(Q_\la- Q_\mu) (b^2 T_\la - a^2 \ti T_{\mu} \Big) \Big \|_{L^2} \\
  &\qquad \qquad \lesssim b^4 \lam^{-\al +1}  + a^4\nu^{\al-1}\lam^{-\al +1}  +\nu^{2k}  \lam^{-\al +1}    \\
&   \Big\| r^{-\al}  \Big(f'(Q_\la - Q_\mu) (b^2 T_\la - a^2 \ti T_\mu)  - f'(Q_\la) b^2 T_\la + f'(Q_\mu) a^2 \ti T_\mu\Big) \Big\|_{L^2} \\
 &\qquad \qquad\lesssim b^2 \nu^{k-1} \lam^{-\al +1} +a^2 \nu^{k+ \al - 2} \lam^{-\al +1}   + \nu^{2k  -1} \lam^{-\al +1} 
} 
 We also have the estimates, 
 \EQ{ \label{eq:ff'mu} 
 &  \abs{ \ang{ \La Q_{\U \mu} \mid  \frac{1}{r^2} \Big( f(Q_\la - Q_\mu)  - f(Q_\la) + f(Q_\mu)  - 4  \big(\frac{r}{ \mu}\big)^k (\La Q_{\la})^2  -  4 \big(\frac{r}{ \la}\big)^{-k} (\La Q_{\mu})^2 \Big) }}  \\
 &\qquad \qquad \lesssim  \frac{\nu^{2k+1}}{\la}   \\
& \abs{ \ang{ \La Q_{\U \mu} \mid \frac{1}{r^2} \Big( f(\Phi) - f(Q_\la - Q_\mu) -  f'(Q_\la- Q_\mu) (b^2 T_\la - a^2 \ti T_{\mu} \Big)}}  \\
 &\qquad \qquad\lesssim  \frac{b^4 \nu^{k+1}}{\la} + \frac{\nu^{3k+1}}{\la} + \frac{a^4\nu}{\la} + \frac{\nu^{2k+1}}{\la}  \\
& \abs{ \ang{ \La Q_{\U \mu} \mid \frac{1}{r^2}  \Big(f'(Q_\la - Q_\mu) (b^2 T_\la - a^2 \ti T_\mu)  - f'(Q_\la) b^2 T_\la + f'(Q_\mu) a^2 \ti T_\mu\Big)}}  \\
 &\qquad \qquad\lesssim   \frac{b^2 \nu^{k-1}}{\la}  + \frac{a^2 \nu^{k+1}}{\la}+ \frac{\nu^{2k-1}}{\la}
 }
 and
 \EQ{ \label{eq:ff'la} 
& \abs{ \ang{ \La Q_{\U \la} \mid  \frac{1}{r^2} \Big( f(Q_\la - Q_\mu)  - f(Q_\la) + f(Q_\mu)  - 4  \big(\frac{r}{ \mu}\big)^k (\La Q_{\la})^2  -  4 \big(\frac{r}{ \la}\big)^{-k} (\La Q_{\mu})^2 \Big) }}\\
 &\qquad \qquad \lesssim \frac{\nu^{2k}}{\la}   \\
& \abs{ \ang{ \La Q_{\U \la} \mid \frac{1}{r^2} \Big( f(\Phi) - f(Q_\la - Q_\mu) -  f'(Q_\la- Q_\mu) (b^2 T_\la - a^2 \ti T_{\mu} \Big)}} \\
 &\qquad \qquad \lesssim  \frac{b^4}{\la} + \frac{\nu^{2k}}{\la} + \frac{a^4 \nu^k}{\la} \\
& \abs{ \ang{ \La Q_{\U \la} \mid \frac{1}{r^2}  \Big(f'(Q_\la - Q_\mu) (b^2 T_\la - a^2 \ti T_\mu)  - f'(Q_\la) b^2 T_\la + f'(Q_\mu) a^2 \ti T_\mu\Big)}}\\
 &\qquad \qquad \lesssim  \frac{b^2 \nu^{k}}{\la} +  \frac{\nu^{2k-o(1)}}{\la}  + \frac{a^2 \nu^k}{\la}  
 }
 \end{lem}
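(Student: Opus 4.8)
All nine estimates in Lemma~\ref{l:fest1} are pointwise-in-$r$ bounds followed by an explicit radial integration, and none of them needs an idea beyond the trigonometric structure of $f(u)=\tfrac{k^2}{2}\sin 2u$ together with the pointwise information on $Q,A,B,\ti B$ supplied by Lemmas~\ref{l:ABB}, \ref{l:ABBQ}, \ref{l:ABBQnl}, \ref{l:ABBQL2}, \ref{l:ABBQL21}. The plan is first to record the elementary majorants that are used throughout, namely $|Q_\nu(r)|\lec\min\{(r/\nu)^k,1\}$, $|\pi-Q_\nu(r)|\lec\min\{(\nu/r)^k,1\}$, $\La Q_\nu(r)\aeq\min\{(r/\nu)^k,(r/\nu)^{-k}\}$, and the bounds on $A_\nu,B_\nu,\ti B_\mu$ and their first two derivatives obtained from Lemma~\ref{l:ABB} by rescaling (with the logarithmic loss of $\ti B$ near $r=0$); and to use the algebraic identity $\Phi=(Q_\la-Q_\mu)+(b^2T_\la-a^2\ti T_\mu)$ with $b^2T_\la=b^2A_\la+\nu^kB_\la$ and $a^2\ti T_\mu=a^2A_\mu+\nu^k\ti B_\mu$, which is immediate from~\eqref{eq:Phidef}.

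The next step is the pointwise reduction of each of the three integrands. For the first quantity I would use the sum-to-product identity already exploited in Section~\ref{s:formal}, namely
\[
f(Q_\la-Q_\mu)-f(Q_\la)+f(Q_\mu)=\sin(2Q_\mu)[\La Q_\la]^2-\sin(2Q_\la)[\La Q_\mu]^2,
\]
so that after subtracting the two explicit quadratic pieces the remainder equals $\big(\sin 2Q_\mu-4(r/\mu)^k\big)[\La Q_\la]^2+\big(4(r/\la)^{-k}-\sin 2Q_\la\big)[\La Q_\mu]^2$, where the two prefactors are $O(\min\{(r/\mu)^{3k},(r/\mu)^k\})$ and $O(\min\{(\la/r)^{3k},(\la/r)^k\})$ respectively by Taylor expanding $\sin$ at $0$ and at a multiple of $\pi$. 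For the second quantity I would apply Taylor's theorem with $\|f''\|_{L^\infty}\lec1$ to bound the integrand by $C(b^2|A_\la|+\nu^k|B_\la|+a^2|A_\mu|+\nu^k|\ti B_\mu|)^2$. For the third quantity the combination telescopes to $b^2T_\la\big(f'(Q_\la-Q_\mu)-f'(Q_\la)\big)-a^2\ti T_\mu\big(f'(Q_\la-Q_\mu)-f'(Q_\mu)\big)$, and each difference of $f'$ is rewritten via
\[
f'(Q_\la-Q_\mu)-f'(Q_\la)=2k\cos Q_\mu\,\sin(2Q_\la)\,\La Q_\mu-2\cos(2Q_\la)[\La Q_\mu]^2
\]
and the symmetric identity for $f'(Q_\la-Q_\mu)-f'(Q_\mu)$, which exhibits the extra vanishing.

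Then I would split the radial integral as $\int_0^\infty=\int_0^{\sqrt{\la\mu}}+\int_{\sqrt{\la\mu}}^\infty$: on the inner region $Q_\la$ is of size $\aeq 1$ away from $r\ll\la$ while $Q_\mu\aeq(r/\mu)^k$ is small, and on the outer region $Q_\mu$ is of size $\aeq1$ while $\pi-Q_\la\aeq(\la/r)^k$ is small. On each region every integrand is dominated by a monomial in $r$ times the appropriate powers of $\la,\mu,a,b$, the resulting one-dimensional integrals $\int r^{2\beta-1}\,dr$ are evaluated directly (their convergence at $0$ and $\infty$, and the uniformity in $\al\in\{1,2,3\}$, use $k\ge4$ as in Remark~\ref{r:k4}), and keeping the dominant contribution gives the stated bounds — the factor $\lam^{-\al+1}$ comes from the weight $r^{-\al+1}$ evaluated at the scale $\aeq\la$ of the worst region. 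The six pairing estimates~\eqref{eq:ff'mu} and~\eqref{eq:ff'la} are handled by the same region analysis with the localized weight $\La Q_{\U\mu}(r)=\mu^{-1}\La Q(r/\mu)$, respectively $\La Q_{\U\la}$, inserted in place of the $L^2$ pairing; the decay of this weight at the "other" scale is precisely what produces the extra powers of $\nu$.

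\textbf{Main obstacle.} The analysis is elementary but long, and the only genuinely delicate point is extracting the \emph{sharp} powers of $\nu$ (and of $a,b$): this forces one to isolate the leading term in each region rather than use a crude majorant, and to carry the logarithmic loss in the estimates for $\ti B$ near $r=0$ through the computation, which is the source of the $\nu^{2k-o(1)}$ and $\nu^{k-o(1)}$ factors attached to the $\ti B$-dependent contributions. Once the pointwise reductions above are in place, everything reduces to a finite list of building-block integrals over the two regions, so the heart of the matter is bookkeeping rather than analysis.
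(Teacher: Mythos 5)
Your proposal takes essentially the same route as the paper: both reduce each of the three expressions via the same trigonometric/Taylor identities (the sum-to-product form of $f(Q_\la-Q_\mu)-f(Q_\la)+f(Q_\mu)$, second-order Taylor for the middle term, and a telescoping/$\cos$-difference identity for the last) and then estimate the resulting explicit integrals region by region using the pointwise decay of $Q,\Lambda Q,A,B,\ti B$ at the two scales. The only differences are cosmetic (the paper splits into three regions $r\le R\nu$, $R\nu\le r\le R^{-1}$, $r\ge R^{-1}$ with a tunable $R$, while you split once at $\sqrt{\lambda\mu}$ and sub-split implicitly at the two scales), and your diagnosis that the $o(1)$ losses come from the logarithm in $\ti B$ near the origin matches the source in the paper.
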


 \begin{lem} \label{l:fest2} 
 Let $\bs \Phi = ( \Phi, \dot \Phi)$ be defined as in~\eqref{eq:Phidef} and let $(\mu, \lam, a, b)$ satisfy $ \nu:= \lam/ \mu \ll 1$ and $ \abs{a}, \abs{b} \ll 1$. 
Then, 
\EQ{ \label{eq:ff'L2dr} 
 & \Big\| r^{-2} \p_r \Big( f(Q_\la - Q_\mu)  - f(Q_\la) + f(Q_\mu)  - 4  \big(\frac{r}{ \mu}\big)^k (\La Q_{\la})^2  -  4 \big(\frac{r}{ \la}\big)^{-k} (\La Q_{\mu})^2 \Big) \Big\|_{L^2}  \lesssim \frac{\nu^{2k}}{\la^2} \\
 &   \Big\| r^{-2}  \p_r\Big( f(\Phi) - f(Q_\la - Q_\mu) -  f'(Q_\la- Q_\mu) (b^2 T_\la - a^2 \ti T_{\mu} \Big) \Big \|_{L^2} \lesssim \frac{b^4}{\la^2} + \frac{a^4\nu^2}{\la^2} +\frac{ \nu^{2k}}{\la^2} \\
&   \Big\| r^{-2} \p_r  \Big(f'(Q_\la - Q_\mu) (b^2 T_\la - a^2 \ti T_\mu)  - f'(Q_\la) b^2 T_\la + f'(Q_\mu) a^2 \ti T_\mu\Big) \Big\|_{L^2}  \\
 &\qquad \qquad\lesssim \frac{b^2 \nu^{k-1}}{\la^2} + \frac{a^2 \nu^k}{\la^2}  + \frac{\nu^{2k  -1}}{\la^2} 
}  \end{lem}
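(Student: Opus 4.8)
The plan is to follow the same scheme used to prove Lemma~\ref{l:fest1}: Taylor expand $f$ and $f'$ about the relevant base points to expose the leading-order cancellations, and then estimate the resulting products of rescaled profiles region by region. The only new feature compared to Lemma~\ref{l:fest1} is the derivative $\p_r$, which I would handle via the identity $\p_r(w_\sigma) = \tfrac1r(\La w)_\sigma$, valid for any rescaling $w_\sigma(r) := w(r/\sigma)$. Thus every application of $\p_r$ to one of the building blocks $Q_\la, Q_\mu, A_\la, B_\la, A_\mu, \ti B_\mu$ — or to the explicit powers $(r/\mu)^k$, $(r/\la)^{-k}$ — replaces $r^{-2}$ by $r^{-3}$ while replacing the profile by its $\La$-image, which by Lemma~\ref{l:ABB} obeys pointwise bounds of the same order as the original (e.g. $\La^2 Q = O(r^k)$ as $r\to0$ and $O(r^{-k})$ as $r\to\infty$, exactly as $\La Q$, and similarly for $\La A$, $\La B$, $\La\ti B$, up to the logarithm in the latter). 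Consequently $r^{-2}\p_r$ of each of the three expressions becomes a finite sum of terms of the form $r^{-3}\times(\text{product of rescaled profiles})$, structurally identical to the integrands controlled in the $\al=3$ cases of Lemma~\ref{l:fest1}; each is then estimated exactly as there, and one checks that the claimed power $\la^{-2}$ is $\la^{-1}$ times the corresponding $\al=2$ bound, with possible additional gains of $\nu$ whenever $\p_r$ falls on a profile living at scale $\mu$ rather than $\la$.

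For the first expression I would start from the identity $f(Q_\la - Q_\mu) - f(Q_\la) + f(Q_\mu) = \sin(2Q_\mu)(\La Q_\la)^2 - \sin(2Q_\la)(\La Q_\mu)^2$ used in the derivation of~\eqref{eq:rhs1}, so that the quantity under $\p_r$ equals $\big(\sin(2Q_\mu) - 4(r/\mu)^k\big)(\La Q_\la)^2 - \big(\sin(2Q_\la) + 4(\la/r)^k\big)(\La Q_\mu)^2$. The key point is that $\sin(2Q_\mu) - 4(r/\mu)^k = O\big((r/\mu)^{3k}\big)$ as $r/\mu\to0$ and $O\big((\mu/r)^k\big)$ as $r/\mu\to\infty$, and symmetrically for $\sin(2Q_\la) + 4(\la/r)^k$; differentiating merely multiplies these remainders by $r^{-1}$, and one then splits $(0,\infty)$ into $r\lesssim\la$, $\la\lesssim r\lesssim\mu$, $r\gtrsim\mu$ and bounds each contribution by $\nu^{2k}\la^{-2}$. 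The governing regions are $r\sim\la$ and $\la\lesssim r\lesssim\mu$, where $\p_r$ costs $\la^{-1}$, each factor $\La Q_\mu$ contributes a power $(r/\mu)^k$ (hence $\nu^k$ at $r\sim\la$), and the remainders are $O(1)$ near $r\sim\la$; this reproduces the $\al=3$ instance of the first bound of Lemma~\ref{l:fest1}.

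For the second expression I would write $\Phi = (Q_\la - Q_\mu) + v$ with $v := b^2 T_\la - a^2\ti T_\mu = b^2 A_\la + \nu^k B_\la - a^2 A_\mu - \nu^k\ti B_\mu$, so that Taylor's theorem gives $|f(\Phi) - f(Q_\la - Q_\mu) - f'(Q_\la - Q_\mu)v| \le \tfrac12\|f''\|_{L^\infty}\,v^2$, whence after differentiating $|r^{-2}\p_r(\cdots)| \lesssim r^{-3}\big(|v|\,|\La v| + (|\La Q_\la| + |\La Q_\mu| + |\La v|)\,v^2\big)$. Expanding $v\La v$ and $v^2$ into the monomials in $\{A_\la, B_\la, A_\mu, \ti B_\mu\}$ and their $\La$-images and estimating via Lemma~\ref{l:ABB} together with Lemma~\ref{l:ABBQL21}, the diagonal contributions $b^4 A_\la\La A_\la$, $\nu^{2k}B_\la\La B_\la$, $a^4 A_\mu\La A_\mu$, $\nu^{2k}\ti B_\mu\La\ti B_\mu$ produce $b^4\la^{-2}$, $\nu^{2k}\la^{-2}$, $a^4\mu^{-2}=a^4\nu^2\la^{-2}$, $\nu^{2k}\mu^{-2}\lesssim\nu^{2k}\la^{-2}$, and the mixed-scale terms are dominated by these by Cauchy--Schwarz. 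For the third expression I would rewrite it as $[f'(Q_\la - Q_\mu) - f'(Q_\la)]b^2 T_\la - [f'(Q_\la - Q_\mu) - f'(Q_\mu)]a^2\ti T_\mu$ and use the identity $f'(Q_\la - Q_\mu) - f'(Q_\la) = k^2\big(\sin(2Q_\la)\sin(2Q_\mu) - 2\cos(2Q_\la)\sin^2 Q_\mu\big)$ and its $\la\leftrightarrow\mu$ analogue — which are $O(|\La Q_\mu|)$ and $O(|\La Q_\la|)$ pointwise, respectively — then differentiate and estimate the resulting $r^{-3}$-weighted products (e.g. $r^{-3}|\La^2 Q_\mu|\,|b^2 A_\la|$ and $r^{-3}|\La Q_\mu|\,|b^2\La A_\la|$) using Lemma~\ref{l:ABB} and the relevant lines of Lemma~\ref{l:ABBQL2}, arriving at $b^2\nu^{k-1}\la^{-2}$, $a^2\nu^k\la^{-2}$, $\nu^{2k-1}\la^{-2}$, where the $o(1)$ loss comes solely from the logarithm in~\eqref{eq:tiBest}.

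The main obstacle — already present in Lemma~\ref{l:fest1} and simply inherited here — is the first expression: one must exploit that the leading parts of $\sin(2Q_\mu)$ and $\sin(2Q_\la)$ are \emph{exactly} cancelled by the subtracted explicit terms $4(r/\mu)^k(\La Q_\la)^2$ and $4(\la/r)^k(\La Q_\mu)^2$, and then combine the two widely separated scales $\la\ll\mu$ carefully enough to land on the exponent $\nu^{2k}$ rather than $\nu^k$. Since the $\al=1,2,3$ versions are already available from Lemma~\ref{l:fest1}, the cleanest route is the observation above that $\p_r$ on any constituent factor costs $r^{-1}$ with no loss of scaling size, so no genuinely new estimate is required; what remains is purely bookkeeping of powers of $\nu$, $\la$, $\mu$, $a$, $b$.
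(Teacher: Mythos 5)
Your proposal matches the paper's argument: you start from the same trigonometric identity \eqref{eq:fQQ} for the first expression, differentiate it explicitly, and observe that $\p_r$ acting on any scaled profile produces $\frac{1}{r}$ times the $\La$-image (which has the same pointwise decay), so each estimate reduces to the $r^{-3}$-weighted versions already handled in the proof of Lemma~\ref{l:fest1}. Your accounting of the extra $\nu$-gain on the $a^4$ term (from $\p_r$ hitting $\mu$-scale profiles, costing $\mu^{-1}$ rather than $\lam^{-1}$) is the correct explanation of why the second estimate reads $a^4\nu^2\lam^{-2}$ rather than $a^4\nu\lam^{-2}$, and your Taylor-expansion treatment of the second and third expressions is precisely what the paper means by ``similarly obtained via explicit calculations.''
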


 \begin{lem}  \label{l:w^2est} 
 Let $ \bs w \in \HH \cap \HH^2$, let $\bs \Phi = ( \Phi, \dot \Phi)$ be defined as in~\eqref{eq:Phidef}, and  let $(\mu, \lam, a, b)$ satisfy $ \nu:= \lam/ \mu \ll 1$ and $ \abs{a}, \abs{b} \ll 1$. 
Then, 
\begin{align} 
  \left\| \frac{1}{r} \Big( f( \Phi + w) - f( \Phi) - f'(\Phi) w \Big) \right\|_{L^2} &\lesssim   \|  w \|_{H}^2     \label{eq:w^2L21} \\
  \left\| \frac{1}{r^2} \Big( f( \Phi + w) - f( \Phi) - f'(\Phi) w \Big) \right\|_{L^2} &\lesssim  \frac{1}{\la} \|  w \|_{H}^2  +  \| w \|_H^2 \| \p_r w \|_H    \label{eq:w^2L2} \\
  \left\| \frac{1}{r^3} \Big( f( \Phi + w) - f( \Phi) - f'(\Phi) w \Big) \right\|_{L^2} &\lesssim  \frac{1}{\la} \| \p_r w \|_{H} \| w \|_{H}  + \| \p_r w \|_{H}^2 \| w \|_H \label{eq:w^2L23} \\ 
   \left\| \frac{1}{r^2} \p_r \Big( f( \Phi + w) - f( \Phi) - f'(\Phi) w \Big) \right\|_{L^2} &\lesssim  \frac{1}{\la} \| \p_r w \|_{H} \| w \|_{H}  + \| \p_r w \|_{H}^2 \| w \|_H \label{eq:w^2L22}  \\ 
    \abs{ \ang{ \La Q_{\U \la} \mid \frac{1}{r^2} \Big( f( \Phi + w) - f( \Phi) - f'( Q_\la) w \Big) }} &\lesssim  \frac{1}{\la} \|w \|_{H}^2 + \frac{1}{\la} \| w \|_H \Big( \nu^k + b^2 + a^2 \nu^k\Big)\label{eq:w^2la} \\ 
 \abs{ \ang{ \La Q_{\U \mu} \mid \frac{1}{r^2} \Big( f( \Phi + w) - f( \Phi) - f'( Q_\mu) w \Big) }} &\lesssim \frac{1}{\mu} \| w \|_H^2  +  \frac{1}{\mu} \| w \|_H \Big( \nu^k + b^2 \nu^{k-2}   + a^2 \Big)  \label{eq:w^2mu} 
 \end{align} 
 \end{lem}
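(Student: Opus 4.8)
The plan is to reduce every estimate to a pointwise bound on the Taylor remainder, after which the conclusions follow from the weighted one-dimensional Sobolev inequalities of Lemma~\ref{l:wHH2} and the profile asymptotics of Lemma~\ref{l:ABB}. By the second-order Taylor formula, $f(\Phi+w)-f(\Phi)-f'(\Phi)w = w^2\int_0^1(1-s)f''(\Phi+sw)\,ds$, and for \eqref{eq:w^2la}, \eqref{eq:w^2mu} we split off in addition the term $(f'(\Phi)-f'(Q_\la))w$, resp.\ $(f'(\Phi)-f'(Q_\mu))w$. Since $f'' = -k^2\sin(2\,\cdot\,)$ satisfies $|f''(u)|\lesssim\min(1,|u|)$, and since $\Phi$ (a combination of $Q_\la,A_\la,B_\la,Q_\mu,A_\mu,\ti B_\mu$) vanishes at the origin with $|\Phi(r)|\lesssim(r/\la)^k$ and $|\Phi'(r)|\lesssim\la^{-k}r^{k-1}$ for $r\le\la$ — this is where Lemma~\ref{l:ABB} and the asymptotics of $Q,\Lam Q$ enter — while $|\Phi(r)|\lesssim1$ and $|\Phi'(r)|\lesssim r^{-1}$ for $r>\la$, one obtains $|f(\Phi+w)-f(\Phi)-f'(\Phi)w|\lesssim\big((r/\la)^k\mathbf{1}_{\{r\le\la\}}+\mathbf{1}_{\{r>\la\}}+|w|\big)w^2$, and, after differentiating and invoking the first-order Taylor formula for $f'$, $\big|\p_r\big(f(\Phi+w)-f(\Phi)-f'(\Phi)w\big)\big|\lesssim|\Phi'|\,w^2+(|\Phi|+|w|)|w|\,|\p_r w|$.

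Estimate \eqref{eq:w^2L21} is then immediate from $\|r^{-1}w^2\|_{L^2}\le\|w\|_{L^\infty}\|r^{-1}w\|_{L^2}\lesssim\|w\|_H^2$ via \eqref{eq:winfty}. For \eqref{eq:w^2L2}–\eqref{eq:w^2L22} I would split the integral at $r=\la$. On $\{r\le\la\}$ the factor $(r/\la)^k$ (resp.\ $\la^{-k}r^{k-1}$ coming from $\Phi'$) is exactly what produces the $\la^{-1}$ gain — e.g.\ $\|r^{-2}(r/\la)^kw^2\|_{L^2(r\le\la)}\lesssim\la^{-1}\|w\|_{L^\infty}^2\lesssim\la^{-1}\|w\|_H^2$ — while the cubic term $|w|^3$ is controlled by \eqref{eq:w3} ($\|r^{-2}w^3\|_{L^2}\lesssim\|w\|_H^2\|\p_r w\|_H$ for \eqref{eq:w^2L2}, $\|r^{-3}w^3\|_{L^2}\lesssim\|w\|_H\|\p_r w\|_H^2$ for \eqref{eq:w^2L23}, \eqref{eq:w^2L22}), and the remaining pieces carrying a $|\p_r w|$ are handled by $\|r^{-1}w\|_{L^\infty}+\|r^{-2}w\|_{L^2}\lesssim\|\p_r w\|_H$ from \eqref{eq:w/rinfty} together with $\|r^{-1}w\|_{L^\infty}^2\|\p_r w\|_{L^2}\lesssim\|\p_r w\|_H^2\|w\|_H$. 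On $\{r>\la\}$ one simply uses $r^{-1}\le\la^{-1}$ to trade one inverse power of $r$ for a factor $\la^{-1}$ in front of $\|r^{-1}w\|_{L^\infty}\|r^{-1}w\|_{L^2}$ or $\|r^{-1}w\|_{L^\infty}\|\p_r w\|_{L^2}$. Collecting the contributions reproduces the right-hand sides of \eqref{eq:w^2L2}, \eqref{eq:w^2L23}, \eqref{eq:w^2L22}.

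For the orthogonality-paired estimates write $f(\Phi+w)-f(\Phi)-f'(Q_\la)w=\big(f(\Phi+w)-f(\Phi)-f'(\Phi)w\big)+\big(f'(\Phi)-f'(Q_\la)\big)w$. The first summand, paired with $\La Q_{\U\la}$ against the weight $r^{-2}$, is bounded by $\|w\|_{L^\infty}^2\int_0^\infty|\La Q_{\U\la}(r)|r^{-1}\,dr\lesssim\la^{-1}\|w\|_H^2$, using $\int_0^\infty|\La Q(\rho)|\rho^{-1}\,d\rho<\infty$. For the second summand, $|f'(\Phi)-f'(Q_\la)|=k^2|\cos2\Phi-\cos2Q_\la|\lesssim|\Phi-Q_\la|\lesssim|Q_\mu|+b^2|A_\la|+\nu^k|B_\la|+a^2|A_\mu|+\nu^k|\ti B_\mu|$, and pairing with $r^{-2}\La Q_{\U\la}$ reduces to the weighted interactions of $Q_\mu,A_\la,B_\la,A_\mu,\ti B_\mu$ against $\La Q_{\U\la}$ already recorded in Lemmas~\ref{l:ABBQ}, \ref{l:ABBQL2}, \ref{l:ABBQL21}; since near $r\sim\la$ one has $|A_\la|,|B_\la|=O(1)$ and $|Q_\mu|,|A_\mu|=O(\nu^k)$, $|\ti B_\mu|=O(\nu^k|\log\nu|)$, this yields $\la^{-1}\|w\|_H(\nu^k+b^2+a^2\nu^k)$, i.e.\ \eqref{eq:w^2la}. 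Estimate \eqref{eq:w^2mu} is obtained identically with $\la$ and $\mu$ interchanged; the only new point is that near $r\sim\mu$ one has $Q_\la\approx\pi$, so one writes $f'(\Phi)-f'(Q_\mu)=k^2(\cos2\Phi-\cos2Q_\mu)$ and controls it by $|\Phi-(\pi-Q_\mu)|\lesssim|Q_\la-\pi|+b^2|A_\la|+\nu^k|B_\la|+a^2|A_\mu|+\nu^k|\ti B_\mu|$, where near $r\sim\mu$ one has $|Q_\la-\pi|=O(\nu^k)$, $|A_\la|=O(\nu^{k-2})$, $|A_\mu|,|\ti B_\mu|=O(1)$, producing the powers $\nu^k$, $b^2\nu^{k-2}$, $a^2$ on the right.

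Note that no smallness of $w$ is needed; only $\nu\ll1$ and $|a|,|b|\ll1$ are used — the latter, with $\nu\ll1$, to absorb harmless powers such as $\nu^{2k-2}$ and $\nu^k|\log\nu|$ into $\nu^k$. The genuinely delicate point, and the reason the crude bound $\|w\|_H\|\p_r w\|_H$ does not suffice in \eqref{eq:w^2L2}–\eqref{eq:w^2L22}, is the extraction of the $\la^{-1}$ factor: it relies on the vanishing of $f''$ at $u=0$ combined with the $O((r/\la)^k)$ behavior of $\Phi$ near the origin, so that the refined pointwise bounds above are available. Everything else is bookkeeping — identifying which dyadic scale each profile appearing in $\Phi-Q_\la$ (resp.\ $\Phi-Q_\mu$, $\Phi-\pi+Q_\mu$) lives on and invoking the matching estimate from Lemmas~\ref{l:ABBQ}–\ref{l:ABBQL21}.
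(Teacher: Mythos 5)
For the first four estimates \eqref{eq:w^2L21}--\eqref{eq:w^2L22} your argument is essentially the paper's: both reduce to a pointwise remainder bound of the form $\lesssim (\text{small near }0)\cdot w^2 + |w|^3$ and then apply the weighted inequalities of Lemma~\ref{l:wHH2}. The paper uses the trigonometric identity $f(\Phi+w)-f(\Phi)-f'(\Phi)w = -k^2\sin 2\Phi\sin^2 w + \tfrac{k^2}{2}\cos 2\Phi(\sin 2w-2w)$ with $|\sin 2\Phi|\lesssim r/\la$, while you use the Taylor integral form and $|f''(\Phi+sw)|\lesssim\min(1,|\Phi|)+|w|$; the resulting pointwise bounds are pointwise comparable and the bookkeeping you outline works.

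For \eqref{eq:w^2la} and \eqref{eq:w^2mu}, however, there is a genuine gap, and it is the step you call ``bookkeeping.'' You bound $|f'(\Phi)-f'(Q_\la)| = k^2|\cos 2\Phi-\cos 2Q_\la|\lesssim|\Phi-Q_\la|\lesssim|Q_\mu|+b^2|A_\la|+\cdots$ and then claim the resulting pairings with $r^{-2}\La Q_{\U\la}$ are ``already recorded'' in Lemmas~\ref{l:ABBQ}--\ref{l:ABBQL21}. They are not: those lemmas contain products like $\|r^{-1}[\La Q_\la]^2\La Q_\mu\|_{L^2}$ and $\|r^{-1}\La Q_\la[\La Q_\mu]^2\|_{L^2}$, not $\|r^{-1}\La Q_\la\,Q_\mu\|_{L^2}$, and the difference is not cosmetic. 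A direct computation shows
\[
\int_0^\infty \frac{[\La Q_\la]^2\,Q_\mu^2}{r}\,\ud r \;\simeq\; \nu^{2k}\int_\la^\mu \frac{\ud r}{r} + O(\nu^{2k}) \;\simeq\; \nu^{2k}\log(1/\nu),
\]
because in the transition region $\la\ll r\ll\mu$ you have $\La Q_\la\sim(r/\la)^{-k}$ and $Q_\mu\sim(r/\mu)^k$, so the product $[\La Q_\la]^2Q_\mu^2\sim\nu^{2k}$ is flat in $r$ and the $\ud r/r$ measure produces a log. Your argument therefore yields $\frac{1}{\la}\|w\|_H\,\nu^k\sqrt{\log(1/\nu)}$, which is strictly weaker than the claimed $\frac{1}{\la}\|w\|_H\,\nu^k$, and the remark that harmless powers ``such as $\nu^k|\log\nu|$'' can be ``absorbed into $\nu^k$'' is wrong in the relevant direction as $\nu\to 0$.

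The paper avoids this by never applying the raw mean-value bound. Lemma~\ref{l:cos2Phi} is built from the product formula $\cos 2\Phi-\cos 2Q_\la=-2\sin(\Phi+Q_\la)\sin(\Phi-Q_\la)$, and the crucial observation is that $|\sin(\Phi-Q_\la)|\lesssim|\sin Q_\mu|+\cdots = k^{-1}\La Q_\mu+\cdots$, not $|Q_\mu|+\cdots$: the function $\sin Q_\mu$ vanishes both as $r\to 0$ and as $r\to\infty$, while $Q_\mu$ does not. This preserves an extra factor of $\La Q_\la$ (and/or $\La Q_\mu$) in the remainder so that the integral over the transition region is dominated by its endpoints and no log appears; this is exactly what Lemma~\ref{l:ABBQL21} records. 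The same comment applies to your treatment of \eqref{eq:w^2mu}, where $|\cos 2\Phi-\cos 2Q_\mu|\lesssim|\Phi-(\pi-Q_\mu)|\lesssim|Q_\la-\pi|+\cdots$ produces the identical logarithm on $\la\ll r\ll\mu$ from $[\La Q_\mu]^2(Q_\la-\pi)^2\sim\nu^{2k}$. To close the proof you need to replace $|\Phi-Q_\la|$ (resp. $|\Phi-(\pi-Q_\mu)|$) by $|\sin(\Phi-Q_\la)|$ (resp. $|\sin(\Phi-\pi+Q_\mu)|$) before estimating, i.e.\ you need the structure of Lemma~\ref{l:cos2Phi}, not just a Lipschitz bound on $\cos$.
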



We will often make use of the following pointwise bounds, which follow directly from the definition of $\Phi$ and trigonometric identities.  
\begin{lem}  \label{l:cos2Phi} 
Let $\bs \Phi = ( \Phi, \dot \Phi)$ be defined as in~\eqref{eq:Phidef} and let $(\mu, \lam, a, b)$ satisfy $ \nu:= \lam/ \mu \ll 1$ and $ \abs{a}, \abs{b} \ll 1$. 
Then, 
\begin{align} \label{eq:cos2Phi1} 
\abs{ \cos 2 \Phi - \cos 2Q_\la  } &\lesssim \abs{ \La Q_{\la}( \La Q_\mu   + b^2 A_\la + \nu^k B_\la  + a^2 A_\mu + \nu^k\ti B_\mu)}   \\
& \quad + \La Q_\mu^2 + b^4 A_\la^2  + \nu^{2k} B_\la^2 + a^4 A_\mu^2   + \nu^{2k} \ti B_\mu^2 \\
\abs{ \cos 2 \Phi - \cos 2Q_\mu  } & \lesssim  \abs{\La Q_{\mu} (\La Q_\la   + b^2 A_\la + \nu^k B_\la  + a^2 A_\mu + \nu^k\ti B_\mu) }   \label{eq:cos2Phi2} \\
& \quad + \La Q_\la^2 + b^4 A_\la^2  + \nu^{2k} B_\la^2 + a^4 A_\mu^2   + \nu^{2k} \ti B_\mu^2 \\
\label{eq:sin(Phi-Qla)} 
\abs{\sin(\Phi- Q_\la)} &\lesssim \La Q_\mu + b^2 \abs{A_\la} + \nu^k \abs{B_\la} + a^2 \abs{A_\mu} + \nu^k \abs{\ti B_\mu} 
\end{align} 

\end{lem}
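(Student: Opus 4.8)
The plan is to reduce all three pointwise bounds to two completely elementary facts together with the addition formulas for $\cos$ and $\sin$. The first fact is the identity $\La Q_\nu = k \sin Q_\nu$ (valid at every scale $\nu$), which, since $Q$ takes values in $(0,\pi)$ so that $\La Q_\nu \ge 0$, gives $|\sin Q_\nu| = \tfrac1k \La Q_\nu \lesssim \La Q_\nu$ and $\sin^2 Q_\nu \lesssim (\La Q_\nu)^2$. The second fact is the trivial inequality $|\sin(x-y)| \le |\sin x| + |y|$ and its square $\sin^2(x-y) \lesssim \sin^2 x + y^2$, which I will apply once or twice with $x$ equal to a multiple of $Q_\mu$ (respectively $Q_\la$) and $y$ a combination of the correction profiles read off from \eqref{eq:Phidef}.

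First I would prove \eqref{eq:cos2Phi1}. Write $\Phi = Q_\la + h$ with $h := -Q_\mu + (b^2 A_\la + \nu^k B_\la) - (a^2 A_\mu + \nu^k \ti B_\mu)$, so that the addition formula yields
\[
\cos 2\Phi - \cos 2Q_\la = \cos 2Q_\la\,(\cos 2h - 1) - \sin 2Q_\la \, \sin 2h .
\]
For the first term, $|\cos 2h - 1| = 2\sin^2 h$, and the squared inequality applied with $x = -Q_\mu$ gives $\sin^2 h \lesssim \sin^2 Q_\mu + b^4 A_\la^2 + \nu^{2k} B_\la^2 + a^4 A_\mu^2 + \nu^{2k} \ti B_\mu^2$; using $\sin^2 Q_\mu \lesssim (\La Q_\mu)^2$ this is exactly the quadratic group of terms on the right of \eqref{eq:cos2Phi1}. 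For the second term, $|\sin 2Q_\la| = 2|\sin Q_\la \cos Q_\la| \lesssim \La Q_\la$, while $|\sin 2h| \le |\sin 2Q_\mu| + 2|b^2 A_\la + \nu^k B_\la - a^2 A_\mu - \nu^k \ti B_\mu| \lesssim \La Q_\mu + b^2|A_\la| + \nu^k|B_\la| + a^2|A_\mu| + \nu^k|\ti B_\mu|$; multiplying produces the cross term $\La Q_\la(\cdots)$ of \eqref{eq:cos2Phi1}. The bound \eqref{eq:cos2Phi2} is identical up to interchanging the two bubbles: write $\Phi = -Q_\mu + \ti h$ with $\ti h := Q_\la + (b^2 A_\la + \nu^k B_\la) - (a^2 A_\mu + \nu^k \ti B_\mu)$, use $\cos 2\Phi - \cos 2Q_\mu = \cos 2Q_\mu(\cos 2\ti h - 1) + \sin 2Q_\mu \sin 2\ti h$, and repeat the estimate with $\La Q_\mu$ now the outer factor and $\La Q_\la$ absorbed into $\ti h$. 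Finally \eqref{eq:sin(Phi-Qla)} is immediate, since $\Phi - Q_\la = h$ and $|\sin h| \le |\sin Q_\mu| + |b^2 A_\la + \nu^k B_\la - a^2 A_\mu - \nu^k \ti B_\mu| \lesssim \La Q_\mu + b^2|A_\la| + \nu^k|B_\la| + a^2|A_\mu| + \nu^k|\ti B_\mu|$.

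There is no genuine obstacle here; the argument is mechanical, and the hypotheses $\nu \ll 1$, $|a|,|b| \ll 1$ are only used to absorb harmless constants, not in any essential way. The single point requiring care is bookkeeping: each small scalar $b^2$, $\nu^k$, $a^2$ must be kept attached to its correct profile ($A_\la$, $B_\la$, $A_\mu$, $\ti B_\mu$ respectively), and the contributions of $\cos 2h - 1$ must be recorded with the squared weights $b^4$, $\nu^{2k}$, $a^4$; in particular the worst region $r \sim \mu$, where $\cos 2\Phi - \cos 2 Q_\la$ is merely $O(1)$, is controlled precisely by the term $\La Q_\mu^2$ on the right of \eqref{eq:cos2Phi1}, which is of size $\sim 1$ there. (One could instead start from $\cos 2x = 1 - 2\sin^2 x$ and factor $\cos 2\Phi - \cos 2Q_\la = 2(\sin Q_\la - \sin\Phi)(\sin Q_\la + \sin\Phi)$, but then the cross factor $\La Q_\la(\cdots)$ is less transparent, so the addition-formula presentation above is preferable.)
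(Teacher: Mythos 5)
Your proof is correct, and it is the natural reading of the paper's one-line claim that Lemma~\ref{l:cos2Phi} ``follows directly from the definition of $\Phi$ and trigonometric identities'': you have decomposed $\Phi = Q_\lambda + h$ (resp.\ $\Phi = -Q_\mu + \tilde h$), applied the addition formula for $\cos$, and converted $\sin Q_\lambda, \sin Q_\mu$ to $\Lambda Q_\lambda, \Lambda Q_\mu$ via $\Lambda Q = k\sin Q$ and $Q \in (0,\pi)$, with the elementary bounds $|\cos 2h - 1| = 2\sin^2 h$ and $|\sin(x-y)| \le |\sin x| + |y|$ producing exactly the quadratic and cross terms of \eqref{eq:cos2Phi1}--\eqref{eq:sin(Phi-Qla)}. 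The paper gives no written proof, so your argument supplies the intended one and nothing is missing.
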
 

\begin{proof}[Proof of Lemma~\ref{l:fest1}] 
For convenience we fix the  $\al = 2$ in the proof and note that changing the weight to $r^{-1}$ or $r^{-3}$ simply changes the scaling as in the statement of the estimates~\eqref{eq:ff'L2}. 
First note that 
\EQ{ \label{eq:fQQ} 
f(Q_\la - Q_\mu) &- f(Q_\la)+ f(Q_\mu) - 4  \big(\frac{r}{ \mu}\big)^k (\La Q_{\la})^2  -  4 \big(\frac{r}{ \la}\big)^{-k} (\La Q_{\mu})^2  \\
&  = \Big(\sin 2Q_\mu -   4  \big(\frac{r}{ \mu}\big)^k   \Big)[\La Q_\la]^2  - \Big(\sin 2Q_\la+ 4 \big(\frac{r}{ \la}\big)^{-k} \Big) [\La Q_\mu]^2 
}
Fix a large constant $R>0$ and note the estimates, 
\EQ{ \label{eq:sinrk} 
\abs{\sin 2Q(r) -   4  r^k}  &\lesssim r^{3k}   \mif r \le \frac{1}{R} \\
\abs{\sin 2Q(r) -   4  r^k} & \lesssim r^k \mif  r \ge \frac{1}{R}  \\
\abs{\sin 2Q(r) +  4  r^{-k}} & \lesssim r^{-k} \mif  r \le R \\ 
\abs{\sin 2Q(r) +  4  r^{-k}} & \lesssim r^{-3k} \mif  r \ge R
}
From~\eqref{eq:fQQ} we have 
\EQ{
 \Big\| r^{-2} & \Big( f(Q_\la - Q_\mu)  - f(Q_\la) + f(Q_\mu)  - 4  \big(\frac{r}{ \mu}\big)^k (\La Q_{\la})^2  -  4 \big(\frac{r}{ \la}\big)^{-k} (\La Q_{\mu})^2 \Big) \Big\|_{L^2}  \\
 & \lesssim \frac{1}{\mu}  \Big \| \frac{1}{r^2} \Big(\sin 2Q -   4  r^k   \Big)[\La Q_\nu]^2  \Big \|_{L^2}  +  \frac{1}{\mu}  \Big \| \frac{1}{r^2} \Big(\sin 2Q_\nu+ 4 \big(\frac{r}{ \nu}\big)^{-k} \Big) [\La Q]^2  \Big \|_{L^2}
}
We estimate each of the terms on the right above. Consider the first term. We divide the integrand into three regions $r \le R \nu$, $R \nu \le r \le R^{-1}$ and $R^{-1} \le r$. We have 
\EQ{
 \frac{1}{\mu}  \Big \| \frac{1}{r^2} \Big(\sin 2Q -   4  r^k   \Big)[\La Q_\nu]^2  \Big \|_{L^2(r \le R \nu)} &\lesssim   \frac{1}{\mu}  \Big \| r^{3k-2} (r/ \nu)^{2k}  \Big \|_{L^2(r \le R \nu)}  \lesssim \frac{\nu^{3k}}{\la}  \\
  \frac{1}{\mu}  \Big \| \frac{1}{r^2} \Big(\sin 2Q -   4  r^k   \Big)[\La Q_\nu]^2  \Big \|_{L^2( R \nu\le r \le R^{-1})} & \lesssim  \frac{1}{\mu}  \Big \| r^{3k-2} \nu^{2k} r^{-2k}   \Big \|_{L^2(R \nu \le r \le 1)}  \lesssim \frac{\nu^{2k+1}}{\la} \\
    \frac{1}{\mu}  \Big \| \frac{1}{r^2} \Big(\sin 2Q -   4  r^k   \Big)[\La Q_\nu]^2  \Big \|_{L^2( r \ge R^{-1})} & \lesssim  \frac{1}{\mu}  \Big \| r^{k-2} \nu^{2k} r^{-2k}   \Big \|_{L^2(r \ge R^{-1} )}  \lesssim \frac{\nu^{2k+1}}{\la}
}
For the second term we also divide the integrand into regions $r \le R \nu$, $R \nu \le r \le {R^{-1}} $ and $R^{-1} \le r$. We have 
\EQ{
 \frac{1}{\mu}  \Big \| \frac{1}{r^2} \Big(\sin 2Q_\nu+ 4 \big(\frac{r}{ \nu}\big)^{-k} \Big) [\La Q]^2  \Big \|_{L^2( r \le R \nu)}& \lesssim  \frac{1}{\mu}  \Big \|  \nu^k r^{-k} r^{2k-2}   \Big \|_{L^2( r \le R \nu)} \lesssim \frac{ \nu^{2k}}{\la}  \\
  \frac{1}{\mu}  \Big \| \frac{1}{r^2} \Big(\sin 2Q_\nu+ 4 \big(\frac{r}{ \nu}\big)^{-k} \Big) [\La Q]^2  \Big \|_{L^2( R \nu \le r \le R^{-1} )}& \lesssim  \frac{1}{\mu}  \Big \| \nu^{3k} r^{-3k} r^{2k-2}    \Big \|_{L^2( R \nu \le r \le R^{-1} )} \lesssim \frac{ \nu^{2k}}{\la}  \\
   \frac{1}{\mu}  \Big \| \frac{1}{r^2} \Big(\sin 2Q_\nu+ 4 \big(\frac{r}{ \nu}\big)^{-k} \Big) [\La Q]^2  \Big \|_{L^2(  r \ge R^{-1}  )}& \lesssim  \frac{1}{\mu}  \Big \|\nu^{3k} r^{-3k} r^{-2k-2}     \Big \|_{L^2( r \ge  R^{-1} )} \lesssim \frac{ \nu^{3k}}{\la}
}
This completes the first estimate in~\eqref{eq:ff'L2}. We argue similarly to prove the estimates in the first lines of~\eqref{eq:ff'mu} and~\eqref{eq:ff'la}. Indeed, arguing in the same way for~\eqref{eq:ff'mu}  yields,  
\EQ{
& \abs{ \ang{ \La Q_{\U \mu} \mid  \frac{1}{r^2} \Big( f(Q_\la - Q_\mu)  - f(Q_\la) + f(Q_\mu)  - 4  \big(\frac{r}{ \mu}\big)^k (\La Q_{\la})^2  -  4 \big(\frac{r}{ \la}\big)^{-k} (\La Q_{\mu})^2 \Big) }}  \\
 &  \lesssim \frac{1}{\mu}  \int_0^\infty \La Q ( \sin 2 Q - 4 r^k) [\La Q_\nu]^2 \, \frac{\ud r }{r}  + \frac{1}{\mu} \int_0^\infty [\La Q]^3 ( \sin 2 Q_\nu + 4(r/\nu)^{-k}) \, \frac{\ud r}{r}  \\
 & \lesssim \frac{\nu^{2k+1}}{\la} .
 }
Similarly, for~\eqref{eq:ff'la} we have 
\EQ{
&\abs{ \ang{ \La Q_{\U \la} \mid  \frac{1}{r^2} \Big( f(Q_\la - Q_\mu)  - f(Q_\la) + f(Q_\mu)  - 4  \big(\frac{r}{ \mu}\big)^k (\La Q_{\la})^2  -  4 \big(\frac{r}{ \la}\big)^{-k} (\La Q_{\mu})^2 \Big) }} \\
& \lesssim   \frac{1}{\la}  \int_0^\infty [\La Q_\nu]^3 ( \sin 2 Q - 4 r^k) \, \frac{\ud r }{r}  + \frac{1}{\la} \int_0^\infty \La Q_\nu ( \sin 2 Q_\nu + 4(r/\nu)^{-k}) [\La Q]^2 \, \frac{\ud r}{r}  \\
& \lesssim \frac{\nu^{2k}}{\la} 
}
Next, note that 
\EQ{ \label{eq:f'T} 
\abs{f(\Phi) - f(Q_\la - Q_\mu) -  f'(Q_\la- Q_\mu) (b^2 T_\la - a^2 \ti T_{\mu})} &\lesssim (b^2 A_\la + \nu^k B_\la + a^2 A_\mu + \nu^k \ti B_\mu)^2 \\
& \lesssim b^4 A_\la^2 + \nu^{2k} B_\la^2 + a^4 A_\mu^2 + \nu^{2k}B_\mu^2 
}
It follows from Lemma~\ref{l:ABB} that 
\EQ{
\Big\| r^{-2} \Big(f(\Phi) - f(Q_\la - Q_\mu) &-  f'(Q_\la- Q_\mu) (b^2 T_\la - a^2 \ti T_{\mu} \Big) \Big\|_{L^2}   \lesssim \frac{ b^4}{\la} + \frac{a^4}{\mu} + \frac{\nu^{2k}}{\la} 
}
which yields the second estimate in~\eqref{eq:ff'L2}. The second estimates in~\eqref{eq:ff'mu} and~\eqref{eq:ff'la} follow from~\eqref{eq:f'T} and Lemma~\ref{l:ABBQ}. 

Next, 
\EQ{
&\abs{f'(Q_\la - Q_\mu) (b^2 T_\la - a^2 \ti T_\mu) - f'(Q_\la)b^2 T_\la + f'(Q_\mu) a^2 \ti T_\mu } \\
&  = \Big|- \cos 2Q_\la \La Q_\mu^2(b^2 A_\la + \nu^k B_\la)  + \sin 2Q_\la \sin 2Q_\mu (b^2 A_\la + \nu^k B_\la)  \\
& \quad + \cos 2Q_\mu \La Q_\la(a^2 A_\mu+ \nu^k \ti B_\mu)  - \sin 2Q_\la \sin 2Q_\mu(a^2 A_\mu + \nu^k \ti B_\mu)\Big| \\
& \lesssim b^2 \La Q_\mu^2A_\la    + \nu^k \La Q_\mu^2 B_\la  +  b^2 \La Q_\la \La Q_\mu A_\la + \nu^k \La Q_\la \La Q_\mu B_\la  \\
& \quad +  a^2 \La Q_\la^2 A_\mu+ \nu^k  \La Q_\la^2 \ti B_\mu  +  a^2 \La Q_\la \La Q_\mu A_\mu + \nu^k \La Q_\la \La Q_\mu \ti B_\mu
}
Using~\eqref{eq:ABBQL2} from Lemma~\ref{l:ABBQL2} it follows that 
\EQ{
\| r^{-2} (f'(Q_\la - Q_\mu) &(b^2 T_\la - a^2 \ti T_\mu) - f'(Q_\la)b^2 T_\la + f'(Q_\mu) a^2 \ti T_\mu) \|_{L^2}  \\
& \lesssim \frac{b^2 \nu^{k-1}}{\la}  + \frac{a^2 \nu^{k-o(1)}}{\la}+ \frac{\nu^{2k-1}}{\la}
}
Similarly, using~\eqref{eq:ABBQpair} from Lemma~\ref{l:ABBQnl} we deduce that 
\EQ{
&\abs{\ang{ \La Q_{\U\mu} \mid \frac{1}{r^2} \Big(f'(Q_\la - Q_\mu) (b^2 T_\la - a^2 \ti T_\mu) - f'(Q_\la)b^2 T_\la + f'(Q_\mu) a^2 \ti T_\mu \Big)}} \\
& \qquad \quad \quad  \lesssim  \frac{b^2 \nu^{k-1}}{\la} + \frac{\nu^{2k-1}}{\la} + \frac{a^2 \nu^{k+1}}{\la}
}
and 
\EQ{
&\abs{\ang{ \La Q_{\U\la} \mid \frac{1}{r^2} \Big(f'(Q_\la - Q_\mu) (b^2 T_\la - a^2 \ti T_\mu) - f'(Q_\la)b^2 T_\la + f'(Q_\mu) a^2 \ti T_\mu \Big)}} \\ 
& \qquad \quad \quad  \lesssim  \frac{b^2 \nu^{k}}{\la} +  \frac{\nu^{2k-o(1)}}{\la}  + \frac{a^2 \nu^k}{\la} 
}
\end{proof}

\begin{proof}[Proof of Lemma~\ref{l:fest2}]

To prove the first estimate in~\eqref{eq:ff'L2dr} we differentiate the expression~\eqref{eq:fQQ}, which yields, 
\EQ{
 \p_r \Big(& f(Q_\la - Q_\mu)  - f(Q_\la) + f(Q_\mu)  - 4  \big(\frac{r}{ \mu}\big)^k (\La Q_{\la})^2  -  4 \big(\frac{r}{ \la}\big)^{-k} (\La Q_{\mu})^2 \Big)  \\
  &= \frac{2}{r}  \Big( \cos 2 Q_\mu \La Q_\mu - 2 k\big( \frac{r}{\mu} \big)^k \Big) ( \La Q_\la)^2 + \frac{2}{r}  \Big( \sin 2 Q_\mu - 4 \big( \frac{r}{\mu} \big)^k \Big) \La^2 Q_\la \La Q_\la  \\
&\quad   - \frac{2}{r} \Big(  \cos 2 Q_\la \La Q_\la - 2  \big( \frac{r}{\la} \big)^{-k}  \Big) (\La Q_\mu)^2 - \frac{2}{r}  \Big( \sin 2Q_\la + 4\big( \frac{r}{\la} \big)^{-k}  \Big) \La^2 Q_\mu \La Q_\mu \\
  &= \frac{2}{r}  \Big(  \La Q_\mu - 2 k\big( \frac{r}{\mu} \big)^k \Big) ( \La Q_\la)^2 + \frac{2}{r}  \Big( \sin 2 Q_\mu - 4 \big( \frac{r}{\mu} \big)^k \Big) \La^2 Q_\la \La Q_\la  \\
 &\quad  - \frac{2}{r} \Big( \La Q_\la - 2  \big( \frac{r}{\la} \big)^{-k}  \Big) (\La Q_\mu)^2 -\frac{2}{r} \Big( \sin 2Q_\la + 4\big( \frac{r}{\la} \big)^{-k}  \Big) \La^2 Q_\mu \La Q_\mu \\
  &\quad   -  \frac{4}{rk^2}( \La Q_\mu)^3 ( \La Q_\la)^2  +  \frac{4}{rk^2}( \La Q_\la)^3 ( \La Q_\mu)^2 
} 
Arguing as in the proof of~\eqref{eq:ff'L2} we deduce the estimate
\EQ{
\Big\| r^{-2}  \p_r \Big(& f(Q_\la - Q_\mu)  - f(Q_\la) + f(Q_\mu)  - 4  \big(\frac{r}{ \mu}\big)^k (\La Q_{\la})^2  -  4 \big(\frac{r}{ \la}\big)^{-k} (\La Q_{\mu})^2 \Big)\Big\|_{L^2}  \\
& \lesssim \Big\| r^{-3}  \Big(  \La Q_\mu - 2 k\big( \frac{r}{\mu} \big)^k \Big) ( \La Q_\la)^2 \Big\|_{L^2} + \Big\| r^{-3}  \Big( \sin 2 Q_\mu - 4 \big( \frac{r}{\mu} \big)^k \Big) \La^2 Q_\la \La Q_\la   \Big\|_{L^2}  \\
&\quad +  \Big\| r^{-3}  \Big( \La Q_\la - 2  \big( \frac{r}{\la} \big)^{-k}  \Big) (\La Q_\mu)^2 \Big\|_{L^2} + \Big\| r^{-3}  \Big( \sin 2Q_\la + 4\big( \frac{r}{\la} \big)^{-k}  \Big) \La^2 Q_\mu \La Q_\mu  \Big\|_{L^2}  \\
&\quad +  \| r^{-3} ( \La Q_\mu)^3 ( \La Q_\la)^2  \|_{L^2} + \| r^{-3} ( \La  Q_\la)^3 ( \La Q_\mu)^2\|_{L^2}  \\
& \lesssim \frac{\nu^{2k}}{\la^2} 
}
as claimed. The proofs of the remaining two estimates in~\eqref{eq:ff'L2dr} are similarly obtained via explicit calculations as in the proofs of the second to estimates in~\eqref{eq:ff'L2}. 
\end{proof} 

\begin{proof}[Proof of Lemma~\ref{l:w^2est}]
First observe the identity 
\EQ{ \label{eq:fident} 
f( \Phi +w) - f( \Phi) - f'( \Phi) w &= \frac{k^2}{2} \Big( \sin( 2\Phi  +2 w) - \sin(2 \Phi) - (\cos 2\Phi) 2 w \Big) \\
& = k^2 \sin 2 \Phi \sin^2 w + \frac{k^2}{2}( \sin 2 w - 2 w) \cos 2 \Phi   \\
& = k^2 \sin 2 \Phi \sin^2 w + O( \abs{w}^3) 
}
Since 
\EQ{ \label{eq:sin2Phi} 
\abs{ \sin 2 \Phi } =\abs{ \sin 2Q_\la \cos 2( \Phi - Q_\la) + \cos 2Q_\la \sin 2( \Phi - Q_\la) } \lesssim \frac{r}{\la} 
}
we thus we have the point-wise bound 
\EQ{ \label{eq:w^2-point} 
\frac{1}{r^2} \abs{f( \Phi +w) - f( \Phi) - f'( \Phi) w} & \lesssim \frac{1}{\la} \frac{ \abs{w}^2}{r} + \frac{\abs{w}^3}{r^2}  
}
which leads to  the estimate 
\EQ{
\| r^{-2} (f( \Phi +w) - f( \Phi) - f'( \Phi) w) \|_{L^2} &\lesssim \frac{\| w\|_{\infty} \| w \|_H}{\la} +  \| r^{-2} w^3 \|_{L^2}  \\
& \lesssim  \frac{1}{\la} \| w \|_{H}^2 + \| w \|_{H}^2 \| \p_r w \|_{H} 
}
where the last line follows from~\eqref{eq:winfty} and~\eqref{eq:w3} from Lemma~\ref{l:wHH2}. This proves~\eqref{eq:w^2L2}. Similarly, to prove~\eqref{eq:w^2L23} we have 
\EQ{
\| r^{-3} (f( \Phi +w) - f( \Phi) - f'( \Phi) w) \|_{L^2} &\lesssim \frac{\| r^{-1} w \|_{\infty} \| r^{-1} w \|_{L^2}}{\la} +  \| r^{-1} w \|_{L^\infty}^2 \| r^{-1} w \|_{L^2}  \\
& \lesssim  \frac{1}{\la} \| \p_r w \|_{H} \| w \|_{H}  + \| \p_r w \|_{H}^2 \| w \|_H
}
where in the last line we used~\eqref{eq:w/rinfty}. 

Next we prove~\eqref{eq:w^2L22}. First note that 
\EQ{
\p_r ( f( \Phi +w) - f( \Phi) - f'( \Phi) w)  
& = 2 k^2 \cos 2 \Phi \p_r \Phi \sin^2 w  - k^2  \sin 2 \Phi \p_r \Phi ( \sin 2 w - 2 w) \\
& \quad + 2 k^2 \sin 2\Phi \cos w \sin w \p_r w - 2k^2 \cos 2\Phi \sin^2 w \p_r w
}
which can also be seen directly from the identity~\eqref{eq:fident}. Using the pointwise bound, 
\EQ{
\abs{\p_r \Phi }  &=  \frac{1}{r} \abs{  \La Q_\la - \La Q_\mu + b^2 \La A_\la + \nu^k \La B_\la - a^2 \La A_\mu   - \nu^k \La \ti B_\mu}  \lesssim \frac{1}{\la} 
}
along with~\eqref{eq:sin2Phi} 
we arrive at the estimate 
\EQ{
\abs{\frac{1}{r^2} \p_r ( f( \Phi +w) - f( \Phi) - f'( \Phi) w)} \lesssim \frac{1}{\la}  \frac{\abs{w}^2}{r^2} + \frac{\abs{w}^3}{r^3} + \frac{1}{\la} \frac{\abs{ \p_r w} \abs{w}}{r}  + \frac{\abs{\p_r w} \abs{w}^2}{r^2}  
}
from which it follows that 
\EQ{
\| \frac{1}{r^2} \p_r ( f( \Phi +w) - f( \Phi) - f'( \Phi) w)\|_{L^2} &\lesssim  \frac{1}{\la} \| \p_r w \|_{H} \| w \|_{H}  +  \| \p_r w \|_{H}^2 \| w \|_H
}
which proves~\eqref{eq:w^2L22}. 

Next, we prove~\eqref{eq:w^2la}.  We have 
\EQ{ \label{eq:fPhiw1} 
f( \Phi + w) - f( \Phi) - f'( Q_\la) w  &= k^2  ( \cos 2 \Phi - \cos 2 Q_\la) w  \\
& \quad - k^2 \sin 2 \Phi \sin^2 w +  O( \abs{w}^3)
}
For the first term on the right we use the estimate~\eqref{eq:cos2Phi1} to obtain 
\EQ{
\abs{k^2  ( \cos 2 \Phi - \cos 2 Q_\la) w} &\lesssim  \abs{w} \La Q_{\la} (\La Q_\mu   + b^2 A_\la + \nu^k B_\la  + a^2 A_\mu + \nu^k\ti B_\mu)   \\
& \quad + \abs{w} \La Q_\mu^2 +\abs{w}  b^4 A_\la^2  + \abs{w} \nu^{2k} B_\la^2 +\abs{w}  a^4 A_\mu^2   + \abs{w} \nu^{2k} \ti B_\mu^2
} 
It follows that 
\EQ{
\abs{\ang{ \La Q_{\U \la} \mid \frac{k^2}{r^2}  ( \cos 2 \Phi - \cos 2 Q_\la) w} } &\lesssim  \frac{1}{\la} \| w \|_H  \Big( \| r^{-1} [\La Q_\la]^2 \La Q_\mu  \|_{L^2} + b^2  \| r^{-1} [\La Q]^2 A \|_{L^2}  \\
&\quad + \nu^k \| r^{-1} [\La Q]^2 B \|_{L^2}    + a^2  \| r^{-1} [\La Q_\la]^2 A_\mu \|_{L^2}  \\
&\quad + \nu^k \|r^{-1} [\La Q_\la]^2 \ti \B_\mu \|_{L^2}   + \| r^{-1} \La Q_\la \La Q_\mu^2 \|_{L^2}  \\
& \quad + b^4 \| r^{-1} \La Q A^2 \|_{L^2} + \nu^{2k} \|r^{-1} \La Q B^2 \|_{L^2} \\
& \quad +  a^4 \|r^{-1} \La Q_\la [A_\mu]^2 \|_{L^2} +\nu^{2k} \| r^{-1} \La Q_\la [\ti B_\mu]^2 \|_{L^2} \Big)  \\
& \lesssim \frac{1}{\la} \| w \|_H \Big( \nu^k + b^2 + a^2 \nu^k\Big)
}
where we have used Lemma~\ref{l:ABBQL21} in the second inequality above. The second term on the right-hand side of~\eqref{eq:fPhiw1} leads to the estimate 
\EQ{
\abs{ \ang{ \La Q_{\U \la}  \mid \frac{1}{r^2} \sin 2\Phi  \sin^2 w}} &\lesssim \frac{1}{\la} \int_0^\infty \La Q_{\la} \abs{\sin 2 \Phi }w^2 \, \frac{\ud r}{r}   \lesssim \frac{1}{\la} \|  w \|_{H}^2 
}
Finally, the last term satisfies 
\EQ{
\abs{\ang{ \La Q_{\U \la} \mid \frac{1}{r^2} \abs{w}^3 }} &\lesssim \frac{1}{\la} \|w \|_{\infty} \| w \|_{H}^2 \lesssim  \frac{1}{\la} \| w \|_{H}^3
}
Combining the previous three estimates gives~\eqref{eq:w^2la}. 
Similarly to prove~\eqref{eq:w^2la} we have 
\EQ{ \label{eq:fPhiw2} 
f( \Phi + w) - f( \Phi) - f'( Q_\la) w  &= k^2  ( \cos 2 \Phi - \cos 2 Q_\mu) w - k^2 \sin 2 \Phi \sin^2 w    +  O( \abs{w}^3)
}
For the first term on the right we use~\eqref{eq:cos2Phi2}, 
\EQ{
\abs{k^2  ( \cos 2 \Phi - \cos 2 Q_\mu) w} & \lesssim \abs{w} \abs{\La Q_{\mu} (\La Q_\la   + b^2 A_\la + \nu^k B_\la  + a^2 A_\mu + \nu^k\ti B_\mu) }   \\
& \quad + \abs{w} \La Q_\la^2 + \abs{w} b^4 A_\la^2  + \abs{w} \nu^{2k} B_\la^2 +\abs{w}  a^4 A_\mu^2   + \abs{w} \nu^{2k} \ti B_\mu^2 
}
from which we have 
\EQ{
\Big|\langle &\La Q_{\U \la} \mid \frac{k^2}{r^2}  ( \cos 2 \Phi - \cos 2 Q_\la) w \rangle \Big| \lesssim  \frac{1}{\mu} \| w \|_H \Big(  \| r^{-1} [\La Q_\mu]^2 \La Q_\la \|_{L^2}   + b^2 \| r^{-1} [\La Q_\mu]^2 A_\la \|_{L^2}  \\
&   + \nu^k \| r^{-1} [\La Q_\mu]^2B_\la \|_{L^2}   + a^2 \| r^{-1} [\La Q]^2 A \|_{L^2}   +  \nu^k \| r^{-1} [\La Q]^2  \ti B\|_{L^2}  +  \| r^{-1} \La Q_\mu [\La Q_\la]^2 \|_{L^2}  \\
& + b^4 \| r^{-1} \La Q_\mu [A_{\la}]^2 \|_{L^2} + \nu^{2k} \| r^{-1} \La Q_\mu[ B_\la]^2 \|_{L^2}  + a^4 \| r^{-1} \La Q A^2 \|_{L^2} + \nu^{2k}  \| \La Q \ti B^2 \|_{L^2}  \Big)  \\
& \lesssim \frac{1}{\mu} \| w \|_H \Big( \nu^k + b^2 \nu^{k-2}   + a^2 \Big) 
}
The two remaining terms on the right-hand side of~\eqref{eq:fPhiw2} satisfy, 
\EQ{
 \abs{ \ang{ \La Q_{\U \mu}  \mid \frac{1}{r^2} \sin 2\Phi  \sin^2 w}} & \lesssim \frac{1}{\mu} \|  w \|_{H}^2  , \quad 
 \abs{\ang{ \La Q_{\U \mu} \mid \frac{1}{r^2} \abs{w}^3 }}   \lesssim \frac{1}{\mu} \|  w \|_{H}^3
}
This completes the proof. 
\end{proof}

\subsection{Coercivity} 
The goal of this section is to record the following two lemmas, for which we sketch the standard proofs. 

\begin{lem}[Coercivity of $\LL_\Phi$]\label{l:coerce1} There exists $c_1>0$ with the following property.  Let $\bs u(t) \in \HH$ be a solution to~\eqref{eq:wmk} and assume that the hypothesis of Lemma~\ref{l:mod2}. 
 As long as $\eta_0>0$ is taken small enough in Lemma~\ref{l:mod2}, we have  
\EQ{ \label{eq:coerce1} 
\ang{ \LL_{\Phi} w \mid w }& \ge c_1 \| w \|_{H}^2 \\
}
\end{lem}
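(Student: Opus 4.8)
The plan is to prove~\eqref{eq:coerce1} by the standard two-bubble localization of the single-bubble coercivity of $\LL$. The one input I would take for granted is the single-bubble estimate: there is a constant $c_0 \in (0,1)$ such that $\ang{ \LL g \mid g} \ge c_0 \|g\|_H^2$ for every $g \in H$ with $\ang{ g \mid \La Q} = 0$ (classical; see~\cite{JL1} and Lemma~\ref{l:loc-coerce}), and by rescaling the same holds with $(\LL_\la, \La Q_{\U\la})$ or $(\LL_\mu, \La Q_{\U\mu})$ in place of $(\LL, \La Q)$, with the same $c_0$. The strategy is to split $w$ by a partition of unity adapted to the two bubbles, localize the form $\ang{\LL_\Phi w \mid w}$ onto the two pieces, replace $\LL_\Phi$ by $\LL_\la$ on the inner piece and by $\LL_\mu$ on the outer one (the differences being perturbative by smallness of $\nu,a,b$ from~\eqref{eq:w-d-bound}), and then apply the single-bubble estimate to each piece after correcting for the fact that the localized functions are only \emph{approximately} orthogonal to $\La Q_{\U\la}$, $\La Q_{\U\mu}$.

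The delicate point --- and the reason a naive cutoff at the scale $\sqrt{\la\mu}$ fails --- is the IMS localization error $\ang{(|\p_r\phi_1|^2 + |\p_r\phi_2|^2)w \mid w}$: it is comparable to $\int_{r\sim\sqrt{\la\mu}} r^{-2}w^2$, hence a priori of order $\|w\|_H^2$ rather than small, and since $c_0<1$ strictly it cannot be absorbed. I would resolve this with a \emph{logarithmically wide} transition region. Take $\rho_1 := \la^{3/4}\mu^{1/4}$ and $\rho_2 := \la^{1/4}\mu^{3/4}$, so $\rho_2/\rho_1 = \nu^{-1/2} \to \infty$ as $\eta_0\to 0$ by~\eqref{eq:w-d-bound}, and choose smooth radial cutoffs $\phi_1,\phi_2 \ge 0$ with $\phi_1^2+\phi_2^2\equiv 1$, $\phi_1\equiv 1$ on $\{r\le\rho_1\}$, $\phi_2\equiv1$ on $\{r\ge\rho_2\}$, and $|\p_r\phi_j(r)|\lesssim (r\log(1/\nu))^{-1}$ on $\{\rho_1\le r\le\rho_2\}$. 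Then the Hardy bound $\int_0^\infty r^{-2}w^2\,r\,\ud r \le k^{-2}\|w\|_H^2$ gives $\ang{(|\p_r\phi_1|^2 + |\p_r\phi_2|^2)w\mid w} \lesssim (\log(1/\nu))^{-2}\|w\|_H^2 = o_{\eta_0}(1)\|w\|_H^2$, which is now negligible.

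For the per-piece estimates I would argue as follows. On $\supp\phi_1\subset\{r\le\rho_2\}$ one has $\La Q_\mu(r)\lesssim (r/\mu)^k \lesssim \nu^{k/4}$ and, since $A,B,\ti B$ are bounded by Lemma~\ref{l:ABB}, the bound~\eqref{eq:cos2Phi1} yields $r^{-2}|f'(\Phi)-f'(Q_\la)| \lesssim o_{\eta_0}(1)\,r^{-2}$ there; pairing against $\phi_1 w$ and using Hardy gives $\ang{\LL_\Phi(\phi_1 w)\mid\phi_1 w} = \ang{\LL_\la(\phi_1 w)\mid\phi_1 w} + o_{\eta_0}(1)\|\phi_1 w\|_H^2$. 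Next, because $\ang{\La Q_{\U\la}\mid w}=0$ and $1-\phi_1$ is supported in $\{r\ge\rho_1\}$, the decay of $\La Q_{\U\la}$ for $r\gg\la$ together with~\eqref{eq:winfty} gives $|\ang{\La Q_{\U\la}\mid\phi_1 w}| = |\ang{\La Q_{\U\la}\mid(1-\phi_1)w}| \lesssim \nu^{(k-2)/4}\|w\|_H$; hence the $\La Q_{\U\la}$-component of $\phi_1 w$ has $H$-norm $o_{\eta_0}(1)\|w\|_H$, and the single-bubble estimate applied to the $L^2$-orthogonal complement yields $\ang{\LL_\la(\phi_1 w)\mid\phi_1 w} \ge c_0\|\phi_1 w\|_H^2 - o_{\eta_0}(1)\|w\|_H^2$. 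Symmetrically, on $\supp\phi_2\subset\{r\ge\rho_1\}$ one has $\La Q_\la(r)\lesssim (r/\la)^{-k}\lesssim\nu^{k/4}$, so~\eqref{eq:cos2Phi2} gives $\ang{\LL_\Phi(\phi_2 w)\mid\phi_2 w} = \ang{\LL_\mu(\phi_2 w)\mid\phi_2 w} + o_{\eta_0}(1)\|\phi_2 w\|_H^2$, and using $\ang{\La Q_{\U\mu}\mid w}=0$ and the decay of $\La Q_{\U\mu}$ near $r=0$ one obtains $\ang{\LL_\mu(\phi_2 w)\mid\phi_2 w} \ge c_0\|\phi_2 w\|_H^2 - o_{\eta_0}(1)\|w\|_H^2$ in the same way. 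Finally, reassembling via the IMS identity $\|\phi_1 w\|_H^2 + \|\phi_2 w\|_H^2 = \|w\|_H^2 + \ang{(|\p_r\phi_1|^2+|\p_r\phi_2|^2)w\mid w}$,
\EQ{
\ang{\LL_\Phi w\mid w} \ge c_0\|w\|_H^2 - (1-c_0)\ang{(|\p_r\phi_1|^2+|\p_r\phi_2|^2)w\mid w} - o_{\eta_0}(1)\|w\|_H^2 \ge \tfrac{c_0}{2}\|w\|_H^2
}
once $\eta_0$ is small enough, which proves~\eqref{eq:coerce1} with $c_1 = c_0/2$. The main obstacle throughout is exactly the transition-region localization error; once the logarithmically wide cutoff is in place the remaining steps are routine perturbative bookkeeping using Lemma~\ref{l:cos2Phi}, Lemma~\ref{l:ABB}, and the near-orthogonality of the two pieces.
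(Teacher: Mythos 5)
Your proof is correct, but it takes a genuinely different route from the paper's. The paper does not write out the argument at all: it states that Lemma~\ref{l:coerce1} ``is a straightforward consequence'' of the localized coercivity Lemma~\ref{l:loc-coerce} (estimates \eqref{eq:L-coerce}--\eqref{eq:L-loc2}) cited from~\cite[Lemma 5.4]{JJ-AJM}, and refers the reader there. Those truncated half-line estimates (``$\int_0^{R_1}(\cdots)\,r\ud r + \ang{Pw\mid w} \ge -c\|w\|_H^2$'' and its counterpart from $r_1$ out, with $c$ tunable) are tailored precisely to avoid a partition-of-unity error: one uses the Dirichlet form only on a half-line per bubble, pairs each truncated piece with its own potential, and never has to pay an IMS commutator. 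You instead go the standard IMS/partition-of-unity route, and the key move---which you correctly identify as the delicate point---is to take the transition region logarithmically wide (ratio $\rho_2/\rho_1=\nu^{-1/2}$) so that the localization error $\int |\p_r\phi_j|^2 w^2$ is $O((\log\nu^{-1})^{-2})\|w\|_H^2 = o_{\eta_0}(1)\|w\|_H^2$ rather than $O(\|w\|_H^2)$. That log-cutoff trick is a known device in multi-soliton coercivity (e.g.\ Martel--Merle--Tsai, C\^ote--Zaag) and gives a self-contained proof modulo only the single-bubble coercivity of $\LL$; the paper's route is shorter on the page but depends on the pre-built lemma from~\cite{JJ-AJM}. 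The remaining perturbative bookkeeping (replacing $\LL_\Phi$ by $\LL_\la$ or $\LL_\mu$ on each piece via Lemma~\ref{l:cos2Phi}, and handling the near-orthogonality by splitting off the $\Lam Q_{\U\la}$-component) is sound. One minor slip: your intermediate bound $|\ang{\La Q_{\U\la}\mid\phi_1 w}| \lesssim \nu^{(k-2)/4}\|w\|_H$ should carry an extra factor of $\la$ coming from the $L^2$ normalization $\La Q_{\U\la}=\la^{-1}\La Q(\cdot/\la)$; but since $\|\La Q_{\U\la}\|_H = \la^{-1}\|\La Q\|_H$ cancels that factor, your conclusion that the $H$-norm of the $\La Q_{\U\la}$-component of $\phi_1 w$ is $o_{\eta_0}(1)\|w\|_H$ is correct, and the argument closes as you wrote.
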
 

\begin{lem}[Coercivity of $\LL^2_\Phi$] \label{l:coerce2}
In addition to the hypothesis of Lemma~\ref{l:mod2} assume in addition that $\bs u(t) \in \HH \cap \HH^2 \cap \bs \Lam^{-1} \HH$ for all $t \in J$. Then, for $\bs w(t)$ given by Lemma~\ref{l:mod2} we additionally have $\bs w(t) \in \HH \cap \HH^2 \cap \bs \Lam^{-1} \HH$. And, as long as $\eta_0>0$ is taken small enough, the uniform constant $c_1>0$ from~\eqref{eq:coerce1} can be chosen so that we also have the $\HH^2$ coercivity, 
\EQ{ \label{eq:coerce2} 
\ang{ \LL_{\Phi}^2 w \mid w} + \ang{ \LL_\Phi \dot w  \mid \dot w} &\ge c_1  \| \bs w \|_{\HH^2}^2.
}
\end{lem}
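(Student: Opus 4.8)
The statement has two parts, which I would treat in turn. \textbf{Regularity of $\bs w$.} This reduces to the regularity of the ansatz. From the explicit formula \eqref{eq:Phidef}, the decay bounds of Lemma~\ref{l:ABB}, and the standing assumption $k\ge 4$, each building block of $\bs\Phi(\mu,\la,a,b)$ --- $Q_\la-Q_\mu$, $b^2A_\la$, $\nu^kB_\la$, $a^2A_\mu$, $\nu^k\ti B_\mu$, and all the $\La$- and $\La_0$-images appearing in $\dot\Phi$ --- lies in $H\cap H^2$, resp. in $L^2\cap H$, with norms bounded uniformly for $\nu,|a|,|b|\ll 1$. Hence $\bs\Phi\in\HH\cap\HH^2\cap\bs\Lam^{-1}\HH$, and since $\bs u(t)$ lies in this space by hypothesis, so does $\bs w=\bs u-\bs\Phi$. \textbf{The $\dot w$ term.} The orthogonality conditions \eqref{eq:law}--\eqref{eq:muwdot} give $\ang{\La Q_{\U \la}\mid\dot w}=\ang{\La Q_{\U \mu}\mid\dot w}=0$, which are precisely the hypotheses used in the proof of Lemma~\ref{l:coerce1}; in fact that proof shows $\ang{\LL_\Phi v\mid v}\ge c_1\|v\|_H^2$ for every $v$ with $\ang{\La Q_{\U \la}\mid v}=\ang{\La Q_{\U \mu}\mid v}=0$, so applying it to $v=\dot w$ gives $\ang{\LL_\Phi\dot w\mid\dot w}\ge c_1\|\dot w\|_H^2$. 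Thus \eqref{eq:coerce2} follows once we establish
\EQ{ \label{eq:coerce2-red}
\ang{\LL_\Phi^2 w\mid w}=\|\LL_\Phi w\|_{L^2}^2\gtrsim\|w\|_{H^2}^2,\qquad \ang{\La Q_{\U \la}\mid w}=\ang{\La Q_{\U \mu}\mid w}=0 .
}

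To prove \eqref{eq:coerce2-red} I would run the same localization scheme as in the proof of Lemma~\ref{l:coerce1} (cf. \cite[Lemma 5.4]{JJ-AJM}, \cite{JL1}), but now at the $H^2$ level. The two ingredients are: \textbf{(a)} the algebraic identity $\LL_\Phi^2=\LL_0^2+\Ks_\Phi$ of \eqref{eq:LL2-def}--\eqref{eq:K-def}, with $P_\Phi:=r^{-2}(f'(\Phi)-k^2)=-2k^2r^{-2}\sin^2\Phi$ in place of $P$, which together with the pointwise bounds of Lemma~\ref{l:cos2Phi} (giving $|P_\Phi|\lesssim\la^{-2}$ localized near scale $\la$ and $|P_\Phi|\lesssim 1$ localized near scale $\mu$) and the Hardy--Sobolev inequalities of Lemma~\ref{l:wHH2} yields, after Cauchy--Schwarz, $|\ang{\Ks_\Phi w\mid w}|\le\frac12\|w\|_{H^2}^2+C\la^{-2}\|w\|_H^2$, hence $\|\LL_\Phi w\|_{L^2}^2\ge\frac12\|w\|_{H^2}^2-C\la^{-2}\|w\|_H^2$; and \textbf{(b)} a single-bubble $H^2$-coercivity estimate $\|\LL v\|_{L^2}^2\gtrsim\|v\|_{H^2}^2$ for $v\in H\cap H^2$ with $\ang{v\mid\La Q}=0$, together with a version localized to $\{r\le R\}$. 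Ingredient (b) is standard: $\ker\LL^2$ on $H\cap H^2$ is exactly $\langle\La Q\rangle$ (since $\LL^2v=0$ forces $\|\LL v\|_{L^2}^2=\ang{\LL^2v\mid v}=0$, hence $v\in\ker\LL$), so the estimate follows from (a) applied to $\LL$ plus a contradiction/compactness argument exploiting $\ang{v\mid\La Q}=0$, the compact embedding $H^2\hookrightarrow H^1_{\loc}$, and the fast decay of $P$. One then patches the two bubbles by a cutoff $\chi=\chi_0(r/\sqrt{\la\mu})$: on $\{r\lesssim\sqrt{\la\mu}\}$ one has $\LL_\Phi=\LL_\la+(\text{small})$ and $\chi w$ is orthogonal to $\La Q_{\U \la}$ up to $O(\nu^{k/2}\|w\|_H)$ (because $(1-\chi)\La Q_{\U \la}$ lives where that profile is $O(\la^{-1}\nu^{k/2})$), on $\{r\gtrsim\sqrt{\la\mu}\}$ one has $\LL_\Phi=\LL_\mu+(\text{small})$ using $Q_\la\equiv\pi+O(\nu^{k/2})$ there, and the cutoff commutators $[\LL_\Phi,\chi]$ and the potential differences $f'(\Phi)-f'(Q_\la)$, $f'(\Phi)-f'(Q_\mu)$ are perturbative exactly as in Lemma~\ref{l:coerce1}. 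Assembling (a)--(b) across the two scales, the borderline term $C\la^{-2}\|w\|_H^2$ in (a) gets reabsorbed, which gives \eqref{eq:coerce2-red} and hence \eqref{eq:coerce2}.

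The genuinely delicate point --- the only one --- is this last reabsorption, which has no analogue at the $H^1$ level. The remainder $\ang{\Ks_\Phi w\mid w}$ involves $\|P_\Phi w\|_{L^2}\lesssim\la^{-1}\|w\|_H$, and $\la^{-1}\|w\|_H$ is \emph{comparable} to $\|w\|_{H^2}$ (not small compared to it) precisely when $w$ concentrates at scale $\la$, i.e. along the near-kernel direction $\La Q_{\U \la}$ (and symmetrically at scale $\mu$ along $\La Q_{\U \mu}$). Consequently it cannot be dispatched by a soft Cauchy--Schwarz bound; it is eliminated only by first passing to the model operators $\LL_\la,\LL_\mu$ and then using the orthogonality conditions through a compactness argument carried out at the $H^2$ regularity level, all the while bookkeeping the two-scale geometry so that the cutoff and potential-difference errors from the localization step stay of size $O(\nu^{k/2}+b^2+a^2)$, which is small by \eqref{eq:w-d-bound}. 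This higher-regularity, two-scale accounting is the crux; everything else is routine given the estimates already assembled in Sections \ref{s:formal}--\ref{s:modulation}.
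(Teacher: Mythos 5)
Your outline takes essentially the same route as the paper: the $\dot w$ term is handled immediately by Lemma~\ref{l:coerce1} applied to $\dot w$ (using \eqref{eq:lawdot}--\eqref{eq:muwdot}), the regularity of $\bs w$ follows from that of $\bs\Phi$, and the main estimate \eqref{eq:coerce2-red} is reduced, via a two-scale cutoff, to a single-bubble $H^2$-coercivity for $\LL$, which in turn is proved by a compactness/contradiction argument using $\ang{v\mid\La Q}=0$, $\ker\LL=\langle\La Q\rangle$, and the rapid decay of $P$. This is exactly the paper's scheme (the paper isolates the single-bubble statement as a separate lemma, ``Localized coercivity for $\LL^2$'', together with truncated variants for the patching, and then asserts the two-scale patching ``follows as in Lemma~\ref{l:coerce1}'', citing prior work for the details).

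Two points where your sketch is loose relative to what is actually carried out. First, the bound in your (a), $|\ang{\Ks_\Phi w\mid w}|\le\frac12\|w\|_{H^2}^2+C\la^{-2}\|w\|_H^2$, is written with the crude weight $\la^{-2}$ in front of the full $H$-norm; the compactness step genuinely requires retaining the \emph{rapidly decaying} weight inherited from $P$ (so that the remainder is controlled by a norm $\|\cdot\|_Y$ with $H^2\Subset Y$), not just the scale factor. You do flag this in words (``fast decay of $P$''), but the displayed inequality does not reflect it, and the constant $\frac12$ is obtained there only because you silently moved a derivative off $\LL_0 w$; if you present the estimate that way you should say so. Second, calling ingredient (b) ``standard'' hides the one nontrivial algebraic fact the paper actually verifies: after writing $\ang{\LL^2 w\mid w}$ out term by term one finds $\int_0^\infty\big(k^4-4k^2+(4k^4+16k^3(1-\cos Q)+8k^2)\sin^2 Q\big)r^{-4}w^2\,r\,dr$ plus negative, rapidly decaying corrections $h_1,h_2\le 0$, and one needs $k^4-4k^2\ge 0$ (true since $k\ge 4$ throughout) so the leading coefficient is bounded below. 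Without that sign the contradiction argument does not even get started. Your step (b) should record this positivity check explicitly rather than deferring it; otherwise it is not clear your contradiction argument produces a nontrivial $w_\infty$.

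Apart from these two presentational/quantitative matters, the plan is sound and matches the paper's proof in structure and ingredients.
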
 

We now give brief sketches of the proofs of Lemma~\ref{l:coerce1} and Lemma~\ref{l:coerce2}. As usual, we denote by $\LL$ the operator as in~\eqref{eq:LL-def} and recall that we write 
\EQ{
 \LL= \LL_0 + P 
}
where $\LL_0 = -\De + \frac{k^2}{r^2}$ and $P$ is as in~\eqref{eq:P-def}, i.e., $P(r):= \frac{1}{r^2}( f'(Q) - k^2)$. 

Lemma~\ref{l:coerce1} is a straightforward consequence of the following localized coercivity lemma proved in~\cite{JJ-AJM}. 
\begin{lem}[Localized coercivity for $\LL$] \emph{\cite[Lemma 5.4]{JJ-AJM}}  \label{l:loc-coerce} 
There exists a uniform constant $c_1>0$  with the following property. Suppose $w \in H$ is such that 
\EQ{ \label{eq:w-ortho1} 
\ang{ w \mid  \Lam Q} = 0. 
}
Then, 
\EQ{ \label{eq:L-coerce}
\ang{ \LL w \mid w} \ge c_1  \| w \|_{H}^2 .
}
In addition, for any $c>0$, there exists $R_1>0$ large enough so that for all $w \in H$ as in~\eqref{eq:w-ortho1}, we have 
\EQ{ \label{eq:L-loc1} 
\int_0^{R_1} \Big((\p_r w(r))^2 + k^2 \frac{w(r)^2}{r^2} \Big) \, r \ud r + \ang{ P w \mid w} \ge - c \| w \|_{H}^2 
}
Lastly, for any $c>0$, there exists $r_1>0$ small enough so that for all $w \in H$ as in~\eqref{eq:w-ortho1}, we have 
\EQ{ \label{eq:L-loc2} 
\int_{r_1}^\infty \Big((\p_r w(r))^2 + k^2 \frac{w(r)^2}{r^2}\Big)  \, r \ud r +  \ang{ P w \mid w} \ge - c \| w \|_{H}^2 
}
\end{lem}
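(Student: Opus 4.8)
The plan is to establish all three assertions by a single compactness-and-contradiction scheme, using only two structural inputs about $\LL$: the factorization $\LL=\B^{*}\B$ with $\B=-\p_r+\frac{k}{r}\cos Q$, which gives $\ang{\LL w\mid w}=\|\B w\|_{L^2}^2\ge 0$ for every $w\in H$, and the fact that $\ker\LL=\spn\{\Lam Q\}$ with $\Lam Q\in H\cap L^2$ (see~\cite{JL1}). Before the main argument I would record two preliminary facts that power the compactness. First, the quadratic form $w\mapsto\ang{Pw\mid w}$ is weakly continuous on $H$: since $r^{2}\abs{P(r)}=2k^{2}\sin^{2}Q(r)\to 0$ both as $r\to0$ and as $r\to\infty$, one splits $\ang{Pw\mid w}$ into an integral over a fixed annulus $\{a\le r\le b\}$, where Rellich gives strong $L^{2}$-convergence of any $H$-bounded sequence, plus inner and outer tails bounded by a constant times $\big(\sup_{r\le a}r^{2}\abs{P}+\sup_{r\ge b}r^{2}\abs{P}\big)\|w\|_{H}^{2}$, which is small once $a$ is small and $b$ large. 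Second, $w\mapsto\ang{w\mid\Lam Q}=\ang{r^{-1}w\mid r\Lam Q}$ is a bounded functional on $H$ because $r\Lam Q\in L^{2}$ (here $k\ge4$ is comfortably enough; $k>2$ would suffice), so the orthogonality condition~\eqref{eq:w-ortho1} is preserved under weak $H$-limits.

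For~\eqref{eq:L-coerce} I would argue by contradiction: if it fails there exist $w_{n}\in H$ with $\|w_{n}\|_{H}=1$, $\ang{w_{n}\mid\Lam Q}=0$, and $\ang{\LL w_{n}\mid w_{n}}\to0$; pass to a subsequence with $w_{n}\wto w_{\infty}$ in $H$. From $\ang{\LL w_{n}\mid w_{n}}=\|w_{n}\|_{H}^{2}+\ang{Pw_{n}\mid w_{n}}$, weak lower semicontinuity of $\|\cdot\|_{H}$, and weak continuity of the $P$-form, one gets $\ang{\LL w_{\infty}\mid w_{\infty}}\le\liminf_{n}\ang{\LL w_{n}\mid w_{n}}=0$, so $w_{\infty}\in\ker\LL=\spn\{\Lam Q\}$; since $\ang{w_{\infty}\mid\Lam Q}=0$ this forces $w_{\infty}=0$. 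But then $\ang{Pw_{n}\mid w_{n}}\to0$, hence $\ang{\LL w_{n}\mid w_{n}}\to1$, contradicting $\ang{\LL w_{n}\mid w_{n}}\to0$.

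For the localized bounds I would set $B_{R}(w):=\int_{0}^{R}\big((\p_r w)^{2}+k^{2}r^{-2}w^{2}\big)r\,dr+\ang{Pw\mid w}$ and, symmetrically, $B^{r}(w):=\int_{r}^{\infty}\big((\p_r w)^{2}+k^{2}r^{-2}w^{2}\big)r\,dr+\ang{Pw\mid w}$. The elementary observations are that $R\mapsto B_{R}(w)$ is nondecreasing and $r\mapsto B^{r}(w)$ is nonincreasing (the omitted integrands being nonnegative), and that $B_{R}(w)\to\ang{\LL w\mid w}$ as $R\to\infty$, $B^{r}(w)\to\ang{\LL w\mid w}$ as $r\to0$. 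Negating~\eqref{eq:L-loc1} for a fixed $c>0$ produces $R_{n}\to\infty$ and $w_{n}$ with $\|w_{n}\|_{H}=1$, $\ang{w_{n}\mid\Lam Q}=0$, $B_{R_{n}}(w_{n})<-c$, hence $B_{m}(w_{n})\le B_{R_{n}}(w_{n})<-c$ for every fixed $m$ and all large $n$; passing to a weak limit and combining weak continuity of the $P$-form with weak lower semicontinuity of $w\mapsto\int_{0}^{m}\big((\p_r w)^{2}+k^{2}r^{-2}w^{2}\big)r\,dr$ gives $B_{m}(w_{\infty})\le-c$ for all $m$, and letting $m\to\infty$ yields $\ang{\LL w_{\infty}\mid w_{\infty}}\le-c<0$, contradicting $\LL\ge0$. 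The proof of~\eqref{eq:L-loc2} is word for word the same with $R_{n}\to\infty$ replaced by $r_{n}\to0$ and $B_{m}$ by $B^{m}$. (The estimate~\eqref{eq:L-coerce} then yields Lemma~\ref{l:coerce1} by a routine perturbation absorbing $\LL_{\Phi}-\LL$ using~\eqref{eq:d+small}, so this lemma is the heart of the matter.)

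The step I expect to need the most care is the first preliminary fact: checking that $r^{2}\abs{P(r)}\to0$ at both endpoints with enough uniformity to render the tails negligible independently of the sequence, and then splicing this with the local Rellich compactness; everything else is soft functional analysis. Nothing in the argument is sensitive to the value of $k$ beyond what the ambient hypothesis $k\ge4$ already guarantees.
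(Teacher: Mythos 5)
Your proof is correct, and it runs on essentially the same engine that the paper deploys in the one coercivity proof it does carry out in full. The paper does not actually prove this lemma; it cites~\cite[Lemma 5.4]{JJ-AJM} for the statement and refers elsewhere for the localized parts. The closest point of comparison is therefore the proof given for the $\LL^2$ analogue (the lemma containing~\eqref{eq:L2-coerce}--\eqref{eq:L2-loc2}), which is also a compactness-and-contradiction argument: normalize $\|w_n\|=1$, pass to a weak limit, use the factorization and the kernel characterization to conclude $w_\infty=0$, and then use a compact embedding to show the "potential" part of the quadratic form converges, producing the contradiction. You reach the same contradiction but phrase the last step as weak continuity of $w\mapsto\ang{Pw\mid w}$ (supplied by Rellich on annuli plus the vanishing of $r^2|P|$ at both ends) rather than introducing the paper's auxiliary norm $\|\cdot\|_Y$; these are the same observation dressed differently, with your packaging having the small virtue of handling the three assertions in one uniform scheme via the monotone truncations $B_R$ and $B^r$.

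Two remarks worth recording. First, your argument for~\eqref{eq:L-loc1} and~\eqref{eq:L-loc2} never invokes the orthogonality~\eqref{eq:w-ortho1}: the final contradiction is $\ang{\LL w_\infty\mid w_\infty}\le -c<0$ against $\LL=\B^*\B\ge0$ on $H$, which is unconditional. So those two estimates in fact hold for all $w\in H$; the orthogonality is genuinely used only to kill the kernel direction in the proof of~\eqref{eq:L-coerce}. This is not an error, just a slack hypothesis your proof exposes, and it is consistent with how the lemma is applied later (the localized pieces are used on functions that need not be orthogonal to both rescaled $\Lam Q$'s simultaneously). Second, your closing parenthetical -- that~\eqref{eq:L-coerce} alone yields Lemma~\ref{l:coerce1} by absorbing $\LL_\Phi-\LL$ -- is too glib: since $\Phi$ carries two bubbles at widely separated scales $\lam\ll\mu$, the potential $r^{-2}(f'(\Phi)-f'(Q_\lam))$ is not uniformly small relative to $\LL_\lam$ in the full range of $r$, and the coercivity of $\LL_\Phi$ is in fact obtained by splitting the domain at an intermediate scale and applying~\eqref{eq:L-loc1} near the inner bubble and~\eqref{eq:L-loc2} near the outer one. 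So all three parts of the lemma, not just~\eqref{eq:L-coerce}, are needed downstream; that is why they appear together.
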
 

Lemma~\ref{l:coerce2} is a straightforward consequence of similar localized coercivity lemma for the operator $\LL^2$, for which we give a sketched proof below. Note that the statement that $\bs w(t) = \bs u(t) - \bs \Phi( \mu(t), \lam(t), a(t), b(t)) \in \HH \cap \HH^2 \cap \bs \Lam^{-1} \HH$ follows from the fact that both $\bs u(t)$ and $\bs \Phi( \mu, \lam, a, b)$ are in this space. 

 First, recall that $\LL^2$ is given by the following formula~\eqref{eq:LL2-def} and that we often will write, 
 \EQ{
 \LL^2 w = \LL_0^2 w + \Ks w 
 }
 where $\Ks$ is defined in~\eqref{eq:K-def}. 
\begin{lem}[Localized coercivity for $\LL^2$] There exists a uniform constant $c_1>0$  with the following property. Suppose $w \in H^2$ is such that 
\EQ{ \label{eq:w-ortho} 
\ang{ w \mid  \Lam Q} = 0. 
}
Then, 
\EQ{ \label{eq:L2-coerce}
\ang{ \LL^2 w \mid w} \ge c_1  \| w \|_{H^2}^2 
}
In addition, for any $c>0$, there exists $R_1>0$ large enough so that for all $w \in H^2$ as in~\eqref{eq:w-ortho}, we have 
\EQ{ \label{eq:L2-loc1} 
\int_0^{R_1} ( \LL_0 w)^2 \, r \ud r + \ang{ \Ks w \mid w} \ge - c \| w \|_{H^2}^2 
}
Lastly, for any $c>0$, there exists $r_1>0$ small enough so that for all $w \in H^2$ as in~\eqref{eq:w-ortho}, we have 
\EQ{\label{eq:L2-loc2} 
\int_{r_1}^\infty ( \LL_0 w)^2 \, r \ud r +  \ang{ \Ks w \mid w} \ge - c \| w \|_{H^2}^2 
}
\end{lem}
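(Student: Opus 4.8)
The plan is to follow the strategy of Lemma~\ref{l:loc-coerce}, and the whole argument rests on a single algebraic identity. For $w\in H^{2}$, integrating by parts once in the term $-2\langle\p_{r}P\,\p_{r}w\mid w\rangle$ of~\eqref{eq:K-def} (it produces $\langle\Delta P\,w\mid w\rangle$, which cancels the $-\Delta P$ contribution) gives
\[
\langle \Ks w\mid w\rangle \;=\; 2\langle \LL_{0}w\mid Pw\rangle \;+\; \|Pw\|_{L^{2}}^{2}.
\]
I will also use, repeatedly, the decay $r^{2}|P(r)|\lesssim\min(r^{2k},r^{-2k})$ from~\eqref{eq:P-def} and the bound $\|r^{-2}w\|_{L^{2}}+\|r^{-1}w\|_{L^{\infty}}\lesssim\|w\|_{H^{2}}$, which follows from the explicit formula for $\|\cdot\|_{H^{2}}$ together with the computation in Lemma~\ref{l:wHH2}; together they give the tail estimates $\|\mathbf{1}_{\{r<r_{1}\}}Pw\|_{L^{2}}\lesssim r_{1}^{2k}\|w\|_{H^{2}}$ and $\|\mathbf{1}_{\{r>R_{1}\}}Pw\|_{L^{2}}\lesssim R_{1}^{-2k}\|w\|_{H^{2}}$.

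First I would dispatch the two localized estimates~\eqref{eq:L2-loc1} and~\eqref{eq:L2-loc2}, which are purely algebraic and use neither the orthogonality condition nor~\eqref{eq:L2-coerce}. For~\eqref{eq:L2-loc1}, split $2\langle\LL_{0}w\mid Pw\rangle$ into the integrals over $\{r\le R_{1}\}$ and $\{r>R_{1}\}$; the first combines with $\int_{0}^{R_{1}}(\LL_{0}w)^{2}\rdr$ and $\|Pw\|_{L^{2}}^{2}$ into at least $\int_{0}^{R_{1}}(\LL_{0}w+Pw)^{2}\rdr\ge0$, while the second is at least $-2\|\LL_{0}w\|_{L^{2}}\|\mathbf{1}_{\{r>R_{1}\}}Pw\|_{L^{2}}\gtrsim -R_{1}^{-2k}\|w\|_{H^{2}}^{2}$; choosing $R_{1}$ large makes the total $\ge-c\|w\|_{H^{2}}^{2}$. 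The estimate~\eqref{eq:L2-loc2} is the mirror image, the small-$r$ tail now costing $\gtrsim -r_{1}^{2k}\|w\|_{H^{2}}^{2}$.

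The substantive step is the global coercivity~\eqref{eq:L2-coerce}, and here I would argue by contradiction and concentration-compactness, in the spirit of~\cite[Lemma 5.4]{JJ-AJM}. Suppose it fails and take $w_{n}\in H^{2}$ with $\|w_{n}\|_{H^{2}}=1$, $\langle w_{n}\mid\Lam Q\rangle=0$, and $\|\LL w_{n}\|_{L^{2}}\to0$. Passing to a subsequence, $w_{n}\rightharpoonup w_{\infty}$ weakly in $H^{2}$, hence $w_{n}\to w_{\infty}$ locally uniformly on $(0,\infty)$, and the weighted bounds above hold uniformly in $n$. Using the decay of $P$, this upgrades $Pw_{n}\to Pw_{\infty}$ to \emph{strong} convergence in $L^{2}$, so $\LL w_{n}=\LL_{0}w_{n}+Pw_{n}\rightharpoonup\LL w_{\infty}$ weakly in $L^{2}$; comparing with $\LL w_{n}\to0$ forces $\LL w_{\infty}=0$, and since $\Lam Q$ spans $\ker\LL$ (see~\cite{JL1}) and membership in $H^{2}$ rules out the singular solution of $\LL u=0$, we get $w_{\infty}=c\,\Lam Q$. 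To see that $c=0$ I would pass the orthogonality relation to the limit through $\langle w_{n}\mid\Lam Q\rangle=\langle r^{-2}w_{n}\mid r^{2}\Lam Q\rangle$: here $r^{-2}w_{n}\rightharpoonup r^{-2}w_{\infty}$ weakly in $L^{2}$, while $r^{2}\Lam Q$ behaves like $r^{k+2}$ at the origin and $r^{-k+2}$ at infinity and is therefore in $L^{2}$ \emph{precisely because $k\ge4$}. Hence $0=\langle w_{\infty}\mid\Lam Q\rangle=c\|\Lam Q\|_{L^{2}}^{2}$, so $w_{\infty}=0$; then local uniform convergence and the tail bounds give $\|Pw_{n}\|_{L^{2}}\to0$ and $\langle\LL_{0}w_{n}\mid Pw_{n}\rangle\to0$, so $\|\LL w_{n}\|_{L^{2}}^{2}=\|\LL_{0}w_{n}\|_{L^{2}}^{2}+o(1)=1+o(1)$, contradicting $\|\LL w_{n}\|_{L^{2}}\to0$.

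The hard part is exactly this last step: the non-compactness of the domain at both $r=0$ and $r=\infty$ means one has to work to get strong (not merely weak) convergence of the potential term $Pw_{n}$, and one has to ensure the constraint $\langle w_{n}\mid\Lam Q\rangle=0$ is not lost in the weak limit even though $H^{2}$ does not embed in $L^{2}$; this second point is what pins down the restriction $k\ge4$. The tail estimates for $P$ and the completion-of-squares bookkeeping are routine.
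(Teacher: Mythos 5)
Your proof is correct, and it reaches the same conclusion via a compactness-contradiction argument, but the algebraic reduction you use is genuinely different and somewhat cleaner than the one in the paper. You start from the factorization $\langle \LL^{2}w\mid w\rangle=\|\LL w\|_{L^{2}}^{2}=\|\LL_{0}w\|_{L^{2}}^{2}+2\langle\LL_{0}w\mid Pw\rangle+\|Pw\|_{L^{2}}^{2}$ (equivalently $\langle\Ks w\mid w\rangle=2\langle\LL_{0}w\mid Pw\rangle+\|Pw\|_{L^{2}}^{2}$), and treat the cross term and $\|Pw\|_{L^{2}}^{2}$ as compact perturbations of $\|w\|_{H^{2}}^{2}$, using only the decay of $P$. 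The paper instead expands $\langle\LL^{2}w\mid w\rangle$ further, integrates by parts, and writes the quadratic form with explicit nonpositive weights $h_{1}(r)=2r^{2}P(r)$ and $h_{2}(r)$ multiplying $(\p_{r}w/r)^{2}$ and $(w/r^{2})^{2}$ respectively, together with a uniformly positive algebraic lower bound ($k^{4}-4k^{2}+(\cdots)\sin^{2}Q\ge1$ for $k\ge2$). In the compactness step, the paper concludes $w_{\infty}\ne0$ because the strictly negative $h$-integrals survive the limit via the compact embedding $H^{2}\Subset Y$, while you instead conclude $w_{\infty}=0$ from the kernel argument and then get the contradiction from $\|\LL w_{n}\|_{L^{2}}^{2}=1+o(1)$; these two routes are logically parallel and use the same ingredient (compactness in a weighted norm against $P$-type decay). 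A few small points worth noting in your favor: you give the localized estimates \eqref{eq:L2-loc1}--\eqref{eq:L2-loc2} a direct completion-of-squares proof with explicit tail bounds, whereas the paper delegates these to external references; you also observe (correctly and usefully) that those two estimates do not actually require the orthogonality hypothesis; and you make explicit the point, glossed over in the paper, that passing $\langle w_{n}\mid\Lam Q\rangle=0$ to the weak $H^{2}$ limit requires $r^{2}\Lam Q\in L^{2}$, which is precisely where the standing hypothesis $k\ge4$ enters.
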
 

\begin{proof} 
We first prove~\eqref{eq:L2-coerce}. The argument is standard and similar to the one used to prove the coercivity estimates for first order operators given in~\cite[Appendix B]{RS}.  Observe the identity, 
\EQ{
\int_0^\infty ( \LL_0 w)^2 \,  \rdr = \int_0^\infty (\p_r^2 w)^2 \, \rdr + (2k^2 +1) \int_0^\infty \frac{( \p_r w)^2}{r^2}  \, \rdr + (k^4 - 4 k^2) \int_0^\infty  \frac{ w^2}{r^4} \, \rdr  
}
Next, we compute 
\EQ{
 \ang{ \Ks w \mid w}  = \int_0^\infty 2 P \LL_0 w  \, w \, r \, \ud r - 2\int_0^\infty \p_r P \p_r w w \, r \, \ud r  + \int_0^\infty (P^2 -\De P) w^2 \,  \rdr 
}
Note that 
\EQ{
\int_0^\infty 2 P \LL_0 w  \, w \, r \, \ud r &= -2 \int_0^\infty  P \De w  \, w \, r \, \ud r + 2k^2 \int_0^\infty  P w^2 \, r^{-2} \, r \, \ud r \\
& = 2 \int_0^\infty \p_r P \p_r w w \,  r \, \ud r + 2 \int_0^\infty P (\p_r w)^2 \,  \rdr  + 2k^2 \int_0^\infty  P w^2 \, r^{-2} \, r \, \ud r 
}
and hence we obtain, 
\EQ{
 \ang{ \Ks w \mid w} & = + 2 \int_0^\infty P (\p_r w)^2 \,  \rdr   +  \int_0^\infty (P^2 -\De P  + \frac{2 k^2}{r^2}P) w^2 \,  \rdr 
}
and thus the identity, 
\EQ{ \label{eq:L2ww1} 
\ang{ \LL^2 w \mid w}  &=  \int_0^\infty (\p_r^2 w)^2 \, \rdr  + (2k^2 +1) \int_0^\infty \frac{( \p_r w)^2}{r^2}  \, \rdr +   \int_0^\infty 2 r^2 P(r)\frac{ (\p_r w)^2}{r^2}  \,  \rdr  \\
& \quad + (k^4 - 4 k^2) \int_0^\infty  \frac{ w^2}{r^4} \, \rdr   +  \int_0^\infty r^4(P^2 -\De P  + \frac{2 k^2}{r^2}P) \frac{w^2 }{r^4} \,  \rdr 
}
Define the function $h_1(r)$ by
\EQ{
h_1(r) := 2 r^2 P(r) = 2 (f'(Q) -k^2) = -2k^2 \sin^2 Q  \le 0
}
Note the identity 
\EQ{
r^4 (P^2 - \De P + \frac{2 k^2}{r^2} P) = \big(4k^4 +16 k^3(1- \cos Q) +  8k^2\big) \sin^2 Q   + h_2(r)
}
where the function $h_2(r)$ is given by 
\EQ{
h_2(r) =   - 8k^4 \sin^4 Q - 16k^3 \sin^2 Q  \le 0 
}
The functions $h_1, h_2$ satisfy the bounds, 
\EQ{ \label{eq:h_j-bound} 
\abs{h_1(r)}, \abs{ h_2(r)} \lesssim  \frac{ r^{2k}}{(1 + r^{2k})^2}
}
and we rewrite~\eqref{eq:L2ww1} as, 
\EQ{ \label{eq:LL2-ident} 
\ang{ \LL^2 w \mid w}  &=  \int_0^\infty (\p_r^2 w(r))^2 \, \rdr  + (2k^2 +1) \int_0^\infty \frac{( \p_r w(r))^2}{r^2}  \, \rdr   \\
&\, +  \int_0^\infty \Big(k^4 - 4 k^2 + \big(4k^4 +16 k^3(1- \cos Q) +  8k^2\big) \sin^2 Q \Big)\frac{ w(r)^2}{r^4} \, \rdr  \\\
& \,  +  \int_0^\infty  h_1(r) \frac{( \p_r w(r))^2}{r^2}  \, \rdr  +  \int_0^\infty h_2(r) \frac{ w(r)^2}{r^4} \, \rdr
}
and note that, 
\EQ{ \label{eq:k2-lower} 
k^4 - 4 k^2 + \big(4k^4 +16 k^3(1- \cos Q) +  8k^2\big) \sin^2 Q \ge 1 \mif k \ge 2
}
Now, suppose that~\eqref{eq:L2-coerce} fails. Then we can find a sequence $w_n \in H^2$ so that, 
\EQ{
\| w_n \|_{H^2} = 1, \quad 
\ang{w_n \mid \Lam Q} = 0 \\
\ang{ \LL^2 w_n \mid w_n}   \le  \eps_n   \to 0  \mas n \to \infty 
}
Passing to a subsequence we obtain a weak limit $w_n \rightharpoonup w_\infty \in H^2$ such that 
\EQ{ \label{eq:infty-ortho} 
\ang{ w_{\infty}  \mid \Lam Q} = 0
}
and so that $w_\infty$ is a weak solution to the equation $\LL w_\infty =0$. By elliptic regularity $w_\infty$ is in fact a strong solution. Since the kernel of $\LL$ is given by the span of $\La Q$, we deduce from~\eqref{eq:infty-ortho} that $w_\infty =0$. We will obtain a contradiction by showing that $w_\infty \neq 0$. From~\eqref{eq:LL2-ident} and the lower bound~\eqref{eq:k2-lower} we can find a uniform positive constant $c_2>0$ so that 
\EQ{
c_2 \| w_n \|_{H^2}^2 +   \int_0^\infty  h_1(r) \frac{( \p_r w_n(r))^2}{r^2}  \, \rdr  +  \int_0^\infty h_2(r) \frac{ w_n(r)^2}{r^4} \, \rdr \le \eps_n \| w_n \|_{H^2}^2 
}
From the normalization $\|w_n \|_{H^2} = 1$, we conclude that 
\EQ{
\limsup_{n \to \infty} \Big( \int_0^\infty  h_1(r) \frac{( \p_r w_n(r))^2}{r^2}  \, \rdr  +  \int_0^\infty h_2(r) \frac{ w_n(r)^2}{r^4} \, \rdr \Big)  \le - \frac{1}{2} c_2 < 0
}
Since we have $h_j(r) \le 0$ we can conclude from the above that $w_\infty \neq 0$ if we can show strong convergence of the above integrals. To see this define the norm, 
\EQ{
\| w \|_{Y}^2:=  \int_0^\infty  \frac{ r^{2k}}{(1 + r^{2k})^2} \frac{( \p_r w_n(r))^2}{r^2}  \, \rdr  +  \int_0^\infty \frac{ r^{2k}}{(1 + r^{2k})^2} \frac{ w_n(r)^2}{r^4} \, \rdr
}  
Because of the favorable weight at $r =0$ and $r =\infty$ we observe the compact embedding $H^2  \Subset Y$. Hence, from the estimate~\eqref{eq:h_j-bound} we conclude that 
\EQ{
 \int_0^\infty  h_1(r) \frac{( \p_r w_\infty(r))^2}{r^2}  \, \rdr  +  \int_0^\infty h_2(r) \frac{ w_\infty(r)^2}{r^4} \, \rdr < 0
}
and thus $w_\infty \neq 0$ a contradiction. This proves~\eqref{eq:L2-coerce}. Finally, we note that the localized estimates~\eqref{eq:L2-loc1},~\eqref{eq:L2-loc2} follow from the same argument used to prove the analogous estimates in Lemma~\ref{l:coerce1}, and for these we refer the reader to the detailed arguments given in~\cite[Proof of Lemma 2.1]{JJ-Pisa} or~\cite[Proof of Lemma 2.1]{MaMe16-arma}.  
\end{proof}

\subsection{Dynamical control of the modulation parameters} 
In this section we obtain preliminary estimates on the dynamics of the modulation parameters by differentiating the orthogonality conditions~\eqref{eq:law},~\eqref{eq:muw}, \eqref{eq:lawdot}, and \eqref{eq:muwdot}.


\begin{lem}[Dynamics of modulation parameters] \label{l:modc2} There exists $\eta_1>0$ with the following property. Let $ J \subset \R$ be a time interval and let $\bs u(t)$ be a solution to~\eqref{eq:wmk} on $J$. Suppose that 
\EQ{
\bfd_+( \bs u(t))  \le \eta_1  \quad \forall \, t \in J
}
Let $(\mu(t), \lam(t), a(t), b(t))$ and $\bs w(t)$ be as in Lemma~\ref{l:mod2} and let $\nu := \lam/ \mu$ as usual. Assume (for convenience, as this will later be bootstrapped) that the bound, 
$
\mu \simeq 1
$
is satisfied on $J$. 
Then, 
\begin{align}
\abs{ a - \mu'} &\lesssim (\abs{a} + \abs{b} ) \| \bs w \|_{\HH} +  (\abs{a }+ \abs{b})\nu^{2k} + ( \abs{a}^5 + \abs{b}^5) \nu^{k-2}  \label{eq:a-mu'} \\ 
\abs{ b+ \lam'}& \lesssim (\abs{a} + \abs{b} ) \| \bs w \|_{\HH} +( \abs{a} + \abs{b})\nu^{2k-1}  + ( \abs{a}^5 + \abs{b}^5) \nu^{k-2}\label{eq:b+la'}  \\ 
\abs{a' +  \ti \gamma_k \frac{\nu^k}{\mu}} & \lesssim  \frac{\abs{a}\nu  + \abs{b}\nu}{\la}   \| \dot w \|_{L^2}  +  \frac{\nu}{\la} \| \bs w \|_{\HH}^2 + \frac{\nu^{2k-1}}{\lam} + \frac{b^2 \nu^{k-1}}{\lam} +  \frac{b^4\nu}{\lam} + \frac{a^4\nu}{\lam}    \label{eq:a'est} \\ 
\abs{b' +  \gamma_k \frac{\nu^k}{\lam}} & \lesssim  \frac{\abs{a}  + \abs{b}}{\la}   \| \dot w \|_{L^2}  +  \frac{1}{\la} \| \bs w \|_{\HH}^2 + \frac{\nu^{2k-o(1)}}{\lam}  +  \frac{b^4}{\lam} + \frac{a^4}{\lam} \label{eq:b'est}
\end{align} 
\end{lem}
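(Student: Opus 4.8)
The plan is to differentiate the four orthogonality conditions \eqref{eq:law}--\eqref{eq:muwdot} in time, substitute the evolution equations \eqref{eq:weq} for $\bs w = (w,\dot w)$, and use the remaining orthogonality conditions to cancel the ``good'' pieces. This produces four scalar equations, which after isolating the leading diagonal contributions $\pm\ang{\La Q\mid\La Q}$ take the form of a linear system $(\mathrm{Id}+M)\bs e = \bs F$ for the error vector $\bs e := (b+\lam',\, a-\mu',\, b'+\gamma_k\nu^k/\lam,\, a'+\ti\gamma_k\nu^k/\mu)$. One checks that $\|M\| \lesssim \nu^{k-1} + |a| + |b| \ll 1$, using $\mu\simeq 1$ and the smallness built into $\bfd_+$ (Lemma~\ref{l:mod2}), so that $\mathrm{Id}+M$ is invertible with uniformly bounded inverse, while $\bs F$ is bounded by the right-hand sides of \eqref{eq:a-mu'}--\eqref{eq:b'est}; the four estimates then follow by solving for $\bs e$.

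For the two ``position'' conditions \eqref{eq:law}, \eqref{eq:muw}: differentiating $\ang{\La Q_{\U\la}\mid w}=0$ gives $-\tfrac{\lam'}{\lam}\ang{(\La_0\La Q)_{\U\la}\mid w} + \ang{\La Q_{\U\la}\mid\partial_t w} = 0$, and since $\ang{\La Q_{\U\la}\mid\dot w}=0$ by \eqref{eq:lawdot}, \eqref{eq:weq} reduces this to $\ang{\La Q_{\U\la}\mid\dot\Phi-\partial_t\Phi} = \tfrac{\lam'}{\lam}\ang{(\La_0\La Q)_{\U\la}\mid w}$. Expanding $\dot\Phi-\partial_t\Phi$ via \eqref{eq:dotPhi-ptPhi} and using that $A,B,\ti B\perp\La Q$ (so the self-interaction terms paired against $\La Q_{\U\la}$ drop out), the left side equals $(b+\lam')\ang{\La Q\mid\La Q}$ plus terms that are either quadratically small in $\bs e$ (to be absorbed into $M$) or bounded by $\nu$-powers via the cross-scale pairings of Lemma~\ref{l:ABBQ} and Lemma~\ref{l:ABBQnl}. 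The right side is $O(|b|\,\|w\|_H)$ since $|\lam'/\lam|\simeq|b|/\lam$ and $|\ang{(\La_0\La Q)_{\U\la}\mid w}|\lesssim\lam\|w\|_H$ (the profile $\La_0\La Q$ has enough decay at $0$ and $\infty$ for $k\ge4$). The same computation for $\ang{\La Q_{\U\mu}\mid w}=0$ gives the second equation of the system, and inversion yields \eqref{eq:a-mu'}, \eqref{eq:b+la'}.

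For the two ``velocity'' conditions \eqref{eq:lawdot}, \eqref{eq:muwdot}: differentiating $\ang{\La Q_{\U\la}\mid\dot w}=0$ and using \eqref{eq:weq} for $\partial_t\dot w$ gives $-\tfrac{\lam'}{\lam}\ang{(\La_0\La Q)_{\U\la}\mid\dot w} + \ang{\La Q_{\U\la}\mid\De w} + \ang{\La Q_{\U\la}\mid-\partial_t\dot\Phi+\De\Phi-r^{-2}f(\Phi)} - \ang{\La Q_{\U\la}\mid r^{-2}(f(\Phi+w)-f(\Phi))} = 0$. The crucial algebraic point is that, after integrating by parts and using $\LL_\la\La Q_{\U\la}=0$, the term $\ang{\La Q_{\U\la}\mid\De w}$ becomes $\ang{r^{-2}f'(Q_\la)\La Q_{\U\la}\mid w}$, which combines with the last term to produce exactly $-\ang{\La Q_{\U\la}\mid r^{-2}(f(\Phi+w)-f(\Phi)-f'(Q_\la)w)}$ — the quantity controlled by \eqref{eq:w^2la}, and this is where the gain from $\|w\|_H$ to $\|w\|_H^2$ originates. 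The block $\ang{\La Q_{\U\la}\mid-\partial_t\dot\Phi+\De\Phi-r^{-2}f(\Phi)}$ is handled by expanding \eqref{eq:ptdotPhi}: its diagonal part is $-(b'+\gamma_k\nu^k/\lam)\ang{\La Q\mid\La Q}$, the terms proportional to $(b+\lam')$, $(a-\mu')$, $(a'+\ti\gamma_k\nu^k/\mu)$ feed back into $M$ with $\nu$-power or $|a|,|b|$ coefficients, and the three pure-interaction error lines of \eqref{eq:ptdotPhi} are bounded by the $\ang{\La Q_{\U\la}\mid\cdot}$ estimates of Lemma~\ref{l:fest1}, i.e.\ \eqref{eq:ff'la}. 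Finally $|\tfrac{\lam'}{\lam}\ang{(\La_0\La Q)_{\U\la}\mid\dot w}|\lesssim\tfrac{|b|}{\lam}\|\dot w\|_{L^2}$ accounts for the $\|\dot w\|_{L^2}$ terms in \eqref{eq:b'est}. Repeating the argument with $\ang{\La Q_{\U\mu}\mid\dot w}=0$ and invoking \eqref{eq:ff'mu} and \eqref{eq:w^2mu} gives \eqref{eq:a'est}.

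The main obstacle is the bookkeeping: the right-hand side of \eqref{eq:ptdotPhi} has a very large number of terms, and for each of them, paired against $\La Q_{\U\la}$ or $\La Q_{\U\mu}$, one must decide whether it is (i) part of the diagonal, (ii) a feedback term whose coefficient in $\bs e$ is small enough to be absorbed into $\mathrm{Id}+M$, or (iii) genuine forcing of the claimed size; the technical estimates of Lemmas~\ref{l:ABBQ}, \ref{l:ABBQnl}, \ref{l:fest1}, \ref{l:w^2est} are tailored precisely so that category (iii) comes out dominated by the right-hand sides of \eqref{eq:a-mu'}--\eqref{eq:b'est}. The single genuinely non-mechanical step is recognizing the $\LL_\la\La Q_{\U\la}=0$ cancellation in the velocity conditions, without which the linear-in-$w$ contribution would only be $O(\lam^{-1}\|w\|_H)$ rather than $O(\lam^{-1}\|w\|_H^2)$, and the energy estimates of the subsequent sections would fail to close.
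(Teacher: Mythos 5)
Your proposal is correct and follows essentially the same route as the paper: differentiate the four orthogonality conditions, use the remaining conditions and $\LL_\lam \La Q_{\U\lam}=0$ to cancel the dominant pieces, set up a $4\times 4$ linear system in $(b+\lam',\,a-\mu',\,b'+\gamma_k\nu^k/\lam,\,a'+\ti\gamma_k\nu^k/\mu)$ with $\pm\|\La Q\|_{L^2}^2$ on the diagonal and small off-diagonal entries, invert, and estimate the forcing via Lemmas~\ref{l:ABBQ}, \ref{l:ABBQnl}, \ref{l:fest1}, \ref{l:w^2est}. You correctly identify the key cancellation ($\LL_\lam\La Q_{\U\lam}=0$) that upgrades the linear-in-$w$ term to quadratic. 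One small imprecision: your claimed bound $\|M\|\lesssim\nu^{k-1}+|a|+|b|$ omits a contribution of size $\frac{1}{\lam}\|\dot w\|_{L^2}$ coming from the coupling between the position-equations and the velocity-unknowns (the paper's $M_3$ block); this is still small under the hypotheses because $\|\dot w\|_{L^2}\lesssim\nu^{k/2}\eta_1$ with $\nu\simeq\lam$ and $k\ge 4$, but the paper handles the scale mismatch more carefully through the $2\times2$ block/Schur-complement structure rather than a naive operator-norm bound on the whole off-diagonal correction.
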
 


\begin{proof}[Proof of Lemma~\ref{l:modc2}] 
We differentiate the orthogonality conditions~\eqref{eq:law}--\eqref{eq:muwdot} with the goal of setting up a linear system for $((a- \mu'), (b+ \la'),( a' +   \ga_k \frac{\nu^k}{\mu}),  ( b' + \gamma_k \frac{\nu^{k}}{\la}))$. We begin with~\eqref{eq:law} using~\eqref{eq:weq} in the third equality below. 
\EQ{
 0 &= \frac{\ud}{\ud t} \ang{ \La Q_{\U \la}  \mid w}  = -\frac{\la'}{\la} \ang{ \La_0 \La Q_{\U \la} \mid  w} + \ang{ \La Q_{\U \la}  \mid \p_t w} \\
 & = -\frac{\la'}{\la} \ang{ \La_0 \La Q_{\U \la} \mid  w} + \ang{ \La Q_{\U \la}  \mid  \dot w}   + \ang{  \La Q_{\U \la} \mid \dot \Phi - \p_t \Phi  }  \\ 
 & = -\frac{\la'}{\la} \ang{ \La_0 \La Q_{\U \la} \mid  w }  + \ang{  \La Q_{\U \la} \mid \dot \Phi - \p_t \Phi  }  \\
 & = I_a + I_b 
 }
 where in the second to last line we have used the orthogonality~\eqref{eq:lawdot}.  First we write $I_a$ as 
 \EQ{
 I_a = b\ang{ (r\La_0 \La Q)_{\U \la} \mid  w/ r } -   (\la' + b) \ang{ (r\La_0 \La Q)_{\U \la} \mid  w/ r } 
 }
  We expand out term $I_b$ above using~\eqref{eq:dotPhi-ptPhi} along with~\eqref{eq:AQ}~\eqref{eq:BQ}. 
 \EQ{
 I_b &= (b+ \la') \Big(  \| \La Q \|_{L^2}^2  + b^2 \ang{ \La Q\mid  \La A} + \nu^k \ang{ \La Q \mid \La B}  - k \nu^{k-1} \ang{ \La Q_{\U \lam} \mid \ti B_{\U \mu}} \Big)  \\
 & \quad +  (a- \mu') \Big( \ang{ \La Q_{\U \lam}  \mid  \La Q_{\U \mu} }   + a^2 \ang{ \La Q_{\U \lam}  \mid \La A_{\U \mu}}  + \nu^k  \ang{ \La Q_{\U \lam}  \mid   \La \ti B_{\U \mu}}  + k \nu^k \ang{ \Lam Q_{\U \lam} \mid \ti B_{\U \mu}} \Big)  \\
 &\quad + ( a' +   \ga_k \frac{\nu^k}{\mu}) 2 a \mu  \ang{ \La Q_{\U \lam}  \mid A_{\U\mu} }  
  }
Combining the above we arrive at the identity, 
\EQ{
(b+ \la') \Big(  \| \La &Q \|_{L^2}^2  + b^2 \ang{ \La Q\mid  \La A} + \nu^k \ang{ \La Q \mid \La B}  - k \nu^{k-1} \ang{ \La Q_{\U \lam} \mid \ti B_{\U \mu}} -  \ang{ (r\La_0 \La Q)_{\U \la} \mid  \frac{w}{r} } \Big)  \\
 +  (a- \mu')& \Big( \ang{ \La Q_{\U \lam}  \mid  \La Q_{\U \mu} }   + a^2 \ang{ \La Q_{\U \lam}  \mid \La A_{\U \mu}}  + \nu^k  \ang{ \La Q_{\U \lam}  \mid   \La \ti B_{\U \mu}}  + k \nu^k \ang{ \Lam Q_{\U \lam} \mid \ti B_{\U \mu}} \Big)  \\ 
 +  ( a' +   \ga_k& \frac{\nu^k}{\mu}) \Big(2 a \mu  \ang{ \La Q_{\U \lam}  \mid A_{\U\mu} }  \Big)\\ 
 & =  - b\ang{ (r\La_0 \La Q)_{\U \la} \mid  w/ r } 
}
Similarly, we differentiate~\eqref{eq:muw} again using~\eqref{eq:weq} and~\eqref{eq:muwdot}. 
\EQ{
0 &=\frac{\ud}{\ud t}  \ang{\La Q_{\U{\mu}} \mid w } =  -\frac{\mu'}{\mu} \ang{  \La_0 \La Q_{\U \mu} \mid w }  +  \ang{  \La Q_{\U \mu} \mid \p_t w } \\
& =  -\frac{\mu'}{\mu} \ang{  \La_0 \La Q_{\U \mu} \mid w }  +  \ang{ \La Q_{\U \mu} \mid\dot \Phi - \p_t \Phi}   \\
& = II_a + II_b 
}
Write, 
\EQ{
II_a = (a- \mu') \ang{ (r \La_0 \La Q)_{\U \mu} \mid r^{-1} w}  - a  \ang{ (r \La_0 \La Q)_{\U \mu} \mid r^{-1} w}
}
Using~\eqref{eq:dotPhi-ptPhi} along with~\eqref{eq:AQ} and~\eqref{eq:tiBQ} we see that 
\EQ{
II_b &=  (a - \mu') \Big(\|\La Q \|_{L^2}^2  + a^2\ang{ \La Q \mid \La A}   + \nu^k \ang{ \La Q \mid  \La \ti B} - k \nu^{k+1} \ang{ \Lam Q_{\U \mu} \mid   B_{ \U\la} } \Big) \\
& \quad + (b+ \la')\Big(\ang{ \Lam Q_{\U \mu} \mid \La Q_{\U \la}}   + b^2 \ang{ \Lam Q_{\U \mu} \mid  \La A_{\U \la} }   +  \nu^k \ang{ \Lam Q_{\U \mu} \mid \La B_{\U \la}}   - k \nu^k \ang{ \Lam Q_{\U \mu} \mid  B_{ \U\la} }  \Big)    \\ 
& \quad  +( b' + \gamma_k \frac{\nu^{k}}{\la})  \Big( -2 b \la \ang{ \Lam Q_{\U \mu} \mid  A_{\U \la} }\Big)   
}
Combining the above we obtain the identity 
\EQ{
(a - \mu')& \Big(\|\La Q \|_{L^2}^2  + a^2\ang{ \La Q \mid \La A}   + \nu^k \ang{ \La Q \mid  \La \ti B} - k \nu^{k+1} \ang{ \Lam Q_{\U \mu} \mid   B_{ \U\la} } + \ang{ (r \La_0 \La Q)_{\U \mu} \mid r^{-1} w}\Big) \\
  + (b+ \la')&\Big(\ang{ \Lam Q_{\U \mu} \mid \La Q_{\U \la}}   + b^2 \ang{ \Lam Q_{\U \mu} \mid  \La A_{\U \la} }   +  \nu^k \ang{ \Lam Q_{\U \mu} \mid \La B_{\U \la}}   - k \nu^k \ang{ \Lam Q_{\U \mu} \mid  B_{ \U\la} }  \Big)    \\ 
  + ( b' + \gamma_k& \frac{\nu^{k}}{\la})  \Big( -2 b \la \ang{ \Lam Q_{\U \mu} \mid  A_{\U \la} }\Big) \\
& = a  \ang{ (r \La_0 \La Q)_{\U \mu} \mid r^{-1} w} 
}
Define matrix entries,   
\EQ{
M_{11} &:= \|\La Q \|_{L^2}^2  + a^2\ang{ \La Q \mid \La A}   + \nu^k \langle \La Q \mid  \La \ti B\rangle - k \nu^{k+1} \ang{ \Lam Q_{\U \mu} \mid   B_{ \U\la} } + \ang{ (r \La_0 \La Q)_{\U \mu} \mid  w/r}\\
M_{22} &:= \| \La Q \|_{L^2}^2  + b^2 \ang{ \La Q\mid  \La A} + \nu^k \ang{ \La Q \mid \La B}  - k \nu^{k-1} \ang{ \La Q_{\U \lam} \mid \ti B_{\U \mu}} -  \ang{ (r\La_0 \La Q)_{\U \la} \mid  w/ r }\\
M_{12} &:= \ang{ \Lam Q_{\U \mu} \mid \La Q_{\U \la}}   + b^2 \ang{ \Lam Q_{\U \mu} \mid  \La A_{\U \la} }   +  \nu^k \ang{ \Lam Q_{\U \mu} \mid \La B_{\U \la}}   - k \nu^k \ang{ \Lam Q_{\U \mu} \mid  B_{ \U\la} }  \\
M_{21}&:=  \ang{ \La Q_{\U \lam}  \mid  \La Q_{\U \mu} }   + a^2 \ang{ \La Q_{\U \lam}  \mid \La A_{\U \mu}}  + \nu^k  \ang{ \La Q_{\U \lam}  \mid   \La \ti B_{\U \mu}}  + k \nu^k \ang{ \Lam Q_{\U \lam} \mid \ti B_{\U \mu}} \\
M_{14}& := -2 b \la \ang{ \Lam Q_{\U \mu} \mid  A_{\U \la} } \\
M_{23}& := 2 a \mu  \ang{ \La Q_{\U \lam}  \mid A_{\U\mu}} 
} 
and column vector entries, 
\EQ{
G_1 &:= a  \ang{ (r \La_0 \La Q)_{\U \mu} \mid r^{-1} w}\\ 
G_2 &:= - b\ang{ (r\La_0 \La Q)_{\U \la} \mid  w/ r } 
}
Next, we differentiate~\eqref{eq:muwdot} and~\eqref{eq:lawdot}. 
We compute, 
\EQ{
0 &= \frac{\ud}{\ud t}  \ang{ \La Q_{\U \mu} \mid  \dot w}  = -\frac{\mu'}{\mu} \ang{  \La_0 \La Q_{\U \mu} \mid  \dot w} + \ang{ \La Q_{\U \mu} \mid \p_t \dot w} \\
&  = -\frac{\mu'}{\mu} \ang{  \La_0 \La Q_{\U \mu} \mid  \dot w}  + \ang{ \La Q_{\U \mu} \mid - \LL_\mu \dot w} \\
& \quad + \ang{ \La Q_{\U \mu} \mid -\p_t \dot \Phi + \De \Phi -\frac{1}{r^2} f( \Phi)} + \ang{ \La Q_{\U \mu} \mid -\frac{1}{r^2} \Big( f( \Phi + w) - f( \Phi) - f'( Q_\mu) w \Big) }  \\  
& =( a-\mu')\frac{\nu}{\lam}\ang{  \La_0 \La Q_{\U \mu} \mid  \dot w}   - \frac{a \nu}{\la} \ang{  \La_0 \La Q_{\U \mu} \mid  \dot w}  \\
& \quad + \ang{ \La Q_{\U \mu} \mid -\p_t \dot \Phi + \De \Phi -\frac{1}{r^2} f( \Phi)}  + \ang{ \La Q_{\U \mu} \mid -\frac{1}{r^2} \Big( f( \Phi + w) - f( \Phi) - f'( Q_\mu) w \Big) }  
}
where we have used above that $ \LL_\mu \La Q_{\U \mu} = 0$. Inserting the expresssion~\eqref{eq:ptdotPhi} into the previous line and using 
the fact that $\ang{\La_0 \La Q \mid  \La Q} = 0$ along with~\eqref{eq:AQ},~\eqref{eq:BQ}, and~\eqref{eq:tiBQ}, we arrive at the expression, 
\EQ{
G_3 = M_{31} (a-\mu') + M_{32}  (b+\la') + M_{33} (a'  + \ti \gamma_k \frac{\nu^k}{\mu}) + M_{34} (b'+ \gamma_k \frac{\nu^k}{\la})
}
where 
\begingroup
\allowdisplaybreaks
\begin{align} 
- G_3& :=  - \frac{a \nu}{\la} \ang{  \La_0 \La Q_{\U \mu} \mid  \dot w} + 3\gamma_k \frac{\nu^k}{\la}b^2  \ang{ \La Q_{\U \mu} \mid \La A_{\U \la} }   -  \frac{b^4}{\la}\ang{ \La Q_{\U \mu} \mid \La_0 \La A_{\U \la} }   \\
& \quad    -2 \gamma_k^2  \frac{\nu^{2k}}{\la} \ang{ \La Q_{\U \mu} \mid A_{\U \la} } + 2 \gamma_k\frac{ b^2 \nu^k}{\la}\ang{ \La Q_{\U \mu} \mid  \La_0 A_{\U \la}  }    +   \gamma_k  \frac{\nu^{2k}}{\lam} \ang{ \La Q_{\U \mu} \mid\La B_{\U \la}} \\
&\quad    - 2k \gamma_k  \frac{b^2 \nu^k}{\lam} \ang{ \La Q_{\U \mu} \mid A_{\U \la} }  - 2k \gamma_k  \frac{a b\nu^{k+1}}{\lam} \ang{ \La Q_{\U \mu} \mid A_{\U \la} }  + k  \frac{b^2 \nu^k}{\lam} \ang{ \La Q_{\U \mu} \mid \La B_{\U\la} } \\
& \quad   + k  \frac{ab \nu^{k+1}}{\lam}\ang{ \La Q_{\U \mu} \mid  \La B_{\U\la} }     - \frac{b^2 \nu^k}{\la}\ang{ \La Q_{\U \mu} \mid \La_0 \La B_{\U \la} }   -  k \frac{\nu^{2k}}{\lam} \ang{ \La Q_{\U \mu} \mid B_{\U \la}}  \\
& \quad  +  \frac{ k b^2 \nu^{k}}{\la}\ang{ \La Q_{\U \mu} \mid \La_0 B_{ \U \la} } -  k^2 \frac{b^2 \nu^{k} }{\lam}\ang{ \La Q_{\U \mu} \mid B_{ \U \la} }    -   k^2 \frac{a b  \nu^{k+1}}{\lam} \ang{ \La Q_{\U \mu} \mid B_{ \U \la}}   \\
&\quad    - k(k+1) ab \frac{\nu^{k+1}}{\lam}\ang{\Lam Q_{\U \mu} \mid  B_{\U \lam}}  + k a b\frac{\nu^{k+1}}{\lam} \ang{\Lam Q_{\U \mu} \mid \La_0 B_{\U \lam}}   \\
&\quad -   k(k+1) a^2 \frac{\nu^{k+2}}{\lam}\ang{\Lam Q_{\U \mu} \mid B_{\U \lam}}  -  k  \ti \gamma_k \frac{\nu^{2k+2}}{\lam} \ang{\Lam Q_{\U \mu} \mid B_{\U \lam}}   \\ 
 &\quad   + 3 \ti \gamma_k \frac{a^2\nu^k}{\mu}\ang{ \La Q \mid \La A }    + \frac{a^4\nu}{\lam}\ang{ \La Q \mid \La_0 \La A }      
 + 2 \gamma_k \frac{a^2 \nu^{k+1}}{\lam} \ang{ \La Q\mid  \La_0 A}  \\
 & \quad    -  \ti \gamma_k  \frac{\nu^{2k+1}}{\lam}\ang{ \La Q \mid \La  \ti B }   +  k  \frac{ ab \nu^k}{\lam}\ang{ \La Q \mid  \La \ti B } +  k \frac{a^2 \nu^{k+1}}{\lam}\ang{ \La Q \mid  \La \ti B} \\
& \quad   +  \frac{ a^2 \nu^{k+1}}{\lam} \ang{ \La Q \mid \La_0 \La \ti B }    
 + k \frac{ a b \nu^{k}}{\lam}\ang{ \La Q \mid  \La_0  \ti B }  + k a^2 \frac{ \nu^{k+1}}{\lam} \ang{ \Lam Q \mid  \Lam_0 \ti B}  \\
 &\quad - \ang{ \Lam Q_{\U \mu} \mid \frac{1}{r^2} \Big( f(Q_\la - Q_\mu)  - f(Q_\la) + f(Q_\mu)  - 4  \big(\frac{r}{ \mu}\big)^k (\La Q_{\la})^2  -  4 \big(\frac{r}{ \la}\big)^{-k} (\La Q_{\mu})^2 \Big)} \\
&\quad - \ang{ \Lam Q_{\U \mu} \mid \frac{1}{r^2} \Big( f(\Phi) - f(Q_\la - Q_\mu) -  f'(Q_\la- Q_\mu) (b^2 T_\la - a^2 \ti T_{\mu} \Big) } \\
&\quad - \ang{ \Lam Q_{\U \mu} \mid \frac{1}{r^2}  \Big(f'(Q_\la - Q_\mu) (b^2 T_\la - a^2 \ti T_\mu)  - f'(Q_\la) b^2 T_\la + f'(Q_\mu) a^2 \ti T_\mu\Big)}  \\
&\quad - \ang{ \La Q_{\U \mu} \mid \frac{1}{r^2} \Big( f( \Phi + w) - f( \Phi) - f'( Q_\mu) w \Big) }
 \end{align} 
 \endgroup
and the matrix entries $M_{3j}$ are defined by, 
\EQ{
 M_{31} & = - \frac{ a^3 \nu}{\lam}  \ang{ \La Q \mid  \La_0 \La A } - 2 \gamma_k \frac{a \nu^{k+1}}{\lam}   \ang{ \La Q \mid\La_0 A} - \frac{a \nu^{k+1}}{\lam} \ang{ \La Q \mid  \La_0 \La \ti B}   \\
& \quad   - k  \frac{ b \nu^{k}}{\lam}  \ang{ \La Q \mid \La_0  \ti B}  + 2k \gamma_k  \frac{b\nu^{k+1}}{\lam}  \ang{ \La Q_{\U \mu} \mid  A_{\U \la}}  - k  \frac{b\nu^{k+1}}{\lam} \ang{ \La Q_{\U \mu} \mid  \La B_{\U\la}} \\
& \quad   +  k^2 \frac{b \nu^{k+1}}{\la} \ang{ \La Q_{\U \mu} \mid  B_{ \U \la}}     - k \frac{ a \nu^{k+1}}{\lam} \ang{ \La Q \mid  \La \ti B} +  \frac{\nu}{\lam}\ang{  \La_0 \La Q_{\U \mu} \mid  \dot w}  \\ 
&\quad +  k(k+1) a \frac{\nu^{k+2}}{\lam}\ang{ \Lam Q_{\U \mu} \mid  B_{\U \lam}} 
-  k a \frac{\nu^{k+1}}{\lam} \ang{ \Lam Q \mid  \Lam_0 \ti B } 
}
\EQ{
 M_{32} &= \frac{b}{\la}  \ang{ \La Q_{\U \mu} \mid \La_0 \La Q_{\U \la}}  +  \frac{b^3}{\la} \ang{ \La Q_{\U \mu} \mid \La_0 \La A_{\U \la}}  - 2 \gamma_k \frac{ b \nu^k}{\la} \ang{ \La Q_{\U \mu} \mid \La_0 A_{\U \la} }  \\
& \quad   - k   \frac{ b \nu^{k}}{\la} \ang{ \La Q_{\U \mu} \mid \La_0 B_{ \U \la}}  +  \frac{b \nu^k}{\la} \ang{ \La Q_{\U \mu} \mid \La_0 \La B_{\U \la}}   + 2k \gamma_k \frac{ b \nu^{k}}{\la}  \ang{ \La Q_{\U \mu} \mid A_{\U \la}} \\
& \quad   - k  \frac{ b\nu^k}{\lam}  \ang{ \La Q_{\U \mu} \mid\La B_{\U\la}} +   k^2\frac{b\nu^k}{\lam}  \ang{ \La Q_{\U \mu} \mid B_{ \U \la}}     - k  \frac{a\nu^k}{\lam}  \ang{ \La Q\mid \La \ti B}    \\
&\quad +  k(k+1)  \frac{a\nu^{k+1}}{\lam}\ang{ \Lam Q_{\U \mu} \mid  B_{\U \lam} } - k  \frac{a\nu^{k+1}}{\lam} \ang{ \Lam Q_{\U \mu} \mid  \La_0 B_{\U \lam}}   
}
\EQ{
 M_{33} & = -  \| \La Q \|_{L^2}^2   -3a^2 \ang{ \La Q \mid \La A}    +  \nu^k \ang{ \La Q \mid \La  \ti B} +  k \nu^{k+1} \ang{ \Lam Q_{\U \mu} \mid B_{\U \lam} }   
 }
 \EQ{
 M_{34} & = -  \ang{ \La Q_{\U \mu} \mid \La Q_{\U \la}}  - 3   b^2  \ang{ \La Q_{\U \mu} \mid\La A_{\U \la}} + 2 \gamma_k\nu^k   \ang{ \La Q_{\U \mu} \mid A_{\U \la} }  \\
&  \quad  -  \nu^k  \ang{ \La Q_{\U \mu} \mid \La B_{\U \la}} +  k \nu^k \ang{ \La Q_{\U \mu} \mid B_{\U \la}} 
}

Lastly, we compute, 
\EQ{
0 &= \frac{\ud}{\ud t} \ang{ \La Q_{\U \la} \mid \dot w}  \\
& =  -\frac{\la'}{\la} \ang{ \La_0 \La Q_{\U \la} \mid \dot w} + \ang{ \La Q_{\U \la} \mid \p_t \dot w}  \\
& = -\frac{\la'}{\la} \ang{ \La_0 \La Q_{\U \la} \mid \dot w} + \ang{ \La Q_{\U \la} \mid - \LL_\la  w }  + \ang{ \La Q_{\U \la} \mid  -  \p_t \dot \Phi  + \De \Phi  - \frac{1}{r^2} f( \Phi) }  \\
&\quad -  \ang{ \La Q_{\U \la} \mid   \frac{1}{r^2}  \Big ( f( \Phi +w) - f( \Phi) - f'(Q_\la) w \Big) }    \\
& = ( b+ \la') (-\frac{1}{\la} \ang{ \La_0 \La Q_{\U \la} \mid \dot w}   +\frac{b}{\la} \ang{ \La_0 \La Q_{\U \la} \mid \dot w}  \\
& \quad + \ang{ \La Q_{\U \la} \mid  -  \p_t \dot \Phi  + \De \Phi  - \frac{1}{r^2} f( \Phi) }  -  \ang{ \La Q_{\U \la} \mid   \frac{1}{r^2}  \Big ( f( \Phi +w) - f( \Phi) - f'(Q_\la) w \Big) } 
}
where in the second-to-last line we used that $\ang{ \La Q_{\U \la}  \mid -  \LL_\la w  } = -\ang{ \LL_\la \La Q_{\U \la} \mid w} = 0$. 
Inserting the expresssion~\eqref{eq:ptdotPhi} into the previous line and using 
the fact that $\ang{\La_0 \La Q \mid  \La Q} = 0$ along with~\eqref{eq:AQ},~\eqref{eq:BQ}, and~\eqref{eq:tiBQ}, we arrive at the expression, 
\EQ{
G_4 = M_{41} (a-\mu') + M_{42} (b+\la') + M_{43} (a'  + \ti \gamma_k \frac{\nu^k}{\mu}) + M_{44} (b'+ \gamma_k \frac{\nu^k}{\la})
}
where 
\begingroup
\allowdisplaybreaks
\begin{align} 
-G_4 & := \frac{b}{\la} \ang{ \La_0 \La Q_{\U \la} \mid \dot w}   + 3\gamma_k \frac{\nu^k}{\la}b^2 \ang{  \Lam Q \mid \La A}   -  \frac{b^4}{\la} \ang{  \Lam Q \mid \La_0 \La A }   + 2 \gamma_k\frac{ b^2 \nu^k}{\la} \ang{  \Lam Q \mid  \La_0 A}    \\
& \quad   +   \gamma_k  \frac{\nu^{2k}}{\lam} \ang{  \Lam Q \mid \La B }      + k  \frac{b^2 \nu^k}{\lam} \ang{  \Lam Q \mid  \La B }  + k  \frac{ab \nu^{k+1}}{\lam} \ang{  \Lam Q \mid  \La B}  \\
&\quad    - \frac{b^2 \nu^k}{\la} \ang{  \Lam Q \mid \La_0 \La B}    +  \frac{ k b^2 \nu^{k}}{\la} \ang{  \Lam Q \mid \La_0 B} + k a b\frac{\nu^{k+1}}{\lam} \ang{ \Lam Q \mid  \La_0 B}  \\
 &\quad   + 3 \ti \gamma_k \frac{a^2\nu^k}{\mu} \ang{  \Lam Q_{\U \lam} \mid \La A_{\U \mu} }    + \frac{a^4\nu}{\lam}  \ang{  \Lam Q_{\U \lam} \mid\La_0 \La A_{\U \mu}}    +   2  \ti \gamma_k^2 \frac{\nu^{2k+1}}{\la}  \ang{  \Lam Q_{\U \lam} \mid A_{\U \mu} }   \\
 & \quad  + 2k \ga_k \frac{ a b \nu^k}{\lam}  \ang{  \Lam Q_{\U \lam} \mid A_{\U \mu} } +  2k \ga_k\frac{a^2 \nu^{k+1}}{\lam}  \ang{  \Lam Q_{\U \lam} \mid A_{\U \mu}}  +  k \ti \gamma_k  \frac{\nu^{2k+1}}{\lam} \ang{ \Lam Q_{\U \lam} \mid  \ti B_{\U \mu} }   \\
 &\quad  + 2 \gamma_k \frac{a^2 \nu^{k+1}}{\lam}  \ang{  \Lam Q_{\U \lam} \mid  \La_0 A_{\U \mu} }   -  \ti \gamma_k  \frac{\nu^{2k+1}}{\lam} \ang{  \Lam Q_{\U \lam} \mid \La  \ti B_{\U \mu} }  +  k  \frac{ ab \nu^k}{\lam}  \ang{  \Lam Q_{\U \lam} \mid \La \ti B_{\U \mu} } \\
 & \quad  +  k \frac{a^2 \nu^{k+1}}{\lam}   \ang{  \Lam Q_{\U \lam} \mid\La \ti B_{\U \mu}}   +  \frac{ a^2 \nu^{k+1}}{\lam}  \ang{  \Lam Q_{\U \lam} \mid \La_0 \La \ti B_{\U \mu} }   + k \gamma_k \frac{\nu^{2k-1}}{\lam}  \ang{  \Lam Q_{\U \lam} \mid\ti B_{\U \mu}}   \\
 & \quad  + k(k-1) \frac{b^2 \nu^{k-1}}{\lam}  \ang{  \Lam Q_{\U \lam} \mid \ti B_{\U \mu} }  + k(k-1) \frac{ab  \nu^{k}}{\lam}  \ang{  \Lam Q_{\U \lam} \mid \ti B_{\U \mu}}   + k \frac{ a b \nu^{k}}{\lam}  \ang{  \Lam Q_{\U \lam} \mid \La_0  \ti B_{\U \mu} } \\
 &\quad   + k^2 a^2 \frac{\nu^{k+1}}{ \lam}\ang{ \Lam Q_{\U \lam} \mid  \ti B_{\U \mu}}  + k a^2 \frac{ \nu^{k+1}}{\lam} \ang{ \Lam Q_{\U \lam} \mid  \Lam_0 \ti B_{\U \mu}}   + k^2 a b \frac{\nu^k}{\lam} \ang{ \Lam Q_{\U \lam} \mid  \ti B_{\U \mu}}   \\ 
 &\quad - \ang{  \Lam Q_{\U \lam} \mid\frac{1}{r^2} \Big( f(Q_\la - Q_\mu)  - f(Q_\la) + f(Q_\mu)  - 4  \big(\frac{r}{ \mu}\big)^k (\La Q_{\la})^2  -  4 \big(\frac{r}{ \la}\big)^{-k} (\La Q_{\mu})^2 \Big)} \\
&\quad - \ang{  \Lam Q_{\U \lam} \mid\frac{1}{r^2} \Big( f(\Phi) - f(Q_\la - Q_\mu) -  f'(Q_\la- Q_\mu) (b^2 T_\la - a^2 \ti T_{\mu} \Big)}   \\
&\quad - \ang{  \Lam Q_{\U \lam} \mid\frac{1}{r^2}  \Big(f'(Q_\la - Q_\mu) (b^2 T_\la - a^2 \ti T_\mu)  - f'(Q_\la) b^2 T_\la + f'(Q_\mu) a^2 \ti T_\mu\Big)}  \\
& \quad  -  \ang{ \La Q_{\U \la} \mid   \frac{1}{r^2}  \Big ( f( \Phi +w) - f( \Phi) - f'(Q_\la) w \Big) }
\end{align} 
\endgroup 
and  the last row of matrix entries $M_{4j}$ are given by, 
\EQ{
 M_{41} &= -\frac{a \nu }{\lam} \ang{  \Lam Q_{\U \lam} \mid \La_0 \La Q_{\U \mu} }  - \frac{ a^3 \nu}{\lam}  \ang{  \Lam Q_{\U \lam} \mid  \La_0 \La A_{\U \mu} } - 2 \gamma_k \frac{a \nu^{k+1}}{\lam} \ang{  \Lam Q_{\U \lam} \mid  \La_0 A_{\U \mu}} \\
& \quad  - \frac{a \nu^{k+1}}{\lam}  \ang{  \Lam Q_{\U \lam} \mid \La_0 \La \ti B_{\U \mu} }  
   - k  \frac{ b \nu^{k}}{\lam}  \ang{  \Lam Q_{\U \lam} \mid \La_0  \ti B_{\U \mu}}  - k  \frac{b\nu^{k+1}}{\lam}  \ang{  \Lam Q_{\U \lam} \mid \La B_{\U\la} }  \\
&  \quad     - 2k \ga_k \frac{a \nu^{k+1}}{\lam}  \ang{  \Lam Q_{\U \lam} \mid A_{\U \mu}}   - k \frac{ a \nu^{k+1}}{\lam}  \ang{  \Lam Q_{\U \lam} \mid \La \ti B_{\U \mu}}   - k(k-1) \frac{  b  \nu^{k}}{\lam}  \ang{  \Lam Q_{\U \lam} \mid \ti B_{\U \mu} } \\
& \quad -  k^2 a \frac{\nu^{k+1}}{\lam} \ang{ \Lam Q_{\U \lam} \mid \ti B_{\U \mu} }  - k a \frac{\nu^{k+1}}{\lam}  \ang{ \Lam Q_{\U \lam} \mid  \Lam_0 \ti B_{\U \mu}  }\\ 
}
\EQ{
 M_{42} &  =    \frac{b^3}{\la} \ang{  \Lam Q\mid \La_0 \La A}  - 2 \gamma_k \frac{ b \nu^k}{\la}     \ang{  \Lam Q\mid \La_0 A}  - k   \frac{ b \nu^{k}}{\la} \ang{  \Lam Q\mid \La_0 B}  +  \frac{b \nu^k}{\la} \ang{  \Lam Q\mid \La_0 \La B }  \\
&   \quad  - k  \frac{ b\nu^k}{\lam}  \ang{  \Lam Q \mid \La B}  - 2k \ga_k  \frac{a\nu^k}{\lam}  \ang{  \Lam Q_{\U \lam} \mid A_{\U \mu}} - k  \frac{a\nu^k}{\lam}  \ang{  \Lam Q_{\U \lam} \mid \La \ti B_{\U \mu}  } \\
&  \quad   - k(k-1) \frac{ b \nu^{k-1}}{\lam} \ang{  \Lam Q_{\U \lam} \mid \ti B_{\U \mu}}  -\frac{1}{\la} \ang{ \La_0 \La Q_{\U \la} \mid \dot w}   \\ 
&\quad -  k^2 a \frac{\nu^k}{\lam}\ang{ \Lam Q_{\U \lam} \mid  \ti B_{\U \mu}} -  k a \frac{\nu^{k+1}}{\lam} \ang{ \Lam Q\mid  \La_0 B}  
}
\EQ{
 M_{43} & =  -  \ang{  \Lam Q_{\U \lam} \mid \La Q_{\U \mu}}    -3a^2  \ang{  \Lam Q_{\U \lam} \mid\La A_{\U \mu}}    -  2 \ti \gamma_k  \nu^k  \ang{  \Lam Q_{\U \lam} \mid A_{\U \mu} } +  \nu^k \ang{  \Lam Q_{\U \lam} \mid \La  \ti B_{\U \mu}}  \\
 &\quad  -  k \nu^k \ang{ \Lam Q_{\U \lam} \mid \ti B_{\U \mu} }  \\
 M_{44} & = - \| \Lam Q \|_{L^2}^2 - 3   b^2 \ang{  \Lam Q \mid \La A}     -  \nu^k \ang{  \Lam Q \mid\La B}   - k \nu^{k-1}\ang{  \Lam Q_{\U \lam} \mid \ti B_{\U \mu}}
 }
Putting this all together we arrive at a $4\times 4$ linear system for $ \Big((a- \mu'), (b + \la'), (a' + \ti \gamma_k \nu^k/ \mu ), (b' + \gamma_k \nu^k/ \la)\Big)$.
  \EQ{
 \pmat{ M_{11} & M_{12} & 0 & M_{14}  \\ M_{21} & M_{22} & M_{23} & 0  \\ M_{31}   & M_{32} & M_{33} & M_{34} \\ M_{41} & M_{42}   & M_{43} & M_{44}}\pmat{ (a- \mu') \\  (b + \la') \\  (a' + \ti \gamma_k \nu^k/ \mu ) \\  (b' + \gamma_k \nu^k/ \la)}  =  \pmat{ G_1 \\ G_2 \\ G_3 \\ G_4} 
 }
 We claim that the matrix $M$ above is invertible. To see this we 
 express $M$ in $2 \times 2$ block form, 
  \EQ{
 M = \pmat{M_1 & M_2 \\ M_3 & M_4} 
 } 
For the entries in $M_1, M_4$ we deduce using Lemma~\ref{l:ABBQ} that, 
\EQ{
M_{11}, M_{22},  M_{33}, M_{44} = O(1)  \\
\abs{M_{12}}, \abs{M_{21}}, \abs{M_{34}}, \abs{ M_{43}} \lesssim \nu^{k-1}  
}
Hence each of $M_1$ and $M_4$ are diagonally dominant and hence invertible with determinant $ = O(1)$. Next note that for $M_2$ we have, 
\EQ{
 \abs{ M_{14}} \lesssim \abs{b }\lam \nu^{k-1}, \quad \abs{ M_{23}} \lesssim \abs{a} \mu \nu^{k-1}  \Longrightarrow \| M_2 \| \lesssim (\abs{a} + \abs{b}) \nu^{k-1} 
}
 and for $M_3$, 
 \EQ{
 \abs{M_{31}} &\lesssim\frac{\nu}{\lam} \| \dot w \|_{L^2} +  \abs{ a}^3 \frac{\nu}{\lam} + \abs{a} \frac{\nu^{k+1}}{\lam} + \abs{b} \frac{\nu^k}{\lam}   \\
\abs{M_{32}} &\lesssim  \abs{b} \frac{\nu^{k-1}}{\lam}  + \abs{a} \frac{\nu^k}{\lam} \\
\abs{  M_{41}} & \lesssim \abs{a} \frac{\nu^k}{\lam} + \abs{b} \frac{\nu^k}{\lam}  \\
 \abs{M_{42}} & \lesssim \frac{1}{\lam} \| \dot w \|_{L^2} + \frac{ \abs{b}^3}{\lam} + \frac{\abs{b} \nu^k}{\lam}  + \frac{\abs{a} \nu^k}{\lam}    \\
 \| M_3 \| &\lesssim \frac{1}{\lam} \| \dot w \|_{L^2} + ( \abs{a} + \abs{b}) \frac{\nu^{k-1}}{\lam} 
 } 
 It follows that the matrix $M_4 - M_3 M_1^{-1} M_2$ is invertible with determinant $= O(1)$ under the hypothesis of the lemma and thus the inverse of $M$ is given by 
%
 \EQ{
 M^{-1} = \pmat{M_1^{-1} + M_1^{-1} M_2( M_4 - M_3 M_1^{-1} M_2)^{-1} M_3 M_1^{-1}  & - M_1^{-1} M_2   ( M_4 - M_3 M_1^{-1} M_2)^{-1} \\  - ( M_4 - M_3 M_1^{-1} M_2)^{-1} M_3 M_1^{-1} &   ( M_4 - M_3 M_1^{-1} M_2)^{-1} }  
 }
 and we may solve 
 \EQ{
\pmat{ a - \mu' \\ b + \la' \\ a'+ \ti \gamma_k \frac{\nu^{k}}{\mu}  \\ b' + \gamma_k \frac{\nu^k}{\lam}} = M^{-1} \pmat{ G_1 \\G_2 \\G_3 \\ G_4} 
 }
Next using Lemma~\ref{l:ABBQ} we observe the bounds, 
\EQ{  \label{eq:G12est}
\abs{G_1} &\lesssim  \abs{a} \| w \|_H  \\
\abs{G_2} & \lesssim  \abs{b} \| w \|_H  
}
We next estimate the size of $G_3$ and $G_4$ using Lemma~\ref{l:ABBQ},  Lemma~\ref{l:fest1}, and  Lemma~\ref{l:w^2est}. 
\EQ{
 \abs{G_3} 
 & \lesssim \frac{\abs{a}  \nu}{\la}   \| \dot w \|_{L^2}  +  \frac{\nu}{\la} \| \bs w \|_{\HH}^2 + \frac{\nu^{2k-1}}{\lam} + \frac{b^2 \nu^{k-1}}{\lam} +  \frac{b^4}{\lam} + \frac{a^4}{\lam} \\ 
 \abs{ G_4} 
 & \lesssim  \frac{ \abs{b}}{\la} \| \dot w \|_{L^2} + \frac{1}{\la} \| \bs w \|_{\HH}^2 +  \frac{\nu^{2k-o(1)}}{\lam} + \frac{b^4}{\la} + \frac{a^4}{\lam} 
 }
It follows, using that $\nu \simeq \lam$,  that 
\EQ{
\abs{ a - \mu'} &\lesssim \abs{ G_1} + \nu^{k-1} \abs{G_2} +  \| M_2\| ( \abs{G_3} + \abs{G_4})  \\
& \lesssim  (\abs{a} + \abs{b} ) \| \bs w \|_{\HH} +  (\abs{a }+ \abs{b})\nu^{2k} + ( \abs{a}^5 + \abs{b}^5) \nu^{k-2} 
}
Similarly, 
\EQ{
\abs{ b+ \lam'}& \lesssim  \abs{ G_2} + \nu^{k-1} \abs{G_1} + \|M_2\|( \abs{G_3} + \abs{G_4})\\
& \lesssim (\abs{a} + \abs{b} ) \| \bs w \|_{\HH} +( \abs{a} + \abs{b})\nu^{2k-1}  + ( \abs{a}^5 + \abs{b}^5) \nu^{k-2} 
}
We also deduce that, 
\EQ{
\abs{a' +  \ti \gamma_k \frac{\nu^k}{\mu}} & \lesssim  \abs{ G_3} + \nu^{k-1} \abs{G_4} + \|M_3 \|( \abs{G_1} + \abs{G_2})  \\ 
& \lesssim  \frac{\abs{a}\nu  + \abs{b}\nu}{\la}   \| \dot w \|_{L^2}  +  \frac{\nu}{\la} \| \bs w \|_{\HH}^2 + \frac{\nu^{2k-1}}{\lam} + \frac{b^2 \nu^{k-1}}{\lam} +  \frac{b^4\nu}{\lam} + \frac{a^4\nu}{\lam}
}
and
\EQ{
\abs{b' +  \gamma_k \frac{\nu^k}{\lam}} & \lesssim \abs{ G_4} + \nu^{k-1} \abs{G_3} + \|M_3 \|( \abs{G_1} + \abs{G_2})   \\
& \lesssim  \frac{\abs{a}  + \abs{b}}{\la}   \| \dot w \|_{L^2}  +  \frac{1}{\la} \| \bs w \|_{\HH}^2 + \frac{\nu^{2k-o(1)}}{\lam}  +  \frac{b^4}{\lam} + \frac{a^4}{\lam}
}
This completes the proof. 
\end{proof}

\section{Construction of the solution} \label{s:construction} 

The goal of this section is to prove Theorem~\ref{t:refined}. First, we establish a few preliminary technical lemmas in Section~\ref{s:virial}. Then we prove energy type estimates in the spaces $\HH$, $\HH^2$ and $\bs \Lam^{-1} \HH$ for functions $\bs w(t)$ given by modulation in Lemma~\ref{l:mod2} -- this is the technical heart of the proof of Theorem~\ref{t:refined}. Finally, we construct the desired two-bubble solution in Section~\ref{s:construct}.

%
%

\subsection{The truncated virial operators} \label{s:virial} 
We define truncated virial operators $\calA(\la)$ and $\calA_0(\la)$, and state related estimates. Nearly identical operators were introduced by the first author in~\cite{JJ-AJM} and used crucially by the authors in~\cite{JL1}. We require a slight modification, which we give below. 

\begin{lem} \emph{\cite[Lemma 4.6]{JJ-AJM}}
  \label{l:pdef}
  For each $c, R > 0$ there exists a function $p(r) = p_{c, R}(r) \in C^{5,1}((0, +\infty))$ with the following properties:
  \begin{enumerate}
    \item $p(r) = \frac{1}{2} r^2$ for $r \leq R$, 
    \item there exists $\ti R = \ti R(R, c)> R$ such that $p(r) \equiv \tx{const}$ for $r \geq \ti R$, 
    \item $|p'(r)| \lesssim r$ and $|p''(r)| \lesssim 1$ for all $r > 0$, with constants independent of $c, R$, 
    \item $p''(r) \geq -c$ and $\frac 1r p'(r) \geq -c$, for all $r > 0$, 
    \item $ \abs{r \p_r \De p} \le c$ for all $r > 0$,
    \item $\De^2 p(r) \leq c\cdot r^{-2}$, for all $r > 0$,
    \item $\De^3 p(r)  \ge -c  \cdot r^{-4}$ for all $r >0$, 
    \item $\big|r\big(\frac{p'(r)}{r}\big)'\big| \leq c$, for all $r > 0$, 
    \item $\abs{ r \bigg(r\big(\frac{p'(r)}{r}\big)'  \bigg)' } \leq c$ for all $r >0$.
 \end{enumerate}
\end{lem}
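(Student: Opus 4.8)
The plan is to construct $p$ by first prescribing the profile $g(r) := p'(r)/r$ and then setting $p(r) := \int_0^r s\, g(s)\, \ud s$, so that $p'(r) = r g(r)$. With this choice two of the required properties are automatic: if $g \equiv 1$ on $(0,R]$ then $p(r) = \frac12 r^2$ there, which is property (1); and if $g$ vanishes identically on $[\ti R, \infty)$ then $p' \equiv 0$, hence $p \equiv \mathrm{const}$, there, which is property (2). The key observation is that all of the remaining properties (3)--(9) can be rewritten using only $g$ and its Euler derivatives $(r\p_r)^j g$, $j \le 5$: indeed $\frac1r p'(r) = g$, $p''(r) = g + (r\p_r)g$, $r\big(\frac{p'}{r}\big)' = (r\p_r)g$, $r\big(r(\frac{p'}{r})'\big)' = (r\p_r)^2 g$, and, using the two-dimensional radial identity $\De h = r^{-2}(r\p_r)^2 h$ repeatedly, $r\p_r \De p = 2(r\p_r)g + (r\p_r)^2 g$, $\De^2 p = r^{-2}\big(2(r\p_r)^2 g + (r\p_r)^3 g\big)$, and $\De^3 p = r^{-4}\big(4G - 4(r\p_r)G + (r\p_r)^2 G\big)$ with $G := 2(r\p_r)^2 g + (r\p_r)^3 g$. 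Hence (3)--(9) all follow once $0 \le g \le 1$ and $|(r\p_r)^j g| \le c_0$ for $1 \le j \le 5$, with $c_0 = c_0(c)$ small; note in particular that (3) holds with absolute constants since $|p'| \le r$ and $|p''| \le 1 + c_0$.

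To produce such a $g$ I would fix, once and for all, a non-increasing $\phi \in C^\infty(\R)$ with $\phi \equiv 1$ on $(-\infty,0]$, $\phi \equiv 0$ on $[1,\infty)$, $0 \le \phi \le 1$, and $\|\phi^{(j)}\|_{L^\infty} \le C_j$ for $j \le 6$. Given $c, R > 0$ one chooses $c_0 = c_0(c) \in (0,1]$, sets $\ti R := R\, e^{1/c_0}$, and defines $g(r) := \phi\big(c_0\log(r/R)\big)$. Then $g \equiv 1$ on $(0,R]$ and $g \equiv 0$ on $[\ti R, \infty)$ as needed, and since $r\p_r$ acts as $\p_s$ in the variable $s = c_0\log(r/R)$ one gets $(r\p_r)^j g(r) = c_0^{\,j}\, \phi^{(j)}\big(c_0\log(r/R)\big)$, so $|(r\p_r)^j g| \le C_j\, c_0$ for $j \ge 1$. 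Choosing $c_0$ small enough that $c_0$ times each of the finitely many fixed combinations of the $C_j$ occurring in the identities of the previous paragraph is $\le c$ finishes the verification; smoothness of $p$ on $(0,\infty)$, in particular $p \in C^{5,1}$, is immediate since $g \in C^\infty((0,\infty))$. This matches the construction in \cite[Lemma 4.6]{JJ-AJM} with $\ti R = \ti R(R,c)$, now also covering the lower-order conditions (8), (9).

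The only genuine work is the bookkeeping behind the formulas for $\De^2 p$ and $\De^3 p$: one must expand $\De$ (twice, then three times) applied to $p$ with $p' = rg$, peel off the powers of $r$, and confirm that every surviving term carries at least one factor of $c_0$ — in fact at least two factors for $\De^2 p$ and $\De^3 p$, which is exactly why (6) and (7) come with the weights $r^{-2}$ and $r^{-4}$. This is a routine computation with the operator $r\p_r$ and I do not anticipate any conceptual obstacle; the one thing to be careful about is not dropping a term when commuting $r\p_r$ past negative powers of $r$.
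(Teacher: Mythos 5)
Your construction is correct, and it takes a genuinely different (and, in my view, cleaner) route than the paper. The paper starts from the explicit quasi-polynomial $q_0(r)=\tfrac12 r^2 + c_1\big(\tfrac12(r-1)^2-\tfrac14\log r\,(r^2-1)\big)$ of \cite[Lemma~4.6]{JJ-AJM}, observes that $q_0^{(4)}(1)$ and $q_0^{(5)}(1)$ fail to vanish, patches this by adding $\tfrac{c_1}{24}(\log r)^4-\tfrac{c_1}{120}(\log r)^5$ to reach $C^{5,1}$, verifies properties (3)--(9) from the explicit formulas, and then truncates using a Taylor polynomial damped by a cutoff. Your approach instead works directly in Euler coordinates: set $p'(r)=r\,g(r)$ with $g(r)=\phi\big(c_0\log(r/R)\big)$ for a fixed nonincreasing bump $\phi\in C^\infty(\R)$, observe that $r\p_r$ acts as $\p_s$ in $s=c_0\log(r/R)$ so that $(r\p_r)^j g = c_0^j\phi^{(j)}$, and then rewrite all of (3)--(9) purely in terms of $g$ and its Euler derivatives using the $2$D radial identity $\De h=r^{-2}(r\p_r)^2 h$. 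I checked your identities: $\De p = 2g+(r\p_r)g$, hence $r\p_r\De p = 2(r\p_r)g+(r\p_r)^2 g$, $\De^2 p = r^{-2}\big(2(r\p_r)^2 g+(r\p_r)^3 g\big)$, and, with $G:=2(r\p_r)^2 g+(r\p_r)^3 g$, $\De^3 p = r^{-4}\big(4G-4(r\p_r)G+(r\p_r)^2 G\big)$; these are all correct (using that $r\p_r$ is a derivation and $(r\p_r)r^{-2}=-2r^{-2}$). Since $g\in[0,1]$ and $|(r\p_r)^j g|\le C_j c_0$ for $1\le j\le 5$, choosing $c_0=c_0(c)$ small gives (4)--(9) with the stated weights, and (3) with absolute constants; $p\in C^\infty((0,\infty))\subset C^{5,1}$ and equals $\tfrac12 r^2$ for $r\le R$, $\mathrm{const}$ for $r\ge \ti R = Re^{1/c_0}$. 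Your version has the advantage of producing a $C^\infty$ function in a single step (no separate regularization at $r=1$ and no Taylor-polynomial truncation), and of making transparent why (6) and (7) carry the weights $r^{-2}$ and $r^{-4}$ and why they come with an extra factor of $c_0^2$: two Euler derivatives must land on $g$ before anything survives. The paper's route is closer to \cite{JJ-AJM} and keeps the constants fully explicit, but yours isolates the only structural fact being used, namely that all nine conditions are statements about $g$ and finitely many of its $\log$-derivatives.
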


\begin{proof} 
The proof is essentially the same as~\cite[Proof of Lemma 4.6]{JJ-AJM}, but the function constructed there is only $C^{3, 1}$ so we briefly demonstrate how to further smooth it. Note that it suffices to consider the case $R=1$.   Following~\cite{JJ-AJM} we define, 
\EQ{
q_0(r) := \begin{cases} \frac{1}{2} r^2 \mif r \le 1 \\ \frac{1}{2} r^2 + c_1 \Big( \frac{1}{2} (r-1)^2 - \frac{1}{4} \log r (r^2 -1)\Big) \mif r \ge 1 \end{cases} 
}
Note that for $r>1$ 
\EQ{
q_0'(r) &= r \Big( 1- \frac{c_1 \log r}{2} \Big) + c_1 \Big( \frac{3}{4} r - 1 + \frac{1}{4r} \Big)  , \quad 
q_0''(r) = \Big( 1- \frac{c_1 \log r}{2} \Big) + c_1  \Big( \frac{1}{4} - \frac{1}{4r^2} \Big)  \\ 
q_0'''(r) &= -c_1 \frac{r^2 - 1}{2 r^3}   , \quad 
q_0^{(4)}(r) = \frac{ \frac{1}{2} c_1 r^2 - \frac{3}{2} c_1}{r^4}  , \quad 
q_0^{(5)}(r) = \frac{ -c_1 r^2 + 6 c_1}{r^5} 
}
From the above it is clear that $q_0$ is $C^{3, 1}(0, \infty)$.  Since we require a $C^{5,1}$ function we modify it slightly. Define $p_0(r)$ by 
\EQ{
p_0(r) = \begin{cases} \frac{1}{2} r^2  \mif r \le 1 \\ q_0(r) + \frac{c_1}{24} ( \log r)^4  - \frac{c_1}{120} ( \log r)^5 \mif r  \ge 1 \end{cases} 
}
A direct computation shows that $p_0^{(4)}(1) = p_0^{(5)}(1) = 0$, which yields the desired smoothness. First we check all the properties (except for {\it (2)}) for $q_0(r)$ in the regime $1 <r < R_0:= e^{2/c_1}$  The properties {\it (3), (4), (6), (8)} were verified in~\cite[Lemma 4.6]{JJ-AJM}. For property {\it (5)} we have using {\it (4)} that 
\EQ{
\abs{r \p_r \De q_0 (r) }&= \abs{r \p_r ( q_0'' + \frac{q'}{r})} = \abs{r q_0 ''' + r  \big( \frac{q_0'}{r})' }  \lesssim c_1 \frac{r^2 -1}{2r^2}  + c_1  \lesssim c_1
}
as desired. Similarly, 
\EQ{
r \p_r \Big( r \p_r \big( \frac{q_0'}{r} \big) \Big) &=  r \p_r \big( \frac{q_0'}{r} \big) + r^2 \p_r^2 \big( \frac{q_0'}{r} \big) =  r \p_r \big( \frac{q_0'}{r} \big)  - r^2 \p_r \Big(  \frac{q_0'}{r^2} \Big) + r^2 \p_r \Big( \frac{q_0''}{r} \Big) \\ 
& = r \p_r \big( \frac{q_0'}{r} \big) - 2 q_0'' + 2 \frac{q_0'}{r} + r q_0''' = - r  \big( \frac{q_0'}{r} \big)'  + r q_0''' 
}
and {\it (9)} follows. Finally, 
\EQ{
\De^2 q_0 = -\frac{c_1}{r^3} , \quad \De^3 q_0 = - \frac{9 c_1}{r^5}, 
}
from which {\it (7)} follows.

Since the terms $ \frac{c_1}{24} ( \log r)^4  - \frac{c_1}{120} ( \log r)^5$ are perturbative with respect to properties {\it (3) - (9)} in the regime $1 <r <R_0 = e^{2/c_1}$, the function $p_0(r)$ inherits these properties from $q_0(r)$.  It remains to find a suitable truncation to ensure property {\it (2)}, but the identical truncation used in~\cite[Lemma 4.6]{JJ-AJM} works here as well,  suitably smoothed to ensure $C^{5, 1}$. Define the function $E_j(r):= \frac{1}{j \!} r^j \chi(r)$ where $\chi(r)$ is a standard smooth cutoff. Set 
\EQ{
p(r) = \begin{cases} p_0(r)  \mif r \le R_0 \\  p_0(R_0) + \sum_{j =1}^5 p_0^{(j)}(R_0) R_0^j E_j( r/ R_0 - 1) \mif r \ge R_0 \end{cases} 
}
Then, as in~\cite[Proof of Lemma 4.6]{JJ-AJM}, one can verify that $p(r)$ satisfies $p(r) =$ constant for $r \ge 3 R_0$ and inherits the rest of the desired properties from $p_0(r)$.  
%
\end{proof}

For each $\lambda > 0$ define $\A(\lambda)$ and $\A_0(\lambda)$ as follows, 
\begin{align}
  [\A(\lambda)w](r) &:= p'\big(\frac{r}{\lambda}\big)\cdot \p_r w(r), \label{eq:opA} \\
  [\A_0(\lambda)w](r) &:= \big(\frac{1}{2\lambda}p''\big(\frac{r}{\lambda}\big) + \frac{1}{2r}p'\big(\frac{r}{\lambda}\big)\big)w(r) + p'\big(\frac{r}{\lambda}\big)\cdot\p_r w(r). \label{eq:opA0}
\end{align}
Note the similarity between $\A$ and $\frac{1}{\la} \La$ and between $\A_0$ and $\frac{1}{\la} \La_0$. 


 Recall the notation, 
$
\LL_0:= -\De + \frac{k^2}{r^2} . 
$
\begin{lem} \emph{\cite[Lemma 5.5]{JJ-AJM}}
  \label{l:opA}
  Let $c_0>0$ be arbitrary. There exists $c>0$ small enough and $R, \ti R>0$ large enough in Lemma~\ref{l:pdef} so that the operators $\A(\lambda)$ and $\A_0(\lambda)$ defined in~\eqref{eq:opA} and~\eqref{eq:opA0} have the following properties:
  \begin{itemize}[leftmargin=0.5cm]
    \item the families $\{\A(\lambda): \lambda > 0\}$, $\{\A_0(\lambda): \lambda > 0\}$, $\{\lambda\partial_\lambda \A(\lambda): \lambda > 0\}$
      and $\{\lambda\partial_\lambda \A_0(\lambda): \lambda > 0\}$ are bounded in $\mathscr{L}(H; L^2)$, with the bound depending only on the choice of the function $p(r)$,
      \item In addition, the operators $\A_0(\lam)$ and $\lam \p_\lam \A_0(\lam)$ satisfy the bounds
      \EQ{ \label{eq:prA0} 
       \| \p_r \A_0(\lam) w \|_{L^2}  +  \| r^{-1} \A_0(\lam) w \|_{L^2} &\lesssim  \| \p_r w \|_H +  \frac{1}{\lam} \| w \|_H \\
       \| \p_r \lam \p_\lam \A_0(\lam) w \|_{L^2}  +  \| r^{-1} \lam \p_\lam \A_0(\lam) w \|_{L^2} &\lesssim  \| \p_r w \|_H +  \frac{1}{\lam} \| w \|_H 
      }
      with a constant that depends only on the choice of the function $p(r)$, 
  
   
    \item For all $w \in H \cap H^2$ we have  
\EQ{
        \label{eq:A-pohozaev}
        \ang{\A_0(\lambda)w  \mid  \LL_0 w} \ge -\frac{c_0}{\lambda}\|w\|_{H}^2 + \frac{1}{\lambda}\int_0^{R\lambda}\Big((\partial_r w)^2 + \frac{k^2}{r^2}w^2\Big) \udr, 
        }
        
        \item For all $w, \p_r w \in H \cap H^2$ we have 
        \EQ{ \label{eq:A-pohozaev2} 
        \ang{\A_0(\lambda)w  \mid \LL_0^2w} \ge - \frac{c_0}{\lam} \| w \|_{H^2}^2 + \frac{2}{\lam}  \int_0^{R \lam}  ( \LL_0 w)^2 \, \rdr 
        }
        \item Moreover, for $\la, \mu >0$ with $\la/\mu \ll 1$, 
        \EQ{
         \label{eq:L0-A0-wm}
      \|\Lambda_0 \Lambda Q_\uln{\lambda} - \A_0(\lambda)\Lambda Q_{\lambda}\|_{L^2} \leq c_0,
        }
\item Finally, let $P_\lam(r)$ denote the potential, 
\EQ{
P_{\lam}(r):= \frac{1}{r^2} ( f'(Q_\lam) - k^2) 
}
and let $\Ks_\lam$ denote the operator given by  
\EQ{ 
\Ks_\lam w := 2 P_\lam \LL_0 w  - 2 \p_r P_\lam \p_r w + (P_\lam^2 - \De P_\lam) w
}
We have, 
\EQ{ \label{eq:vir-new} 
 \abs{  \ang{ \calA_0(\la) w \mid P_\lam(r) w}  - \ang{  \frac{1}{\la} \La_0 w \mid P_\lam(r) w} } \le \frac{c_0}{\la} \| w \|_{H}^2 
}
as well as, 
\EQ{ \label{eq:vir-new2} 
\abs{ \ang{ \calA_0(\la) w \mid \Ks_\lam w}  - \ang{  \frac{1}{\la} \La_0 w \mid \Ks_\lam w} } \le \frac{c_0}{\la} \| w \|_{H^2}^2 
}

  \end{itemize}
\end{lem}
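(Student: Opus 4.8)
\textbf{Proof plan for Lemma~\ref{l:opA}.} Properties stated in the first four bullets are established in \cite[Lemma 5.5]{JJ-AJM} together with the modifications dictated by the refined function $p$ from Lemma~\ref{l:pdef}; the only genuinely new statements are the bounds~\eqref{eq:prA0} and the virial commutator estimates~\eqref{eq:vir-new}, \eqref{eq:vir-new2}, and the $H^2$-Pohozaev bound~\eqref{eq:A-pohozaev2}. I will concentrate on these. Throughout write $\varphi(r) := p'(r/\lambda)$ and $\psi(r):= \tfrac{1}{2\lambda}p''(r/\lambda) + \tfrac{1}{2r}p'(r/\lambda)$, so that $[\A_0(\lambda)w](r) = \psi(r) w(r) + \varphi(r)\p_r w(r)$. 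From Lemma~\ref{l:pdef}\,(3) we have the uniform bounds $|\varphi(r)|\lesssim r$, $|\psi(r)|\lesssim 1$, and differentiating, $|\varphi'(r)|\lesssim 1$, $|\psi'(r)|\lesssim r^{-1}$, all with constants independent of $\lambda$ after rescaling $r\mapsto r/\lambda$; note also that $\varphi,\psi$ are supported in $r\lesssim\lambda$.

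For~\eqref{eq:prA0}: compute $\p_r \A_0(\lambda)w = \psi' w + \psi\,\p_r w + \varphi'\,\p_r w + \varphi\,\p_r^2 w$. The terms $\psi\,\p_r w$ and $\varphi'\,\p_r w$ are bounded in $L^2$ by $\|\p_r w\|_{L^2}\lesssim \|\p_r w\|_H$; the term $\psi' w$ is controlled by $\|r^{-1} w\|_{L^2}\lesssim \|\p_r w\|_H$ using~\eqref{eq:w/rinfty}; the term $\varphi\,\p_r^2 w$ is bounded by $\|r\,\p_r^2 w\|_{L^2}$, and since $\p_r w \in H$ (as $w\in H\cap H^2$, cf.\ the definition of $H^2$ via $\|\p_r^2 w\|_{L^2}$ and the Hardy-type terms), we have $\|r\,\p_r^2 w\|_{L^2}\lesssim \|\p_r w\|_H$. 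Combining, and using the support property to absorb the $\lambda^{-1}\|w\|_H$ gain where the weight saturates, gives the first line of~\eqref{eq:prA0}; the bound on $r^{-1}\A_0(\lambda)w$ is similar using $|r^{-1}\varphi|\lesssim 1$ and $|r^{-1}\psi|\lesssim r^{-1}$. For $\lambda\p_\lambda\A_0(\lambda)$ one notes $\lambda\p_\lambda\varphi = -\tfrac{r}{\lambda}p''(r/\lambda)$ and $\lambda\p_\lambda\psi$ enjoy exactly the same pointwise bounds as $\varphi$ and $\psi$ by Lemma~\ref{l:pdef}\,(3),(8),(9), so the argument is verbatim.

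For~\eqref{eq:A-pohozaev2}: this is the $H^2$ analogue of~\eqref{eq:A-pohozaev}. I would integrate by parts in $\ang{\A_0(\lambda)w \mid \LL_0^2 w} = \ang{\A_0(\lambda)w \mid \LL_0(\LL_0 w)}$, treating $v:= \LL_0 w$; morally $\A_0(\lambda)\approx \tfrac1\lambda\Lambda_0$ and $[\LL_0,\Lambda_0] = [\LL_0,\Lambda] = 2\LL_0$ by~\eqref{eq:comm-LL0}, which formally gives $\ang{\A_0 w\mid \LL_0^2 w} \approx \tfrac1\lambda\ang{\Lambda_0 w\mid \LL_0^2 w}$ and then, pushing one $\LL_0$ through the commutator, $\tfrac{2}{\lambda}\ang{\LL_0 w\mid \LL_0 w}$ up to boundary terms localized to $r\sim R\lambda$ and error terms controlled by the $c_0$-smallness of the "defect" of $p$ from $\tfrac12 r^2$ — exactly the mechanism already used for~\eqref{eq:A-pohozaev} in \cite{JJ-AJM}. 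The boundary/error terms are handled using Lemma~\ref{l:pdef}\,(5),(6),(7),(9), which were designed precisely so that $\De^2 p$, $\De^3 p$ and $r\p_r\De p$ are as small/controlled as needed; the positive main term $\tfrac{2}{\lambda}\int_0^{R\lambda}(\LL_0 w)^2\,r\,dr$ comes from the region $r\le R\lambda$ where $p(r)=\tfrac12 r^2$ exactly, so $\A_0(\lambda)=\tfrac1\lambda\Lambda_0$ there and the Pohozaev identity is exact.

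For~\eqref{eq:vir-new} and~\eqref{eq:vir-new2}: write $\A_0(\lambda)w - \tfrac1\lambda\Lambda_0 w = (\psi - \tfrac1\lambda\cdot 1)w + (\varphi - \tfrac{r}{\lambda})\p_r w$, where on $r\le R\lambda$ both differences vanish since $p(r)=\tfrac12 r^2$ there. Thus $\ang{(\A_0(\lambda)-\tfrac1\lambda\Lambda_0)w \mid P_\lambda w}$ is supported in $r\ge R\lambda$, and there $|P_\lambda(r)| = 2k^2\lambda^{-2}\sin^2 Q(r/\lambda)\cdot(r/\lambda)^{-2}\cdot(r/\lambda)^2\cdots$ — more precisely $|r^2 P_\lambda(r)| = 2k^2\sin^2 Q(r/\lambda)\lesssim (r/\lambda)^{-2k}$ for $r\gtrsim\lambda$, which is extremely small when $R$ is large. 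Pairing the bound $|\psi - \tfrac1\lambda|\lesssim \tfrac1\lambda$, $|\varphi - \tfrac{r}\lambda|\lesssim \tfrac{r}{\lambda}$ against $|P_\lambda w|\lesssim (r/\lambda)^{-2k} r^{-2}|w|$ and using Hardy's inequality $\|r^{-1}w\|_{L^2}\lesssim\|w\|_H$ (for~\eqref{eq:vir-new}) resp.\ $\|r^{-1}w\|_{L^\infty}+\|r^{-2}w\|_{L^2}\lesssim\|\p_r w\|_H$ and the structure of $\Ks_\lambda$ in terms of $P_\lambda\LL_0 w$, $\p_r P_\lambda\p_r w$, $(P_\lambda^2-\De P_\lambda)w$ (for~\eqref{eq:vir-new2}), the whole inner product is bounded by $C(R)\,\lambda^{-1}\|w\|_H^2$ resp.\ $C(R)\,\lambda^{-1}\|w\|_{H^2}^2$ with $C(R)\to 0$ as $R\to\infty$; choosing $R$ large (hence $c$ small) in Lemma~\ref{l:pdef} makes $C(R)\le c_0$.

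\textbf{Main obstacle.} The routine part is the weighted-$L^2$ bookkeeping; the delicate point is~\eqref{eq:A-pohozaev2}, where one must track \emph{second-order} commutators and the associated boundary terms at $r\sim R\lambda$ without losing the favorable sign of the localized $\int_0^{R\lambda}(\LL_0 w)^2$ term. Here it is essential that the error terms produced by the non-quadratic part of $p$ involve only $\De^2 p$, $\De^3 p$, $r\p_r\De p$ and $r(r(p'/r)')'$, each of which is made pointwise $\le c$ (in the appropriate scale-invariant sense) by Lemma~\ref{l:pdef}\,(5)--(9); verifying that the integration-by-parts genuinely produces \emph{only} these combinations — and that cross terms of indeterminate sign are absorbed into $c_0\lambda^{-1}\|w\|_{H^2}^2$ — is the crux, and is carried out by the same scheme as in \cite[Proof of Lemma 2.1]{JJ-Pisa}, \cite[Proof of Lemma 2.1]{MaMe16-arma} referenced above.
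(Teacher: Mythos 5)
Your plan matches the paper's strategy for the genuinely new items: \eqref{eq:vir-new}--\eqref{eq:vir-new2} follow because $\A_0(\lam)-\tfrac1\lam\La_0$ is supported in $r\ge R\lam$ where $P_\lam$ decays like $(r/\lam)^{-2k}r^{-2}$, \eqref{eq:prA0} is direct from the definition and Lemma~\ref{l:pdef}(3), and \eqref{eq:A-pohozaev2} reduces via the exact antisymmetry $\ang{\A_0(1)h\mid h}=0$ to $\ang{[\LL_0,\A_0(1)]w\mid\LL_0 w}$, whose five resulting terms (the coercive $2\int (p'/r)(\LL_0 w)^2$ plus errors involving $r\p_r(p'/r)$, $r\p_r(r\p_r(p'/r))$, $\De^2 p$, $\p_r\De p$) are controlled by Lemma~\ref{l:pdef}(1),(4)--(9). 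The paper works directly with $\A_0(1)$ and writes out this commutator identity (its display \eqref{eq:ll0a0}) together with an additional integration by parts on the $\De^2 p$ term to produce $\De^3 p$; you only gesture at this ``morally'' through the heuristic $\A_0\approx\tfrac1\lam\La_0$ and correctly flag it as the crux, so the commutator identity and the term-by-term bounds are the part you would still need to carry out.
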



\begin{proof} 
The conditions $ w, \p_r w \in H^2$ are required only to ensure that the left-hand-side of 
  ~\eqref{eq:A-pohozaev} and~\eqref{eq:A-pohozaev2} are well defined,
  but do not appear on the right-hand-side of the estimates. 

The first, third, and fifth bullet points we proved in~\cite[Lemma~5.5]{JJ-AJM}. The second bullet point is a direct consequence of the definition of $\calA_0(\lam)$. 

Note that it suffices to prove~\eqref{eq:A-pohozaev2} for $\lam =1$. We may also assume (by approximation) that $p, w \in C^\infty_0(0, \infty)$ -- here we will use that $p \in C^{5, 1}$. 
First, we use the fact that $\ang{ \A_0(1) h \mid h} = 0$ to deduce that  
\EQ{
\ang{\A_0(1)w  \mid \LL_0^2w} &= \ang{ \LL_0\A_0(1)w  \mid  \LL_0 w} \\
& = \ang{ [ \LL_0, \A_0(1)] w \mid \LL_0 w} + \ang{ \A_0(1) \LL_0 w \mid \LL_0 w}  \\
& = \ang{ [ \LL_0, \A_0(1)] w \mid \LL_0 w} 
}
Next, we compute the commutator, $[ \LL_0, \A_0(1)] w$. Writing $\A_0(1)w = \frac{1}{2} \De p w + \p_r p \p_r w$ we have 
\EQ{
[\LL_0, \frac{1}{2} \De p] w = -\frac{1}{2} (\De^2 p )w - ( \p_r \De p) \p_r w
}
and, 
\EQ{
[\LL_0, \p_r p \p_r] w &= - \De( \p_r p \p_r w) +\frac{k^2}{r^2} \p_r p \p_r w + \p_r p \p_r( \De w) - \p_r p \p_r( \frac{k^2}{r^2} w)  \\
& =   2 \frac{p'}{r}  \LL_0 w    - r\p_r \Big(  r \p_r \big( \frac{p'}{r} \big)\Big) \frac{ \p_r w}{r}    -2  r \p_r \big( \frac{p'}{r} \big)  \De w
} 
Hence, 
\EQ{ \label{eq:ll0a0} 
\ang{ [ \LL_0, \A_0(1)] w \mid \LL_0 w}  &=  2  \int_0^\infty \frac{p'}{r}  (\LL_0 w)^2 \, \rdr  - \int_0^\infty   r\p_r \Big(  r \p_r \big( \frac{p'}{r} \big)\Big) \frac{ \p_r w}{r}  \LL_0 w \, \rdr  \\
& \quad - \int_0^\infty r \p_r \big( \frac{p'}{r} \big)  \De w \LL_0 w \, \rdr   \\
& \quad - \frac{1}{2}  \int_0^\infty (\De^2 p )w \LL_0 w \rdr - \int_0^\infty ( \p_r \De p) \p_r w  \LL_0 w \rdr 
} 
We consider separately each of the terms on the right above. For the first term we use properties {\it (1),(4)} from Lemma~\ref{l:pdef} to deduce that 
\EQ{
2  \int_0^\infty \frac{p'}{r}  (\LL_0 w)^2 \, \rdr  &= 2  \int_0^R  (\LL_0 w)^2 \, \rdr + 2  \int_R^\infty \frac{p'}{r}  (\LL_0 w)^2 \, \rdr  \\
& \ge 2  \int_0^R  (\LL_0 w)^2 \, \rdr - c \| w \|_{H^2}^2 
}
By property {\it(9)}  from Lemma~\ref{l:pdef} we estimate the second term by 
\EQ{
\abs{ \int_0^\infty  r \p_r \Big(  r \p_r \big( \frac{p'}{r} \big)\Big)  \frac{ \p_r w }{r}  \LL_0 w \, \rdr } \le c  \| r^{-1}\p_r w \|_{L^2} \|\LL_0 w \|_{L^2} \lesssim c \| w \|_{H^2} 
}
The third term is controlled by property {\it(8)}, 
\EQ{
\abs{- \int_0^\infty    r \p_r \big( \frac{p'}{r} \big) \De w   \LL_0 w \, \rdr}   \le c  \| \De w \|_{L^2} \|\LL_0 w \|_{L^2} \lesssim c \| w \|_{H^2} 
}
For the fourth term we expand and integrate by parts as follows, 
\EQ{
- \frac{1}{2}  \int_0^\infty (\De^2 p )w& \LL_0 w \rdr  =  \frac{1}{2}  \int_0^\infty (\De^2 p )w \De w \rdr  - \frac{1}{2}  \int_0^\infty (\De^2 p )w^2 \frac{k^2}{r^2}  \rdr  \\
& = \frac{1}{4}  \int_0^\infty (\De^3 p ) w^2  \rdr  - \frac{1}{2} \int_0^\infty (\De^2 p ) (\p_r w)^2 \rdr  - \frac{1}{2}  \int_0^\infty (\De^2 p )w^2 \frac{k^2}{r^2}  \rdr 
}
For the first term on the right we use property {\it (7)}, 
\EQ{
\frac{1}{4}  \int_0^\infty (\De^3 p ) w^2  \rdr   \ge -  \frac{1}{4} c \int_0^\infty r^{-4} w^2  \rdr   \gtrsim  -  c \| r^{-2} w \|_{L^2}^2 \gtrsim  - c \|  w \|_{H^2}^2
}
For the remaining two terms we use property {\it (6)}, 
\EQ{
- \frac{1}{2} \int_0^\infty (\De^2 p ) (\p_r w)^2 \rdr  - \frac{1}{2}  \int_0^\infty (\De^2 p )w^2 \frac{k^2}{r^2}  \rdr  \gtrsim -c  \|  w \|_{H^2}^2
}
Finally, we estimate the last term in~\eqref{eq:ll0a0} using property {\it(5)} and Cauchy-Schwarz. 

Lastly, the estimates~\eqref{eq:vir-new} and~\eqref{eq:vir-new2}  are straightforward consequences of the definition of $\calA_0(\lam)$ and Lemma~\ref{l:pdef} along with the estimates, 
\EQ{
\abs{P(r) } + r \abs{\p_r P(r)} + r^2 \abs{ \p_r^2 P(r)} \lesssim r^{-2} R^{-2k} \mif r \ge R 
}
We omit the details. 
\end{proof}


%

\subsection{Energy estimates for $\bs w(t)$ in $\HH$} In this section we prove energy-type estimates for $\bs w(t)$ as given by Lemma~\ref{l:mod2}. Recall that the equation for $\bs w(t)$ can be written as 
\EQ{
\p_t w &= \dot w + (\dot \Phi - \p_t \Phi)   \\
\p_t \dot w &= - \LL_\Phi  w + \Big( -  \p_t \dot \Phi  + \De \Phi  - \frac{1}{r^2} f( \Phi) \Big)    - \frac{1}{r^2}  \Big ( f( \Phi +w) - f( \Phi) - f'( \Phi) w \Big)
}
where we also recall the notation 
\EQ{
\LL_{\Phi}  w := -\De w + \frac{1}{r^2} f'( \Phi) w, \quad \LL_\la w = \LL_0 w + P_\lam w
}
where $\LL_0 = -\De + \frac{k^2}{r^2}$ and $P_\lam$ is the potential
\EQ{\label{eq:Plam}
P_\lam(r) = \frac{1}{r^2} ( f'(Q_\lam) - k^2)
}
To ease notation we define,  
\EQ{ \label{eq:Psi1} 
 \Psi_1&:= \dot \Phi - \p_t \Phi  
}
as well as,  
\EQ{ \label{eq:Psi2} 
\Psi_2 &:= -\p_t  \dot \Phi + \De \Phi  - \frac{1}{r^2} f( \Phi)   - \frac{1}{r^2}  \Big ( f( \Phi +w) - f( \Phi) - f'( \Phi) w \Big)  
}
With this notation we rewrite the equation for $\bs w(t)$ as 
\EQ{ \label{eq:weq3} 
 \p_t w  &=  \dot w  + \Psi_1 \\
 \p_t \dot w &= - \LL_\Phi w  + \Psi_2 
}
We define a modified energy functional 
for $\bs w$ as follows: 
\EQ{ \label{eq:E1w} 
\Es_1(t) = \frac{1}{2} \ang{ \dot w \mid \dot w} + \frac{1}{2} \ang{ w \mid \LL_\Phi w} 
}
We also define the virial correction, 
 \EQ{ \label{eq:Vs1} 
  \mathscr{V}_{1, \la}(t):= \ang{ \calA_0(\la) w \mid  \dot w}
 }
 and the mixed energy/virial functional, 
 \EQ{ \label{eq:H1} 
 \Hs_1(t) = \Es_1(t) - b(t) \Vs_{1, \la}(t)  
 }
 These functionals compare as follows, 
 \begin{lem} \label{l:w-E1-H1} Let $J$ be a time interval one which $\bs u(t)$ satisfies the hypothesis Lemma~\ref{l:mod2}. Let $a, b, \la, \mu$ and $\bs w(t)$ be given by Lemma~\ref{l:mod2} and let $\nu:= \la/\mu$ as usual. Then, as long as $\eta_0$ is small enough in Lemma~\ref{l:mod2} we have 
 \EQ{ \label{eq:w-E1-H1} 
\| \bs w \|_{\HH}^2 \simeq  \Es_1(t) \simeq \Hs_1(t)   
 }
 with for a uniform constant, independent of $J$. 
\end{lem}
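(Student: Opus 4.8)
The plan is to prove the two equivalences in~\eqref{eq:w-E1-H1} in turn: first $\|\bs w\|_{\HH}^2 \simeq \Es_1(t)$, and then $\Es_1(t) \simeq \Hs_1(t)$.

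For the first equivalence, I would obtain the lower bound $\Es_1(t) \gtrsim \|\bs w\|_{\HH}^2$ directly from the coercivity estimate~\eqref{eq:coerce1} of Lemma~\ref{l:coerce1}, which applies because $\bs u(t)$ satisfies the hypotheses of Lemma~\ref{l:mod2} on $J$ (in particular $\bs w(t)$ satisfies the orthogonality conditions~\eqref{eq:law}--\eqref{eq:muwdot}): writing $\Es_1(t) = \tfrac12\|\dot w\|_{L^2}^2 + \tfrac12\ang{\LL_\Phi w \mid w}$ and inserting $\ang{\LL_\Phi w \mid w} \ge c_1\|w\|_H^2$ gives $\Es_1(t) \gtrsim \|\dot w\|_{L^2}^2 + \|w\|_H^2 = \|\bs w\|_{\HH}^2$. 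For the matching upper bound I would split $\ang{\LL_\Phi w \mid w} = \int_0^\infty (\p_r w)^2\,r\,\ud r + \ang{r^{-2} f'(\Phi) w \mid w}$ and use the crude pointwise bound $|f'(\Phi)| = k^2|\cos 2\Phi| \le k^2$, so that $|\ang{r^{-2}f'(\Phi)w\mid w}| \le k^2\int_0^\infty r^{-2} w^2\,r\,\ud r \le \|w\|_H^2$; hence $\ang{\LL_\Phi w \mid w} \le 2\|w\|_H^2$ and $\Es_1(t) \lesssim \|\bs w\|_{\HH}^2$.

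For the second equivalence I would control the virial correction $\Vs_{1,\la}(t) = \ang{\calA_0(\la) w \mid \dot w}$ defined in~\eqref{eq:Vs1}. By the boundedness of the family $\{\calA_0(\la): \la > 0\}$ in $\mathscr{L}(H; L^2)$ asserted in the first bullet of Lemma~\ref{l:opA}, one has $|\Vs_{1,\la}(t)| \le \|\calA_0(\la) w\|_{L^2}\,\|\dot w\|_{L^2} \lesssim \|w\|_H\,\|\dot w\|_{L^2} \lesssim \|\bs w\|_{\HH}^2$. Since~\eqref{eq:w-d-bound} yields $|b(t)|^2 \lesssim \bfd_+(\bs u(t)) \lesssim \eta_0$, we get $|b(t)| \lesssim \eta_0^{1/2}$, and therefore $|b(t)\Vs_{1,\la}(t)| \lesssim \eta_0^{1/2}\|\bs w\|_{\HH}^2 \lesssim \eta_0^{1/2}\,\Es_1(t)$ by the first equivalence. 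Choosing $\eta_0$ small enough makes $|b(t)\Vs_{1,\la}(t)| \le \tfrac12\Es_1(t)$, so from~\eqref{eq:H1} we conclude $\tfrac12\Es_1(t) \le \Hs_1(t) \le \tfrac32\Es_1(t)$, i.e. $\Hs_1(t)\simeq\Es_1(t)$. Combining the two chains of equivalences gives~\eqref{eq:w-E1-H1} with constants uniform in $t \in J$.

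I do not expect a genuine obstacle here: the statement is essentially a bookkeeping consequence of the coercivity Lemma~\ref{l:coerce1}, the operator bounds of Lemma~\ref{l:opA}, and the a priori smallness of $b(t)$ from Lemma~\ref{l:mod2}. The only point requiring mild care is that one may need to shrink $\eta_0$ (already constrained in Lemmas~\ref{l:mod2} and~\ref{l:coerce1}) still further so that the virial correction is absorbed into $\tfrac12\Es_1(t)$; since all implicit constants are independent of $J$ and of $t\in J$, this causes no difficulty and the resulting equivalences are uniform.
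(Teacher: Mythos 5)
Your proof is correct and follows essentially the same route as the paper: the lower bound $\Es_1 \gtrsim \|\bs w\|_{\HH}^2$ from the coercivity of $\LL_\Phi$ (Lemma~\ref{l:coerce1}), a trivial upper bound via the pointwise bound on $f'(\Phi)$, and absorption of the virial correction using the boundedness of $\calA_0(\lambda)$ on $H$ together with $|b|\ll 1$ guaranteed by~\eqref{eq:w-d-bound} for $\eta_0$ small. The paper states this more tersely but with identical ingredients; no gap.
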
 
\begin{proof}
The first estimate in~\eqref{eq:w-E1-H1} follows from the coercivity estimate in Lemma~\ref{l:coerce1}. The second estimate follows since taking $\eta_0>0$ small in Lemma~\ref{l:mod2} ensures $\abs{b} \ll 1$ and thus, 
\EQ{
 \abs{ b(t) \Vs_{1, \la}(t) } \lesssim \abs{b} \| \bs w \|_{\HH}^2  \ll  \| \bs w \|_{\HH}^2 
}
\end{proof} 

\begin{prop}  \label{p:Hs1'} 
Let $J$ be a time interval on which $\bs u(t)$ satisfies the hypothesis  Lemma~\ref{l:mod2}. Let $a, b, \la, \mu$  and $\bs w(t)$ be given by  Lemma~\ref{l:mod2} and let $\nu:= \la/\mu$ as usual. Let $\Hs_1(t)$ be defined as in~\eqref{eq:H1}. Then, 
\EQ{ \label{eq:Hs1'} 
\Hs_1'(t) &\ge   \frac{b}{\lambda}\int_0^{R\lambda}\Big((\partial_r w)^2 + \frac{k^2}{r^2}w^2\Big) \udr + \frac{b}{\lam} \int_0^\infty (f'(Q_\lam) - k^2) \frac{w^2}{r^2} \, \udr \\
&\quad   -c_0 \frac{\abs{b}}{\la} \| w \|_H^2  
- C_1\Big( \frac{\nu^{2k-1}}{\la} + \frac{b^2 \nu^{k-1}}{\la} +  \frac{b^4}{\la} + \frac{a^4}{\la} \Big) \| \bs w \|_{\HH} \\
& \quad -C_1\Big( \frac{\abs{b}\lam}{\lam} +  \frac{\abs{a}\nu}{\la}  + \frac{\nu^{k}}{\la} + \frac{a^2}{\la}+  \frac{b^2}{\la} \Big) \| \bs w \|_{\HH}^2 -  C_1 \frac{1}{\la} \| \dot w \|_{L^2} \| \bs w \|_{\HH}^2\\
& \quad  -C_1\Big( \frac{ \abs{a} + \abs{b}}{\lam}  \Big) \| \bs w \|_{\HH}^3  -C_1\| \dot w \|_{L^2} \| \bs w \|_{\HH}^2 \| \p_r w \|_H - C_1 \frac{1}{\lam}  \| \dot w \|_{L^2} \| \bs w \|_{\HH}^3   \\
&\quad  -C_1  \abs{b} \| w \|_H^2   \| w \|_{H^2} -C_1  \abs{b} \|w \|_H^3\| w \|_{H^2} 
}
where $c_0>0$ is a constant that can be taken as small as we like by choosing $R>0$ large enough in Lemma~\ref{l:opA}.  Importantly, $c_0$ can be taken small independently of $\mu, \lam, a, b$ and $J$. 
\end{prop}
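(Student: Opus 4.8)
The plan is to differentiate $\Hs_1(t) = \Es_1(t) - b(t)\Vs_{1,\la}(t)$ in time, isolate a small number of main terms, and absorb everything else into the error terms on the right-hand side of~\eqref{eq:Hs1'} using Lemma~\ref{l:modc2} (modulation dynamics), Lemmas~\ref{l:ABB}, \ref{l:ABBQ}, \ref{l:ABBQL21} (profile bounds), Lemmas~\ref{l:fest1}, \ref{l:w^2est}, \ref{l:cos2Phi} (forcing/pointwise bounds), Lemma~\ref{l:opA} (virial and Pohozaev estimates), and Lemma~\ref{l:wHH2}. Throughout we use that $b>0$ in the regime under consideration. Differentiating $\Es_1$ and using~\eqref{eq:weq3} together with self-adjointness of $\LL_\Phi$ gives
\[
\Es_1'(t) = \ang{\dot w \mid \Psi_2} + \ang{\Psi_1 \mid \LL_\Phi w} + \tfrac12 \ang{w \mid [\p_t,\LL_\Phi] w}.
\]
Since $\calA_0(\la)$ depends on $t$ only through $\la$, since $\ang{\calA_0(\la)\dot w \mid \dot w}=0$, and using~\eqref{eq:weq3} again,
\[
(b\Vs_{1,\la})'(t) = b'\Vs_{1,\la} + \tfrac{b\la'}{\la}\ang{(\la\p_\la\calA_0(\la))w \mid \dot w} + b\ang{\calA_0(\la)\Psi_1 \mid \dot w} - b\ang{\calA_0(\la)w \mid \LL_\Phi w} + b\ang{\calA_0(\la)w \mid \Psi_2}.
\]
Subtracting, $\Hs_1'(t)$ equals $\tfrac12\ang{w\mid[\p_t,\LL_\Phi]w} + b\ang{\calA_0(\la)w\mid\LL_\Phi w}$ (the two ``structural'' terms) plus the remainder $\ang{\dot w\mid\Psi_2} + \ang{\Psi_1\mid\LL_\Phi w} - b'\Vs_{1,\la} - \tfrac{b\la'}{\la}\ang{(\la\p_\la\calA_0(\la))w\mid\dot w} - b\ang{\calA_0(\la)\Psi_1\mid\dot w} - b\ang{\calA_0(\la)w\mid\Psi_2}$.

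The crux is the interaction of the two structural terms. By~\eqref{eq:comm-ptL}, $[\p_t,\LL_\Phi]w = -r^{-2}(2k^2\sin 2\Phi)(\p_t\Phi)w$; writing $\sin 2\Phi = \sin 2Q_\la + (\sin 2\Phi - \sin 2Q_\la)$ and $\p_t\Phi = -\tfrac{\la'}{\la}\La Q_\la + (\p_t\Phi + \tfrac{\la'}{\la}\La Q_\la)$, the main piece is, after the integration by parts of~\eqref{eq:comm-intro}, equal to $\tfrac{\la'}{\la}\ang{P_\la w\mid\La w} = -\tfrac{b}{\la}\ang{P_\la w\mid\La w} + \tfrac{\la'+b}{\la}\ang{P_\la w\mid\La w}$; the second summand is an error bounded by~\eqref{eq:b+la'}, and the remaining contributions (from $\sin 2\Phi - \sin 2Q_\la$, estimated via Lemma~\ref{l:cos2Phi}, and from the non-$Q_\la$ part of $\p_t\Phi$, which is $\p_t$ of the correction profiles plus the modulation errors $(a-\mu')$, $(b+\la')$, $(a'+\ti\gamma_k\nu^k/\mu)$, $(b'+\gamma_k\nu^k/\la)$ times profiles) are errors controlled by Lemmas~\ref{l:ABB}, \ref{l:ABBQ}, \ref{l:modc2}, \ref{l:wHH2}. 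For the virial term, decompose $\LL_\Phi = \LL_0 + P_\la + (\LL_\Phi - \LL_\la)$. By~\eqref{eq:A-pohozaev}, $b\ang{\calA_0(\la)w\mid\LL_0 w}\ge -c_0\tfrac{b}{\la}\|w\|_H^2 + \tfrac{b}{\la}\int_0^{R\la}((\p_r w)^2 + k^2 r^{-2}w^2)\udr$; by~\eqref{eq:vir-new} and $\La_0 = \La + 1$,
\[
b\ang{\calA_0(\la)w\mid P_\la w} = \tfrac{b}{\la}\ang{\La w\mid P_\la w} + \tfrac{b}{\la}\int_0^\infty (f'(Q_\la) - k^2)\tfrac{w^2}{r^2}\udr + O\!\big(c_0\tfrac{b}{\la}\|w\|_H^2\big);
\]
and $b\ang{\calA_0(\la)w\mid(\LL_\Phi-\LL_\la)w}$ is an error, using that $\calA_0(\la)$ is bounded $H\to L^2$ and Lemma~\ref{l:cos2Phi}. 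The term $\tfrac{b}{\la}\ang{\La w\mid P_\la w}$ cancels exactly against $-\tfrac{b}{\la}\ang{P_\la w\mid\La w}$, leaving precisely the two displayed main terms of~\eqref{eq:Hs1'} together with $-c_0\tfrac{|b|}{\la}\|w\|_H^2$.

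It remains to bound the remainder terms. For $\ang{\dot w\mid\Psi_2}$ and $b\ang{\calA_0(\la)w\mid\Psi_2}$, expand $\Psi_2$ via~\eqref{eq:ptdotPhi} for $-\p_t\dot\Phi + \De\Phi - r^{-2}f(\Phi)$ plus the $w$-quadratic remainder, and pair against $\dot w$ (resp.\ $\calA_0(\la)w$, which is $O(\|w\|_H)$ in $L^2$ by Lemma~\ref{l:opA}): use Lemmas~\ref{l:ABB}, \ref{l:ABBQ} for the profile $L^2$-norms, Lemma~\ref{l:fest1} for the purely profile-nonlinear forcing, Lemma~\ref{l:w^2est} (in particular~\eqref{eq:w^2L2}) for the $w$-quadratic, and Lemma~\ref{l:modc2} to replace the modulation factors by their bounds. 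For $\ang{\Psi_1\mid\LL_\Phi w}$, an integration by parts gives $|\ang{\Psi_1\mid\LL_\Phi w}|\lesssim \|\Psi_1\|_H\|w\|_H$, and $\|\Psi_1\|_H$ is controlled from~\eqref{eq:dotPhi-ptPhi}, Lemma~\ref{l:ABB}, and~\eqref{eq:a-mu'},~\eqref{eq:b+la'}. The terms $b'\Vs_{1,\la}$, $\tfrac{b\la'}{\la}\ang{(\la\p_\la\calA_0(\la))w\mid\dot w}$, and $b\ang{\calA_0(\la)\Psi_1\mid\dot w}$ are $O\big((|b'| + \tfrac{|\la'|}{\la}|b| + |b|\|\Psi_1\|_H)\|\bs w\|_{\HH}\big)$, into which we insert~\eqref{eq:b'est},~\eqref{eq:b+la'},~\eqref{eq:a-mu'} and $|\la'|\le |b| + |b+\la'|$. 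Collecting the contributions and using $\nu\simeq\la$, $\mu\simeq 1$ yields~\eqref{eq:Hs1'}.

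The main obstacle is purely organizational: $\Psi_2$ consists, via~\eqref{eq:ptdotPhi}, of a very large number of terms, each of which must be paired against both $\dot w$ and $\calA_0(\la)w$ and carried through the modulation estimates with the correct powers of $\nu,\la,a,b$, while one must also verify that the structural cancellation between the commutator main term and the $\tfrac{b}{\la}\ang{\La w\mid P_\la w}$ piece of the virial term is exact — which relies on the identity $\La_0 = \La + 1$ and the precise forms of~\eqref{eq:comm-intro} and~\eqref{eq:vir-new}. No genuinely new estimate beyond those already recorded is required.
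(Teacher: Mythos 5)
Your outline captures the core structure of the paper's proof: the decomposition $\Hs_1' = \Es_1' - (b\Vs_{1,\la})'$, the key cancellation between the commutator main piece $\frac{\la'}{\la}\ang{\Lam w\mid P_\la w}$ (via~\eqref{eq:comm-intro}) and the $P_\la$-part of the virial pairing $b\ang{\calA_0(\la)w\mid\LL_\Phi w}$ (via~\eqref{eq:vir-new} and $\Lam_0 = \Lam+1$), the Pohozaev coercivity~\eqref{eq:A-pohozaev} for the $\LL_0$-part, and the $\sin 2Q_\la/(\sin2\Phi-\sin2Q_\la)$ and $\p_t Q_\la/\p_t(\Phi-Q_\la)$ splittings. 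Your choice to keep $\LL_\Phi/\Psi_2$ rather than switching to $\LL_\la/\ti\Psi_2$ in the virial term merely relocates the error $b\ang{\calA_0(\la)w\mid(\LL_\Phi-\LL_\la)w}$ (which is indeed controllable) and is a harmless bookkeeping choice.

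There is, however, a genuine gap in your treatment of $\ang{\Psi_1\mid\LL_\Phi w}$. The bound $|\ang{\Psi_1\mid\LL_\Phi w}|\lesssim\|\Psi_1\|_H\|w\|_H$, combined with $\|\Psi_1\|_H\lesssim\frac{|a|+|b|}{\la}\|\bs w\|_{\HH}+\ldots$ (which is~\eqref{eq:Psi1-H}), yields an error of size $\frac{|a|+|b|}{\la}\|\bs w\|_{\HH}^2$. This lies outside the error budget of~\eqref{eq:Hs1'}: every $\|\bs w\|_{\HH}^2$-coefficient there is $\lesssim |b| + \frac{a^2+b^2+\nu^k+|a|\nu}{\la}$, whereas $\frac{|a|+|b|}{\la}\simeq\frac{|b|}{\la}$ is strictly larger (by a factor $\sim 1/|b|$). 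This matters in the bootstrap argument of Proposition~\ref{p:boot}, where the $c_0\frac{|b|}{\la}\|w\|_H^2$ term contributes $c_0 C_0^2\lam^{7k/2-3}$ and all other errors must scale like $C_1 C_0\lam^{7k/2-3}$ so as to be absorbed by taking $C_0$ large; an error $\sim C_1\frac{|b|}{\la}\|\bs w\|_{\HH}^2\sim C_1 C_0^2\lam^{7k/2-3}$ with $C_1$ a fixed constant cannot be absorbed. The paper's fix (Claim~\ref{c:Psi1Lw}) is to exploit the kernel structure: the leading pieces of $\Psi_1$ are $(b+\la')\Lam Q_{\U\la}$ and $(a-\mu')\Lam Q_{\U\mu}$, and since $\LL_\la\Lam Q_{\U\la}=0$, $\LL_\mu\Lam Q_{\U\mu}=0$, one writes $\ang{\Lam Q_{\U\la}\mid\LL_\Phi w}=\ang{\Lam Q_{\U\la}\mid(\LL_\Phi-\LL_\la)w}$ and gains a factor $\nu^k+a^2+b^2$ from~\eqref{eq:cos2Phi1}, bringing the error down to $\frac{a^2+b^2+\nu^{2k}}{\la}\|\bs w\|_{\HH}^2$ as required. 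A similar remark applies to $\ang{\Psi_2\mid\dot w}$: it is essential there (as the paper notes at the start of the proof of Claim~\ref{c:Psi2wdot}) that the orthogonality conditions~\eqref{eq:lawdot},~\eqref{eq:muwdot} annihilate the top-order $(b'+\gamma_k\nu^k/\la)\Lam Q_{\U\la}$ and $(a'+\ti\gamma_k\nu^k/\mu)\Lam Q_{\U\mu}$ contributions coming from~\eqref{eq:ptdotPhi}; your outline leaves this crucial step implicit.
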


\begin{lem}  \label{l:E1'}
Let $J \subset \R$ be an interval on which $\bs u(t)$ satisfies the hypothesis of Lemma~\ref{l:mod2}. Let $a, b, \la, \mu$ and $\bs w(t)$ be given by Lemma~\ref{l:mod2} and let $\nu:= \la/\mu$ as usual. Let $\Es_1(t)$ be defined as in~\eqref{eq:E1w}. Then,  
\EQ{
\Big| \Es_1'(t) + & \frac{b}{\lam} \ang{  \La_0 w \mid P_\lam w} - \frac{b}{\lam} \ang{ w \mid P_\lam w} \Big|  \le  C_1\Big( \frac{\nu^{2k-1}}{\la} + \frac{b^2 \nu^{k-1}}{\la} +  \frac{b^4}{\la} + \frac{a^4}{\la} \Big) \| \bs w \|_{\HH} \\
& \quad + C_1 \Big( \frac{\abs{a}\nu}{\la}  + \frac{b \la^{\frac{k}{2}}}{\la} + \frac{\nu^{k}}{\la} + \frac{a^2}{\la}+  \frac{b^2}{\la} \Big) \| \bs w \|_{\HH}^2    + C_1 \frac{1}{\la} \| \dot w \|_{L^2} \| \bs w \|_{\HH}^2\\
& \quad + C_1\Big( \frac{ \abs{a} + \abs{b}}{\lam}  \Big) \| \bs w \|_{\HH}^3 +C_1 \Big( \frac{ \abs{a} + \abs{b}}{\lam}  \Big) \| \bs w \|_{\HH}^4+ C_1\| \dot w \|_{L^2} \| \bs w \|_{\HH}^2 \| \p_r w \|_H  
}

for some uniform constant $C_1>0$. 
\end{lem}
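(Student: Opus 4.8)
The plan is to differentiate $\Es_1(t)$ directly, using the equations~\eqref{eq:weq3} for $\bs w(t)$ and the structure of the forcing terms $\Psi_1, \Psi_2$. Writing $\Es_1(t) = \frac12\ang{\dot w \mid \dot w} + \frac12\ang{w \mid \LL_\Phi w}$ and computing the time derivative, I would first handle the ``main'' term coming from the linear flow: $\ang{\dot w \mid \p_t \dot w} + \ang{w \mid \LL_\Phi \p_t w}$, which after substituting $\p_t \dot w = -\LL_\Phi w + \Psi_2$ and $\p_t w = \dot w + \Psi_1$ produces $\ang{\dot w \mid \Psi_2} + \ang{w \mid \LL_\Phi \Psi_1}$ plus the crucial commutator term $\frac12\ang{ (\p_t \LL_\Phi) w \mid w}$ coming from the time-dependence of the potential in $\LL_\Phi$. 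The commutator identity~\eqref{eq:comm-ptL} gives $\p_t \LL_\Phi = -2k^2 r^{-2}\sin(2\Phi)\,\p_t\Phi$; since $\Phi$ is to leading order $Q_\la - Q_\mu$ with $\p_t\Phi \approx -\frac{\lam'}{\lam}\Lam Q_{\U\lam} + \frac{\mu'}{\mu}\Lam Q_{\U\mu}$ and $\lam' \approx -b$, the leading piece of this commutator is exactly $\frac{b}{\lam}\ang{\Lam_0 \Lam Q_{\U\lam}$-type terms$}$, which after an integration by parts (as in~\eqref{eq:comm-intro} of the introduction) becomes $-\frac{b}{\lam}\ang{\Lam_0 w \mid P_\lam w} + \frac{b}{\lam}\ang{w \mid P_\lam w}$ up to the error terms listed. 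This identifies the two explicit ``main'' terms on the left-hand side of the claimed estimate.

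Next I would systematically bound all the error terms. The difference between $\p_t\Phi$ and its leading approximation is controlled by $\Psi_1 = \dot\Phi - \p_t\Phi$ via~\eqref{eq:dotPhi-ptPhi}, whose every term carries a factor $(b+\lam')$, $(a-\mu')$, $(b'+\gamma_k\nu^k/\lam)$, or $(a'+\gamma_k\nu^k/\mu)$, or is genuinely higher order in $a,b,\nu$; using the modulation estimates~\eqref{eq:a-mu'}--\eqref{eq:b'est} from Lemma~\ref{l:modc2} and the pairing/$L^2$ bounds in Lemmas~\ref{l:ABBQ}--\ref{l:ABBQL21} these contribute the terms with $\nu^{2k-1}/\lam$, $b^2\nu^{k-1}/\lam$, $b^4/\lam$, $a^4/\lam$ (paired against $\|\bs w\|_{\HH}$ via $\ang{w \mid \LL_\Phi\Psi_1}$, noting $\LL_\Phi$ is bounded $H \to H^{-1}$). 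The forcing $\Psi_2$ splits via~\eqref{eq:Psi2} into the ansatz-error $-\p_t\dot\Phi + \De\Phi - r^{-2}f(\Phi)$, estimated using~\eqref{eq:ptdotPhi} together with Lemma~\ref{l:fest1}, and the genuinely nonlinear remainder $r^{-2}(f(\Phi+w) - f(\Phi) - f'(\Phi)w)$, estimated by~\eqref{eq:w^2L21}; pairing each against $\dot w$ in $L^2$ gives the remaining terms, including the cubic and quartic ones $(\abs a + \abs b)\lam^{-1}\|\bs w\|_{\HH}^3$, $(\abs a+\abs b)\lam^{-1}\|\bs w\|_{\HH}^4$, and $\|\dot w\|_{L^2}\|\bs w\|_{\HH}^2\|\p_r w\|_H$ (this last from the $O(|w|^3)$ part, using~\eqref{eq:w3} of Lemma~\ref{l:wHH2}). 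The $b\lam^{k/2}\lam^{-1}\|\bs w\|_{\HH}^2$-type term arises from the sub-leading (non-$P_\lam$) part of the commutator, where $\sin 2\Phi$ differs from $\sin 2Q_\la$; this is handled via the pointwise bounds in Lemma~\ref{l:cos2Phi}.

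I also need to track the correction from $\Es_1$ seeing $\LL_\Phi$ rather than $\LL_\lam$: in the commutator step one replaces $\LL_\Phi$ by $\LL_\lam = \LL_0 + P_\lam$ at the cost of terms controlled again by Lemma~\ref{l:cos2Phi} and absorbed into the $\frac{\nu^k}{\lam} + \frac{a^2}{\lam} + \frac{b^2}{\lam}$ coefficients multiplying $\|\bs w\|_{\HH}^2$. Collecting, the signed ``main'' terms are precisely $-\frac{b}{\lam}\ang{\Lam_0 w \mid P_\lam w} + \frac{b}{\lam}\ang{w \mid P_\lam w}$ and everything else is an error of the stated size, which is the content of Lemma~\ref{l:E1'}; Proposition~\ref{p:Hs1'} then follows by adding the $-b\Vs_{1,\lam}$ correction and invoking the Pohozaev-type inequality~\eqref{eq:A-pohozaev} and the virial comparison~\eqref{eq:vir-new} of Lemma~\ref{l:opA} (together with $|b'|, |\lam''|$-type bounds for the term where $\p_t$ hits $b(t)/\lam(t)$), exactly as sketched in the introduction.

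The main obstacle I anticipate is purely bookkeeping: $\Psi_1$ and especially $\Psi_2$ (through~\eqref{eq:ptdotPhi}) have a very large number of terms, each carrying a specific power of $\nu$, $a$, $b$ and a specific spatial profile ($\Lam Q$, $A$, $B$, $\tilde B$, or their $\Lam$, $\Lam_0$ images), and one must verify that \emph{every} one of them, when paired against $w$ or $\dot w$, falls under one of the error categories on the right-hand side of Lemma~\ref{l:E1'} — no term should generate something larger, e.g. of size $\frac{\nu^{k-1}}{\lam}\|\bs w\|_{\HH}$, that cannot be absorbed later in the bootstrap. The delicate points are (i) making sure the leading commutator term is extracted \emph{exactly} (with the right constant, tied to $\lam' \approx -b$) so that it cancels against $b\Vs_{1,\lam}$, and (ii) keeping the $\nu$-powers in the nonlinear-interaction pairings sharp, since losing even one power of $\nu$ somewhere could break the final estimate~\eqref{eq:w-est}. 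None of this is conceptually hard given Lemmas~\ref{l:ABBQ}--\ref{l:w^2est}, \ref{l:opA}, and \ref{l:modc2}, but it requires care.
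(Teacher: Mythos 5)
Your overall plan coincides with the paper's: write $\Es_1'(t) = \ang{\Psi_2 \mid \dot w} + \ang{\Psi_1 \mid \LL_\Phi w} + \frac12\ang{w \mid [\p_t,\LL_\Phi]w}$, extract $-\frac{b}{\lam}\ang{\Lam_0 w \mid P_\lam w} + \frac{b}{\lam}\ang{w \mid P_\lam w}$ from the commutator after passing from $\LL_\Phi$ to $\LL_\lam$, and bound everything else. Your commutator treatment is essentially right (modulo imprecise wording: the commutator $[\p_t,\LL_\lam]$ produces $\p_t P_\lam$, not $\Lam_0\Lam Q$, but the reference to~\eqref{eq:comm-intro} shows you understand the mechanism). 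Your handling of the nonlinear remainder via~\eqref{eq:w^2L2} is also fine.

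However, there is a genuine gap: your estimation of $\ang{\Psi_2 \mid \dot w}$ and $\ang{\Psi_1 \mid \LL_\Phi w}$ as described would produce the unacceptable term $\frac{\abs a + \abs b}{\lam}\|\bs w\|_{\HH}^2$, which is strictly larger than every coefficient of $\|\bs w\|_{\HH}^2$ in the claimed bound (those are $\frac{\abs a\nu}{\lam}$, $\frac{b\lam^{k/2}}{\lam}$, $\frac{\nu^k}{\lam}$, $\frac{a^2}{\lam}$, $\frac{b^2}{\lam}$ — all smaller by at least a factor of $\abs a + \abs b$). Since $\frac{\abs a + \abs b}{\lam} \sim t^{-1}$ along the relevant trajectory, such a term would not be integrable in the later bootstrap and the whole construction would fail. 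Two structural cancellations, which you do not invoke, are needed to avoid it. First, for $\ang{\Psi_2 \mid \dot w}$: the leading contributions $(b'+\gamma_k\nu^k/\lam)\ang{-\Lam Q_{\U\lam}\mid\dot w}$ and $(a'+\gamma_k\nu^k/\mu)\ang{-\Lam Q_{\U\mu}\mid\dot w}$ coming from~\eqref{eq:ptdotPhi} vanish \emph{identically} by the orthogonality conditions~\eqref{eq:lawdot},~\eqref{eq:muwdot}; a naive $L^2$ estimate of $\Psi_2$ (e.g. via~\eqref{eq:ptdotPhiL2}) paired against $\dot w$ cannot see this cancellation. Second, for $\ang{\Psi_1 \mid \LL_\Phi w}$: the first two terms $(b+\lam')\Lam Q_{\U\lam}$, $(a-\mu')\Lam Q_{\U\mu}$ of $\Psi_1$ must be handled via the kernel identities $\LL_\lam\Lam Q_{\U\lam} = 0$, $\LL_\mu\Lam Q_{\U\mu} = 0$, writing $\ang{\Lam Q_{\U\lam}\mid\LL_\Phi w} = \ang{\Lam Q_{\U\lam}\mid(\LL_\Phi - \LL_\lam)w}$ and using~\eqref{eq:cos2Phi1} to gain the crucial factor $\nu^k + a^2 + b^2$. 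Your bound "noting $\LL_\Phi$ is bounded $H \to H^{-1}$" ignores this gain and would again produce $\frac{\abs a+\abs b}{\lam}\|\bs w\|_{\HH}^2$. These two cancellations are the genuinely non-trivial content of the lemma; everything else is, as you say, bookkeeping.
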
 

\begin{lem}  \label{l:Vs1'} 
Let $J \subset \R$ be an interval on which $\bs u(t)$ satisfies the hypothesis of Lemma~\ref{l:mod2}. Let $a, b, \la, \mu$ and $\bs w(t)$ be given by Lemma~\ref{l:mod2} and let $\nu= \la/\mu$.  Let $\Vs_{1, \la}$ be as in~\eqref{eq:Vs1}. Then, as long as $a, b, \nu \ll1$ on $J$ we have, 
\EQ{
 (b &\Vs_{1, \la})'(t)   + \frac{ b}{\lam}   \ang{ \Lam_0 w \mid P_\lam w}   
 \le c_0 \frac{b}{\la} \| w \|_H^2  -  \frac{1}{\lambda}\int_0^{R\lambda}\Big((\partial_r w)^2 + \frac{k^2}{r^2}w^2\Big) \udr  \\
&  + C_1 \Big( \frac{\abs{b} \nu^{2k-1}}{\la}  + \frac{\abs{b}^5}{\la}  + \frac{\abs{a}^5}{\la} \Big) \| \dot w \|_{L^2}+ C_1 \Big(  \frac{ \abs{b} \lam + a^2 + b^2 + \nu^{k} }{\lam}  \Big)   \| \bs w \|_{\HH}^2   \\
&   + C_1 \Big( \frac{ \abs{a} + \abs{b}}{\lam} \| \dot w \|_{L^2} \| \bs w \|_{L^2}^2  + \frac{1}{\lam}  \| \dot w \|_{L^2} \| \bs w \|_{\HH}^3+  \abs{b} \| w \|_H^2   \| w \|_{H^2} +  \abs{b} \|w \|_H^3\| w \|_{H^2}\Big)
}
for some uniform constant $C_1>0$,
and where $c_0>0, R>0$ are given by Lemma~\ref{l:opA}. We note that $c_0>0$ can be taken arbitrarily small independently of $a, b, \la, \mu$. 
\end{lem}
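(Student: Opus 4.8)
The plan is to differentiate $b(t)\Vs_{1,\la}(t) = b(t)\ang{\calA_0(\la)w \mid \dot w}$ directly, using the equation~\eqref{eq:weq3} for $(\p_t w,\p_t\dot w)$ and the scaling identity $\p_t\calA_0(\la) = \tfrac{\la'}{\la}\,\la\p_\la\calA_0(\la)$, and then to sort the resulting terms into (i) a main coercive contribution produced by a Pohozaev-type virial identity, (ii) the single term $-\tfrac{b}{\la}\ang{\La_0 w \mid P_\la w}$, which exactly cancels the term carried to the left-hand side of the claimed inequality, and (iii) a remainder of strictly lower order. Concretely, I would first record, using antisymmetry of $\calA_0(\la)$ in $L^2$ (so $\ang{\calA_0(\la)\dot w\mid\dot w}=0$) and $\p_t w = \dot w + \Psi_1$, $\p_t\dot w = -\LL_\Phi w + \Psi_2$,
\begin{align*}
(b\Vs_{1,\la})' ={}& b'\ang{\calA_0(\la)w\mid\dot w} + b\tfrac{\la'}{\la}\ang{\la\p_\la\calA_0(\la)w\mid\dot w} + b\ang{\calA_0(\la)\Psi_1\mid\dot w} \\
&{}- b\ang{\calA_0(\la)w\mid\LL_\Phi w} + b\ang{\calA_0(\la)w\mid\Psi_2}.
\end{align*}

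The heart of the matter is $-b\ang{\calA_0(\la)w\mid\LL_\Phi w}$, which I would treat by splitting $\LL_\Phi = \LL_0 + P_\la + r^{-2}(f'(\Phi)-f'(Q_\la))$. For the $\LL_0$-part, the Pohozaev-type estimate~\eqref{eq:A-pohozaev} of Lemma~\ref{l:opA} yields, after multiplication by $b$, precisely the localized coercive term $-\tfrac{b}{\la}\int_0^{R\la}\big((\p_r w)^2 + \tfrac{k^2}{r^2}w^2\big)\udr$ up to an error $\le c_0\tfrac{|b|}{\la}\|w\|_H^2$ with $c_0$ as small as desired. For the $P_\la$-part, the replacement~\eqref{eq:vir-new} gives $-b\ang{\calA_0(\la)w\mid P_\la w} = -\tfrac{b}{\la}\ang{\La_0 w\mid P_\la w} + O\!\big(c_0\tfrac{|b|}{\la}\|w\|_H^2\big)$, and the leading term here is exactly the one appearing on the left of the statement. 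The remaining potential difference $r^{-2}(f'(\Phi)-f'(Q_\la)) = k^2 r^{-2}(\cos 2\Phi - \cos 2Q_\la)$ is bounded pointwise via~\eqref{eq:cos2Phi1} of Lemma~\ref{l:cos2Phi} and paired against $\calA_0(\la)w$, whose $L^2$ norm is $\lesssim\|w\|_H$ by the $\mathscr{L}(H;L^2)$-bound of Lemma~\ref{l:opA}; this produces the $\tfrac{a^2+b^2+\nu^k}{\la}\|\bs w\|_{\HH}^2$-type errors, using Lemmas~\ref{l:ABBQ} and~\ref{l:ABBQL21} for the relevant pairings.

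For the remaining terms, which are all error, I would: (i) bound $b'\ang{\calA_0(\la)w\mid\dot w}$ using $|\ang{\calA_0(\la)w\mid\dot w}|\lesssim\|w\|_H\|\dot w\|_{L^2}$ and $|b'|\lesssim |b'+\gamma_k\nu^k/\la| + \nu^k/\la$, the first factor being controlled by~\eqref{eq:b'est} of Lemma~\ref{l:modc2}; (ii) bound $b\tfrac{\la'}{\la}\ang{\la\p_\la\calA_0(\la)w\mid\dot w}$ the same way, with $|\la'|\lesssim |b| + |b+\la'|$ and~\eqref{eq:b+la'}; (iii) for $b\ang{\calA_0(\la)\Psi_1\mid\dot w}$ and $b\ang{\calA_0(\la)w\mid\Psi_2}$, insert the explicit expansions~\eqref{eq:dotPhi-ptPhi} for $\Psi_1$ and~\eqref{eq:ptdotPhi} together with~\eqref{eq:Psi2} for $\Psi_2$, pass $\calA_0(\la)$ onto the $H$- or $L^2$-factor via the $\mathscr{L}(H;L^2)$-bound, estimate each profile term using Lemma~\ref{l:ABB} and the interaction bounds of Lemmas~\ref{l:fest1}--\ref{l:w^2est}, and absorb every modulation prefactor $(b+\la')$, $(a-\mu')$, $(a'+\ti\gamma_k\nu^k/\mu)$, $(b'+\gamma_k\nu^k/\la)$ via Lemma~\ref{l:modc2}. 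Collecting these contributions yields the stated inequality.

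The step I expect to be the main obstacle is making the cancellation in the $P_\la$-part exact: $\tfrac{b}{\la}\ang{\La_0 w\mid P_\la w}$ is the only term of critical size $t^{-1}\|\bs w\|_{\HH}^2$ with indeterminate sign, so the whole monotonicity scheme collapses unless it is extracted cleanly — and since the honest generator $\tfrac{1}{\la}\La_0$ is \emph{not} bounded $H\to L^2$, one is forced to work with the truncated $\calA_0(\la)$ and pay the price~\eqref{eq:vir-new}, keeping the small constant $c_0$ strictly separate from $\eta_0$ throughout (this is exactly why Lemma~\ref{l:opA} includes~\eqref{eq:vir-new}). A secondary difficulty is purely bookkeeping: $\calA_0(\la)$ is localized to the \emph{inner} scale $\la\ll1$ whereas large parts of $\Psi_1,\Psi_2$ are concentrated at the \emph{outer} scale $\mu\simeq1$, so one must check that every cross-scale pairing genuinely gains enough powers of $\nu$ to land below $\tfrac{1}{\la}\|\bs w\|_{\HH}^2$ (equivalently, to be $t$-integrable after division by $\la$); this is routine given Lemmas~\ref{l:ABBQ}--\ref{l:ABBQL21} and~\ref{l:fest1}--\ref{l:w^2est}, but it is where power- and sign-tracking errors are most likely.
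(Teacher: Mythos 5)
Your proposal reproduces the paper's proof of Lemma~\ref{l:Vs1'} essentially line for line: the same differentiation of $b\Vs_{1,\la}$, the same use of the antisymmetry $\ang{\calA_0(\la)\dot w\mid\dot w}=0$, the Pohozaev-type bound~\eqref{eq:A-pohozaev} for the $\LL_0$-part, and the replacement~\eqref{eq:vir-new} extracting $\tfrac{b}{\la}\ang{\Lam_0 w\mid P_\la w}$; the only cosmetic difference is that the paper writes $\p_t\dot w = -\LL_\la w + \ti\Psi_2$ with the cross-term $r^{-2}\big(f'(\Phi)-f'(Q_\la)\big)w$ folded into $\ti\Psi_2$, whereas you split $\LL_\Phi = \LL_0 + P_\la + r^{-2}\big(f'(\Phi)-f'(Q_\la)\big)$ and keep $\Psi_2$, an algebraic rearrangement of the same terms. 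One bookkeeping caution: in bounding that cross-term, the pointwise estimate stemming from~\eqref{eq:sin(Phi-Qla)} gives $\big|r^{-1}(\cos 2\Phi - \cos 2Q_\la)\big| \lesssim 1 + \tfrac{b^2+\nu^k}{\la} + a^2$, and the leading constant produces the error $|b|\,\|\bs w\|_{\HH}^2$ rather than the $\tfrac{a^2+b^2+\nu^k}{\la}\|\bs w\|_{\HH}^2$ you wrote — this larger contribution is exactly the $C_1\tfrac{|b|\lam}{\lam}\|\bs w\|_{\HH}^2$ term in the statement, so the conclusion is unaffected, but the book-keeping should reflect it.
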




\begin{proof}[Proof of Proposition~\ref{p:Hs1'} assuming Lemmas~\ref{l:E1'},~\ref{l:Vs1'}]
Using Lemma~\ref{l:E1'} and Lemma~\ref{l:Vs1'} we have 
\EQ{
&\Hs_1'(t)   = \Es_1'(t) - (b\Vs_{1, \la})'(t)  \\
&  \ge -c_0 \frac{b}{\la} \| w \|_H^2  +  \frac{b}{\lambda}\int_0^{R\lambda}\Big((\partial_r w)^2 + \frac{k^2}{r^2}w^2\Big) \udr + \frac{b}{\lam} \int_0^\infty (f'(Q_\lam) - k^2) \frac{w^2}{r^2} \, \udr\\
&\quad - C_1\Big( \frac{\nu^{2k-1}}{\la} + \frac{b^2 \nu^{k-1}}{\la} +  \frac{b^4}{\la} + \frac{a^4}{\la} \Big) \| \bs w \|_{\HH} -C_1\Big( \frac{\abs{b}\lam}{\lam} +  \frac{\abs{a}\nu}{\la}  + \frac{\nu^{k}}{\la} + \frac{a^2}{\la}+  \frac{b^2}{\la} \Big) \| \bs w \|_{\HH}^2 \\
& \quad -  C_1 \frac{1}{\la} \| \dot w \|_{L^2} \| \bs w \|_{\HH}^2  -C_1\Big( \frac{ \abs{a} + \abs{b}}{\lam}  \Big) \| \bs w \|_{\HH}^3  -C_1\| \dot w \|_{L^2} \| \bs w \|_{\HH}^2 \| \p_r w \|_H - C_1 \frac{1}{\lam}  \| \dot w \|_{L^2} \| \bs w \|_{\HH}^3  \\
&\quad -C_1  \abs{b} \| w \|_H^2   \| w \|_{H^2} -C_1  \abs{b} \|w \|_H^3\| w \|_{H^2} 
}
which completes the proof. 
\end{proof}

\begin{proof}[Proof of Lemma~\ref{l:E1'}] 
First we note the identity, 
\EQ{ \label{eq:E1'1} 
\Es_1'(t) &=\ang{ \Psi_2 \mid  \dot w } + \ang{ \Psi_1 \mid  \calL_\Phi w}   
 +\frac{1}{2} \ang{ w \mid [\p_t, \LL_\Phi] w}
}
To see this, we compute using~\eqref{eq:weq3}, 
\EQ{
\Es_1'(t) &= \ang{ \p_t  \dot w \mid\dot  w} + \ang{ \p_t w \mid \LL_\Phi w} + \frac{1}{2} \ang{ w \mid [\p_t, \LL_\Phi] w}  \\
& = \ang{ -\calL_\Phi w \mid \dot w} + \ang{ \Psi_2 \mid  \dot w} 
 + \ang{ \dot w \mid \calL_\Phi w} + \ang{ \Psi_1 \mid  \calL_\Phi w} 
 +\frac{1}{2} \ang{ w \mid [\p_t, \LL_\Phi] w} \\
& = \ang{ \Psi_2 \mid  \dot w } + \ang{ \Psi_1 \mid  \calL_\Phi w} +\frac{1}{2} \ang{ w \mid [\p_t, \LL_\Phi] w}  \\
}
which is precisely~\eqref{eq:E1'1}. 

We estimate each of the terms on the right-hand side of~\eqref{eq:E1'1}. 
\begin{claim} We have,   
\EQ{ \label{c:Psi2wdot} 
\abs{ \ang{ \Psi_2 \mid  \dot w } } &\lesssim   \Big( \frac{ \nu^{2k-1}}{\la} + \frac{b^2 \nu^{k-1}}{\la}  + \frac{b^4}{\la} + \frac{a^4}{\la}       \Big)  \| \dot w \|_{L^2} + \frac{\abs{a}^3 + \abs{b}^3 + ( \abs{a} + \abs{b}) \nu^{k-1}}{\la} \| \dot w \|_{L^2}   \|  \bs w \|_{\HH}  \\ 
&\quad + \frac{1}{\la} \| \dot w \|_{L^2} \| \bs w \|_{\HH}^2  + \| \dot w \|_{L^2} \| \bs w \|_{\HH}^2 \| \p_r w \|_H  
  }
\end{claim}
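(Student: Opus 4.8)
The plan is to prove the claimed bound on $\abs{\ang{\Psi_2 \mid \dot w}}$ by decomposing $\Psi_2$ into the three groups of terms visible in its definition and the identity~\eqref{eq:ptdotPhi}: first the ``modulation error'' terms multiplied by the factors $(b' + \gamma_k \nu^k/\la)$, $(a' + \ti\gamma_k \nu^k/\mu)$, $(b + \la')$, $(a - \mu')$; second the purely algebraic (quadratic and higher in $a, b, \nu^k$) terms coming from time-differentiating the coefficients of $\dot\Phi$; and third the nonlinear interaction terms $-\tfrac{1}{r^2}(\dots)$ together with the genuinely nonlinear-in-$w$ piece $-\tfrac{1}{r^2}(f(\Phi+w) - f(\Phi) - f'(\Phi)w)$. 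Each group is paired with $\dot w$ and estimated by Cauchy--Schwarz, so the whole argument reduces to collecting $L^2$ bounds that are already available in the preliminary lemmas.

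The key steps, in order, are as follows. \emph{Step 1:} For the modulation-error terms, use Lemma~\ref{l:modc2} to bound $\abs{b' + \gamma_k \nu^k/\la}$, $\abs{a' + \ti\gamma_k\nu^k/\mu}$ by~\eqref{eq:b'est},~\eqref{eq:a'est}, and $\abs{b + \la'}$, $\abs{a - \mu'}$ by~\eqref{eq:b+la'},~\eqref{eq:a-mu'}; then pair with $\dot w$, using Lemma~\ref{l:ABB} (via the $L^2$ bounds on $A, \La A, \La_0\La A, B, \dots$) to control the $L^2$ norms of the profile combinations, which each carry a factor $\la^{-1}$ from the $L^2$-rescaling. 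The resulting contributions are exactly the $\frac{\abs a^3 + \abs b^3 + (\abs a + \abs b)\nu^{k-1}}{\la}\|\dot w\|_{L^2}\|\bs w\|_{\HH}$ term and (after multiplying the leading pieces of~\eqref{eq:b'est},~\eqref{eq:a'est} with $\|\dot w\|_{L^2}$) the $\big(\frac{\nu^{2k-1}}{\la} + \frac{b^2\nu^{k-1}}{\la} + \frac{b^4}{\la} + \frac{a^4}{\la}\big)\|\dot w\|_{L^2}$ term, while the $\la^{-1}\|\bs w\|_{\HH}^2$ and $\la^{-1}\|\dot w\|_{L^2}\|\bs w\|_{\HH}^2$ cross-terms arising from the $\|\bs w\|_{\HH}$- and $\|\bs w\|_{\HH}^2$-dependent pieces of Lemma~\ref{l:modc2} are absorbed into the last two displayed terms of the claim. \emph{Step 2:} For the algebraic quadratic/cubic terms in~\eqref{eq:ptdotPhi} (lines not multiplied by a modulation error), note that every such term has the schematic form $\frac{1}{\la}(\text{monomial in }a,b,\nu^k \text{ of degree}\ge 2) \times (\text{profile})_{\U{\la \text{ or } \mu}}$; pairing with $\dot w$ and using Lemma~\ref{l:ABB} gives a bound $\lesssim \big(\frac{\nu^{2k-1}}{\la} + \frac{b^4}{\la} + \frac{a^4}{\la} + \text{l.o.t.}\big)\|\dot w\|_{L^2}$ — here one checks that the genuinely quadratic monomials like $\gamma_k^2\nu^{2k}/\la$, $b^2\nu^k/\la$ etc. are all dominated by $\nu^{2k-1}/\la + b^2\nu^{k-1}/\la + b^4/\la + a^4/\la$ once $\nu, a, b \ll 1$. \emph{Step 3:} For the nonlinear-in-$\Phi$ interaction terms, apply the first three estimates of Lemma~\ref{l:fest1} with $\al = 2$ and pair with $\dot w$, which gives $\lesssim \big(\frac{\nu^{2k}}{\la} + \frac{b^4}{\la} + \frac{a^4\nu}{\la} + \frac{b^2\nu^{k-1}}{\la} + \frac{a^2\nu^{k}}{\la} + \frac{\nu^{2k-1}}{\la}\big)\|\dot w\|_{L^2}$, again within the claimed budget. \emph{Step 4:} For the quadratic-in-$w$ remainder, use~\eqref{eq:w^2L2} from Lemma~\ref{l:w^2est}: $\|r^{-2}(f(\Phi+w)-f(\Phi)-f'(\Phi)w)\|_{L^2} \lesssim \frac1\la\|w\|_H^2 + \|w\|_H^2\|\p_r w\|_H$, and pairing with $\dot w$ yields exactly $\frac1\la\|\dot w\|_{L^2}\|\bs w\|_{\HH}^2 + \|\dot w\|_{L^2}\|\bs w\|_{\HH}^2\|\p_r w\|_H$, the final two displayed terms. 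Summing Steps 1--4 and discarding dominated terms gives~\eqref{c:Psi2wdot}.

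The main obstacle is purely bookkeeping rather than conceptual: the expression~\eqref{eq:ptdotPhi} for $-\p_t\dot\Phi + \De\Phi - r^{-2}f(\Phi)$ has several dozen terms, and one must verify term-by-term that each $L^2$ norm, once multiplied by the scaling factor and the relevant power of $a, b, \nu$, falls under one of the six term-types on the right of~\eqref{c:Psi2wdot}. The one genuinely delicate point is tracking which terms carry a ``free'' factor of $\la$ versus $\mu \simeq 1$ (so that rescaling from $\U\mu$ costs $1$ while rescaling from $\U\la$ costs $\la^{-1}$), and making sure the $\nu^{k-1}$ versus $\nu^{k-3}$ discrepancies in Lemma~\ref{l:ABBQ} for profiles centered at the ``wrong'' scale are never triggered in a way that violates the stated powers; here one uses that the dangerous pairings (e.g. $\ang{\La Q_{\U\mu} \mid A_{\U\la}} \lesssim \nu^{k-3}$) only ever appear multiplied by at least $b^2$ or $\nu^k$ or a modulation error, so the net power stays acceptable. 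Since $k \ge 4$ all these profiles are in $L^2$ (Remark~\ref{r:k4}), so no additional cutoffs or log-losses beyond the $o(1)$ already present in Lemma~\ref{l:fest1} are needed.
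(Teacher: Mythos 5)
Your high-level decomposition of $\langle\Psi_2 \mid \dot w\rangle$ --- splitting $\Psi_2$ into the expanded expression~\eqref{eq:ptdotPhi} grouped by modulation-error coefficients, algebraic monomials, nonlinear profile interactions, and the quadratic-in-$w$ remainder treated via~\eqref{eq:w^2L2} --- matches the paper's. Steps~2--4 are fine and the catalogue of lemmas you cite is the correct one.

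The gap is in Step~1, and it is real. In~\eqref{eq:ptdotPhi} the lines multiplied by $(b'+\gamma_k\nu^k/\lambda)$ and $(a'+\ti\gamma_k\nu^k/\mu)$ have leading entries $-\Lambda Q_{\U\lambda}$ and $-\Lambda Q_{\U\mu}$, whose $L^2$ norms are $O(1)$ --- the $L^2$-rescaling $w \mapsto w_{\U\lambda}$ is an isometry on $L^2$, so the ``factor $\lambda^{-1}$ from the $L^2$-rescaling'' you invoke does not exist. If you pair these by Cauchy--Schwarz you pick up $\abs{b'+\gamma_k\nu^k/\lambda}\,\|\dot w\|_{L^2}$, and by~\eqref{eq:b'est} the dominant piece of that is $\frac{\abs a+\abs b}{\lambda}\|\dot w\|_{L^2}^2$. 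This is \emph{not} absorbed by the claimed right-hand side: the corresponding slot there is $\frac{\abs a^3+\abs b^3+(\abs a+\abs b)\nu^{k-1}}{\lambda}\|\dot w\|_{L^2}\|\bs w\|_{\HH}$, which is smaller by a factor on the order of $a^2+b^2+\nu^{k-1}\ll 1$, and in the regime where one later bootstraps $\|\bs w\|_{\HH}\ll\abs a,\abs b$ it also cannot be absorbed into $\frac{1}{\lambda}\|\dot w\|_{L^2}\|\bs w\|_{\HH}^2$. What makes the estimate close is the observation --- used crucially in the paper --- that the orthogonality conditions~\eqref{eq:lawdot},~\eqref{eq:muwdot} force $\ang{\Lambda Q_{\U\lambda}\mid\dot w}=\ang{\Lambda Q_{\U\mu}\mid\dot w}=0$, so the $-\Lambda Q_{\U\lambda}$, $-\Lambda Q_{\U\mu}$ contributions drop out identically and every surviving profile ($A$, $\Lambda A$, $B$, $\ti B$, \dots) carries a small coefficient ($b^2$, $\nu^k$, $\nu^{k-1}$), which is exactly where the extra powers $\abs a^3+\abs b^3+(\abs a+\abs b)\nu^{k-1}$ come from. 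You implicitly drop the $\Lambda Q$ terms (you list $A,\Lambda A,\Lambda_0\Lambda A, B,\dots$ but not $\Lambda Q$) and arrive at the right answer, but your argument as written gives no justification for dropping them and replaces the real reason with an incorrect scaling claim. State the use of~\eqref{eq:lawdot},~\eqref{eq:muwdot} explicitly; without it the stated power count fails.
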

\begin{proof}[Proof of Claim~\ref{c:Psi2wdot}] 
First, note that by the definition of ~$\Psi_2$ in~\eqref{eq:Psi2} we have 
\EQ{
\ang{ \Psi_2 \mid  \dot w } & = \ang{  -\p_t \dot \Phi + \De \Phi - \frac{1}{r^2} f( \Phi)  \mid \dot w }  - \ang{  \frac{1}{r^2}  \Big ( f( \Phi +w) - f( \Phi) - f'( \Phi) w \Big) \mid \dot w} 
}
The claim then follows from two estimates: 
 \EQ{ \label{eq:ptdotPhiest} 
\Big|\langle  - \p_t \dot \Phi + &\De \Phi - \frac{1}{r^2} f( \Phi)  \mid  \dot w \rangle \Big|    \lesssim \| \dot w \|_{L^2} \Big(\frac{ \nu^{2k-1}}{\la} + \frac{b^2 \nu^{k-1}}{\la}  + \frac{b^4}{\la} + \frac{a^4\nu}{\la} \Big)   \\ 
& \,  + \frac{\abs{a}^3 + \abs{b}^3 +( \abs{a} + \abs{b})\nu^{k-1}}{\la} \| \dot w \|_{L^2}   \|  \bs w \|_{\HH} +    \frac{a^2 + b^2 + \nu^{k-1}}{\la}   \| \dot w \|_{L^2} \| \bs w \|_{\HH}^2  
 }
 and 
 \EQ{ \label{eq:w^2-est1} 
 \abs{ \ang{  \frac{1}{r^2}  \Big ( f( \Phi +w) - f( \Phi) - f'( \Phi) w \Big) \mid \dot w} } &\lesssim  \Big(\frac{ \|  w \|_{H}^2 }{\la}  +  \| w \|_H^2 \| \p_r w \|_H \Big) \| \dot w \|_{L^2}
}
Note that~\eqref{eq:w^2-est1} follows from~\eqref{eq:w^2L2} from Lemma~\ref{l:w^2est}. It remains the prove~\eqref{eq:ptdotPhiest}. Using crucially the orthogonality conditions~\eqref{eq:lawdot} and \eqref{eq:muwdot} we see that  
 \EQ{
&\langle  -\p_t \dot \Phi  + \De \Phi - \frac{1}{r^2} f( \Phi)  \mid \dot w \rangle  = I + II + III + IV 
}
where 
\EQ{
I = b \frac{(b+\la')}{\la} \ang{ \La_0 \La Q_{\U \la} \mid  \dot w}   +   a\frac{(\mu'- a)}{\mu} \ang{ \La_0 \La Q_{\U \mu}\mid \dot w}  
} 
and satisfies the estimates, 
\EQ{
\abs{I} \lesssim \frac{\abs{b}}{\lam} \abs{ b+ \lam'} \| \dot w \|_{L^2}  +  \frac{\abs{a} \nu}{\lam} \abs{ a - \mu'}   \| \dot w \|_{L^2} 
}
The second term $II$ is given by
\begingroup
\allowdisplaybreaks
\begin{align} 
&II  =  (b'+ \gamma_k \frac{\nu^k}{\la})\Bigg( - 3   b^2 \ang{\La A_{\U \la} \mid  \dot w}  + 2 \gamma_k\nu^k  \ang{ A_{\U \la}  \mid  \dot w}  -  \nu^k \ang{ \La B_{\U \la} \mid  \dot w}+  k \nu^k \ang{ B_{\U \la}  \mid  \dot w} \\
 &\qquad \qquad \qquad  \qquad - k \nu^{k-1} \ang{\ti B_{\U \mu} \mid  \dot w}\Bigg) \\
& \quad  + (a'  + \ti \gamma_k \frac{\nu^k}{\mu})\Bigg(    -3a^2 \ang{ \La A_{\U \mu}  \mid  \dot w}  -  2 \ti \gamma_k  \nu^k \ang{ A_{\U \mu}  \mid  \dot w}+  \nu^k  \ang{ \La  \ti B_{\U \mu}  \mid  \dot w}+  k \nu^{k+1} \ang{B_{\U \lam}  \mid  \dot w}   \\
&\qquad\qquad\qquad \qquad - k \nu^k \ang{\ti B_{\U \mu}  \mid  \dot w} \Bigg)  \\ 
& \quad + (b+\la') \Bigg(    \frac{b^3}{\la}\ang{  \La_0 \La A_{\U \la} \mid  \dot w} - 2 \gamma_k \frac{ b \nu^k}{\la}\ang{  \La_0 A_{\U \la} \mid  \dot w} - k   \frac{ b \nu^{k}}{\la}\ang{ \La_0 B_{ \U \la} \mid  \dot w} \\
& \qquad \qquad  \quad  + 2k \gamma_k \frac{ b \nu^{k}}{\la} \ang{  A_{\U \la} \mid  \dot w}- k  \frac{ b\nu^k}{\lam}\ang{\La B_{\U\la} \mid  \dot w}+   k^2\frac{b\nu^k}{\lam} \ang{ B_{ \U \la}  \mid  \dot w}- 2k \ga_k  \frac{a\nu^k}{\lam}  \ang{ A_{\U \mu}  \mid  \dot w} \\
& \qquad \qquad  \quad - k  \frac{a\nu^k}{\lam} \ang{ \La \ti B_{\U \mu}   \mid  \dot w}  - k(k-1) \frac{ b \nu^{k-1}}{\lam}\ang{ \ti B_{\U \mu} \mid  \dot w}  +  k(k+1) a \frac{\nu^{k+1}}{\lam} \ang{ B_{\U \lam}  \mid  \dot w} \\
& \qquad \qquad  \quad- k a \frac{\nu^{k+1}}{\lam}  \ang{ \La_0 B_{\U \lam} \mid  \dot w}  -  k^2 a \frac{\nu^k}{\lam}\ang{ \ti B_{\U \mu} \mid  \dot w} +  \frac{b \nu^k}{\la}\ang{ \La_0 \La B_{\U \la}   \mid  \dot w} \Bigg)  \\ 
 & \quad + (a-\mu') \Bigg(  - \frac{ a^3 \nu}{\lam}  \ang{ \La_0 \La A_{\U \mu}  \mid  \dot w}- 2 \gamma_k \frac{a \nu^{k+1}}{\lam} \ang{  \La_0 A_{\U \mu} \mid  \dot w} - \frac{a \nu^{k+1}}{\lam}  \ang{ \La_0 \La \ti B_{\U \mu}  \mid  \dot w}\\
& \qquad \qquad  \quad  - k  \frac{ b \nu^{k}}{\lam} \ang{ \La_0  \ti B_{\U \mu}  \mid  \dot w}   + 2k \gamma_k  \frac{b\nu^{k+1}}{\lam} \ang{   A_{\U \la}  \mid  \dot w} - k  \frac{b\nu^{k+1}}{\lam} \ang{ \La B_{\U\la}  \mid  \dot w} \\
& \qquad \qquad  \quad +  k^2 \frac{b \nu^{k+1}}{\la}  \ang{ B_{ \U \la}  \mid  \dot w}  - 2k \ga_k \frac{a \nu^{k+1}}{\lam} \ang{ A_{\U \mu}   \mid  \dot w}- k \frac{ a \nu^{k+1}}{\lam} \ang{ \La \ti B_{\U \mu} \mid  \dot w} \\
& \qquad \qquad  \quad    - k(k-1) \frac{  b  \nu^{k}}{\lam}\ang{  \ti B_{\U \mu} \mid  \dot w}  +   k(k+1) a \frac{\nu^{k+2}}{\lam}\ang{ B_{\U \lam} \mid  \dot w}  -  k^2 a \frac{\nu^{k+1}}{\lam}\ang{ \ti B_{\U \mu} \mid  \dot w}  \\
& \qquad \qquad  \quad  - k a \frac{\nu^{k+1}}{\lam} \ang{ \Lam_0 \ti B_{\U \mu}  \mid  \dot w}\Bigg)  
\end{align} 
\endgroup
and satisfies, 
\EQ{
\abs{II} &\lesssim \abs{ b'+ \gamma_k \frac{\nu^k}{\la}}( b^2 + \nu^{k-1}) \| \dot w \|_{L^2} + \abs{a'  + \ti \gamma_k \frac{\nu^k}{\mu}} (a^2 + \nu^k) \| \dot w \|_{L^2} \\
& \quad  + (\abs{b+ \la'}+ \abs{ a - \mu'})( \frac{\abs{b}^3}{\lam} + \frac{\abs{a}^3}{\lam} + \frac{\nu^{\frac{3k}{2}}}{\lam} + \frac{\abs{b} \nu^{k-1}}{\lam}) \| \dot w \|_{L^2} 
}

The third term $III$ is given by, 
\EQ{
III &=    3\gamma_k \frac{\nu^k}{\la}b^2 \ang{  \La A_{\U \la}   \mid  \dot w} -  \frac{b^4}{\la} \ang{ \La_0 \La A_{\U \la}   \mid  \dot w}   -2 \gamma_k^2  \frac{\nu^{2k}}{\la} \ang{  A_{\U \la} \mid  \dot w} + 2 \gamma_k\frac{ b^2 \nu^k}{\la} \ang{  \La_0 A_{\U \la}  \mid  \dot w}   \\
&\quad  +   \gamma_k  \frac{\nu^{2k}}{\lam} \ang{ \La B_{\U \la} \mid  \dot w}  - 2k \gamma_k  \frac{b^2 \nu^k}{\lam} \ang{  A_{\U \la}  \mid  \dot w} - 2k \gamma_k  \frac{a b\nu^{k+1}}{\lam} \ang{  A_{\U \la}  \mid  \dot w} + k  \frac{b^2 \nu^k}{\lam}\ang{  \La B_{\U\la}   \mid  \dot w} \\
&\quad  + k  \frac{ab \nu^{k+1}}{\lam} \ang{  \La B_{\U\la} \mid  \dot w}  - \frac{b^2 \nu^k}{\la}\ang{  \La_0 \La B_{\U \la}  \mid  \dot w}  -  k \frac{\nu^{2k}}{\lam} \ang{ B_{\U \la}  \mid  \dot w} +  \frac{ k b^2 \nu^{k}}{\la}\ang{ \La_0 B_{ \U \la} \mid  \dot w}   \\
& \quad  -  k^2 \frac{b^2 \nu^{k} }{\lam} \ang{ B_{ \U \la}   \mid  \dot w}  -   k^2 \frac{a b  \nu^{k+1}}{\lam} \ang{ B_{ \U \la} \mid \dot w}  - k(k+1) ab \frac{\nu^{k+1}}{\lam}\ang{  B_{\U \lam}  \mid  \dot w} \\ 
 &\quad   -  k(k+1) a^2 \frac{\nu^{k+2}}{\lam} \ang{B_{\U \lam}  \mid  \dot w} -  k  \ti \gamma_k \frac{\nu^{2k+2}}{\lam}\ang{  B_{\U \lam}  \mid  \dot w}  + 3 \ti \gamma_k \frac{a^2\nu^k}{\mu}\ang{  \La A_{\U \mu}   \mid  \dot w} \\
 &\quad  + \frac{a^4\nu}{\lam}\ang{  \La_0 \La A_{\U \mu}   \mid  \dot w} +   2  \ti \gamma_k^2 \frac{\nu^{2k+1}}{\la} \ang{ A_{\U \mu}  \mid  \dot w}  + 2k \ga_k \frac{ a b \nu^k}{\lam}\ang{  A_{\U \mu}  \mid  \dot w} \\
 &\quad   +  2k \ga_k\frac{a^2 \nu^{k+1}}{\lam} \ang{  A_{\U \mu} \mid  \dot w}+ 2 \gamma_k \frac{a^2 \nu^{k+1}}{\lam}  \ang{ \La_0 A_{\U \mu}   \mid  \dot w} -  \ti \gamma_k  \frac{\nu^{2k+1}}{\lam} \ang{ \La  \ti B_{\U \mu} \mid  \dot w} \\
 &\quad   +  k  \frac{ ab \nu^k}{\lam} \ang{  \La \ti B_{\U \mu}  \mid  \dot w}+  k \frac{a^2 \nu^{k+1}}{\lam} \ang{  \La \ti B_{\U \mu} \mid  \dot w}  +  \frac{ a^2 \nu^{k+1}}{\lam}\ang{   \La_0 \La \ti B_{\U \mu}  \mid  \dot w}   \\
& \quad  + k \gamma_k \frac{\nu^{2k-1}}{\lam} \ang{ \ti B_{\U \mu} \mid  \dot w} +  k \ti \gamma_k  \frac{\nu^{2k+1}}{\lam} \ang{  \ti B_{\U \mu}  \mid  \dot w}  + k^2 a b \frac{\nu^k}{\lam}\ang{  \ti B_{\U \mu}  \mid  \dot w}  \\
& \quad  + k(k-1) \frac{b^2 \nu^{k-1}}{\lam} \ang{  \ti B_{\U \mu}   \mid  \dot w}+ k(k-1) \frac{ab  \nu^{k}}{\lam} \ang{  \ti B_{\U \mu}  \mid  \dot w} + k \frac{ a b \nu^{k}}{\lam}\ang{   \La_0  \ti B_{\U \mu}  \mid  \dot w}  \\
&\quad + k^2 a^2 \frac{\nu^{k+1}}{ \lam} \ang{ \ti B_{\U \mu}  \mid  \dot w}+ k a^2 \frac{ \nu^{k+1}}{\lam}  \ang{ \Lam_0 \ti B_{\U \mu} \mid  \dot w}+ k a b\frac{\nu^{k+1}}{\lam} \ang{  \La_0 B_{\U \lam}  \mid  \dot w}
}
and satisfies, 
\EQ{
\abs{ III} & \lesssim   \frac{b^4 + a^4 + b^2 \nu^{k-1} + \nu^{2k-1} }{\lam}   \| \dot w \|_{L^2} 
}
Finally, 
\EQ{
 IV= &-\ang{ \frac{1}{r^2} \Big( f(Q_\la - Q_\mu)  - f(Q_\la) + f(Q_\mu)  - 4  \big(\frac{r}{ \mu}\big)^k (\La Q_{\la})^2  -  4 \big(\frac{r}{ \la}\big)^{-k} (\La Q_{\mu})^2 \Big) \mid \dot w} \\
&-\ang{\frac{1}{r^2} \Big( f(\Phi) - f(Q_\la - Q_\mu) -  f'(Q_\la- Q_\mu) (b^2 T_\la - a^2 \ti T_{\mu} \Big)  \mid \dot w}  \\
 &-\ang{ \frac{1}{r^2}  \Big(f'(Q_\la - Q_\mu) (b^2 T_\la - a^2 \ti T_\mu)  - f'(Q_\la) b^2 T_\la + f'(Q_\mu) a^2 \ti T_\mu\Big) \mid \dot w} 
 }
 and satisfies, 
 \EQ{
\abs{ IV} \lesssim   \frac{ b^4 + a^4 + \nu^{2k-1} + b^2 \nu^{k-1}}{\lam} \| \dot w \|_{L^2} 
 }
 where we have used Lemma~\ref{l:fest1} above. Combining these estimates we have 
\EQ{
\Big| \langle  - \p_t \dot \Phi +& \De \Phi - \frac{1}{r^2} f( \Phi)  \mid  \dot w \rangle \Big|   \lesssim  \| \dot w \|_{L^2} \Bigg(\frac{ \nu^{2k-1}}{\la} + \frac{b^2 \nu^{k-1}}{\la}  + \frac{b^4}{\la} + \frac{a^4}{\la} + \abs{a'  + \ti \gamma_k \frac{\nu^k}{\mu}}( a^2 + \nu^k) \\
 & \quad   +  \abs{b'+ \gamma_k \frac{\nu^k}{\la}}( b^2 + \nu^{k-1})  + \Big(\frac{\abs{b}}{\la}  + \frac{\abs{a}}{\lam}  + \frac{\nu^{\frac{3}{2}k}}{\lam} \Big)(\abs{ b + \la'}  +\abs{a - \mu'}   \Big) \Bigg)
 }
Using Lemma~\ref{l:modc2} on the right above we obtain, 
\EQ{
\Big|\langle  - \p_t \dot \Phi + &\De \Phi - \frac{1}{r^2} f( \Phi)  \mid  \dot w \rangle \Big|    \lesssim \| \dot w \|_{L^2} \Big(\frac{ \nu^{2k-1}}{\la} + \frac{b^2 \nu^{k-1}}{\la}  + \frac{b^4}{\la} + \frac{a^4\nu}{\la} \Big)   \\ 
& \,  + \frac{\abs{a}^3 + \abs{b}^3 +( \abs{a} + \abs{b})\nu^{k-1}}{\la} \| \dot w \|_{L^2}   \|  \bs w \|_{\HH} +    \frac{a^2 + b^2 + \nu^{k-1}}{\la}   \| \dot w \|_{L^2} \| \bs w \|_{\HH}^2  
}
which is precisely~\eqref{eq:ptdotPhiest}. 
\end{proof} 

\begin{claim} \label{c:Psi1Lw} We have,  
\EQ{
\abs{ \ang{ \Psi_1 \mid  \calL_\Phi w} } &\lesssim  \Big(  \frac{\abs{a}^5}{\lam} + \frac{\abs{b}^5}{\lam} + \frac{(\abs{a} + \abs{b}) \nu^{2k-1}}{\lam}  + \frac{(\abs{a} + \abs{b})b^2 \nu^{k-1}}{\lam} \Big) \| \bs w \|_{\HH} \\
& \quad  + \Big( \frac{ a^2 + b^2 + \nu^{2k}}{\lam} \Big) \| \bs w \|_{\HH}^2  + \Big( \frac{ \abs{a} + \abs{b}}{\lam} \Big) \| \bs w \|_{\HH}^3 
}
\end{claim}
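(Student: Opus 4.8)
\textbf{Proof proposal for Claim~\ref{c:Psi1Lw}.} The plan is to integrate by parts so as to move $\LL_\Phi$ onto $\Psi_1$, and then to exploit the structure of $\Psi_1 = \dot\Phi - \p_t\Phi$ recorded in~\eqref{eq:dotPhi-ptPhi}. Every summand of $\Psi_1$ has the form $\varepsilon\cdot(\text{monomial in }a,b,\nu)\cdot Z_{\U\sigma}$ with $\sigma\in\{\la,\mu\}$, $Z\in\{\La Q,\La A,A,\La B,B,\La\ti B,\ti B\}$, and $\varepsilon$ one of the four modulation errors $b+\la'$, $a-\mu'$, $b'+\gamma_k\nu^k/\la$, $a'+\ti\gamma_k\nu^k/\mu$ estimated in Lemma~\ref{l:modc2}. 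Since every such $Z$ vanishes like $r^k$ (or $r^k|\log r|$) at $r=0$ and decays like $r^{-k+2}$ at $r=\infty$, there are no boundary contributions, so $\ang{\Psi_1\mid\LL_\Phi w} = \ang{\LL_\Phi\Psi_1\mid w}$, and each summand can be bounded by $\|r\LL_\Phi Z_{\U\sigma}\|_{L^2}\,\|r^{-1}w\|_{L^2}\lesssim \|r\LL_\Phi Z_{\U\sigma}\|_{L^2}\,\|w\|_H$ after pulling out the scalar prefactor $\varepsilon\cdot(\text{monomial})$.

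The crucial point is the treatment of the two terms carrying the weakest intrinsic smallness, $(b+\la')\La Q_{\U\la}$ and $(a-\mu')\La Q_{\U\mu}$: for these one uses the exact identities $\LL_\la\La Q_{\U\la} = \LL_\mu\La Q_{\U\mu} = 0$, so that $\LL_\Phi\La Q_{\U\la} = r^{-2}(f'(\Phi)-f'(Q_\la))\La Q_{\U\la}$ and likewise at scale $\mu$; the pointwise estimates of Lemma~\ref{l:cos2Phi} for $\cos 2\Phi - \cos 2Q_\la$ and $\cos 2\Phi - \cos 2Q_\mu$, combined with the weighted bounds of Lemmas~\ref{l:ABBQL2} and~\ref{l:ABBQL21}, yield $\|r\LL_\Phi\La Q_{\U\la}\|_{L^2}\lesssim \la^{-1}(\nu^k + b^2 + \dots)$ and $\|r\LL_\Phi\La Q_{\U\mu}\|_{L^2}\lesssim \nu^k + b^2\nu^{k-2} + a^2 + \dots$. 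For every other summand of~\eqref{eq:dotPhi-ptPhi} --- each of which already carries a factor $b^2$, $\nu^k$, $a^2$, $\nu^{k-1}$, etc. --- I would write $\LL_\Phi = \LL_\sigma + (\LL_\Phi - \LL_\sigma)$ and invoke the defining equations $\LL A = -\La_0\La Q$, $\LL B = \gamma_k\La Q - 4r^{k-2}[\La Q]^2$, $\LL\ti B = -\ti\gamma_k\La Q + 4r^{-k-2}[\La Q]^2$, together with the commutator $[\LL,\La]h = 2\LL h + \frac{2k^2(\sin 2Q)\La Q}{r^2}h$ for the $\La A,\La B,\La\ti B$ profiles; this expresses $\LL_\sigma Z_{\U\sigma}$ as an explicit rescaled function with $O(r^k)$ behavior at the origin and $O(r^{-k})$ decay at infinity, so $\|r\LL_\sigma Z_{\U\sigma}\|_{L^2} = O(\sigma^{-1})$, while the potential-difference remainders $(\LL_\Phi - \LL_\sigma)Z_{\U\sigma}$, controlled again by Lemmas~\ref{l:cos2Phi} and~\ref{l:ABBQL2}, are strictly smaller. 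For the $\ti B_{\U\mu}$ and $B_{\U\la}$ pieces one additionally uses the orthogonality $\ang{\La Q_{\U\mu}\mid w} = \ang{\La Q_{\U\la}\mid w} = 0$ to annihilate the $\La Q$-component appearing in $\LL\ti B$, $\LL B$.

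Putting this together, every term of $\ang{\Psi_1\mid\LL_\Phi w}$ is bounded by (its Lemma~\ref{l:modc2} modulation-error estimate) $\times$ (its monomial in $a,b,\nu$) $\times$ ($\la^{-1}$, or $1\simeq\mu^{-1}$ for a scale-$\mu$ profile) $\times$ $\|w\|_H$; inserting the Lemma~\ref{l:modc2} bounds and using $\la\simeq\nu$ along with $|a|,|b|,\nu\ll1$ and repeated Young's inequalities (e.g., $(|a|+|b|)\nu^k\lesssim a^2+b^2+\nu^{2k}$, $(|a|+|b|)b^2\lesssim a^2+b^2$, $|b|a^4\lesssim|a|^5+|b|^5$), all contributions collapse into the three groups on the claimed right-hand side. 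I expect essentially all of the work to lie in this final bookkeeping --- sorting roughly a dozen profiles at each of the two scales, attaching to each the right power of $\nu,a,b$ and the right scale, and checking which of the $\|\bs w\|_\HH$, $\|\bs w\|_\HH^2$, $\|\bs w\|_\HH^3$ terms absorbs each resulting product --- whereas each individual estimate is routine given the preliminary lemmas of Section~\ref{s:modulation}.
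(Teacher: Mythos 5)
Your proposal is correct and uses the same key idea as the paper: for the two leading terms $(b+\la')\Lam Q_{\U\la}$ and $(a-\mu')\Lam Q_{\U\mu}$, exploit $\LL_\la \Lam Q_{\U\la} = \LL_\mu \Lam Q_{\U\mu} = 0$ so that only the potential difference $\LL_\Phi - \LL_\sigma = k^2 r^{-2}(\cos 2\Phi - \cos 2Q_\sigma)$ contributes, which is then controlled via Lemma~\ref{l:cos2Phi} and the weighted $L^2$ bounds of Lemma~\ref{l:ABBQL21}. For the remaining terms the paper takes a slightly lighter route than you propose: instead of moving $\LL_\Phi$ all the way onto the profile $Z$ and invoking the defining equations $\LL A = -\La_0\La Q$, etc., together with commutator identities to compute $\LL_\Phi Z$ explicitly, the paper integrates by parts just once, writing $\ang{Z_{\U\sigma} \mid \LL_\Phi w} = \ang{\p_r Z_{\U\sigma} \mid \p_r w} + \ang{Z_{\U\sigma} \mid k^2 r^{-2}\cos(2\Phi)\, w}$, and bounds this directly by $\sigma^{-1}\|Z\|_H \|w\|_H$. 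This avoids any reference to the equations satisfied by $A, B, \ti B$ (and to the orthogonality conditions, which you invoke but do not actually need) because those terms already carry extra factors $b^2$, $a^2$, $\nu^k$, $\nu^{k-1}$ that supply the required smallness; only the bare $\Lam Q$ terms are critical. Both routes yield the same rates after inserting Lemma~\ref{l:modc2} and doing the final Young's-inequality bookkeeping, so yours is a valid alternative, just somewhat more laborious.
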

\begin{proof}[Proof of Claim~\ref{c:Psi1Lw}]  
Using~\eqref{eq:Psi1} 
we have 
\EQ{ \label{eq:Psi1w} 
 \ang{ \Psi_1 \mid  \calL_\Phi w} &=   (b+ \la') \ang{ \La Q_{\U \la} \mid \LL_\Phi w}+ (a - \mu')\ang{ \La Q_{\U \mu} \mid  \LL_\Phi w} \\
 & \quad  +  b^2( b + \la') \ang{\La A_{\U \la}\mid  \LL_\Phi w}    - 2 b \la ( b' + \gamma_k \frac{\nu^{k}}{\la}) \ang{A_{\U \la}  \mid  \LL_\Phi w}   \\
 & \quad +  \nu^k( b + \la') \ang{\La B_{\U \la}\mid  \LL_\Phi w} - k \nu^k( b +\la')  \ang{B_{ \U\la} \mid  \LL_\Phi w}  \\
 &\quad  - k \nu^{k+1}( a - \mu')\ang{  B_{ \U\la} \mid \LL_\Phi w}+ a^2(  a - \mu')\ang{ \La A_{\U \mu}\mid  \LL_\Phi w}  \\
 &\quad + 2a \mu ( a' +  \ga_k \frac{\nu^k}{\mu}) \ang{A_{\U\mu}  \mid  \LL_\Phi w} + \nu^k(   a- \mu') \ang{\La \ti B_{\U \mu}\mid  \LL_\Phi w} \\
   &\quad + k \nu^{k-1} (  b + \la') \ang{\ti B_{\U \mu} \mid  \LL_\Phi w}+ k \nu^{k} (  a- \mu') \ang{\ti B_{\U \mu}   \mid \LL_{\Phi} w} 
}
To estimate the first two terms on the right-hand side above we exploit the fact that $\LL_\la \La Q_{\U \la} =0$ and $\LL_\mu \La Q_{\U \mu} = 0$. To handle the first term,  we  write 
\EQ{
\LL_{\Phi} h  = (\LL_\Phi - \LL_\la) h+  \LL_\la h  = \frac{k^2}{r^2} ( \cos 2 \Phi - \cos 2 Q_\la) h + \LL_\la h 
}
Then we have, 
\EQ{ \label{eq:Psi1w-t1} 
(b+ \la') \ang{ \La Q_{\U \la} \mid \LL_\Phi w}  &= (b+ \la') \ang{ \La Q_{\U \la} \mid \LL_\la w} + (b+ \la') \ang{ \La Q_{\U \la} \mid (\LL_{\Phi} - \LL_\la) w} \\
& = (b+ \la') \ang{ \La Q_{\U \la} \mid\frac{k^2}{r^2} ( \cos 2 \Phi  - \cos 2 Q_\la) w}
}
We now estimate, 
\EQ{
\abs{ \ang{ \La Q_{\U \la} \mid\frac{k^2}{r^2} ( \cos 2 \Phi  - \cos 2 Q_\la) w}} &\lesssim \frac{1}{\la} \| w\|_{H} \left( \int_0^\infty \La Q_{\la}^2 \big(\cos 2 \Phi  - \cos 2 Q_\la\big)^2 \, \frac{\ud r}{r} \right)^{\frac{1}{2}} \\
& \lesssim  \frac{ \nu^k + b^2 + a^2}{\la}  \| w \|_H
}
where the last line follows from an explicit computation using~\eqref{eq:cos2Phi1} from Lemma~\ref{l:cos2Phi}. 
Plugging the above back into~\eqref{eq:Psi1w-t1} we obtain, 
\EQ{
\abs{(b+ \la') \ang{ \La Q_{\U \la} \mid \LL_\Phi w} }  \lesssim \frac{ \nu^k + b^2 + a^2}{\la} \abs{b+ \la'} \| w \|_H
}
An identical analysis, this time using~\eqref{eq:cos2Phi2} from Lemma~\ref{l:cos2Phi} yields, 
\EQ{
\abs{(a - \mu')\ang{ \La Q_{\U \mu} \mid  \LL_\Phi w}} \lesssim \frac{ \nu^k + b^2 + a^2}{\mu} \abs{ a - \mu'} \| w \|_H 
}
For the third term on the right-hand side of~\eqref{eq:Psi1w} we integrate by parts to obtain, 
\EQ{
\abs{ b^2( b + \la') \ang{\La A_{\U \la}\mid  \calL_\Phi w} }  &\lesssim \abs{ \frac{b^2}{\la} ( b + \la')} \Big( \abs{ \ang{ \p_r ( \La A_{ \la}) \mid  \p_r w} }  + \abs{ \ang{  \La A_{ \la} \mid \frac{k^2}{r^2} \cos 2 \Phi w} }   \Big) \\
& \lesssim \frac{b^2}{\la}  \abs{ b + \la'} \| \La A \|_{H} \|  w \|_{H}  \\
& \lesssim \frac{b^2}{\la}  \abs{ b + \la'} \| w \|_H
}
In a similar vein for the fourth term we have 
\EQ{
\abs{b \la ( b' + \gamma_k \frac{\nu^{k}}{\la}) \ang{A_{\U \la}  \mid  \LL_\Phi w}}  &\lesssim b \abs{ b' + \gamma_k \frac{\nu^{k}}{\la}}  \| A \|_H \| w \|_H \\
& \lesssim b \abs{ b' + \gamma_k \frac{\nu^{k}}{\la}}  \| A \|_H \| w \|_H 
}
Estimating the remaining terms in an essentially identical fashion yields, 
\EQ{
\abs{ \ang{ \Psi_1 \mid  \calL_\Phi w} } &\lesssim \frac{ \nu^k+ b^2 + a^2}{\la} \Big( \abs{ b + \la'}   + \nu \abs{ a- \mu'} \Big) \| w \|_H \\
& \quad +  b \abs{b'+ \gamma_k   \frac{\nu^{k}}{\la} }  \| w \|_{H} + a\abs{a'  +   \ga_k \frac{\nu^k}{\mu}} \|w \|_H 
}
Next, applying Lemma~\ref{l:modc2} to the right-hand side above gives, 
\EQ{
\abs{ \ang{ \Psi_1 \mid  \calL_\Phi w} } &\lesssim \frac{ \nu^k+ b^2 + a^2}{\la} \Big( (\abs{a} + \abs{b}) \| \bs w \|_{\HH} + ( \abs{a} + \abs{b})  \nu^{2k-1} + ( \abs{a}^5 + \abs{b}^5) \nu^{k-2} \Big)  \| \bs w \|_{\HH}  \\
& \quad + \frac{ \abs{a} + \abs{b}}{\lam} \Big( (\abs{a} + \abs{b})  \| \dot w \|_{L^2} +  \| \bs w \|_{\HH}^2 + \nu^{2k-1} + b^2 \nu^{k-1} + a^4 + b^4 \Big) \|  \bs w \|_{\HH}  \\
& \lesssim \| \bs w \|_{\HH}  \Big(  \frac{\abs{a}^5}{\lam} + \frac{\abs{b}^5}{\lam} + \frac{(\abs{a} + \abs{b}) \nu^{2k-1}}{\lam}  + \frac{(\abs{a} + \abs{b})b^2 \nu^{k-1}}{\lam} \Big) \\
& \quad  + \Big( \frac{ a^2 + b^2 + \nu^{2k}}{\lam} \Big) \| \bs w \|_{\HH}^2  + \Big( \frac{ \abs{a} + \abs{b}}{\lam} \Big) \| \bs w \|_{\HH}^3 
}
as claimed. 
\end{proof} 


Finally we treat the last term on the right-hand side of~\eqref{eq:E1'1}. 
\begin{claim}  \label{c:vir} 
Under the assumptions that $a, b,  \la, \nu \ll 1$ and $\mu \simeq 1$ we have 
\EQ{
\Big|\frac{1}{2} &\ang{ w \mid [\p_t, \LL_\Phi] w} +  b\ang{  \frac{1}{\la} \La_0 w \mid  P_\la w}  - \frac{b}{\la} \ang{ w \mid P_\la w} \Big|    \\
& \, \, \lesssim \Big( \frac{\abs{b} \la^{\frac{k}{2}} + \abs{a}\nu + b^2 + \nu^k}{\la} \Big) \| w \|_{H}^2  + \Big( \frac{ \abs{a} + \abs{b}}{\lam}  \Big) \| \bs w \|_{\HH}^3 + \Big( \frac{ \abs{a} + \abs{b}}{\lam}  \Big) \| \bs w \|_{\HH}^4
}
where we recall the notation $ 
P_\lam(r) := \frac{1}{r^2}( f'(Q_\lam(r)) - k^2) $.
\end{claim}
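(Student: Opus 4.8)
The plan is to exploit that $[\p_t,\LL_\Phi]$ is a multiplication operator, so that after one integration by parts the term acquires the virial structure described around~\eqref{eq:comm-intro}. By~\eqref{eq:comm-ptL} with $U=\Phi$,
\[
\tfrac12\ang{w\mid[\p_t,\LL_\Phi]w}=-k^2\int_0^\infty\frac{\sin 2\Phi\;\p_t\Phi}{r}\,w^2\,\ud r .
\]
First I would split $\p_t\Phi=-\la'\,\La Q_{\U\la}+\bigl(\p_t\Phi+\la'\,\La Q_{\U\la}\bigr)$, using $\p_t Q_\la=-\la'\,\La Q_{\U\la}$; the remainder is precisely the expression obtained by differentiating the definition~\eqref{eq:Phidef} of $\Phi$ and subtracting the leading $Q_\la$-contribution, i.e.\ the outer-bubble term $\mu'\,\La Q_{\U\mu}$ together with a list of terms each carrying a factor $(\mu'-a)$, $(\la'+b)$, $(a'+\ti\gamma_k\nu^k/\mu)$, $(b'+\gamma_k\nu^k/\la)$ or an explicit power of $a$, $b$ or $\nu$, acting on $\La Q$, $A$, $B$, $\ti B$ and their $\La$- and $\La_0$-derivatives (the same type of list as in~\eqref{eq:ptdotPhi}). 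Throughout I use the standing assumptions $a,b,\la,\nu\ll1$ and $\mu\simeq1$, so that $\nu\simeq\la$ and $\tfrac1\mu\simeq1$.

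The main contribution is the $-\la'\,\La Q_{\U\la}$ piece, namely $\la'\ang{\tfrac{k^2\sin 2\Phi}{r^2}\La Q_{\U\la}\,w\mid w}$. I would first replace $\sin 2\Phi$ by $\sin 2Q_\la$: by the trigonometric identities behind Lemma~\ref{l:cos2Phi}, $|\sin 2\Phi-\sin 2Q_\la|\lesssim\La Q_\mu+b^2|A_\la|+\nu^k|B_\la|+a^2|A_\mu|+\nu^k|\ti B_\mu|$ pointwise, and pairing this against $\tfrac1{r^2}\La Q_{\U\la}\,w^2$, using $\|w\|_{L^\infty}\lesssim\|w\|_H$ together with Lemma~\ref{l:ABB} (for the size of $A_\mu,\ti B_\mu$ at $r\sim\la$) and Lemma~\ref{l:ABBQL21}, bounds the error by $\tfrac{\nu^k+b^2}{\la}\|w\|_H^2$ (just as for the first term in the proof of Claim~\ref{c:Psi1Lw}). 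After the replacement, $\tfrac{k^2\sin 2Q_\la}{r^2}\La Q_{\U\la}=-\tfrac1{2\la r}\p_r\bigl(f'(Q_\la)-k^2\bigr)$ and $f'(Q_\la)-k^2=r^2P_\la$, so the main piece equals $-\tfrac{\la'}{2\la}\int_0^\infty\p_r\bigl(f'(Q_\la)-k^2\bigr)w^2\,\ud r$. Integrating by parts in $r$ — the boundary terms vanish because $f'(Q_\la)-k^2=-2k^2\sin^2Q_\la=O\bigl((r/\la)^{2k}\bigr)$ as $r\to0$ and $O\bigl((r/\la)^{-2k}\bigr)$ as $r\to\infty$, while $w\in L^\infty$ and $w(r)\to0$ at $r=0$ — yields $\tfrac{\la'}{\la}\ang{P_\la w\mid\La w}$, and since $\La w=\La_0 w-w$ this is $\tfrac{\la'}{\la}\bigl(\ang{\La_0 w\mid P_\la w}-\ang{w\mid P_\la w}\bigr)$. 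Writing $\la'=-b+(\la'+b)$ extracts the claimed principal part $-\tfrac b\la\ang{\La_0 w\mid P_\la w}+\tfrac b\la\ang{w\mid P_\la w}$, while the residue $\tfrac{\la'+b}\la\bigl(\ang{\La_0 w\mid P_\la w}-\ang{w\mid P_\la w}\bigr)$ is $\lesssim\tfrac{|\la'+b|}\la\|w\|_H^2$ and is then estimated by~\eqref{eq:b+la'} of Lemma~\ref{l:modc2}; this is the source of the cubic term $\tfrac{|a|+|b|}\la\|\bs w\|_\HH^3$ and, through the $\|\bs w\|_\HH$ factor there, of the quartic term $\tfrac{|a|+|b|}\la\|\bs w\|_\HH^4$ in the claim.

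It remains to control the contribution of $\p_t\Phi+\la'\,\La Q_{\U\la}$ to $-k^2\int\tfrac{\sin 2\Phi\,\p_t\Phi}{r}w^2\,\ud r$. For this I would decompose the radial integral into the region near the outer bubble, $r\sim\mu\simeq1$, where $\Phi\approx\pi-Q_\mu$ so that $|\sin 2\Phi|\lesssim\La Q_\mu$, and the complementary region, where one uses the global bound $|\sin 2\Phi|\lesssim r/\la$ from~\eqref{eq:sin2Phi}; combined with the pointwise decay of $\La Q,A,B,\ti B$ from Lemma~\ref{l:ABB}, the paired estimates of Lemmas~\ref{l:ABBQ},~\ref{l:ABBQnl},~\ref{l:ABBQL2},~\ref{l:ABBQL21}, and the modulation bounds~\eqref{eq:a-mu'}--\eqref{eq:b'est} for the coefficients $(\mu'-a)$, $(\la'+b)$, $(a'+\ti\gamma_k\nu^k/\mu)$, $(b'+\gamma_k\nu^k/\la)$, each resulting term is absorbed into the right-hand side of the claim: the outer-bubble term $\mu'\,\La Q_{\U\mu}$ produces $\tfrac{|a|\nu}\la\|w\|_H^2=\tfrac{|a|}\mu\|w\|_H^2$, the $O(b^2)$, $O(\nu^k)$ and the $\dot\Phi$-versus-$\p_t\Phi$ discrepancies (of size $\lesssim\nu^{k/2}$, forced by the size constraints~\eqref{eq:w-d-bound}) feed the $\tfrac{b^2+\nu^k+|b|\la^{k/2}}{\la}\|\bs w\|_\HH^2$ block, and the remaining genuinely cubic pieces feed $\tfrac{|a|+|b|}\la\|\bs w\|_\HH^3$. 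The main obstacle is bookkeeping: there are a large number of such error contributions, mirroring the length of~\eqref{eq:ptdotPhi}, and the one real subtlety is that in the region $r\sim\mu$ one must use the sharp bound $|\sin 2\Phi|\lesssim\La Q_\mu$ rather than $r/\la$, the latter being disastrous there since $\la\ll1$; once this region decomposition is set up, each individual term reduces to a Cauchy--Schwarz estimate against the weighted norms already assembled in Section~\ref{s:modulation}. Note that $\calA_0$ and Lemma~\ref{l:opA} play no role here: we work directly with $\La_0$ and $P_\la$, the truncated operator entering only in the companion Lemma~\ref{l:Vs1'}.
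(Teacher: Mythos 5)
Your proposal takes essentially the same route as the paper: split $\tfrac12\ang{w\mid[\p_t,\LL_\Phi]w}$ into the $\LL_\la$-contribution plus a remainder, integrate by parts in the former to extract $\tfrac{\la'}{\la}\ang{P_\la w\mid\La w}=\tfrac{\la'}{\la}\bigl(\ang{\La_0 w\mid P_\la w}-\ang{w\mid P_\la w}\bigr)$, insert $\la'=-b+(\la'+b)$, and control the rest via the modulation estimates of Lemma~\ref{l:modc2}. Two small inaccuracies worth correcting: the quartic error $\tfrac{|a|+|b|}{\la}\|\bs w\|_{\HH}^4$ does \emph{not} come from the $\|\bs w\|_{\HH}$ factor in~\eqref{eq:b+la'} (that only produces the cubic); it comes from the $\tfrac{1}{\la}\|\bs w\|_{\HH}^2$ term in~\eqref{eq:a'est}--\eqref{eq:b'est} after those are applied to the $b'$- and $a'$-coefficients inside $\p_t\Phi+\la'\La Q_{\U\la}$. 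Also, the paper does not perform your $r\sim\mu$ versus complementary-region decomposition for that remainder, and if you do, you need a middle zone $\la\lesssim r\lesssim\mu$ where the bound $|\sin 2\Phi|\lesssim r/\la$ is vacuous and one simply uses $|f''(\Phi)|\lesssim 1$; in fact using $|f''(\Phi)|\lesssim 1$ globally, together with Hardy's inequality and the sizes of $\La Q,A,B,\ti B$, already suffices for all those terms.
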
 
\begin{proof} 
First we claim the estimate, 
\EQ{ \label{eq:comm-est1} 
\abs{\frac{1}{2} \ang{ w \mid [\p_t, \LL_\Phi] w} -  \frac{1}{2} \ang{ w \mid [\p_t, \LL_\la]  w}}  &\lesssim \Big( \frac{\abs{b} \la^{\frac{k}{2}} + \abs{a}\nu + b^2 + \nu^k}{\la} \Big) \| w \|_{H}^2  + \Big( \frac{ \abs{a} + \abs{b}}{\lam}  \Big) \| \bs w \|_{\HH}^3  \\
& \quad + \Big( \frac{ \abs{a} + \abs{b}}{\lam}  \Big) \| \bs w \|_{\HH}^4
}
Postponing the proof of~\eqref{eq:comm-est1} we complete the proof of Claim~\ref{c:vir}. 
Noting the formula, 
$\LL_\lam  = -\De + \frac{k^2}{r^2} + P_\lam$, 
it follows that  
\EQ{
\frac{1}{2} \ang{  w \mid [\p_t, \LL_\la]  w} = \frac{1}{2} \ang{  w \mid [\p_t, P_\lam]  w} 
}
Noting the the identity, 
\EQ{
\p_t P_\lam = -\frac{\la'}{\lam} \frac{1}{r^2}f''(Q_\lam) \Lam Q_\lam  = \frac{-\la'}{\lam} \Lam P_\lam + 2 \frac{-\la'}{\la} P_\lam
}
we have, 
\EQ{
\frac{1}{2} \ang{  w \mid [\p_t, \LL_\la]  w} &= \frac{1}{2} \ang{ w \mid \p_t P_\lam   w}  =  \frac{-\la'}{\la} \frac{1}{2} \ang{  w \mid \Lam P_\lam w} + \frac{-\la'}{\la} \ang{  w \mid P_\lam w} 
}
Using the identity~\eqref{eq:ibpLa} we arrive at 
\EQ{
\frac{1}{2} \ang{  w \mid [\p_t, \LL_\la]  w} & =  - \frac{-\la'}{\la} \ang{ \La  w \mid P_\la w}  \\
& = - \frac{b}{\la} \ang{ \La  w \mid P_\la  w} + \frac{b+ \la'}{\la} \ang{ \La w \mid P_\la  w}  \\
&= - \frac{b}{\la} \ang{ \La_0 w \mid P_\la  w}  + \frac{b}{\la} \ang{   w \mid P_\la  w}+ \frac{b+ \la'}{\la} \ang{ \La w \mid P_\la w}
}
For the last term on the right above we have the estimate, 
\EQ{
\abs{\frac{b+ \la'}{\la} \ang{ \La w \mid P_\la w}} & \lesssim \abs{\frac{b+ \la'}{\la}} \| w \|_H^2  \\
& \lesssim  \frac{1}{\la}\Big((\abs{a} + \abs{b} ) \| \bs w \|_{\HH} +( \abs{a} + \abs{b})\nu^{2k-1}  + ( \abs{a}^5 + \abs{b}^5) \nu^{k-2} \Big) \| w \|_H^2
}
Combing the above with~\eqref{eq:comm-est1} completes the proof. 

It remains to prove~\eqref{eq:comm-est1}.
First,  \EQ{
 [\p_t, \LL_\Phi] \dot w -  [\p_t, \LL_\la] \dot w &=  \frac{1}{r^2} f''(\Phi) \p_t \Phi - \frac{1}{r^2} f''(Q_\lam) \p_t Q_\lam   \\
 & = - \frac{\la'}{\lam}\frac{1}{r^2}( f''(\Phi) - f''(Q_\lam))  \Lam Q_\lam  + \frac{1}{r^2} ( \p_t \Phi - \p_t Q_\lam) f''( \Phi) 
}
First, noting that $f''(\rho) = -2k^2 \sin 2\rho$ 
we write  
\EQ{
\sin 2 \Phi - \sin 2 Q_\la = -  2\sin 2 Q_\la  \sin^2( \Phi - Q_\la) +  \cos 2Q_\la  \sin 2( \Phi - Q_\la)
}
We establish the estimates corresponding to each of the two terms on the right above. For the first term we will divide the integral into two regions $r \lesssim \sqrt{ \la}$ and $r \gtrsim \sqrt{\la}$. From~\eqref{eq:sin(Phi-Qla)} and the above we have the pointwise estimates 
\EQ{ \label{eq:f''Phi-f''Qla} 
\abs{ \chi_{r \le \sqrt{\la}}  \La Q_\la (\sin 2 \Phi - \sin 2 Q_\la )} &\lesssim r^k + b^2 +  \nu^k  \lesssim \la^{\frac{k}{2}} + b^2 +  \nu^k \\
\abs{ \chi_{r \ge \sqrt{\la}}  \La Q_\la (\sin 2 \Phi - \sin 2 Q_\la )} &\lesssim  \la^{\frac{k}{2}}
}
Using these estimates we see that 
\EQ{
\abs{ \frac{\la'}{\la}  \ang{ w \mid  \frac{k^2}{r^2} (\sin 2 \Phi - \sin 2 Q_\la) \La Q_{\la} w}} & \lesssim \frac{b}{\la}  \left( \la^{\frac{k}{2}} + b^2 +  \nu^k\right) \| w \|^2_H  \\
& \quad  + \frac{\abs{b+ \la'}}{\la}\left( \la^{\frac{k}{2}} + b^2 +  \nu^k\right) \| w \|^2_H
}
For the second term we note the formula 
\EQ{
-\p_t \Phi + \p_t Q_\lam  &=   b^2( b + \la') \La A_{\U \la}  - 2 b \la ( b' + \gamma_k \frac{\nu^{k}}{\la}) A_{\U \la}  +  \nu^k( b + \la') \La B_{\U \la}  - k \nu^k( b + \mu \nu')  B_{ \U\la}  \\
 & \quad +(a - \mu') \La Q_{\U \mu}  + a^2(  a - \mu') \La A_{\U \mu} + 2a \mu ( a' +   \ga_k \frac{\nu^k}{\mu}) A_{\U\mu} + \nu^k(   a- \mu') \La \ti B_{\U \mu} \\
& \quad  +k \nu^{k-1} (  b + \mu \nu') \ti B_{\U \mu}     - b^3 \La A_{\U \la}  + 2 \gamma_k b \nu^k A_{\U \la}  - b \nu^k \La B_{\U \la} + k b \nu^k B_{\U \la} \\
& \quad -  a \La Q_{\U \mu} -  a^3 \La A_{\U \mu}  - 2  \ga_k a \nu^k A_{ \U\mu} - a \nu^k \La \ti B_{\U \mu}-  k b \nu^{k-1} \ti B_{ \U\mu} 
}
It follows then that 
\EQ{
&\abs{- \ang{ w \mid  \frac{1}{r^2} f'' (\Phi )(-\p_t \Phi  + \p_t Q_{\U \la} )w}} \lesssim \frac{b^2 + \nu^k}{\la} \abs{ b+ \la'}  \| w \|_H^2   +  b \abs{ b' + \gamma_k \la^{-1} \nu^{k}}  \| w \|_H^2 \\
 & \qquad  \qquad    +\frac{ 1+ a^2 + \nu^k}{\mu} \abs{a - \mu'}  \| w \|_H^2   +  a \abs{ a' +   \ga_k \frac{\nu^k}{\mu}}  \| w \|_H^2   + \frac{b \nu^k +  b^3}{\la}   \| w \|_H^2  \\
& \qquad \qquad  + (  \frac{\nu^k}{\la} + \frac{ \nu^{k-1}}{\mu})  \abs{  b + \mu \nu'}  \| w \|_H^2    +  \frac{a + a^3 + a \nu^k}{\mu}  \| w \|_H^2  +   \frac{ b}{\mu} \nu^{k-1}  \| w \|_H^2
}
Plugging all of the above yields the estimate 
\EQ{
\Big|\frac{1}{2} \ang{ w \mid [\p_t, \LL_\Phi] w} &-   \ang{  w \mid  [\p_t, \LL_\Phi] w} \Big|    \\
& \, \,  \lesssim  \| w \|_H^2 \Bigg(  \frac{b \la^{\frac{k}{2}} + a\nu + b^2 + \nu^k}{\la}   + \frac{\abs{b+ \la'}}{\la}\left( \la^{\frac{k}{2}} + b^2 +  \nu^k\right)   \\
& \, \,  \quad +  b \abs{ b' + \gamma_k \la^{-1} \nu^{k}}  
      + \frac{\nu}{\la}  \abs{a - \mu'}    +  a \abs{ a' +   \ga_k \frac{\nu^k}{\mu}}      +  \frac{\nu^{k}}{\la}    \abs{  b + \mu \nu'}        \Bigg)
}
An application of Lemma~\ref{l:modc2} to the right-hand side above gives~\eqref{eq:comm-est1}
as claimed. 
\end{proof} 
Finally, to complete the proof of Lemma~\ref{l:E1'} we combine the estimates from Claim~\ref{c:Psi2wdot}, Claim~\ref{c:Psi1Lw} and Claim~\ref{c:vir} to obtain, 
\EQ{
\Big| \Es_1'(t) &+  b\ang{  \frac{1}{\la} \La w \mid  P_\lam w}  - \frac{b}{\lam} \ang{ w \mid P_\lam w} \Big| \lesssim   \Big( \frac{ \nu^{2k-1}}{\la} + \frac{b^2 \nu^{k-1}}{\la}  + \frac{b^4}{\la} + \frac{a^4\nu}{\la}       \Big)  \| \dot w \|_{L^2} \\ 
&\quad + \frac{\abs{a}^3 + \abs{b}^3 + ( \abs{a} + \abs{b})\nu^{k-1}}{\la} \| \dot w \|_{L^2}   \|  \bs w \|_{\HH} + \frac{1}{\la} \| \dot w \|_{L^2} \| \bs w \|_{\HH}^2  + \| \dot w \|_{L^2} \| \bs w \|_{\HH}^2 \| \p_r w \|_H  \\
& \quad +\Big(  \frac{\abs{a}^5}{\lam} + \frac{\abs{b}^5}{\lam} + \frac{(\abs{a} + \abs{b}) \nu^{2k-1}}{\lam}  + \frac{(\abs{a} + \abs{b})b^2 \nu^{k-1}}{\lam} \Big) \| \bs w \|_{\HH} \\
& \quad  + \Big( \frac{ a^2 + b^2 + \nu^{2k}}{\lam} \Big) \| \bs w \|_{\HH}^2  + \Big( \frac{ \abs{a} + \abs{b}}{\lam} \Big) \| \bs w \|_{\HH}^3  \\
& \quad +  \Big( \frac{\abs{b} \la^{\frac{k}{2}} + \abs{a}\nu + b^2 + \nu^k}{\la} \Big) \| w \|_{H}^2  + \Big( \frac{ \abs{a} + \abs{b}}{\lam}  \Big) \| \bs w \|_{\HH}^3 + \Big( \frac{ \abs{a} + \abs{b}}{\lam}  \Big) \| \bs w \|_{\HH}^4
}
which reduces to 
\EQ{
\Big| \Es_1'(t) &+  b\ang{  \frac{1}{\la} \La w \mid  P_\lam w}  - \frac{b}{\lam} \ang{ w \mid P_\lam w} \Big| \lesssim  \Big( \frac{\nu^{2k-1}}{\la} + \frac{b^2 \nu^{k-1}}{\la} +  \frac{b^4}{\la} + \frac{a^4}{\la} \Big) \| \bs w \|_{\HH} \\
& \quad +  \Big( \frac{a\nu}{\la}  + \frac{b \la^{\frac{k}{2}}}{\la} + \frac{\nu^{k}}{\la} + \frac{a^2}{\la}+  \frac{b^2}{\la} \Big) \| \bs w \|_{\HH}^2    +  \frac{1}{\la} \| \dot w \|_{L^2} \| \bs w \|_{\HH}^2\\
& \quad + \Big( \frac{ \abs{a} + \abs{b}}{\lam}  \Big) \| \bs w \|_{\HH}^3 + \Big( \frac{ \abs{a} + \abs{b}}{\lam}  \Big) \| \bs w \|_{\HH}^4+ \| \dot w \|_{L^2} \| \bs w \|_{\HH}^2 \| \p_r w \|_H  
}
which completes the proof. 
\end{proof} 

The proof of Lemma~\ref{l:Vs1'} (as well as the proof of Lemma~\ref{l:V2'} in the next subsection) requires additional estimates that we group together and prove here. 

\begin{lem}  \label{l:ptdotPhiL2} The following estimates hold true. 
\EQ{ \label{eq:Psi1-H} 
\| \dot \Phi - \p_t \Phi \|_H &\lesssim   \frac{(\abs{a} + \abs{b} ) }{\lam} \| \bs w \|_{\HH} +   \frac{(\abs{a} + \abs{b} ) }{\lam} \| \bs w \|_{\HH}^2  \\
&\quad +(\abs{a} + \abs{b}) \Big( \frac{\nu^{2k-1}}{\lam} + \frac{b^2 \nu^{k-1}}{\lam} +  \frac{b^4}{\lam} + \frac{a^4}{\lam}  \Big)
 }
\EQ{  \label{eq:Psi1-H2} 
\| \dot \Phi - \p_t \Phi \|_{H^2}  &\lesssim  \frac{(\abs{a} + \abs{b} ) }{\lam^2} \| \bs w \|_{\HH} +   \frac{(\abs{a} + \abs{b} ) }{\lam^2} \| \bs w \|_{\HH}^2  \\
&\quad +(\abs{a} + \abs{b}) \Big( \frac{\nu^{2k-1}}{\lam^2} + \frac{b^2 \nu^{k-1}}{\lam^2} +  \frac{b^4}{\lam^2} + \frac{a^4}{\lam^2}  \Big)
}
\EQ{ \label{eq:ptdotPhiL2} 
\| - \p_t \dot \Phi + \De \Phi  -\frac{1}{r^2} f(\Phi)  \|_{L^2} &\lesssim   \frac{ \nu^{2k-1}}{\la} + \frac{b^2 \nu^{k-1}}{\la}  + \frac{b^4}{\la} + \frac{a^4\nu}{\la}   \\
&\quad  +  \frac{\abs{a}  + \abs{b}}{\la}   \| \bs w \|_{\HH}     +  \frac{1}{\la} \| \bs w \|_{\HH}^2 
}
\EQ{ \label{eq:ptdotPhi-H} 
\| - \p_t \dot \Phi + \De \Phi  -\frac{1}{r^2} f(\Phi)  \|_{H} &\lesssim \frac{ \nu^{2k-1}}{\la^2} + \frac{b^2 \nu^{k-1}}{\la^2}  + \frac{b^4}{\la^2} + \frac{a^4}{\la^2}   \\
&\quad  +  \frac{\abs{a}  + \abs{b}}{\la^2}   \| \bs w \|_{\HH}     +  \frac{1}{\la^2} \| \bs w \|_{\HH}^2
}
\end{lem}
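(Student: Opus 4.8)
The plan is to prove Lemma~\ref{l:ptdotPhiL2} by direct inspection of the explicit formulas~\eqref{eq:dotPhi-ptPhi} and~\eqref{eq:ptdotPhi} for $\dot\Phi-\p_t\Phi$ and $-\p_t\dot\Phi+\De\Phi-\frac{1}{r^2}f(\Phi)$, estimating each term in $L^2$, $H$, or $H^2$ using the pointwise decay of $A,B,\ti B$ from Lemma~\ref{l:ABB}, the interaction bounds from Lemmas~\ref{l:ABBQ}--\ref{l:ABBQL21}, the $f$-difference estimates from Lemmas~\ref{l:fest1}--\ref{l:fest2}, and then closing with the modulation estimates~\eqref{eq:a-mu'}--\eqref{eq:b'est} from Lemma~\ref{l:modc2} to control the coefficients $(a-\mu')$, $(b+\la')$, $(a'+\ti\gamma_k\nu^k/\mu)$, $(b'+\gamma_k\nu^k/\la)$ that multiply most terms.

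For~\eqref{eq:Psi1-H} and~\eqref{eq:Psi1-H2}: I would read off~\eqref{eq:dotPhi-ptPhi}, noting that every term carries one of the four modulation errors $(b+\la')$, $(a-\mu')$, or comes from the expressions $(b'+\gamma_k\nu^k/\la)$, $(a'+\ti\gamma_k\nu^k/\mu)$ up to the trivial re-labeling; the leading terms $(b+\la')\La Q_{\U\la}$ and $(a-\mu')\La Q_{\U\mu}$ contribute $\lesssim(\abs{a}+\abs{b})\la^{-1}(\cdots)$ in $H$ since $\|\La Q_{\U\la}\|_H\simeq\la^{-1}$ (and $\la^{-2}$ in $H^2$), while the remaining terms carry extra powers of $\nu^k$, $b^2$, $a^2$ that only improve the estimate. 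Substituting~\eqref{eq:a-mu'}--\eqref{eq:b+la'} for the modulation errors produces the stated right-hand sides; the $\nu\simeq\lam$ bound (valid once $\mu\simeq1$ is assumed, as in Lemma~\ref{l:modc2}) is used to simplify the powers. The $H^2$ version is identical with one extra factor $\la^{-1}$ from each rescaled profile's $H^2$ norm.

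For~\eqref{eq:ptdotPhiL2} and~\eqref{eq:ptdotPhi-H}: this is the main bookkeeping effort. The long formula~\eqref{eq:ptdotPhi} groups naturally into (i) terms proportional to $(b'+\gamma_k\nu^k/\la)$ or $(a'+\ti\gamma_k\nu^k/\mu)$ — handled by~\eqref{eq:a'est}--\eqref{eq:b'est}; (ii) terms proportional to $(b+\la')$ or $(a-\mu')$ — handled by~\eqref{eq:a-mu'}--\eqref{eq:b+la'}; (iii) a block of purely algebraic terms of quadratic-and-higher order in $(a,b,\nu^k)$ built from $A,B,\ti B$ and their $\La,\La_0$ derivatives, whose $L^2$/$H$ norms are bounded using $\|A_{\U\la}\|_{L^2}\simeq\|\La A_{\U\la}\|_{L^2}\simeq 1$, $\|\cdot\|_H\simeq\la^{-1}$, etc., from Lemma~\ref{l:ABB} — these contribute the $\nu^{2k-1}/\la$, $b^2\nu^{k-1}/\la$, $b^4/\la$, $a^4\nu/\la$ type terms directly; and (iv) the three $f$-difference remainders on the last three lines of~\eqref{eq:ptdotPhi}, which are exactly the quantities estimated in Lemma~\ref{l:fest1} (weights $r^{-\al}$, $\al=1$) for the $L^2$ bound and Lemma~\ref{l:fest2} (with $\p_r$, weight $r^{-2}$) together with the derivative of the $f$-differences for the $H$ bound. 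The $\|\cdot\|_H$ estimate in~\eqref{eq:ptdotPhi-H} picks up one additional factor $\la^{-1}$ relative to the $L^2$ estimate, consistent with the scaling of all the profiles; I would verify that no logarithmic loss from $\ti B(r)=O(r^k\abs{\log r})$ survives into these particular combinations, since wherever $\ti B$ appears it is paired against sufficiently decaying weights — and if a harmless $o(1)$ power is needed it can be absorbed as in Lemmas~\ref{l:fest1}--\ref{l:fest2}.

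The hard part will not be any single estimate — each is a short power-counting argument — but rather the sheer number of terms in~\eqref{eq:ptdotPhi} and the need to correctly track which modulation estimate applies to which coefficient so that the final right-hand sides come out exactly as stated (in particular, keeping the $\abs{a}^5,\abs{b}^5$ contributions from $(a-\mu')$, $(b+\la')$ from degrading the $\| \bs w\|_{\HH}$-coefficient, which works because those come with an extra $\nu^{k-2}$ and $\nu\simeq\lam$). I would organize the write-up by stating the three sublemma-type bounds once (for the algebraic block, and for the two modulation-error-carrying blocks), then assembling. A remark that all of these are ``straightforward but lengthy computations of the type already carried out in the proofs of Lemmas~\ref{l:fest1}--\ref{l:w^2est}'' is probably the appropriate level of detail for the final paper, with the key mechanism — substitute Lemma~\ref{l:modc2} into the explicit formulas and power-count using Lemma~\ref{l:ABB} — spelled out explicitly.
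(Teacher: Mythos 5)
Your proposal matches the paper's proof essentially exactly: both proceed by direct estimation of the explicit formulas~\eqref{eq:dotPhi-ptPhi} and~\eqref{eq:ptdotPhi} using the profile bounds from Lemma~\ref{l:ABB} and the $f$-difference bounds from Lemmas~\ref{l:fest1}--\ref{l:fest2}, then substitute the modulation estimates~\eqref{eq:a-mu'}--\eqref{eq:b'est} from Lemma~\ref{l:modc2} and simplify via $\nu\simeq\lam$. The one point worth noting beyond your outline is that the paper explicitly flags the contrast with~\eqref{eq:ptdotPhiest}: here the norm is taken directly rather than pairing against $\dot w$, so the orthogonality conditions~\eqref{eq:lawdot}--\eqref{eq:muwdot} give no extra gain, which is why the right-hand side is a little weaker; and your concern about a logarithmic loss from $\ti B$ is moot because $\ti B$ only appears at the $\mu\simeq1$ scale, where $\ti B,\Lam\ti B,\Lam_0\Lam\ti B\in L^2$ by Lemma~\ref{l:ABB} already absorbs the log.
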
 
\begin{proof} 
We prove~\eqref{eq:Psi1-H} by directly estimating~\eqref{eq:dotPhi-ptPhi} in $H$. This yields, 
\EQ{
\| \dot \Phi - \p_t \Phi \|_H &\lesssim \frac{\abs{b+ \la'}}{\lam} +  \nu \frac{ \abs{ a- \mu'}}{\lam}  + \abs{b} \abs{b' + \gamma_k \la^{-1} \nu^k} + \abs{a} \abs{ a' + \ti\gamma_k \la^{-1} \nu^{k+1}} \\
& \lesssim \frac{(\abs{a} + \abs{b} ) }{\lam} \| \bs w \|_{\HH}   + ( \abs{a}^5 + \abs{b}^5) \frac{\nu^{k-2}}{\lam} + ( \abs{a} + \abs{b})\frac{\abs{a}  + \abs{b}}{\la}   \| \dot w \|_{L^2}  \\
& \quad  +  \frac{\abs{a} + \abs{b}}{\la} \| \bs w \|_{\HH}^2 + (\abs{a} + \abs{b}) \Big( \frac{\nu^{2k-1}}{\lam} + \frac{b^2 \nu^{k-1}}{\lam} +  \frac{b^4}{\lam} + \frac{a^4}{\lam}  \Big) 
}
The proof of~\eqref{eq:Psi1-H2} is similar. 


We proceed exactly as in the proof of~\eqref{eq:ptdotPhiest}, but noting that here we are not pairing $ - \p_t \dot \Phi + \De \Phi  -\frac{1}{r^2} f(\Phi)$ with $\dot w$ so we do not see the additional gain from the orthogonality conditions~\eqref{eq:lawdot} and~\eqref{eq:muwdot}. Directly estimating~\eqref{eq:ptdotPhi} in $L^2$ gives 
\EQ{
\| - \p_t \dot \Phi + \De \Phi  -\frac{1}{r^2} f(\Phi)  \|_{L^2} & \lesssim \frac{ \nu^{2k-1}}{\la} + \frac{b^2 \nu^{k-1}}{\la}  + \frac{b^4}{\la} + \frac{a^4\nu}{\la} + \frac{a^2 \nu^{k+1}}{\la}  + \frac{ab \nu^{k}}{\la}   \\
&\quad   +  \abs{b'+ \gamma_k \frac{\nu^k}{\la}}   + \abs{a'  + \ti \gamma_k \frac{\nu^k}{\mu}} + \frac{b}{\la} \abs{ b + \la'}  + \frac{a\nu}{\la} \abs{a - \mu'} 
 }
 An application of Lemma~\ref{l:modc2} to the right-hand side above yields, 
 \EQ{
 \| - \p_t \dot \Phi + \De \Phi & -\frac{1}{r^2} f(\Phi)  \|_{L^2}  \lesssim \frac{ \nu^{2k-1}}{\la} + \frac{b^2 \nu^{k-1}}{\la}  + \frac{b^4}{\la} + \frac{a^4\nu}{\la}   \\
& \quad +  \frac{\abs{a}  + \abs{b}}{\la}   \| \dot w \|_{L^2}  +  \frac{1}{\la} \| \bs w \|_{\HH}^2 + \frac{\nu^{2k-1}}{\lam} + \frac{b^2 \nu^{k-1}}{\lam} +  \frac{b^4}{\lam} + \frac{a^4}{\lam} \\
&\quad + \frac{\abs{a}  + \abs{b}}{\la} \Big((\abs{a} + \abs{b} ) \| \bs w \|_{\HH} +( \abs{a} + \abs{b})\nu^{2k-1}  + ( \abs{a}^5 + \abs{b}^5) \nu^{k-2} \Big)  \\
& \lesssim  \frac{ \nu^{2k-1}}{\la} + \frac{b^2 \nu^{k-1}}{\la}  + \frac{b^4}{\la} + \frac{a^4\nu}{\la}   +  \frac{\abs{a}  + \abs{b}}{\la}   \| \bs w \|_{\HH}     +  \frac{1}{\la} \| \bs w \|_{\HH}^2 
 }
 as claimed. The proof of~\eqref{eq:ptdotPhi-H} is similar. 
\end{proof} 

\begin{proof}[Proof of Lemma~\ref{l:Vs1'}]
For the purpose of this  proof we rewrite the equation for $\bs w$ as follows, 
\EQ{ \label{eq:w-eq4} 
\p_t w &=  \dot w + \Psi_1, \\
\p_t \dot w &= -\LL_\lam w +  \ti \Psi_2,
}
where $\Psi_1$ is defined as usual as in~\eqref{eq:Psi1} and  $\ti \Psi_2$ is defined as 
\EQ{ \label{eq:tiPsi2-def} 
 \ti \Psi_2 = (-\p_t \dot \Phi + \De \Phi -\frac{1}{r^2} f( \Phi)) - \frac{1}{r^2}( f( \Phi + w) - f( \Phi) - f'(Q_\lam) w).
}
We differentiate the virial correction, using~\eqref{eq:w-eq4} below. 
\EQ{ \label{eq:bV1'} 
(b \Vs_1)'(t)   &= b' \ang{ \calA_0(\la) w \mid \dot w} + \frac{b \la' }{\la}  \ang{ \la  \p_\la \calA_{0}(\la) w \mid \dot w}   + b \ang{ \calA_0(\la)\p_t  w \mid \dot w}   + b \ang{ \calA_0(\la)  w \mid \p_t  \dot w}  \\
& =  b' \ang{ \calA_0(\la) w \mid \dot w} + \frac{b \la' }{\la}  \ang{ \la  \p_\la \calA_{0}(\la) w \mid \dot w}  +  b \ang{ \calA_0(\la)\dot w \mid \dot w} +  b \ang{ \calA_0(\la)\Psi_1 \mid \dot w}  \\
&\quad + b   \ang{ \calA_0(\la) w \mid  \ti \Psi_2}  -  b \ang{ \calA_0(\la)w \mid \LL_\lam w } 
}
First, consider the last term on the right-hand side of~\eqref{eq:bV1'} above. Writing $\LL_\lam = \LL_0 + P_\lam$ as usual we have, 
\EQ{
 -  b \ang{ \calA_0(\la)w \mid \LL_\lam w }  &=  -  b \ang{ \calA_0(\la)w \mid \LL_0 w }  -    b \ang{ \calA_0(\la)w \mid P_\lam w }  \\
 & \le c_0 \frac{b}{\la} \| w \|_{H}^2  - \frac{b}{\la} \int_0^{R \la}\Big( ( \p_r  w)^2 + \frac{k^2}{r^2} w^2  \Big) \, \rdr  - \frac{b}{\lam} \ang{\Lam_0 w \mid P_\la  w}
}
 where the last line follows by using the estimate~\eqref{eq:A-pohozaev} from Lemma~\ref{l:opA} on the first term along with the estimate~\eqref{eq:vir-new} for the second term. Note that  $c_0>0$, $R>0$ are as in Lemma~\ref{l:opA} and $c_0>0$ can be taken as small as we like independent of $a, b, \la, \mu$. 

Next, we estimate each of the remaining terms on the right-hand side of~\eqref{eq:bV1'}. For the first term note that we have 
\EQ{
\abs{b' \ang{ \calA_0(\la) w \mid \dot w}}  &\lesssim  \Big(\frac{\nu^k}{\la}+ \Big| b' + \gamma_k \frac{\nu^k}{\la}\Big|\Big) \| w \|_{H} \| \dot w \|_{L^2}  \\
& \lesssim  \Big(\frac{\nu^k}{\la}+  \frac{\abs{a}  + \abs{b}}{\la}   \| \dot w \|_{L^2}  +  \frac{1}{\la} \| \bs w \|_{\HH}^2   +  \frac{b^4}{\lam} + \frac{a^4}{\lam} \Big) \| w \|_{H} \| \dot w \|_{L^2} \\
&\lesssim \Big( \frac{\nu^k}{\la} +\frac{a^4}{\lam}  +  \frac{b^4}{\la}  \Big)\| \bs w \|_{\HH}^2 +  \frac{\abs{a}  + \abs{b}}{\la}   \| \dot w \|_{L^2}^2\| \bs w \|_{\HH} + \frac{1}{\la}  \| \dot w \|_{L^2} \| \bs w \|_{\HH}^3 
}
using the first bullet point in Lemma~\ref{l:opA}  and Lemma~\ref{l:modc2} above. 
For the second term, we again use Lemma~\ref{l:opA} and Lemma~\ref{l:modc2} to conclude that 
\EQ{
\abs{\frac{b \la' }{\la}  \ang{ \la  \p_\la \calA_{0}(\la) w \mid \dot w}}  &\lesssim \Big(\frac{b^2}{\la} +  \frac{\abs{b}\abs{ b+ \la'} }{\la} \Big)\| \dot w \|_{L^2} \|  w \|_{H} \\
& \lesssim \Bigg(\frac{b^2}{\la} +  \frac{\abs{b}}{\la} \Big[(\abs{a} + \abs{b} ) \| \bs w \|_{\HH} +( \abs{a} + \abs{b})\nu^{2k-1}  \Big]  \Bigg)\| \dot w \|_{L^2} \|  w \|_{H} \\
 & \lesssim \Big( \frac{b^2}{\la} + \frac{a^2 \nu^{4k-2}}{\la}   \Big) \| \dot w \|_{L^2} \|  w \|_{H}  + \frac{a^2 + b^2}{\la} \| \dot w \|_{L^2} \|  \bs w \|_{\HH}^2   
}
For the third term we note that 
\EQ{
 b \ang{ \calA_0(\la)\dot w \mid \dot w} = 0
}
which follows directly from the definition of $\calA_0(\la)$ and integration by parts. For the fourth term we apply the first bullet point in~Lemma~\ref{l:opA} and the definition of $\Psi_1$ followed by the estimate~\eqref{eq:Psi1-H}, 
\EQ{
&\Big| b \big\langle  \calA_0(\la) \Psi_1 \mid \dot w \big\rangle \Big| \lesssim \abs{b} \| \dot \Phi- \p_t \Phi \|_H \| \dot w \|_{L^2}  \\
& \lesssim \abs{b} \Bigg(  \frac{(\abs{a} + \abs{b} ) }{\lam} \| \bs w \|_{\HH} +   \frac{(\abs{a} + \abs{b} ) }{\lam} \| \bs w \|_{\HH}^2   +(\abs{a} + \abs{b}) \Big( \frac{\nu^{2k-1} + b^2 \nu^{k-1} +  b^4 + a^4}{\lam}  \Big) \Bigg) \| \dot w \|_{L^2}  \\
& \lesssim  \frac{a^6 + b^6 +( \abs{a} + \abs{b})( \abs{b} \nu^{2k-1} + \abs{b}^3 \nu^{k-1} )}{\lam} \| \dot w  \|_{L^2} +  \frac{a^2 + b^2}{\lam} \Big( \| \bs w \|_{\HH} \| \dot w \|_{L^2} +  \| \bs w \|_{\HH}^2 \| \dot w \|_{L^2} \Big) 
}
We use the definition of $\ti \Psi_2$ to write the fifth term in~\eqref{eq:bV1'} as follows, 
\EQ{ \label{eq:5-term} 
 b   \ang{ \calA_0(\la) w \mid  \ti \Psi_2} = b   \Big\langle \calA_0(\la) w & \mid  - \p_t \dot \Phi + \De \Phi  -\frac{1}{r^2} f(\Phi) \Big\rangle \\
 & \quad  -  b \ang{ \calA_0(\la) w  \mid \frac{1}{r^2}( f( \Phi + w) - f( \Phi) - f'(Q_\lam) w)}
}
For the first term on the right above we use~\eqref{eq:ptdotPhiL2} from Lemma~\ref{l:ptdotPhiL2} to conclude that 
\EQ{
\Big| b   \Big\langle \calA_0(\la) w & \mid  - \p_t \dot \Phi + \De \Phi  -\frac{1}{r^2} f(\Phi) \Big\rangle \Big| \lesssim \abs{b} \| w \|_H  \| - \p_t \dot \Phi + \De \Phi  -\frac{1}{r^2} f(\Phi) \|_{L^2} \\
&\lesssim\abs{ b} \| w \|_H \Bigg(\ \frac{ \nu^{2k-1}}{\la} + \frac{b^2 \nu^{k-1}}{\la}  + \frac{b^4}{\la} + \frac{a^4}{\la}   +  \frac{\abs{a}  + \abs{b}}{\la}   \| \bs w \|_{\HH}     +  \frac{1}{\la} \| \bs w \|_{\HH}^2 \Bigg) \\
& \lesssim \Big( \frac{\abs{b} \nu^{2k-1}}{\la}  + \frac{\abs{b}^5}{\la}  + \frac{\abs{a}^5}{\la} \Big) \| \bs w \|_{\HH}    + \frac{b^2 + a^2 }{\la} \| \bs w \|_{\HH}^2   + \frac{\abs{b}}{\la} \| \bs w \|_{\HH}^3 
}

Finally, we examine the last term on the right-hand side of~\eqref{eq:5-term}. Note that 
\EQ{
f( \Phi + w) - f(\Phi) - f'(Q_\lam) w  &=  -k^2 \sin 2 \Phi \sin^2 w + \frac{k^2}{2} ( \sin 2 w - 2w) \cos 2 \Phi  \\
&\quad +k^2 ( \cos 2 \Phi - \cos 2Q_\lam) w 
}
and further we have 
\EQ{ \label{eq:f'Phi-f'Q-1} 
k^2 ( \cos 2 \Phi - \cos 2Q_\lam) w 
& =  -  2k^2\cos 2 Q_\la  \sin^2( \Phi - Q_\la)w - k^2\sin 2 Q_\la \sin 2( \Phi - Q_\la)w
}
Hence, 
\EQ{
 - b \Big\langle \calA_0(\la) w &\mid \frac{1}{r^2} \Big( f( \Phi + w) - f(\Phi) - f'(Q_\lam) w \Big) \Big\rangle =    b \Big\langle \calA_0(\la) w \mid \frac{k^2}{r^2} \sin 2 \Phi \sin^2 w \Big\rangle\\  
 &  - b \Big\langle \calA_0(\la) w \mid \frac{k^2}{2r^2}  ( \sin 2 w - 2w) \cos 2 \Phi \Big\rangle  + b   \ang{ \calA_0(\la) w \mid \frac{2k^2}{r^2}\cos 2 Q_\la  \sin^2( \Phi - Q_\la) w } \\
 &  + b   \ang{ \calA_0(\la) w \mid \frac{k^2}{r^2}\sin 2 Q_\la \sin 2( \Phi - Q_\la) w}
}
To estimate the last two terms on the right above we recall that by~\eqref{eq:sin(Phi-Qla)} that we have point-wise estimates 
\EQ{ \label{eq:f'Phi-f'Q-2} 
\frac{1}{r}\abs{\sin^2( \Phi- Q_\la) } \lesssim  1  + \frac{b^4}{\la} + \frac{\nu^{2k}}{\la} + a^4  \\
\frac{1}{r}\abs{\sin2Q_\la \sin2( \Phi- Q_\la) } \lesssim  1 + \frac{b^2}{\la} + \frac{\nu^k}{\la} +a^2 
}
Combining these estimates above with the previous line and Lemma~\ref{l:opA} gives, 
\EQ{ \label{eq:tiPsi2-1} 
 \Big|  b \Big\langle  \calA_0(\la) w \mid \frac{1}{r^2} \Big( f( \Phi + w) - f(\Phi) - f'(Q_\lam) w \Big) \Big\rangle \Big| & \lesssim \abs{b} \| w \|_H^2   \| w \|_{H^2} +  \abs{b} \|w \|_H^3\| w \|_{H^2}\\
 &\quad + \Big( \abs{b}  + \frac{\abs{a}^3 + \abs{b}^3 + \nu^{\frac{3}{2}k}}{\lam} \Big)\| \bs w \|_{\HH}^2 
}
Putting together all of the estimates for the terms on the right-hand side of~\eqref{eq:bV1'} we obtain, 
\EQ{
& (b \Vs_1)'(t)   + \frac{b}{\lam}  \ang{ \La_0 w \mid P_\lam  w}   
 \le c_0 \frac{b}{\la} \| w \|_H^2  -  \frac{1}{\lambda}\int_0^{R\lambda}\Big((\partial_r w)^2 + \frac{k^2}{r^2}w^2\Big) \udr  \\
 &\quad  + C_1 \Big(\frac{\abs{b} \nu^{2k-1}}{\la}  + \frac{\abs{b}^5}{\la}  + \frac{\abs{a}^5}{\la}  \Big) \| \dot w \|_{L^2}  + C_1 \Big(  \frac{ \abs{b} \lam + a^2 + b^2 + \nu^{k} }{\lam}  \Big)   \| \bs w \|_{\HH}^2 \\
& \quad  + C_1  \frac{ \abs{a} + \abs{b}}{\lam} \| \dot w \|_{L^2} \| \bs w \|_{L^2}^2  + C_1 \frac{1}{\lam}  \| \dot w \|_{L^2} \| \bs w \|_{\HH}^3 +  \abs{b} \| w \|_H^2   \| w \|_{H^2} +  \abs{b} \|w \|_H^3\| w \|_{H^2}
}
for some uniform constant $C_1>0$, as claimed. 
\end{proof}

\subsection{Energy estimates for $\bs w(t)$ in $\HH^2$} 
The goal of this section is to establish energy-type estimates for $\bs w$ in $\HH^2$. 
We define the $\HH^2$-type energy, 
\EQ{ \label{eq:E2} 
\Es_2(t)&
 := \frac{1}{2} \ang{  \dot w \mid \LL_\Phi  \dot w} + \frac{1}{2} \ang{ w \mid \LL_{\Phi}^2 w} 
}
We also define a virial correction, 
\EQ{ \label{eq:V2} 
\Vs_{2, \la}(t):= 
\ang{ \A_0(\la) w \mid \LL_\la \dot w}  - 2  \ang{\A_0(\la) \dot w \mid \LL_\la w}
}
where $\A_0(\lam)$ is as in Lemma~\ref{l:opA}. 
Finally, we define the mixed $\HH^2$-energy-virial functional 
\EQ{ \label{eq:H2} 
\Hs_2(t) = \Es_2(t) - b(t) \Vs_{2, \la}(t)
}
These functionals as related as follows. 
\begin{lem} 
Let $J \subset \R$ be an interval on which $\bs u(t)$ satisfies the hypothesis of Lemma~\ref{l:mod2}.  Assume in addition that $\bs u(t) \in \HH \cap \HH^2 \cap \bs \Lam^{-1} \HH$ for all $t \in J$.  Let $a, b, \la, \mu$  and $\bs w(t) \in \HH \cap \HH^2 \cap \bs \Lam^{-1} \HH$ be given by Lemma~\ref{l:mod2} and let $\nu:= \la/\mu$ as usual. Then, 
\EQ{ \label{eq:E2H2} 
 \| \bs w \|_{\HH^2}^2   \simeq  \Es_2 (t)  &  \\
\Hs_2(t)  - \frac{\abs{b}}{\lam} \| \bs w \|_{\HH} \| \bs w \|_{\HH^2}  \lesssim      \Es_2(t)  &\lesssim  \Hs_2(t)  + \frac{\abs{b}}{\lam} \| \bs w \|_{\HH} \| \bs w \|_{\HH^2} 
}
\end{lem}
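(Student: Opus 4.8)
The goal is the two-sided comparison $\|\bs w\|_{\HH^2}^2 \simeq \Es_2(t)$ together with the estimate relating $\Es_2$ and $\Hs_2$ up to an error controlled by $\tfrac{|b|}{\lambda}\|\bs w\|_{\HH}\|\bs w\|_{\HH^2}$. The first comparison is essentially a coercivity statement, so the plan is to extract it from Lemma~\ref{l:coerce2}. Indeed, unwinding the definition~\eqref{eq:E2},
\EQ{
2\Es_2(t) = \ang{\dot w \mid \LL_\Phi \dot w} + \ang{w \mid \LL_\Phi^2 w} = \ang{\LL_\Phi \dot w \mid \dot w} + \ang{\LL_\Phi^2 w \mid w},
}
which is precisely the quantity bounded below by $c_1\|\bs w\|_{\HH^2}^2$ in~\eqref{eq:coerce2}, once we invoke the hypothesis that $\bs u(t) \in \HH \cap \HH^2 \cap \bs\Lam^{-1}\HH$ so that $\bs w(t)$ lies in the same space and Lemma~\ref{l:coerce2} applies. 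For the matching upper bound $\Es_2(t) \lesssim \|\bs w\|_{\HH^2}^2$, I would argue directly: $\LL_\Phi = \LL_0 + \tfrac{1}{r^2}(f'(\Phi) - k^2)$ with the potential part bounded pointwise by $O(r^{-2}\cdot \text{(smooth cutoff)})$, uniformly once $\nu, |a|, |b| \ll 1$ (this uses the pointwise bounds on $\cos 2\Phi$ from Lemma~\ref{l:cos2Phi} together with~\eqref{eq:sin2Phi}-type estimates), so $\ang{\LL_\Phi \dot w \mid \dot w} \lesssim \|\dot w\|_H^2$ and $\ang{\LL_\Phi^2 w \mid w} = \|\LL_\Phi w\|_{L^2}^2 \lesssim \|w\|_{H^2}^2$, using that $\LL_\Phi$ maps $H^2 \to L^2$ boundedly (again by the decay/boundedness of the potential and its derivatives, analogous to~\eqref{eq:LL2-def}). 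Together these give the first line of~\eqref{eq:E2H2}.

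For the second line, the point is simply to control $\Vs_{2,\la}(t)$. From~\eqref{eq:V2},
\EQ{
|\Vs_{2,\la}(t)| \le |\ang{\A_0(\la) w \mid \LL_\la \dot w}| + 2|\ang{\A_0(\la)\dot w \mid \LL_\la w}|.
}
For the first term I would integrate by parts to move one derivative (writing $\LL_\la = \LL_0 + P_\la$ and using that $\A_0(\la)$ is, up to the truncation, a first-order operator of the form $p'(\cdot/\la)\p_r + \tfrac12(\ldots)$), so that $\ang{\A_0(\la) w \mid \LL_\la \dot w}$ is bounded by $\lambda^{-1}$ times a product of an $H^1$-type norm of $w$ and an $H^1$-type (in the derivative) norm of $\dot w$; more precisely, using the boundedness of $\A_0(\la)\colon H \to L^2$ and~\eqref{eq:prA0} from Lemma~\ref{l:opA}, one gets $|\ang{\A_0(\la) w \mid \LL_\la \dot w}| \lesssim \tfrac{1}{\lambda}\|w\|_{H^2}\|\dot w\|_H$, and symmetrically for the second term $|\ang{\A_0(\la)\dot w \mid \LL_\la w}| \lesssim \tfrac{1}{\lambda}\|\dot w\|_H\|w\|_{H^2}$. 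Hence $|b|\,|\Vs_{2,\la}(t)| \lesssim \tfrac{|b|}{\lambda}\|\dot w\|_H\|w\|_{H^2} \lesssim \tfrac{|b|}{\lambda}\|\bs w\|_{\HH}\|\bs w\|_{\HH^2}$ (bounding $\|\dot w\|_H \le \|\bs w\|_{\HH}$ — wait, rather $\|\dot w\|_H$ is an $\HH^2$-type norm; one should instead write $\|\dot w\|_H \le \|\bs w\|_{\HH^2}$ and $\|w\|_{H^2}\le\|\bs w\|_{\HH^2}$ to get $\lesssim\tfrac{|b|}{\lambda}\|\bs w\|_{\HH^2}^2$, OR split one factor as an $\HH$-norm and one as an $\HH^2$-norm). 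Since $\Hs_2 = \Es_2 - b\Vs_{2,\la}$, both inequalities in the second line of~\eqref{eq:E2H2} follow by the triangle inequality: $|\Es_2 - \Hs_2| = |b\Vs_{2,\la}| \lesssim \tfrac{|b|}{\lambda}\|\bs w\|_{\HH}\|\bs w\|_{\HH^2}$.

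The main obstacle — though it is mild — is getting the correct weights and norm combination in the bound for $\Vs_{2,\la}$: the operator $\A_0(\la)$ behaves like $\tfrac{1}{\lambda}\Lam_0$ and one must be careful that the truncation at scale $\sim\lambda$ in Lemma~\ref{l:pdef} does not spoil the $\lambda^{-1}$ gain, and that pairing against $\LL_\la \dot w$ (which costs two derivatives) is compensated by the $H^2$ regularity of $w$ on the other side. This is handled by~\eqref{eq:prA0}, which is precisely tailored to keep $\p_r \A_0(\la) w$ and $r^{-1}\A_0(\la) w$ in $L^2$ with the right $\lambda$-dependence, so after one integration by parts the pairing splits as (derivative of $\A_0(\la) w$) against (first derivative of $\dot w$) plus potential terms, each of which is $O(\lambda^{-1})$. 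The second, easier, obstacle is the upper bound $\Es_2 \lesssim \|\bs w\|_{\HH^2}^2$, which requires uniform-in-$(\mu,\lambda,a,b)$ control of the potential $\tfrac{1}{r^2}(f'(\Phi)-k^2)$ and its derivatives; this is exactly the content of the pointwise estimates collected in Lemma~\ref{l:cos2Phi} and the remarks around~\eqref{eq:sin2Phi}, so no new work is needed. I would therefore present this lemma's proof in three short paragraphs: (i) coercivity lower bound via Lemma~\ref{l:coerce2}; (ii) the reverse bound $\Es_2 \lesssim \|\bs w\|_{\HH^2}^2$ via boundedness of $\LL_\Phi$; (iii) the $\Vs_{2,\la}$ estimate via Lemma~\ref{l:opA}, concluding with the triangle inequality.
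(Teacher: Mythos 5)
Your overall structure matches the paper's (the paper's proof is a two-sentence sketch: line one from Lemma~\ref{l:coerce2}, line two from $|b|\ll 1$ together with the second bullet of Lemma~\ref{l:opA}), and parts (i) and (ii) are fine. There is, however, a genuine gap in part (iii), and it is precisely at the point where you hedge mid-sentence.

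The asserted bound $|\ang{\A_0(\la)w\mid \LL_\la\dot w}|\lesssim \frac{1}{\lambda}\|w\|_{H^2}\|\dot w\|_H$ is not what Lemma~\ref{l:opA} gives. Integrating by parts once and using~\eqref{eq:prA0} yields
\[
|\ang{\A_0(\la)w\mid \LL_\la\dot w}| \lesssim \Big(\|\p_r w\|_H + \tfrac{1}{\lambda}\|w\|_H\Big)\|\dot w\|_H,
\]
and symmetrically $|\ang{\A_0(\la)\dot w\mid \LL_\la w}|\lesssim \|\dot w\|_H\big(\|w\|_{H^2}+\tfrac{1}{\lambda}\|w\|_H\big)$. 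The $\lambda^{-1}$ gain is only on the lower-order factor $\|w\|_H$; there is no $\lambda^{-1}$ on the $\|\p_r w\|_H\|\dot w\|_H$ and $\|w\|_{H^2}\|\dot w\|_H$ contributions, which are genuinely $\|\bs w\|_{\HH^2}^2$-type terms and cannot be ``split'' into an $\HH$-norm times an $\HH^2$-norm with a $\lambda^{-1}$ weight (that would require $\|w\|_{H^2}\lesssim\lambda^{-1}\|w\|_H$, which is false). So the honest output of the computation is
\[
|b\Vs_{2,\la}| \lesssim |b|\,\|\bs w\|_{\HH^2}^2 + \frac{|b|}{\lambda}\|\bs w\|_{\HH}\|\bs w\|_{\HH^2},
\]
not $\frac{|b|}{\lambda}\|\bs w\|_{\HH}\|\bs w\|_{\HH^2}$ alone. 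To recover the second line of~\eqref{eq:E2H2} you must now absorb the term $|b|\|\bs w\|_{\HH^2}^2$: use line one of the lemma, $\|\bs w\|_{\HH^2}^2\simeq\Es_2$, so $|b|\|\bs w\|_{\HH^2}^2\lesssim |b|\Es_2$, and since $|b|\ll 1$ this is $\le\frac12\Es_2$ and can be moved to the left side of the inequality $\Es_2=\Hs_2+b\Vs_{2,\la}$. This is exactly why the paper's proof cites ``$|b|\ll 1$ on $J$'' as an ingredient and not merely the second bullet point of Lemma~\ref{l:opA}; as written your argument never invokes the smallness of $b$ and therefore does not close.
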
 
\begin{proof} 
The first line in~\eqref{eq:E2H2} is a consequence of the coercivity estimate in Lemma~\ref{l:coerce2}. The second line follows from the fact that $\abs{b}  \ll 1$ on $J$ along with the second bullet point in Lemma~\ref{l:opA}. 
\end{proof} 

We next state the main result of this section. 
\begin{prop}  \label{p:H2'} Let $J \subset \R$ be an interval on which $\bs u(t)$ satisfies the hypothesis of Lemma~\ref{l:mod2}.  Assume in addition that $\bs u(t) \in \HH \cap \HH^2 \cap \bs \Lam^{-1} \HH$ for all $t \in J$.  Let $a, b, \la, \mu$  and $\bs w(t) \in  \HH \cap \HH^2 \cap \bs \Lam^{-1} \HH$ be given by Lemma~\ref{l:mod2} and let $\nu:= \la/\mu$ as usual. Then, there exists a uniform constant $C_1>0$ so that 
\EQ{ \label{eq:H2'} 
\Hs_2'(t)    &\ge  2\frac{b}{\lam} \int_0^{R\lam} ( \LL_0 w)^2  \, r\, \ud r  + 2 \frac{b}{\lam}\ang{ w \mid \Ks_\lam w}  \\
&\quad + \frac{b}{\la} \int_0^{R \la}\Big( ( \p_r \dot w)^2 + \frac{k^2}{r^2} \dot w^2  \Big) \, \rdr + \frac{b}{\lam} \ang{ \dot w \mid P_\lam \dot w} \\
&\quad - c_0 \frac{b}{\lam} \| \bs w \|_{\HH^2}^2    -  C_1   \Big( \abs{a} + \abs{b} +  \frac{b^2+a^2 + \nu^k}{\lam} \Big) \| \bs w \|_{\HH^2}^2  - C_1\frac{ a^2 + b^2}{\lam} \frac{\| \bs w \|_{\HH}^2}{\lam^2}  \\ 
&\quad   - C_1  \Big( \frac{b^2 \nu^{k-1}+ \nu^{2k-1}+ b^4 + a^4}{\la^2}  \Big)  \| \bs w \|_{\HH^2}   \\
&\quad  -C_1 \Big( \frac{\abs{a}^5 + \abs{b}^5 + \abs{b} \nu^{2k-1} + \abs{b}^3 \nu^{k-1}}{\lam^2}\Big)    \frac{\| \bs w \|_{\HH}}{\lam}    \\
&\quad - C_1   \Big( \abs{a} + \abs{b} +  \frac{b^2+a^2 + \nu^k}{\lam} \Big) \frac{ \| \bs w \|_{\HH}}{\lam} \| \bs w \|_{\HH^2}  \\
&\quad  -C_1  \Big( \abs{a} + \abs{b} +  \frac{b^2+a^2 + \nu^k}{\lam} \Big) \frac{ \| \bs w \|_{\HH}^2}{\lam^2} \| \bs w \|_{\HH^2}  \\
&\quad   -C_1  \frac{1}{\la}\| \bs w \|_{\HH} \| \bs w \|_{\HH^2}^2      - C_1\frac{1}{\lam^2} \| \bs w \|_{\HH}^3 \| \bs w\|_{\HH^2}  - C_1\abs{b} \frac{ \| \bs w\|_{\HH}^3}{\lam^3}   \\ 
& \quad - C_1 \Big( \abs{a} + \abs{b}  + \| \bs w \|_{\HH} \Big)   \| \bs w \|_{\HH^2}^3- C_1 \Big( \abs{a} + \abs{b} + \lam \Big) \frac{ \| \bs w \|_{\HH}^2}{\lam^2}  \| \bs w \|_{\HH^2}^2 \\
}
where $c_0>0$ is a constant that can be taken as small as we like by choosing $R>0$ large enough in Lemma~\ref{l:opA}.  Importantly, $c_0$ can be taken small independently of $\mu, \lam, a, b$ and $J$. Note that above $\Ks$ is as in~\eqref{eq:K-def} and $P$ is as in~\eqref{eq:P-def}. 
\end{prop}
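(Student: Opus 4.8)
The plan is to compute $\Hs_2'(t) = \Es_2'(t) - (b\Vs_{2,\la})'(t)$ by differentiating the two pieces separately, in close analogy with the $\HH$-level argument of Proposition~\ref{p:Hs1'} but one derivative higher. First I would differentiate $\Es_2(t)$ using the evolution equation~\eqref{eq:weq3}. The computation produces the ``forcing'' terms $\ang{\Psi_1 \mid \LL_\Phi^2 w}$ and $\ang{\Psi_2 \mid \LL_\Phi \dot w}$ (plus their symmetric counterparts, which combine via the self-adjointness of $\LL_\Phi$), together with the crucial commutator terms $\tfrac12\ang{w \mid [\p_t,\LL_\Phi^2]w}$ and $\tfrac12\ang{\dot w \mid [\p_t,\LL_\Phi]\dot w}$ coming from the time-dependence of the potential $f'(\Phi)$ in $\LL_\Phi$. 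As at the $\HH$-level, I would first replace $\LL_\Phi$ by $\LL_\la$ up to acceptable errors: the difference $\LL_\Phi - \LL_\la = r^{-2}(f'(\Phi)-f'(Q_\la))$ is controlled pointwise by $\nu^k + b^2 + a^2$ near the inner scale using Lemma~\ref{l:cos2Phi}, and this swap costs only terms of the form $(\nu^k+b^2+a^2)\lam^{-1}\|\bs w\|_{\HH^2}^2$. Similarly one replaces $[\p_t,\LL_\la^2]$ by its leading behavior: since $\p_t \LL_\la = -\tfrac{\la'}{\la}(\Lam P_\la + 2 P_\la)$ and $\LL_\la^2 = \LL_0^2 + \Ks_\la$, the commutator $[\p_t, \LL_\la^2]$ reduces after integration by parts to virial-type expressions involving $\Lam_0$ acting against $\Ks_\la$, mirroring the identity~\eqref{eq:comm-intro}.

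Next I would differentiate $b\Vs_{2,\la}$. Here the design of $\Vs_{2,\la} = \ang{\A_0(\la)w \mid \LL_\la \dot w} - 2\ang{\A_0(\la)\dot w \mid \LL_\la w}$ is exactly to cancel the indeterminate-sign commutator terms. The key structural inputs are: (i) $\ang{\A_0(\la)h\mid h}=0$ by antisymmetry; (ii) the higher-order Pohozaev/coercivity estimate~\eqref{eq:A-pohozaev2}, which converts $-b\ang{\A_0(\la)w\mid\LL_0^2 w}$ into $-c_0\lam^{-1}\|w\|_{\HH^2}^2 + 2\tfrac{b}{\lam}\int_0^{R\la}(\LL_0 w)^2\rdr$ (the $\HH^2$ analogue of what happened at the $\HH$-level with~\eqref{eq:A-pohozaev}); (iii) the virial-correction estimate~\eqref{eq:vir-new2} replacing $\ang{\A_0(\la)w\mid\Ks_\la w}$ by $\tfrac1\la\ang{\Lam_0 w\mid\Ks_\la w}$ up to $c_0\lam^{-1}\|w\|_{\HH^2}^2$; and (iv) at the level of $\dot w$, the lower-order versions~\eqref{eq:A-pohozaev} and~\eqref{eq:vir-new} reproduce the $\tfrac{b}{\la}\int_0^{R\la}((\p_r\dot w)^2 + k^2 r^{-2}\dot w^2)\rdr + \tfrac{b}{\lam}\ang{\dot w\mid P_\lam\dot w}$ terms on the right-hand side of~\eqref{eq:H2'}. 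The terms $\p_t w = \dot w + \Psi_1$ and $\p_t\dot w = -\LL_\la w + \ti\Psi_2$ feed $\A_0(\la)$; the $\Psi_1, \ti\Psi_2$ pieces are estimated by the boundedness of $\A_0(\la)$ and $\la\p_\la\A_0(\la)$ in $\mathscr L(H;L^2)$ together with the $H$ and $H^2$ bounds on $\Psi_1$ and $-\p_t\dot\Phi+\De\Phi-r^{-2}f(\Phi)$ from Lemma~\ref{l:ptdotPhiL2}, and the genuinely nonlinear remainder $r^{-2}(f(\Phi+w)-f(\Phi)-f'(Q_\la)w)$ by~\eqref{eq:w^2L22},~\eqref{eq:w^2L23} from Lemma~\ref{l:w^2est} and the factorization~\eqref{eq:f'Phi-f'Q-1}.

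For the forcing terms I would estimate $\ang{\Psi_1\mid\LL_\Phi^2 w}$ and $\ang{\Psi_2\mid\LL_\Phi\dot w}$ much as in Claims~\ref{c:Psi2wdot} and~\ref{c:Psi1Lw}, but now pairing against one more power of $\LL_\Phi$, hence losing a factor $\lam^{-1}$ relative to the $\HH$-level and gaining a factor $\|\bs w\|_{\HH^2}$ instead of $\|\bs w\|_{\HH}$. Crucially, the orthogonality conditions~\eqref{eq:law}--\eqref{eq:muwdot} let me integrate by parts so that $\LL_\Phi$ (or $\LL_\Phi^2$) falls on the explicit $\Lam Q$-type profiles rather than on $w$; the pairings of $\LL_\la$-images of $A,B,\ti B$-profiles against $\Lam Q_{\U\la}, \Lam Q_{\U\mu}$ are then controlled by Lemma~\ref{l:ABBQ} and its nonlinear companions, after replacing the modulation-parameter derivatives $(a-\mu'), (b+\la'), (a'+\ti\gamma_k\nu^k/\mu), (b'+\gamma_k\nu^k/\la)$ by their bounds from Lemma~\ref{l:modc2}. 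The nonlinear term $r^{-2}(f(\Phi+w)-f(\Phi)-f'(\Phi)w)$ paired against $\LL_\Phi\dot w$ is handled by~\eqref{eq:w^2L2},~\eqref{eq:w^2L22} from Lemma~\ref{l:w^2est}. Finally I would collect all error terms, absorb the $c_0\lam^{-1}\|\bs w\|_{\HH^2}^2$ pieces (with $c_0$ small, independent of the parameters and $J$), and match the result term-by-term with the stated right-hand side of~\eqref{eq:H2'}.

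The main obstacle I anticipate is bookkeeping the genuinely new $\HH^2$ commutator $[\p_t, \LL_\Phi^2]$: expanding $\LL_\Phi^2 = \LL_0^2 + \Ks_\Phi$ and then $\p_t\Ks_\Phi$ produces many terms (derivatives of $P_\Phi$, of $\p_r P_\Phi$, of $P_\Phi^2 - \De P_\Phi$), each of which must be integrated by parts against $w$ to expose a $\Lam$ or $\Lam_0$ structure that the virial correction $\Vs_{2,\la}$ is designed to cancel — and one must check that the leftover, uncancelled pieces are either coercive (via~\eqref{eq:A-pohozaev2}, controlled by the favorable weight $r^{2k}(1+r^{2k})^{-2}$ as in the proof of localized $\LL^2$-coercivity) or genuinely small in the sense of the stated error budget. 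Making the virial cancellation precise at this order — i.e.\ verifying that $-b\ang{\A_0(\la)w\mid\LL_\la^2 w}$ really does reproduce $\tfrac{b}{\lam}$ times the commutator of $\p_t$ with $\LL_\la^2$ up to $c_0\lam^{-1}\|\bs w\|_{\HH^2}^2$ — is where the estimates~\eqref{eq:A-pohozaev2} and~\eqref{eq:vir-new2} do the essential work, and assembling these correctly is the delicate part of the argument.
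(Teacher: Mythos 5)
Your proposal follows essentially the same route as the paper: differentiate $\Es_2$ and $b\Vs_{2,\la}$ separately (the paper's Lemmas~\ref{l:E2'} and~\ref{l:V2'}), use the orthogonality conditions and Lemmas~\ref{l:ABBQ},~\ref{l:modc2},~\ref{l:w^2est} for the forcing terms $\ang{\Psi_1\mid\LL_\Phi^2 w}$ and $\ang{\Psi_2\mid\LL_\Phi\dot w}$, replace $\LL_\Phi$ by $\LL_\la$ via Lemma~\ref{l:cos2Phi}, and cancel the dangerous commutators $[\p_t,\LL_\Phi]$, $[\p_t,\LL_\Phi^2]$ against the virial correction via the Pohozaev inequalities~\eqref{eq:A-pohozaev},~\eqref{eq:A-pohozaev2} and the replacement estimates~\eqref{eq:vir-new},~\eqref{eq:vir-new2}. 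The difficulty you correctly flag — bookkeeping $[\p_t,\LL_\la^2]$ and verifying the exact cancellation against $-b\ang{\A_0(\la)w\mid\LL_\la^2 w}$ — is precisely where the paper's Claim~\ref{c:commw2} and the symmetry-of-$\Ks_\la$ integration-by-parts identity do the work, matching your outline.
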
 

Proposition~\ref{p:H2'} is a consequence of the following two lemmas. 
\begin{lem}  \label{l:E2'} 
Let $J \subset \R$ be an interval on which $\bs u(t)$ satisfies the hypothesis of Lemma~\ref{l:mod2}.  Assume in addition that $\bs u(t) \in \HH \cap \HH^2 \cap \bs \Lam^{-1} \HH$ for all $t \in J$.  Let $a, b, \la, \mu$  and $\bs w(t) \in  \HH \cap \HH^2 \cap \bs \Lam^{-1} \HH$ be given by Lemma~\ref{l:mod2} and let $\nu:= \la/\mu$ as usual. Then, 
\EQ{
\Es_2'(t) &= - \frac{b}{\la} \ang{ \La_0 \dot w \mid P_{\la} \dot w} + \frac{b}{\lam} \ang{ \dot w \mid P_\lam \dot w}  + \frac{b}{\la}  \ang{w \mid \Ks_\la w}   +\frac{b}{\la} \ang{ w \mid P_\la^2 w}  \\
& \quad  +  \frac{b}{\la} \ang{ w \mid  \La P_\la\LL_0 w  -  \p_r (\La P_\la) \p_r w  +  \frac{1}{2} (-\De \La P_\la) w +  P_\la \La P_\la w  }  \\
&\quad  + O \Bigg( \Big( \frac{b^2 \nu^{k-1}+ \nu^{2k-1}+ b^4 + a^4}{\la^2}  \Big)  \| \bs w \|_{\HH^2} +    \Big( \frac{b^2+ a^2 + \nu^{2k}}{\la^2}  \Big) \| \bs w \|_{\HH} \| \bs w \|_{\HH^2}\Bigg)   \\
& \quad +  O \Bigg(  \Big( \frac{b \la^{\frac{k}{2}} + a\nu + b^2 + \nu^k}{\la} \Big) \| \bs w \|_{\HH^2}^2    +  \frac{1}{\la}\| w \|_{H} \| \bs w \|_{\HH^2}^2   +  \frac{ \abs{a} + \abs{b}}{\lam}   \| \bs w \|_{\HH^2}^2 \| \bs w \|_{\HH}^2 \Bigg)   \\ 
  &\quad  + O \Bigg(    \Big( \frac{\abs{a} + \abs{b} + \nu^k}{\lam^2} \Big)  \| \bs w \|_{\HH}^2 \| \bs w \|_{\HH^2}  +  \| w \|_H \| \bs w \|_{\HH^2}^3\Bigg) 
} 
uniformly on $J$. Note that above,  $\Ks$ is as in~\eqref{eq:K-def} and $P$ is as in~\eqref{eq:P-def}. 
\end{lem}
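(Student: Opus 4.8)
The plan is to mirror the proof of Lemma~\ref{l:E1'}, one derivative higher, working with $\LL_\Phi^2$ in place of $\LL_\Phi$. First, using the equation~\eqref{eq:weq3} for $\bs w$ and the self-adjointness of $\LL_\Phi$ and $\LL_\Phi^2$, I differentiate $\Es_2$ from~\eqref{eq:E2}: substituting $\p_t\dot w = -\LL_\Phi w + \Psi_2$ and $\p_t w = \dot w + \Psi_1$, the cross term $\ang{\dot w \mid \LL_\Phi^2 w}$ coming from $\p_t\big(\tfrac12\ang{w \mid \LL_\Phi^2 w}\big)$ cancels $-\ang{\LL_\Phi w \mid \LL_\Phi \dot w}$, leaving
\EQ{
\Es_2'(t) = \ang{ \Psi_2 \mid \LL_\Phi \dot w} + \ang{ \Psi_1 \mid \LL_\Phi^2 w} + \tfrac{1}{2}\ang{ \dot w \mid [\p_t, \LL_\Phi] \dot w} + \tfrac{1}{2}\ang{ w \mid [\p_t, \LL_\Phi^2] w},
}
the exact analogue of~\eqref{eq:E1'1}. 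It then remains to analyze these four terms; throughout, Lemma~\ref{l:modc2} controls the modulation factors $(b+\lam')$, $(a-\mu')$, $(b'+\gamma_k\nu^k/\lam)$, $(a'+\ti\gamma_k\nu^k/\mu)$ that arise, and we recall $\bs w \in \HH\cap\HH^2\cap\bs\Lam^{-1}\HH$ by Lemma~\ref{l:coerce2}.

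The two source terms will be absorbed into the $O(\cdots)$ on the right. In $\ang{\Psi_2 \mid \LL_\Phi \dot w}$ I integrate by parts once to move the Laplacian in $\LL_\Phi$ off $\dot w$ (legitimate since $\dot w \in H$), split $\Psi_2$ as in~\eqref{eq:Psi2}, and bound the contribution of $-\p_t\dot\Phi + \De\Phi - \tfrac{1}{r^2}f(\Phi)$ by its $H$- and $r^{-1}L^2$-norms via Lemma~\ref{l:ptdotPhiL2}, Lemma~\ref{l:fest1} (case $\al=1$) and Lemma~\ref{l:fest2}, and the quadratic-in-$w$ remainder by the weighted estimates~\eqref{eq:w^2L21},~\eqref{eq:w^2L22},~\eqref{eq:w^2L23} of Lemma~\ref{l:w^2est}. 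For $\ang{\Psi_1 \mid \LL_\Phi^2 w}$ I use self-adjointness of $\LL_\Phi$ to rewrite it as $\ang{\LL_\Phi\Psi_1 \mid \LL_\Phi w}$ (valid since $\Psi_1 \in H^2$ by~\eqref{eq:Psi1-H2} and $w \in H^2$); since $\LL_\la \Lam Q_{\U\la} = \LL_\mu \Lam Q_{\U\mu} = 0$, the leading pieces of $\Psi_1$ in~\eqref{eq:dotPhi-ptPhi} satisfy $\LL_\Phi \Lam Q_{\U\la} = (P_\Phi - P_\la)\Lam Q_{\U\la}$ and similarly, which are small by the pointwise bounds of Lemma~\ref{l:cos2Phi}, while the remaining terms in $\Psi_1$ carry small prefactors times $\LL_\Phi$ of profiles bounded in $H^2$ by Lemma~\ref{l:ABB}.

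The main, non-$O$ part of the identity comes from the two commutator terms. For $\tfrac12\ang{\dot w \mid [\p_t, \LL_\Phi] \dot w}$ I repeat verbatim the computation of Claim~\ref{c:vir} and the estimate~\eqref{eq:comm-est1} with $\dot w$ in place of $w$: write $[\p_t,\LL_\Phi]\dot w = \tfrac{1}{r^2}f''(\Phi)(\p_t\Phi)\dot w$, replace $\Phi$ by $Q_\la$ and $\p_t\Phi$ by $\p_t Q_\la = -\tfrac{\lam'}{\lam}\Lam Q_\la$, use $\p_t P_\la = -\tfrac{\lam'}{\lam}(\Lam P_\la + 2P_\la)$ together with~\eqref{eq:ibpLa} and $-\tfrac{\lam'}{\lam} = \tfrac{b}{\lam} - \tfrac{b+\lam'}{\lam}$; this produces $-\tfrac{b}{\la}\ang{\Lam_0\dot w \mid P_\la \dot w} + \tfrac{b}{\lam}\ang{\dot w \mid P_\la \dot w}$ up to admissible errors (here $\dot w \in H$, the pointwise control of $P_\Phi - P_\la$, and Lemma~\ref{l:modc2} are used). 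For $\tfrac12\ang{w \mid [\p_t, \LL_\Phi^2] w}$, write $\LL_\Phi^2 = \LL_0^2 + \Ks_\Phi$ in analogy with~\eqref{eq:LL2-def},~\eqref{eq:K-def}, so that $[\p_t,\LL_\Phi^2]w = (\p_t\Ks_\Phi)w$ since $\LL_0^2$ carries no time dependence; expand $\p_t\Ks_\Phi$ by the product rule, replace the time dependence of $P_\Phi$ by that of $P_\la$, and substitute $\p_t P_\la = -\tfrac{\lam'}{\lam}(\Lam P_\la + 2P_\la)$ and $-\tfrac{\lam'}{\lam} = \tfrac{b}{\lam} - \tfrac{b+\lam'}{\lam}$. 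The $\tfrac{b}{\lam}$-part yields precisely $\tfrac{b}{\la}\ang{w \mid \Ks_\la w} + \tfrac{b}{\la}\ang{w \mid P_\la^2 w} + \tfrac{b}{\la}\ang{w \mid \Lam P_\la\LL_0 w - \p_r(\Lam P_\la)\p_r w + \tfrac12(-\De\Lam P_\la)w + P_\la\Lam P_\la w}$, while the $\tfrac{b+\lam'}{\lam}$-part, the discrepancy $\p_t\Phi - \p_t Q_\la$, and the discrepancy $P_\Phi - P_\la$ are errors.

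I expect the last term to be the main obstacle: because $\Ks_\Phi$ involves $\De P_\Phi$ and $\p_r P_\Phi$, extracting the main part and estimating the remainder requires pointwise control of $P_\Phi - P_\la$, $\p_r(P_\Phi - P_\la)$ and $\De(P_\Phi - P_\la)$ — obtained from the definition of $\Phi$ and the trigonometric expansions exactly as in the proofs of Lemma~\ref{l:fest1}, Lemma~\ref{l:fest2} and Lemma~\ref{l:cos2Phi} — together with careful bookkeeping of which combinations land in $L^2$, in $H$, or are paired against $\LL_0 w \in L^2$, so that every error term appears on the right with the correct power of $\lam$ (using $\nu\simeq\lam$, as $\mu\simeq1$). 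Collecting all contributions and absorbing the small prefactors via Lemma~\ref{l:modc2} then gives the stated identity.
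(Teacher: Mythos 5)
Your overall plan is the paper's: the identity
\[
\Es_2'(t) = \ang{ \Psi_2 \mid \LL_\Phi \dot w} + \ang{ \Psi_1 \mid \LL_\Phi^2 w} + \tfrac{1}{2}\ang{ \dot w \mid [\p_t, \LL_\Phi] \dot w} + \tfrac{1}{2}\ang{ w \mid [\p_t, \LL_\Phi^2] w}
\]
is exactly~\eqref{eq:E2'}, your treatment of $\ang{\Psi_1\mid\LL_\Phi^2 w}$ (moving one $\LL_\Phi$ onto $\Psi_1$ and using $\LL_\la\Lam Q_{\U\la}=0$, $\LL_\mu\Lam Q_{\U\mu}=0$) matches Claim~\ref{c:Psi1L2w}, and your treatment of the two commutator terms reproduces Claims~\ref{c:commwdot2} and~\ref{c:commw2}.

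There is, however, a genuine gap in the $\ang{\Psi_2\mid\LL_\Phi\dot w}$ term. You propose to bound the contribution of $h := -\p_t\dot\Phi + \De\Phi - \tfrac{1}{r^2}f(\Phi)$ by its $H$-norm via Lemma~\ref{l:ptdotPhiL2}, i.e.\ $|\ang{h\mid\LL_\Phi\dot w}| \lesssim \|h\|_H\|\dot w\|_H$. But the leading pieces of $h$ are, from~\eqref{eq:ptdotPhi}, $-(b'+\gamma_k\nu^k/\la)\Lam Q_{\U\la}$ and $-(a'+\ti\gamma_k\nu^k/\mu)\Lam Q_{\U\mu}$; the modulation estimate for $|b'+\gamma_k\nu^k/\la|$ from Lemma~\ref{l:modc2} is dominated by $\tfrac{|a|+|b|}{\la}\|\dot w\|_{L^2}$, so~\eqref{eq:ptdotPhi-H} gives $\|h\|_H \lesssim \tfrac{|a|+|b|}{\la^2}\|\bs w\|_{\HH} + \cdots$, and Cauchy--Schwarz produces an error $\tfrac{|a|+|b|}{\la^2}\|\bs w\|_{\HH}\|\bs w\|_{\HH^2}$. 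The lemma claims the strictly smaller coefficient $\tfrac{b^2+a^2+\nu^{2k}}{\la^2}$, i.e.\ a gain of one full power $|a|+|b|\sim\la^{k/2}$. This is not cosmetic: after inserting the bootstrap bounds $\|\bs w\|_{\HH}\lesssim\la^{3k/2-1}$, $\|\bs w\|_{\HH^2}\lesssim\la^{3k/2-2}$ and integrating against $\ud t\sim\la^{-(k-2)/2}\ud\log\la$, your version lands exactly at the borderline exponent $\la^{3k-4}$ with an unremovable constant, while the paper's version gains $\la^{k/2}$ and closes strictly.

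The mechanism you need is precisely the one you already use for $\Psi_1$, but applied also here: the coefficients $(b'+\gamma_k\nu^k/\la)$ and $(a'+\ti\gamma_k\nu^k/\mu)$ multiply $\Lam Q_{\U\la}$ and $\Lam Q_{\U\mu}$, and when those are paired against $\LL_\Phi\dot w$ one should not estimate by $\|\Lam Q_{\U\la}\|_H\|\dot w\|_H$ but instead exploit $\LL_\la\Lam Q_{\U\la}=0$ to write $\ang{\Lam Q_{\U\la}\mid\LL_\Phi\dot w} = \ang{\Lam Q_{\U\la}\mid(\LL_\Phi - \LL_\la)\dot w}$, and similarly with $\mu$. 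The difference $\LL_\Phi-\LL_\la = \tfrac{k^2}{r^2}(\cos 2\Phi - \cos 2Q_\la)$ supplies, via Lemma~\ref{l:cos2Phi} and the $L^2$-estimates of Lemma~\ref{l:ABBQL21}, the missing factor $\nu^k + b^2 + a^2\nu^k$ (resp.\ $\nu^k + b^2\nu^{k-2} + a^2$), which turns $\tfrac{|a|+|b|}{\la^2}$ into $\tfrac{b^2+a^2+\nu^{2k}}{\la^2}$. This is the content of the paper's Claim~\ref{c:Psi2Ldotw}: $\Psi_2$ cannot be treated by a blanket $H$-norm bound, because the near-orthogonality of $\Lam Q_{\U\la},\Lam Q_{\U\mu}$ to $\dot w$ (encoded in~\eqref{eq:lawdot},~\eqref{eq:muwdot}) carries over to $\LL_\Phi\dot w$ only up to this computable correction, and that correction is what makes the final exponent work.
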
 

\begin{lem}\label{l:V2'} 
Let $J \subset \R$ be an interval on which $\bs u(t)$ satisfies the hypothesis of Lemma~\ref{l:mod2}.  Assume in addition that $\bs u(t) \in \HH \cap \HH^2 \cap \bs \Lam^{-1} \HH$ for all $t \in J$.  Let $a, b, \la, \mu$  and $\bs w(t) \in  \HH \cap \HH^2 \cap \bs \Lam^{-1} \HH$ be given by Lemma~\ref{l:mod2} and let $\nu:= \la/\mu$ as usual.  Then, there exists a uniform constant $C_1>0$ so that, 
\EQ{
(b \Vs_{2, \la})'(t)  &\le c_0 \frac{b}{\lam} \| \bs w \|_{\HH^2}^2  \\
&\quad -  2\frac{b}{\lam} \int_0^{R\lam} ( \LL_0 w)^2  \, r\, \ud r  - \frac{b}{\lam} \ang{ w \mid \Ks_\lam w}  - \frac{b}{\la} \int_0^{R \la}\Big( ( \p_r \dot w)^2 + \frac{k^2}{r^2} \dot w^2  \Big) \, \rdr \\
&\quad  - \frac{b}{\lam} \ang{\Lam_0 \dot w \mid P_\la \dot w}+ \frac{b}{\lam} \ang{w \mid P_\lam^2 w}   \\
&\quad + \frac{b}{\lam} \ang{w \mid \La P_\lam \LL_0 w -  \p_r (\La P_\lam) \p_r w + \frac{1}{2}(- \De \La P_\lam) w +  P_\lam \La P_\lam w}  \\
&\quad  +C_1 \Big( \frac{\abs{a}^5 + \abs{b}^5 + \abs{b} \nu^{2k-1} + \abs{b}^3 \nu^{k-1}}{\lam^2}\Big)\Big( \| \bs w \|_{\HH^2} +   \frac{\| \bs w \|_{\HH}}{\lam}  \Big) \\
& \quad +C_1  \Big( \abs{a} + \abs{b} +  \frac{b^2+a^2 + \nu^k}{\lam} \Big)\Big(  \| \bs w \|_{\HH^2}+ \frac{ \| \bs w \|_{\HH}}{\lam} + \frac{ \| \bs w \|_{\HH}^2}{\lam^2}\Big)  \| \bs w \|_{\HH^2}   \\
& \quad + C_1 \Big( \abs{a} + \abs{b} \Big)\Big( \| \bs w \|_{\HH^2} + \frac{ \| \bs w \|_{\HH}}{\lam} +  \frac{ \| \bs w \|_{\HH}^2}{\lam^2} \Big) \| \bs w \|_{\HH^2}^2 + C_1\frac{ a^2 + b^2}{\lam^2} \frac{\| \bs w \|_{\HH}^2}{\lam} \\
&\quad   +C_1 \frac{1}{\lam} \| \bs w \|_{\HH}^2 \| \bs w \|_{\HH^2}^2   + C_1\frac{1}{\lam^2} \| \bs w \|_{\HH}^3 \| \bs w\|_{\HH^2}  + C_1\frac{\abs{b}}{\lam^3} \| \bs w\|_{\HH}^3 
}
where $c_0>0$ is a constant that can be taken as small as we like by choosing $R>0$ large enough in Lemma~\ref{l:opA}.  Importantly, $c_0$ can be taken small independently of $\mu, \lam, a, b$ and $J$. 
\end{lem}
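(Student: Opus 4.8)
\textbf{Plan for the proof of Lemma~\ref{l:V2'}.}
The plan is to differentiate the virial correction $\Vs_{2,\la}(t) = \ang{\A_0(\la) w \mid \LL_\la \dot w} - 2\ang{\A_0(\la)\dot w \mid \LL_\la w}$ in time, using the equation~\eqref{eq:w-eq4} for $\bs w$ in the form $\p_t w = \dot w + \Psi_1$, $\p_t \dot w = -\LL_\la w + \ti\Psi_2$ with $\ti\Psi_2$ as in~\eqref{eq:tiPsi2-def}. The time derivative produces four families of terms: (i) terms where $\p_t$ hits the scale $\la$ in $\A_0(\la)$ and in $\LL_\la$ (producing $\la'\p_\la \A_0(\la)$ and $\la' \p_t P_\la$ contributions), (ii) terms where $\p_t$ hits $\dot w$, giving $-\ang{\A_0(\la)w \mid \LL_\la^2 w}$ plus cross terms and $\ti\Psi_2$-terms, (iii) terms where $\p_t$ hits $w$, giving $\ang{\A_0(\la)\dot w \mid \LL_\la \dot w}$-type expressions together with $\Psi_1$ contributions, and (iv) the $b'$ factor from $(b\Vs_{2,\la})' = b'\Vs_{2,\la} + b\Vs_{2,\la}'$, which is controlled by Lemma~\ref{l:modc2}. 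The key structural identity that makes the leading term useful is $-\ang{\A_0(\la)w \mid \LL_\la^2 w} = -\ang{\A_0(\la)w \mid \LL_0^2 w} - \ang{\A_0(\la)w \mid \Ks_\la w}$; applying~\eqref{eq:A-pohozaev2} from Lemma~\ref{l:opA} to the first piece gives the favorable localized term $-\frac{2}{\la}\int_0^{R\la}(\LL_0 w)^2\,r\,\ud r$ (after multiplying by $b>0$), and applying~\eqref{eq:vir-new2} replaces $\ang{\A_0(\la)w \mid \Ks_\la w}$ by $\ang{\frac1\la \La_0 w \mid \Ks_\la w}$ up to $\frac{c_0}{\la}\|w\|_{H^2}^2$. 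The $\La_0$-version is then handled by the same integration-by-parts manipulations used in the $\HH$-level analysis (expanding $\Ks_\la$, using $\La_0 = \La + 1$, the anti-symmetry~\eqref{eq:ibpLa} of $\La_0$, and $\p_t P_\la = -\frac{\la'}{\la}\La P_\la - 2\frac{\la'}{\la}P_\la$) to extract the main terms $\frac{b}{\la}\ang{w \mid \Ks_\la w}$, $\frac{b}{\la}\ang{w\mid P_\la^2 w}$, and the $\La P_\la$-terms that appear in the statement, matching them against the corresponding terms in Lemma~\ref{l:E2'} so that they cancel in $\Hs_2' = \Es_2' - (b\Vs_{2,\la})'$.

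The second batch of work is estimating all the error terms. The $\Psi_1$-contributions are controlled using $\|\dot\Phi - \p_t\Phi\|_H$ and $\|\dot\Phi - \p_t\Phi\|_{H^2}$ from~\eqref{eq:Psi1-H},~\eqref{eq:Psi1-H2} in Lemma~\ref{l:ptdotPhiL2}, paired against $\LL_\la$-derivatives of $w$ or $\A_0(\la)\dot w$, using the $\mathscr L(H;L^2)$-boundedness of $\A_0(\la)$ and $\la\p_\la\A_0(\la)$ from Lemma~\ref{l:opA}, together with the bounds~\eqref{eq:prA0}. The $\ti\Psi_2$-contributions split via~\eqref{eq:tiPsi2-def} into the linear ansatz-error $-\p_t\dot\Phi + \De\Phi - \frac1{r^2}f(\Phi)$, controlled in $L^2$ and $H$ by~\eqref{eq:ptdotPhiL2},~\eqref{eq:ptdotPhi-H} of Lemma~\ref{l:ptdotPhiL2}, and the quadratic-in-$w$ nonlinearity $\frac1{r^2}(f(\Phi+w) - f(\Phi) - f'(Q_\la)w)$, for which we use the pointwise expansion~\eqref{eq:f'Phi-f'Q-1}, the pointwise bounds~\eqref{eq:f'Phi-f'Q-2} for $\frac1r\sin^2(\Phi - Q_\la)$ and $\frac1r\sin 2Q_\la\sin 2(\Phi - Q_\la)$, together with~\eqref{eq:sin2Phi}, and the Sobolev-type estimates~\eqref{eq:w^2L22},~\eqref{eq:w^2L23} from Lemma~\ref{l:w^2est} and the functional inequalities of Lemma~\ref{l:wHH2}; these are paired against $\A_0(\la)w$ or $\A_0(\la)\LL_\la w$ or $\LL_0\A_0(\la)w$ in $L^2$, which is where the $\|\bs w\|_{\HH^2}$ and cubic/quartic $\|\bs w\|_{\HH^2}^2$ factors enter. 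All purely algebraic ansatz-interaction terms (those not involving $w$) are dispatched using Lemma~\ref{l:ABBQ}, Lemma~\ref{l:ABBQnl}, Lemma~\ref{l:ABBQL2}, Lemma~\ref{l:ABBQL21}, Lemma~\ref{l:fest1}, Lemma~\ref{l:fest2}, always converting the resulting modulation-error coefficients (things like $|b' + \gamma_k\nu^k/\la|$) into the final form via Lemma~\ref{l:modc2} and the running assumptions $\mu\simeq 1$, $\nu\simeq\la$, $|a|,|b|,\nu\ll 1$.

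The crossed term $-2\ang{\A_0(\la)\dot w \mid \LL_\la w}$ in $\Vs_{2,\la}$ needs separate care: differentiating it produces $-2\ang{\A_0(\la)(-\LL_\la w + \ti\Psi_2) \mid \LL_\la w} - 2\ang{\A_0(\la)\dot w \mid \LL_\la\dot w} - 2\ang{\A_0(\la)\dot w \mid \p_t\LL_\la w}$ plus $\la'$-terms; the term $2\ang{\A_0(\la)\LL_\la w \mid \LL_\la w}$ is exactly what is needed to complete the $\ang{\A_0(\la)w \mid \LL_\la^2 w}$ identification when one also uses $\ang{\LL_0\A_0(\la)h \mid h} = \ang{[\LL_0,\A_0(\la)]h\mid h} + \ang{\A_0(\la)\LL_0 h\mid h}$ and anti-symmetry of $\A_0(\la)$. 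One also extracts from the $\dot w$-piece the localized coercive term $-\frac{b}{\la}\int_0^{R\la}((\p_r\dot w)^2 + \frac{k^2}{r^2}\dot w^2)\,r\,\ud r$ via~\eqref{eq:A-pohozaev} applied to $\dot w$ (the hypothesis $\dot w\in H\cap H^2$ being available since $\bs w\in\HH^2$), plus $-\frac{b}{\la}\ang{\La_0\dot w\mid P_\la\dot w}$ via~\eqref{eq:vir-new}. I expect the main obstacle to be organizational rather than conceptual: keeping track of the precise powers of $\nu$, $\la$, $a$, $b$ across the enormous number of terms generated by $\p_t\dot\Phi$ in~\eqref{eq:ptdotPhi}, and verifying that every "critical-size" term — one carrying a factor $\frac{\la'}{\la}\sim\frac1t$ with no compensating smallness — either lands in one of the explicit main terms that cancels against Lemma~\ref{l:E2'}, or is of the localized-coercive type absorbed via Lemma~\ref{l:opA}, or carries an extra factor of $b$, $a$, $\nu^k$, or $\|\bs w\|_{\HH}/\la$ that makes it genuinely perturbative. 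The subtlest point is the $\La P_\la$ family of terms: as in the $\HH$-level Claim~\ref{c:vir}, these arise from $[\p_t,\LL_\la]$ acting inside the higher-order energy, they are \emph{not} small, and the whole design of $\Vs_{2,\la}$ is to cancel them — so the proof must carry them symbolically (exactly as written in the statement) rather than estimate them, and only the \emph{difference} between $\LL_\Phi$ and $\LL_\la$ commutators, controlled as in~\eqref{eq:comm-est1}, is treated as an error.
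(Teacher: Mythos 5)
Your plan matches the paper's proof essentially step for step: the paper differentiates $b\Vs_{2,\la}$ using the form $\p_t w = \dot w + \Psi_1$, $\p_t\dot w = -\LL_\la w + \ti\Psi_2$, extracts the leading terms $-b\ang{\A_0(\la)\dot w\mid\LL_\la\dot w}$ and $-b\ang{\A_0(\la)w\mid\LL_\la^2 w}$, applies~\eqref{eq:A-pohozaev},~\eqref{eq:A-pohozaev2},~\eqref{eq:vir-new},~\eqref{eq:vir-new2}, computes $\tfrac12[\La,\Ks_\lam]$ symbolically to display the $\La P_\lam$ terms, and disposes of the $\Psi_1$, $\ti\Psi_2$, $b'$, and $\la'\p_\la\A_0$ error terms with Lemmas~\ref{l:ptdotPhiL2},~\ref{l:w^2est},~\ref{l:modc2} and the $\mathscr L(H;L^2)$-boundedness from Lemma~\ref{l:opA}. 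Two small narrative slips worth noting: first, $\ang{\A_0(\la)\LL_\la w\mid\LL_\la w}$ simply vanishes by anti-symmetry of $\A_0(\la)$ — it does not need to be combined with anything to "complete" the $\LL_\la^2$ identification, since $-\ang{\A_0(\la)w\mid\LL_\la^2 w}$ arrives on its own from $\p_t\dot w$ hitting the first inner product; second, the $\LL_\Phi$-vs-$\LL_\la$ comparison and~\eqref{eq:comm-est1} play no role in this lemma because $\Vs_{2,\la}$ and~\eqref{eq:w-eq5} are built entirely from $\LL_\la$ — that comparison is only needed in the energy functional derivative, Lemma~\ref{l:E2'}.
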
 

%

We first prove  Proposition~\ref{p:H2'} assuming the conclusions of Lemma~\ref{l:E2'} and Lemma~\ref{l:V2'}. 

\begin{proof}[Proof of Proposition~\ref{p:H2'}] 
From Lemma~\ref{l:E2'} and Lemma~\ref{l:V2'} we have 
\EQ{
\Hs_2'(t) &\ge  2\frac{b}{\lam} \int_0^{R\lam} ( \LL_0 w)^2  \, r\, \ud r  + 2 \frac{b}{\lam}\ang{ w \mid \Ks_\lam w}  \\
&\quad + \frac{b}{\la} \int_0^{R \la}\Big( ( \p_r \dot w)^2 + \frac{k^2}{r^2} \dot w^2  \Big) \, \rdr + \frac{b}{\lam} \ang{ \dot w \mid P_\lam \dot w} \\
&\quad - c_0 \frac{b}{\lam} \| \bs w \|_{\HH^2}^2    -  C_1   \Big( \abs{a} + \abs{b} +  \frac{b^2+a^2 + \nu^k}{\lam} \Big) \| \bs w \|_{\HH^2}^2  - C_1\frac{ a^2 + b^2}{\lam} \frac{\| \bs w \|_{\HH}^2}{\lam^2}  \\ 
&\quad   - C_1  \Big( \frac{b^2 \nu^{k-1}+ \nu^{2k-1}+ b^4 + a^4}{\la^2}  \Big)  \| \bs w \|_{\HH^2}   \\
&\quad  -C_1 \Big( \frac{\abs{a}^5 + \abs{b}^5 + \abs{b} \nu^{2k-1} + \abs{b}^3 \nu^{k-1}}{\lam^2}\Big)    \frac{\| \bs w \|_{\HH}}{\lam}    \\
&\quad - C_1   \Big( \abs{a} + \abs{b} +  \frac{b^2+a^2 + \nu^k}{\lam} \Big) \frac{ \| \bs w \|_{\HH}}{\lam} \| \bs w \|_{\HH^2}   \\
&\quad   -C_1  \frac{1}{\la}\| \bs w \|_{\HH} \| \bs w \|_{\HH^2}^2      - C_1\frac{1}{\lam^2} \| \bs w \|_{\HH}^3 \| \bs w\|_{\HH^2}  - C_1\abs{b} \frac{ \| \bs w\|_{\HH}^3}{\lam^3}   \\ 
& \quad -C_1  \Big( \abs{a} + \abs{b} +  \frac{b^2+a^2 + \nu^k}{\lam} \Big) \frac{ \| \bs w \|_{\HH}^2}{\lam^2} \| \bs w \|_{\HH^2}   \\
& \quad - C_1 \Big( \abs{a} + \abs{b}  + \| \bs w \|_{\HH} \Big)   \| \bs w \|_{\HH^2}^3- C_1 \Big( \abs{a} + \abs{b} + \lam \Big) \frac{ \| \bs w \|_{\HH}^2}{\lam^2}  \| \bs w \|_{\HH^2}^2 \\
}
which completes the proof. 
\end{proof}

\begin{proof}[Proof of Lemma~\ref{l:E2'}]  
First we note the identity, 
\EQ{ \label{eq:E2'} 
\Es_2'(t) &=\ang{ \Psi_2 \mid \LL_{\Phi} \dot w } + \ang{ \Psi_1 \mid  \calL_\Phi^2 w}    +\frac{1}{2}\ang{ \dot w \mid [\p_t, \LL_\Phi] \dot w}  +\frac{1}{2} \ang{ w \mid [\p_t, \LL_\Phi^2] w}
}
To see this, we compute using~\eqref{eq:weq3}, 
\EQ{
\Es_2'(t) &= \ang{ \p_t  \dot w  \mid  \LL_\Phi  \dot w} + \ang{ \p_t w \mid \LL_\Phi^2 w}  + \frac{1}{2} \ang{ \dot w \mid [\p_t, \LL_\Phi] \dot w}  +  \frac{1}{2} \ang{  w \mid [\p_t, \LL_\Phi^2]  w}  \\
& = \ang{ -\calL_\Phi w \mid \LL_\Phi \dot w} + \ang{ \Psi_2 \mid \LL_{\Phi} \dot w} + \ang{ \dot w \mid \calL_\Phi^2 w} + \ang{ \Psi_1 \mid  \calL_\Phi^2 w} \\
& \quad+ \frac{1}{2} \ang{ \dot w \mid [\p_t, \LL_\Phi] \dot w}  +  \frac{1}{2} \ang{  w \mid [\p_t, \LL_\Phi^2]  w}  \\
& =\ang{ \Psi_2 \mid \LL_{\Phi} \dot w} + \ang{ \Psi_1 \mid  \calL_\Phi^2 w}     + \frac{1}{2} \ang{ \dot w \mid [\p_t, \LL_\Phi] \dot w}  +  \frac{1}{2} \ang{  w \mid [\p_t, \LL_\Phi^2]  w} 
}
which is precisely~\eqref{eq:E2'}. We estimate each of the terms on the right-hand side of~\eqref{eq:E2'}. 

\begin{claim}\label{c:Psi2Ldotw} 
The following estimate holds true. 
\EQ{ \label{eq:Psi2-Ldotw} 
\abs{\ang{ \Psi_2 \mid \LL_{\Phi} \dot w } } &\lesssim \| \dot w \|_{H} \Big( \frac{b^2 \nu^{k-1}}{\la^2}  +  \frac{\nu^{2k-1}}{\la^2} + \frac{b^4}{\la^2}+\frac{a^4}{\la^2} \Big)   \\ 
  &\quad  +   \| \dot w \|_{H}  \| \bs w \|_{\HH} \Big( \frac{b^2}{\la^2} + \frac{a^2}{\la^2} + \frac{\nu^{2k}}{\la^2} \Big)  +  \| \dot w \|_{H}  \| \bs w \|_{\HH}^2 \Big(   \frac{b^2}{\la^2} + \frac{a^2\nu}{\la^2}   + \frac{\nu^k}{\la^2} \Big)   \\
  & \quad + \frac{1}{\la} \|  w \|_{H^2} \| w \|_{H} \| \dot w \|_{H}     + \|  w \|_{H^2}^2 \| w \|_H  \| \dot w \|_{H} . 
}
\end{claim}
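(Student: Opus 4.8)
\textbf{Proof plan for Claim~\ref{c:Psi2Ldotw}.}
The plan is to write $\Psi_2 = \bigl(-\p_t \dot \Phi + \De \Phi - \tfrac{1}{r^2} f(\Phi)\bigr) - \tfrac{1}{r^2}\bigl(f(\Phi+w)-f(\Phi)-f'(\Phi)w\bigr)$ and estimate the pairing of each piece against $\LL_\Phi \dot w$ separately. For the first piece, rather than pairing with $\LL_\Phi \dot w$ directly, I would move the operator $\LL_\Phi$ onto the other factor using self-adjointness, writing $\ang{ -\p_t \dot \Phi + \De \Phi - \tfrac{1}{r^2}f(\Phi) \mid \LL_\Phi \dot w} = \ang{ \LL_\Phi\bigl(-\p_t \dot \Phi + \De \Phi - \tfrac{1}{r^2}f(\Phi)\bigr) \mid \dot w}$, but since the orthogonality conditions~\eqref{eq:lawdot},~\eqref{eq:muwdot} are what gave the gain in Claim~\ref{c:Psi2wdot}, the cleaner route is to estimate $\| -\p_t \dot \Phi + \De \Phi - \tfrac{1}{r^2}f(\Phi)\|_H$ via~\eqref{eq:ptdotPhi-H} from Lemma~\ref{l:ptdotPhiL2} and pair directly with $\LL_\Phi \dot w$, using that $\|\LL_\Phi \dot w\|_{\edual}$-type quantities are controlled: precisely, $\abs{\ang{ g \mid \LL_\Phi \dot w}} = \abs{\ang{\LL_\Phi g \mid \dot w}} \lesssim \|g\|_H \|\dot w\|_H$ once one checks $\LL_\Phi$ maps $H \to $ the dual of $H$ boundedly uniformly (which follows from $\|\tfrac{1}{r^2} f'(\Phi) h\|_{\dot H^{-1}} \lesssim \|h\|_H$, using $\abs{f'(\Phi)} \lesssim 1$ and Hardy). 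Applying~\eqref{eq:ptdotPhi-H} then yields exactly the terms on the first line of~\eqref{eq:Psi2-Ldotw} plus the $\|\dot w\|_H \|\bs w\|_\HH$ and $\|\dot w\|_H\|\bs w\|_\HH^2$ contributions.

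For the nonlinear piece $\tfrac{1}{r^2}\bigl(f(\Phi+w)-f(\Phi)-f'(\Phi)w\bigr)$, the natural move is again $\ang{ \tfrac{1}{r^2}(\cdots) \mid \LL_\Phi \dot w} = \ang{ \LL_\Phi\bigl(\tfrac{1}{r^2}(\cdots)\bigr) \mid \dot w}$ — but this costs two derivatives on the nonlinearity, so instead I would pair $\LL_\Phi$ with $\dot w$ directly only after one integration by parts, reducing to estimating $\| \tfrac{1}{r^2}(f(\Phi+w)-f(\Phi)-f'(\Phi)w)\|_H$-type norms against $\|\dot w\|_H$. The relevant bound is $\abs{\ang{ \tfrac{1}{r^2}(f(\Phi+w)-f(\Phi)-f'(\Phi)w) \mid \LL_\Phi \dot w}} \lesssim \|\dot w\|_H \cdot \bigl(\|\p_r N\|_{L^2} + \|\tfrac{1}{r}N\|_{L^2}\bigr)$ where $N := \tfrac{1}{r^2}(f(\Phi+w)-f(\Phi)-f'(\Phi)w)$, and these are controlled by~\eqref{eq:w^2L22} and~\eqref{eq:w^2L2} of Lemma~\ref{l:w^2est}: $\|\p_r N\|_{L^2} \lesssim \tfrac{1}{\la}\|\p_r w\|_H \|w\|_H + \|\p_r w\|_H^2 \|w\|_H$ and $\|\tfrac1r N\|_{L^2} \lesssim \tfrac{1}{\la}\|w\|_H^2 + \|w\|_H^2\|\p_r w\|_H$. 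Noting $\|\p_r w\|_H \lesssim \|w\|_{H^2}$, $\|w\|_H \lesssim \|\bs w\|_\HH$, these produce exactly the last two terms $\tfrac{1}{\la}\|w\|_{H^2}\|w\|_H\|\dot w\|_H$ and $\|w\|_{H^2}^2 \|w\|_H \|\dot w\|_H$ on the right of~\eqref{eq:Psi2-Ldotw}.

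The one subtlety to handle carefully is the justification that pairing $\LL_\Phi$ against $\dot w$ in $\HH$ rather than $L^2$ is legitimate: $\LL_\Phi \dot w$ only lives in the dual of $H$, so every pairing $\ang{ g \mid \LL_\Phi \dot w}$ must be interpreted as $\ang{\LL_\Phi^{1/2} g \mid \LL_\Phi^{1/2}\dot w}$ or, more elementarily, after integrating by parts once so that at most one derivative lands on each factor; since $\bs w(t) \in \HH \cap \HH^2$ by hypothesis this is rigorous, but the bookkeeping of which factor carries $\p_r$ is where the $H$-norm (rather than $L^2$-norm) of $\dot w$ on the right-hand side comes from, and it must be tracked consistently. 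I expect this — keeping the derivative count balanced so that the final bound reads $\|\dot w\|_H$ and not $\|\dot w\|_{H^2}$ — to be the main (minor) obstacle; the remaining work is routine substitution of the already-established estimates from Lemmas~\ref{l:ptdotPhiL2} and~\ref{l:w^2est} together with Lemma~\ref{l:modc2}, exactly parallel to the proof of Claim~\ref{c:Psi2wdot}.
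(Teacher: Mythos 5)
Your treatment of the nonlinear piece (the last two terms of~\eqref{eq:Psi2-Ldotw}) is essentially the paper's: after one integration by parts against $\LL_\Phi \dot w$, apply~\eqref{eq:w^2L22} and~\eqref{eq:w^2L23} and use $\|\p_r w\|_H \lesssim \|w\|_{H^2}$. (You cite~\eqref{eq:w^2L2} for $\|\tfrac{1}{r}N\|_{L^2}$, but the relevant bound is~\eqref{eq:w^2L23}; this is a minor mis-citation and does not change the outcome.)

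The first paragraph, however, contains a genuine gap, and it is the same gap you warned yourself about. You observe that the orthogonality conditions~\eqref{eq:lawdot},~\eqref{eq:muwdot} were the source of a gain in Claim~\ref{c:Psi2wdot}, note that they are not directly available when pairing against $\LL_\Phi \dot w$, and then conclude that the ``cleaner route'' is the crude bound $\abs{\ang{g \mid \LL_\Phi \dot w}} \lesssim \|g\|_H \|\dot w\|_H$ with $\|g\|_H$ estimated by~\eqref{eq:ptdotPhi-H}. But that estimate for $\|-\p_t\dot\Phi + \De\Phi - \tfrac{1}{r^2}f(\Phi)\|_H$ carries the contributions $\tfrac{|a|+|b|}{\lam^2}\|\bs w\|_\HH$ and $\tfrac{1}{\lam^2}\|\bs w\|_\HH^2$, whose source is the term $(b'+\gamma_k\tfrac{\nu^k}{\lam})\Lam Q_{\U\lam}$ (and its $\mu$-analogue) in the expansion~\eqref{eq:ptdotPhi}, for which $\|\Lam Q_{\U\lam}\|_H \sim \lam^{-1}$ and $|b'+\gamma_k\tfrac{\nu^k}{\lam}|$ is controlled by~\eqref{eq:b'est}. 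Multiplying by $\|\dot w\|_H$ you would obtain terms $\tfrac{|a|+|b|}{\lam^2}\|\bs w\|_\HH\|\dot w\|_H$ and $\tfrac{1}{\lam^2}\|\bs w\|_\HH^2\|\dot w\|_H$, which are \emph{larger} than the corresponding terms $\tfrac{b^2+a^2+\nu^{2k}}{\lam^2}\|\bs w\|_\HH\|\dot w\|_H$ and $\tfrac{b^2+a^2\nu+\nu^k}{\lam^2}\|\bs w\|_\HH^2\|\dot w\|_H$ in~\eqref{eq:Psi2-Ldotw} by a factor of roughly $(|a|+|b|)^{-1}\sim\nu^{-k/2}$ and $(b^2+\nu^k)^{-1}\sim\nu^{-k}$ respectively. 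This loss is not cosmetic: once the bootstrap bounds are inserted, the extra terms are larger than the margin available in Proposition~\ref{p:boot}, so the bootstrap for $\|\bs w\|_{\HH^2}$ would not close.

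The missing idea is that the pairing against $\LL_\Phi \dot w$ has its own gain, parallel to but distinct from the orthogonality gain in Claim~\ref{c:Psi2wdot}: since $\LL_\lam \Lam Q_{\U\lam} = 0$, one has
\[
\ang{\Lam Q_{\U\lam} \mid \LL_\Phi \dot w} \;=\; \ang{\Lam Q_{\U\lam} \mid \LL_\lam \dot w} + \ang{\Lam Q_{\U\lam} \mid (\LL_\Phi - \LL_\lam)\dot w} \;=\; \ang{\Lam Q_{\U\lam} \mid \tfrac{k^2}{r^2}(\cos 2\Phi - \cos 2Q_\lam)\dot w},
\]
and the factor $\cos 2\Phi - \cos 2Q_\lam$ is small where $\Lam Q_\lam$ is concentrated (estimated via~\eqref{eq:cos2Phi1} and Lemma~\ref{l:ABBQL21}), producing an additional factor of order $\nu^k + b^2 + a^2\nu^k$. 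Similarly for $\Lam Q_{\U\mu}$ via~\eqref{eq:cos2Phi2}. Without this refinement — i.e.\ without splitting off $\Lam Q_{\U\lam},\Lam Q_{\U\mu}$ from $-\p_t\dot\Phi + \De\Phi - \tfrac{1}{r^2}f(\Phi)$ and treating them separately before estimating the remainder in $H$ — the stated bound~\eqref{eq:Psi2-Ldotw} cannot be recovered from~\eqref{eq:ptdotPhi-H} alone.
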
 
\begin{proof} 
First, by the definition of ~$\Psi_2$ in~\eqref{eq:Psi2} we have 
\EQ{
\ang{ \Psi_2 \mid  \LL_{\Phi} \dot w } & = \ang{  -\p_t \dot \Phi + \De \Phi - \frac{1}{r^2} f( \Phi)  \mid  \LL_{\Phi}\dot w }  \\
&\quad  - \ang{  \frac{1}{r^2}  \Big ( f( \Phi +w) - f( \Phi) - f'( \Phi) w \Big) \mid  \LL_{\Phi} \dot w} 
}
The claim then follows from two estimates: 
 \EQ{ \label{eq:ptdotPhiest2} 
\Big| \langle  - \p_t \dot \Phi &+ \De \Phi - \frac{1}{r^2} f( \Phi)  \mid  \LL_{\Phi} \dot w \rangle \Big|   \lesssim \| \dot w \|_{H} \Big( \frac{b^2 \nu^{k-1}}{\la^2}  +  \frac{\nu^{2k-1}}{\la^2} + \frac{b^4}{\la^2}+\frac{a^4}{\la^2} \Big)   \\ 
  &\quad  +   \| \dot w \|_{H}  \| \bs w \|_{\HH} \Big( \frac{b^2}{\la^2} + \frac{a^2}{\la^2} + \frac{\nu^{2k}}{\la^2} \Big)  +  \| \dot w \|_{H}  \| \bs w \|_{\HH}^2 \Big(   \frac{b^2}{\la^2} + \frac{a^2\nu}{\la^2}   + \frac{\nu^k}{\la^2} \Big)  
 }
 and 
 \EQ{ \label{eq:w^2-est2} 
 \abs{ \ang{  \frac{1}{r^2}  \Big ( f( \Phi +w) - f( \Phi) - f'( \Phi) w \Big) \mid  \LL_{\Phi}\dot w} } &\lesssim  \frac{1}{\la} \| \p_r w \|_{H} \| w \|_{H} \| \dot w \|_{H}   \\
 & \quad  + \| \p_r w \|_{H}^2 \| w \|_H  \| \dot w \|_{H}  
}
Note that~\eqref{eq:w^2-est2} follows from  Lemma~\ref{l:w^2est}.  To see this recall that $\LL_{\Phi} = -\De  + \frac{k^2}{r^2} \cos 2 \Phi$ and thus integration by parts yields, 
\EQ{
&\Big\langle \frac{1}{r^2}  \Big ( f( \Phi +w) - f( \Phi) - f'( \Phi) w \Big) \mid \LL_{\Phi} \dot w \Big\rangle = \ang{ \frac{1}{r^2} \p_r   \Big ( f( \Phi +w) - f( \Phi) - f'( \Phi) w \Big) \mid  \p_r \dot w}  \\
& - \Big\langle  \frac{2}{r^3}   ( f( \Phi +w) - f( \Phi) - f'( \Phi) w ) \mid  \frac{\dot w}{r} \Big\rangle  + \Big\langle  \frac{1}{r^3}   ( f( \Phi +w) - f( \Phi) - f'( \Phi) w ) \mid k^2 \cos 2 \Phi \frac{\dot w}{r} \Big\rangle
} 
Now apply Cauchy Schwarz to all three terms on the right above and use~\eqref{eq:w^2L22} on the first term and~\eqref{eq:w^2L23} on the last two. 

It remains to prove~\eqref{eq:ptdotPhiest2}. The proof is similar to the proof of~\eqref{eq:ptdotPhiest}. Using~\eqref{eq:ptdotPhi} we have 
\EQ{
\langle & -\p_t \dot \Phi  + \De \Phi - \frac{1}{r^2} f( \Phi)  \mid \LL_{\Phi} \dot w \rangle = I + II  + III + IV 
}
where term $I$ is given by
\EQ{
I = & -(b'+ \gamma_k \frac{\nu^k}{\la}) \ang{ \La Q_{\U\la} \mid \LL_\Phi \dot w}  - (a' + \ti \gamma_k \frac{\nu^k}{\mu}) \ang{ \La Q_{\U \mu} \mid \LL_\Phi \dot w} \\
& + b \frac{(b+\la')}{\la} \ang{ \La_0 \La Q_{\U \la} \mid  \LL_{\Phi} \dot w}   +   a\frac{(\mu'- a)}{\mu} \ang{ \La_0 \La Q_{\U \mu}\mid \LL_{\Phi} \dot w}  ,
}
and terms $II, III, IV$ are identical to the analogous  terms $II, III, IV$ in the proof~\eqref{eq:ptdotPhiest} except now the pairing is with $\LL_\Phi \dot w$ instead of with $\dot w$. 
In contrast to~\eqref{eq:ptdotPhiest} the first two terms on the right-hand side of term $I$ above do not vanish identically, but they do vanish to top order. Indeed, since $\LL_\la \La Q_{\U \la} =0$  for the first term in $I$ we have 
\EQ{
 \ang{ \La Q_{\U\la} \mid \LL_\Phi \dot w}  &=  \ang{ \La Q_{\U\la} \mid \LL_\la \dot w}  +  \ang{ \La Q_{\U\la} \mid (\LL_\Phi- \LL_\la)  \dot w}  \\
 & =  \ang{ \La Q_{\U\la} \mid (\LL_\Phi- \LL_\la) \dot w}   =  \ang{ \La Q_{\U\la} \mid \frac{k^2}{r^2} ( \cos 2 \Phi - \cos 2Q_\la)  \dot w}
 }
 Using~\eqref{eq:cos2Phi1} we then have 
 \EQ{
 \Big|(b'+ \gamma_k \frac{\nu^k}{\la})& \ang{ \La Q_{\U\la} \mid \LL_\Phi \dot w} \Big|  \lesssim \frac{ \abs{b'+ \gamma_k \frac{\nu^k}{\la}}}{\la}  \| r^{-1} \dot w \|_{L^2}  \| r^{-1} \La Q_\la ( \cos 2\Phi - \cos 2 Q_\la) \|_{L^2} \\
 &\lesssim   \frac{ \abs{b'+ \gamma_k \frac{\nu^k}{\la}}}{\la}  \| \dot w \|_{H}  \Big( \| r^{-1} [\La Q_\la]^2 \La Q_\mu  \|_{L^2} + b^2  \| r^{-1} [\La Q]^2 A \|_{L^2}  \\
&\quad + \nu^k \| r^{-1} [\La Q]^2 B \|_{L^2}    + a^2  \| r^{-1} [\La Q_\la]^2 A_\mu \|_{L^2} + \nu^k \|r^{-1} [\La Q_\la]^2 \ti \B_\mu \|_{L^2}   \\
&\quad  + \| r^{-1} \La Q_\la \La Q_\mu^2 \|_{L^2}   + b^4 \| r^{-1} \La Q A^2 \|_{L^2} + \nu^{2k} \|r^{-1} \La Q B^2 \|_{L^2} \\
& \quad +  a^4 \|r^{-1} \La Q_\la [A_\mu]^2 \|_{L^2} +\nu^{2k} \| r^{-1} \La Q_\la [\ti B_\mu]^2 \|_{L^2} \Big)  \\
& \lesssim\frac{ \abs{b'+ \gamma_k \frac{\nu^k}{\la}}}{\la} \Big( \nu^k + b^2 + a^2 \nu^k\Big)  \| \dot w \|_{H}
}
Similarly, for the second term in $I$ we use~\eqref{eq:cos2Phi2} to obtain,  
\EQ{
 \abs{(a' + \ti \gamma_k \frac{\nu^k}{\mu}) \ang{ \La Q_{\U \mu} \mid \LL_\Phi \dot w} } &\lesssim \frac{\abs{a' + \ti \gamma_k \frac{\nu^k}{\mu}}}{\mu}  \| \dot w \|_H \| r^{-1} ( \cos 2 \Phi - \cos 2 Q_\mu)  \La Q_\mu \|_{L^2}  \\
 & \lesssim  \frac{\abs{a' + \ti \gamma_k \frac{\nu^k}{\mu}}}{\mu} \Big( \nu^k + b^2\nu^{k-2} + a^2\Big)\| \dot w \|_{H}
}
For the third term in $I$ we have 
\EQ{
\abs{b \frac{(b+\la')}{\la} \ang{ \La_0 \La Q_{\U \la} \mid  \LL_{\Phi} \dot w} } & \lesssim \frac{b}{\la^2} \abs{ b + \la'} \| \La_0 \La Q \|_H \| \dot w \|_H
}
The fourth term is handled similarly. 

The terms $II, III, IV$ are estimated in an identical fashion to the corresponding terms in the  proof of ~\eqref{eq:ptdotPhiest}.  We arrive at the estimate, 
\begin{align}  \label{eq:281} 
\Big|\langle & -\p_t \dot \Phi  + \De \Phi - \frac{1}{r^2} f( \Phi)  \mid \LL_{\Phi} \dot w \rangle  \Big| 
  \lesssim \| \dot w \|_{H} \Bigg( \frac{b^2 \nu^{k-1}}{\la^2}  +  \frac{\nu^{2k-1}}{\la^2} + \frac{b^4}{\la^2}+\frac{a^4}{\la^2}    \\ 
& \quad  +  \abs{b'+ \gamma_k \frac{\nu^k}{\la}} \Big(\frac{\nu^k}{\la} +  \frac{b^2}{\la} + \frac{a^2 \nu^k}{\la} + \frac{a^4 \nu^2}{\la^2}  \Big)  + \abs{a' + \ti \gamma_k \frac{\nu^k}{\mu}}   \Big(      \frac{\nu^{k+1}}{\la}  + \frac{b^2\nu^{k-1}}{\la}  + \frac{a^2\nu}{\la} \Big)  \\
&\quad +\abs{ b + \la'} \Big(  \frac{b}{\la^2}     \Big)  + \abs{a - \mu'} \Big(  \frac{a \nu^2}{\la^2}    \Big) \Bigg)
\end{align} 
where we note that the application of the estimates in Lemma~\ref{l:fest2} used to estimate terms from $IV$ is straightforward after noting that 
\EQ{
\ang{ r^{-2} h \mid  \LL_\Phi \dot w} =   -\ang{ \p_r h \mid \p_r \dot w} +  2\ang{ r^{-3} h \mid  \p_r \dot w}  + \ang{  r^{-3} h \mid k^2 \cos 2\Phi r^{-1} \dot w}
}
and thus, 
\EQ{
\abs{\ang{ r^{-2} h \mid  \LL_\Phi \dot w}} \lesssim \Big( \| r^{-3} h \|_{L^2} +  \| r^{-2} \p_r h \|_{L^2} \Big) \| \dot w \|_{H} 
}
Finally we apply Lemma~\ref{l:modc2} to the right-hand side of~\eqref{eq:281} to obtain, 
\EQ{
\Big|\langle & -\p_t \dot \Phi  + \De \Phi - \frac{1}{r^2} f( \Phi)  \mid \LL_{\Phi} \dot w \rangle  \Big|  
  \lesssim \| \dot w \|_{H} \Big( \frac{b^2 \nu^{k-1}}{\la^2}  +  \frac{\nu^{2k-1}}{\la^2} + \frac{b^4}{\la^2}+\frac{a^4}{\la^2} \Big)   \\ 
  &\quad  +   \| \dot w \|_{H}  \| \bs w \|_{\HH} \Big( \frac{b^2}{\la^2} + \frac{a^2}{\la^2} + \frac{\nu^{2k}}{\la^2} \Big)  +  \| \dot w \|_{H}  \| \bs w \|_{\HH}^2 \Big(   \frac{b^2}{\la^2} + \frac{a^2\nu}{\la^2}   + \frac{\nu^k}{\la^2} \Big)  
}
which proves~\eqref{eq:ptdotPhiest2} as claimed. 
\end{proof} 

\begin{claim}\label{c:Psi1L2w} The following estimate holds true. 
\EQ{ \label{eq:Psi1-L2w}
\abs{\ang{ \Psi_1 \mid \LL_{\Phi}^2  w } } &\lesssim  \Big(  \frac{a^5}{\lam^2}   + \frac{b^5}{ \lam^2} + \frac{(\abs{a}+ \abs{b}) \nu^{2k-1}}{\lam^2} + \frac{ \abs{a} b^2 \nu^{k}}{\lam^2}  \Big)  \|  w \|_{H^2} \\
& \quad + \Big(  \frac{a^2 + b^2 + \nu^{2k}}{\lam^2} \Big)  \| \bs w \|_{\HH}  \| w \|_{H^2}  + \Big( \frac{\abs{a} + \abs{b}}{\lam^2} \Big) \| \bs w \|_{\HH}^2 \|  w \|_{H^2}.
}
\end{claim}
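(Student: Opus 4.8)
The proof of Claim~\ref{c:Psi1L2w} mirrors that of Claim~\ref{c:Psi1Lw}, with $\LL_\Phi w$ replaced by $\LL_\Phi^2 w$ throughout. First I would use the expression~\eqref{eq:dotPhi-ptPhi} for $\Psi_1 = \dot\Phi - \p_t\Phi$ to expand $\ang{\Psi_1 \mid \LL_\Phi^2 w}$ as a sum of terms, each carrying one of the modulation-error factors $(b+\la')$, $(a-\mu')$, $(b' + \gamma_k \nu^k/\la)$, $(a' + \ti\gamma_k\nu^k/\mu)$, times a pairing of $\LL_\Phi^2 w$ against a (rescaled) profile drawn from $\{\La Q, \La A, A, \La B, B, \La\ti B, \ti B\}$ at scale $\la$ or $\mu$.

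The key point, exactly as in Claim~\ref{c:Psi1Lw}, is that the two leading terms -- those multiplying $(b+\la')$ and $(a-\mu')$ and pairing against $\La Q_{\U\la}$ resp. $\La Q_{\U\mu}$ -- would be $O(\|w\|_{H^2})$ a priori, which is too lossy; we must extract a gain. For the term against $\La Q_{\U\la}$ we write $\LL_\Phi^2 = (\LL_\Phi - \LL_\la + \LL_\la)^2$ and note $\LL_\la \La Q_{\U\la} = 0$, so that after moving $\LL_\la$ onto $\La Q_{\U\la}$ in the self-adjoint pairing, every surviving term contains at least one factor of $\LL_\Phi - \LL_\la = \tfrac{k^2}{r^2}(\cos 2\Phi - \cos 2Q_\la)$. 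Concretely, $\ang{\La Q_{\U\la} \mid \LL_\Phi^2 w} = \ang{\LL_\la \La Q_{\U\la} \mid \LL_\Phi w} + \ang{(\LL_\Phi - \LL_\la)\La Q_{\U\la} \mid \LL_\Phi w} = \ang{\tfrac{k^2}{r^2}(\cos 2\Phi - \cos 2Q_\la)\La Q_{\U\la} \mid \LL_\Phi w}$, and then one more integration by parts (as in the computation following~\eqref{eq:281}: $\ang{r^{-2}h \mid \LL_\Phi \dot w} = -\ang{\p_r h \mid \p_r\dot w} + 2\ang{r^{-3}h \mid \p_r\dot w} + \ang{r^{-3}h \mid k^2\cos 2\Phi\, r^{-1}\dot w}$, now with $\dot w$ replaced by $w$ and $h = (\cos 2\Phi - \cos 2Q_\la)\La Q_\la$) reduces everything to the weighted $L^2$-bounds on $r^{-1}[\La Q_\la]^2\La Q_\mu$ etc. from Lemma~\ref{l:ABBQL21}, together with the bound $\p_r(\cos 2\Phi - \cos 2Q_\la) = O(\la^{-1}(\ldots))$ obtained by differentiating~\eqref{eq:cos2Phi1}. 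This yields the gain $\nu^k + b^2 + a^2\nu^k$ (divided by an extra $\la$ from the $r^{-1}$-weight, hence the $\lam^{-2}$ in the statement). The analogous argument with~\eqref{eq:cos2Phi2} handles the $\La Q_{\U\mu}$ term. For all the remaining (subleading) profiles one simply integrates by parts twice to pass $\LL_\Phi^2$ onto the smooth profile, uses Lemma~\ref{l:ABB} to bound the resulting $H^2$-type norms of $A, B, \ti B$ by $O(\la^{-2})$ (with the appropriate powers of $\nu$), and applies Cauchy--Schwarz against $\|w\|_{H^2}$.

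Having bounded $\ang{\Psi_1 \mid \LL_\Phi^2 w}$ by the modulation errors times appropriate profile norms, I would finish by inserting the estimates of Lemma~\ref{l:modc2} for $|a-\mu'|$, $|b+\la'|$, $|a' + \ti\gamma_k\nu^k/\mu|$, $|b' + \gamma_k\nu^k/\la|$, exactly as in the last display of the proof of Claim~\ref{c:Psi1Lw}. Using $\nu \simeq \la$ and keeping only the dominant contributions then collapses the sum to the stated right-hand side: the $\|w\|_{H^2}$-linear terms come from the products (modulation error)$\times$(profile gain), the $\|\bs w\|_{\HH}\|w\|_{H^2}$ terms from (modulation error containing $\|\bs w\|_{\HH}$)$\times$(profile gain), and the $\|\bs w\|_{\HH}^2\|w\|_{H^2}$ term from the quadratic-in-$\bs w$ part of the modulation estimates.

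The main obstacle is purely bookkeeping: $\Psi_1$ has on the order of a dozen terms and $\LL_\Phi^2$ is a fourth-order operator, so after the two integrations by parts one must track a large number of pairings and verify that each is controlled by one of the weighted bounds in Lemmas~\ref{l:ABBQ}, \ref{l:ABBQnl}, \ref{l:ABBQL2}, \ref{l:ABBQL21}, and Lemma~\ref{l:ABB}; but no genuinely new idea is needed beyond the extraction-of-a-gain trick already used for $\LL_\Phi$ in Claim~\ref{c:Psi1Lw} and for $\LL_\Phi \dot w$ in Claim~\ref{c:Psi2Ldotw}. The one slightly delicate point is ensuring that differentiating $\cos 2\Phi - \cos 2Q_\la$ and pairing against $\p_r w$ (rather than against $r^{-1}w$) does not cost an extra power of $\la^{-1}$ beyond what is claimed -- this is handled by the same $r^{-1}$-weighted Hardy-type inequalities~\eqref{eq:winfty}, \eqref{eq:w/rinfty} from Lemma~\ref{l:wHH2} that were used in Claim~\ref{c:Psi2Ldotw}.
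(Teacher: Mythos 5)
Your proposal is correct and mirrors the paper's argument: the key cancellation is exactly $\LL_\lambda \Lambda Q_{\underline{\lambda}}=0$ applied to one factor of $\LL_\Phi$, the subleading profiles are handled by moving a single $\LL_\Phi$ across the pairing, and the final step is Lemma~\ref{l:modc2}. The one cosmetic difference is that after the cancellation the paper estimates $\langle\Lambda Q_{\underline{\lambda}}\mid (\LL_\Phi-\LL_\lambda)\LL_\Phi w\rangle$ by a direct Cauchy--Schwarz using $\|\LL_\Phi w\|_{L^2}\simeq\|w\|_{H^2}$, rather than the extra integration by parts you propose -- this is a bit cleaner since it avoids differentiating $\cos 2\Phi-\cos 2Q_\lambda$, but your route works too.
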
 
\begin{proof}[Proof of Claim~\ref{c:Psi1L2w}] 
The argument here is nearly identical to the proof of Claim~\ref{c:Psi1Lw}. Using~\eqref{eq:Psi1w} 
we have 
\EQ{ \label{eq:Psi1L2w} 
 \ang{ \Psi_1 \mid  \calL_\Phi^2 w} &=   (b+ \la') \ang{ \La Q_{\U \la} \mid \LL_\Phi^2 w}+ (a - \mu')\ang{ \La Q_{\U \mu} \mid  \LL_\Phi^2 w} \\
 & \quad  +  b^2( b + \la') \ang{\La A_{\U \la}\mid  \LL_\Phi^2 w}    - 2 b \la ( b' + \gamma_k \frac{\nu^{k}}{\la}) \ang{A_{\U \la}  \mid  \LL_\Phi^2 w}   \\
 & \quad +  \nu^k( b + \la') \ang{\La B_{\U \la}\mid  \LL_\Phi^2 w} - k \nu^k( b + \la')  \ang{B_{ \U\la} \mid  \LL_\Phi^2 w}  \\
 &\quad - k \nu^{k+1}( a - \mu') \ang{ B_{ \U\la} \mid \LL^2_{\Phi}} + a^2(  a - \mu')\ang{ \La A_{\U \mu}\mid  \LL_\Phi^2 w} \\ 
 &\quad   + 2a \mu ( a' +  \ga_k \frac{\nu^k}{\mu}) \ang{A_{\U\mu}  \mid  \LL_\Phi^2 w} + \nu^k(   a- \mu') \ang{\La \ti B_{\U \mu}\mid  \LL_\Phi^2 w} \\
 &\quad   + k \nu^{k-1} (  b + \la') \ang{\ti B_{\U \mu} \mid  \LL_\Phi^2 w}+ k \nu^{k} (  a- \mu') \ang{ \ti B_{\U \mu}  \mid \LL^2_\Phi }
}
To estimate the first two terms on the right-hand side above we exploit the fact that $\LL_\la \La Q_{\U \la} =0$ and $\LL_\mu \La Q_{\U \mu} = 0$. To handle the first term,  we write 
\EQ{
\LL_{\Phi} h  = (\LL_\Phi - \LL_\la) h+  \LL_\la h  = \frac{k^2}{r^2} ( \cos 2 \Phi - \cos 2 Q_\la) h + \LL_\la h 
}
Then we have, 
\EQ{ \label{eq:Psi1L2w-t1} 
(b+ \la') \ang{ \La Q_{\U \la} \mid \LL_\Phi^2 w}  &= (b+ \la') \ang{ \La Q_{\U \la} \mid \LL_\la \LL_\Phi w} + (b+ \la') \ang{ \La Q_{\U \la} \mid (\LL_{\Phi} - \LL_\la) \LL_\Phi w} \\
& = (b+ \la') \ang{ \La Q_{\U \la} \mid\frac{k^2}{r^2} ( \cos 2 \Phi  - \cos 2 Q_\la)  \LL_\Phi w}
}
We now estimate 
\EQ{
\abs{ \ang{ \La Q_{\U \la} \mid\frac{k^2}{r^2} ( \cos 2 \Phi  - \cos 2 Q_\la) \LL_\Phi w}} &\lesssim \frac{1}{\la} \| \LL_\Phi w\|_{L^2} \left( \int_0^\infty \La Q_{\la}^2 \big(\cos 2 \Phi  - \cos 2 Q_\la\big)^2 \, \frac{\ud r}{r^3} \right)^{\frac{1}{2}} \\
& \lesssim  \frac{ \nu^k + b^2 + a^2}{\la^2}  \|  w \|_{H^2} 
}
where the last line follows from an explicit computation using~\eqref{eq:cos2Phi1} from Lemma~\ref{l:cos2Phi} and also using Lemma~\ref{l:wHH2}. Plugging this into the previous line gives, 
\EQ{
\abs{(b+ \la') \ang{ \La Q_{\U \la} \mid \LL_\Phi^2 w}}  & \lesssim  \frac{ \nu^k + b^2 + a^2}{\la^2} \abs{b+ \la'}  \|  w \|_{H^2} 
}
Similarly, we have 
\EQ{
\abs{(a - \mu')\ang{ \La Q_{\U \mu} \mid  \LL_\Phi^2 w} }& \lesssim  \frac{ \nu^k + b^2 + a^2}{\la^2}  \nu^2 \abs{a- \mu'}  \|  w \|_{H^2} 
}
For the remaining terms we argue as follows. With $F = A, B, \ti B$, we estimate 
\EQ{
\abs{\ang{ F_{\U \la} \mid   \LL_\Phi^2 w}} &\lesssim \frac{1}{\la} \| \LL_\Phi F_{\la}\|_{L^2} \| \LL_\Phi w \|_{L^2}  \lesssim  \frac{1}{\la^2} \| \LL_{\Phi} F \|_{L^2}  \|  w \|_{H^2} \lesssim \frac{1}{\la^2}   \|  w \|_{H^2} 
}
and similarly, 
\EQ{
\abs{\ang{ F_{\U \mu} \mid   \LL_\Phi^2 w}} &\lesssim \frac{1}{\mu} \| \LL_\Phi F_{\mu}\|_{L^2} \| \LL_\Phi w \|_{L^2}   \lesssim  \frac{1}{\mu^2} \| \LL_{\Phi} F \|_{L^2}  \|  w \|_{H^2}  \lesssim \frac{\nu^2}{\la^2}   \|  w \|_{H^2} 
}
Using all of these estimates on the right-hand side of~\eqref{eq:Psi1L2w} yields 
\EQ{
\abs{ \ang{\Psi_1 \mid \LL^2_\Phi w} } &\lesssim  \frac{ \nu^k + b^2 + a^2}{\la^2}\Big( \abs{b+ \la'}  + \nu^2\abs{a - \mu'} \Big) \|  w \|_{H^2}  \\
& \quad + \frac{\abs{b}}{\la} \abs{ b+ \ga_k \frac{\nu^k}{\la}}  \|  w \|_{H^2}   + \frac{\abs{a} \nu}{\la} \abs{a' +  \ga_k \frac{\nu^k}{\mu}}  \|  w \|_{H^2} 
}
Applying Lemma~\ref{l:modc2} to the right above gives, 
\EQ{
\Big| \langle\Psi_1& \mid \LL^2_\Phi w\rangle \Big| \lesssim  \frac{ \nu^k + b^2 + a^2}{\la^2}\Big((\abs{a} + \abs{b} ) \| \bs w \|_{\HH} +( \abs{a} + \abs{b})\nu^{2k-1}  + ( \abs{a}^5 + \abs{b}^5) \nu^{k-2} \Big)\|  w \|_{H^2} \\
& \quad + \frac{\abs{b}}{\la} \Big(\frac{\abs{a}  + \abs{b}}{\la}   \| \dot w \|_{L^2}  +  \frac{1}{\la} \| \bs w \|_{\HH}^2 + \frac{\nu^{2k-o(1)}}{\lam}  +  \frac{b^4}{\lam} + \frac{a^4}{\lam} \Big) \|  w \|_{H^2} \\
&\quad + \frac{\abs{a} \nu}{\la}\Big(\frac{\abs{a}\nu  + \abs{b}\nu}{\la}   \| \dot w \|_{L^2}  +  \frac{\nu}{\la} \| \bs w \|_{\HH}^2 + \frac{\nu^{2k-1}}{\lam} + \frac{b^2 \nu^{k-1}}{\lam} +  \frac{b^4\nu}{\lam} + \frac{a^4\nu}{\lam} \Big)  \|  w \|_{H^2} \\
& \lesssim  \Big(  \frac{\abs{a}^5}{\lam^2}   + \frac{\abs{b}^5}{ \lam^2} + \frac{(\abs{a}+ \abs{b}) \nu^{2k-1}}{\lam^2} + \frac{ \abs{a} b^2 \nu^{k}}{\lam^2}  \Big)  \| w \|_{H^2} \\
& \quad + \Big(  \frac{a^2 + b^2 + \nu^{2k}}{\lam^2} \Big)  \| \bs w \|_{\HH}  \| w \|_{H^2}  + \Big( \frac{\abs{a} + \abs{b}}{\lam^2} \Big) \| \bs w \|_{\HH}^2 \| w \|_{H^2} 
}
as claimed. 
\end{proof} 
%

\begin{claim}\label{c:commwdot2} 
The following estimate holds true. 
\EQ{ \label{eq:commwdot2} 
\frac{1}{2} \ang{ \dot w \mid [\p_t, \LL_\Phi] \dot w} & = - \frac{b}{\la} \ang{ \La_0 \dot w \mid P_{\la} \dot w} + \frac{b}{\lam} \ang{ \dot w \mid P_\lam \dot w}   \\
&\quad + O \Bigg(\Big( \frac{b \la^{\frac{k}{2}} + a\nu + b^2 + \nu^k}{\la} \Big) \| \dot w \|_{H}^2  + \Big( \frac{ \abs{a} + \abs{b}}{\lam}  \Big)  \| \dot w \|_{H}^2 \| \bs w \|_{\HH}  \\
&\quad \qquad + \Big( \frac{ \abs{a} + \abs{b}}{\lam}  \Big) \| \dot w \|_{H}^2 \| \bs w \|_{\HH}^2 \Bigg) . 
}
\end{claim}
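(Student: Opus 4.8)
\textbf{Proof proposal for Claim~\ref{c:commwdot2}.}
The plan is to follow the proof of Claim~\ref{c:vir} essentially verbatim, with $w$ replaced by $\dot w$ and the higher-regularity subtleties absent. Recall from~\eqref{eq:comm-ptL} that $[\p_t, \LL_\Phi]\dot w = \tfrac{1}{r^2} f''(\Phi)\, \p_t \Phi\, \dot w$, a pure multiplication operator. I would first split off the leading behaviour near scale $\la$ by writing
\[
[\p_t, \LL_\Phi] \dot w = [\p_t, \LL_\la] \dot w + \big( [\p_t, \LL_\Phi] - [\p_t, \LL_\la]\big) \dot w,
\qquad
[\p_t, \LL_\la]\dot w = \tfrac{1}{r^2} f''(Q_\la)\, \p_t Q_\la\, \dot w = -\tfrac{\la'}{\la}\,\tfrac{1}{r^2} f''(Q_\la)\,\La Q_\la\, \dot w .
\]

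\emph{Main term.} Since $\LL_\la = \LL_0 + P_\la$, the commutator only sees the potential, and, exactly as in the proof of Claim~\ref{c:vir}, $\p_t P_\la = -\tfrac{\la'}{\la}\La P_\la - 2\tfrac{\la'}{\la}P_\la$. Using the integration-by-parts identity~\eqref{eq:ibpLa} one finds
\[
\tfrac12 \ang{\dot w \mid [\p_t, \LL_\la]\dot w}
= \tfrac{\la'}{\la}\ang{P_\la \dot w \mid \La \dot w}
= -\tfrac{b}{\la}\ang{\La_0 \dot w \mid P_\la \dot w} + \tfrac{b}{\la}\ang{\dot w \mid P_\la \dot w} + \tfrac{b+\la'}{\la}\ang{P_\la \dot w \mid \La \dot w},
\]
after writing $\la' = -b + (b+\la')$ and $\La\dot w = \La_0\dot w - \dot w$. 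The first two terms are the main terms in~\eqref{eq:commwdot2}. For the remaining one, the pointwise bound $|r^2 P_\la| \lesssim 1$ gives $|\ang{P_\la\dot w\mid\La\dot w}| \lesssim \|\dot w\|_H^2$, and an application of the bound~\eqref{eq:b+la'} from Lemma~\ref{l:modc2} to $|b+\la'|$ produces precisely the error terms $\big(\tfrac{|a|+|b|}{\la}\big)\|\dot w\|_H^2\|\bs w\|_{\HH}$ and $\big(\tfrac{|a|+|b|}{\la}\big)\|\dot w\|_H^2\|\bs w\|_{\HH}^2$ (the $\nu^{2k-1}$ and $a^5,b^5$ contributions being absorbed into the $\big(\tfrac{b\la^{k/2}+a\nu+b^2+\nu^k}{\la}\big)$ coefficient of $\|\dot w\|_H^2$).

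\emph{Difference term.} Here I would argue as in the proof of~\eqref{eq:comm-est1}: using $f''(\rho) = -2k^2\sin 2\rho$,
\[
\big( [\p_t, \LL_\Phi] - [\p_t, \LL_\la]\big)\dot w = -\tfrac{\la'}{\la}\tfrac{1}{r^2}\big(f''(\Phi) - f''(Q_\la)\big)\La Q_\la\,\dot w + \tfrac{1}{r^2}\big(\p_t\Phi - \p_t Q_\la\big) f''(\Phi)\,\dot w,
\]
then decompose $\sin 2\Phi - \sin 2Q_\la = -2\sin 2Q_\la\sin^2(\Phi - Q_\la) + \cos 2Q_\la\sin 2(\Phi - Q_\la)$ and invoke the pointwise estimates~\eqref{eq:sin(Phi-Qla)} (with the region split $r\lesssim\sqrt\la$, $r\gtrsim\sqrt\la$ as in the proof of Claim~\ref{c:vir}) together with the explicit formula for $-\p_t\Phi + \p_t Q_\la$ recorded there. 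Pairing against $\dot w$ via Cauchy--Schwarz with an $r^{-1}$ weight controls every term by $\|\dot w\|_H^2$ times a small factor, and a final application of Lemma~\ref{l:modc2} to the modulation quantities $|b+\la'|$, $|a-\mu'|$, $|b' + \gamma_k\nu^k/\la|$, $|a' + \ti\gamma_k\nu^k/\mu|$ converts these into the advertised right-hand side of~\eqref{eq:commwdot2}.

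\emph{Main point.} There is no genuinely new difficulty relative to Claim~\ref{c:vir}; the one thing worth flagging is that, because $[\p_t, \LL_\Phi]$ is a multiplication operator, every pairing with $\dot w$ can be closed using only the weighted $L^2$ estimate $\|r^{-1}\dot w\|_{L^2} \le \|\dot w\|_H$, so no $H^2$ norm of $\dot w$ ever appears. This is exactly what is needed, since in the $\HH^2$ energy $\Es_2$ the component $\dot w$ is only controlled in $H$. The remaining effort is purely the bookkeeping of matching the various small powers of $\la$, $\nu$, $a$, $b$ against the stated estimate.
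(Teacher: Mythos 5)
Your proposal is correct and follows the paper's intended approach exactly: the paper's proof of this claim consists of the single sentence that it is ``identical to the proof of Claim~\ref{c:vir}, after replacing $w$ with $\dot w$,'' and you carry out precisely that replacement, reproducing the decomposition $[\p_t,\LL_\Phi]=[\p_t,\LL_\la]+([\p_t,\LL_\Phi]-[\p_t,\LL_\la])$, the $\p_t P_\la$ identity, and the integration by parts via~\eqref{eq:ibpLa} from the proof of Claim~\ref{c:vir}. Your closing remark that only $\|r^{-1}\dot w\|_{L^2}\le\|\dot w\|_H$ is needed (so no $H^2$ norm of $\dot w$ appears) correctly identifies why the substitution is painless; the bookkeeping of error terms then matches~\eqref{eq:commwdot2} verbatim.
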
 
\begin{proof} The proof is identical to the proof of Claim~\ref{c:vir}, after replacing $w$ with $\dot w$. 
%
\end{proof}

\begin{claim}\label{c:commw2} 
The following holds true. 
\EQ{
\frac{1}{2} \ang{  w \mid [\p_t, \LL_\Phi^2]  w}  &=  \frac{b}{\la}  \ang{w \mid \Ks_\la w}   +\frac{b}{\la} \ang{ w \mid P_\la^2 w}  \\
&  +  \frac{b}{\la} \ang{ w \mid  \La P_\la\LL_0 w  -  \p_r (\La P_\la) \p_r w  +  \frac{1}{2} (-\De \La P_\la) w +  P_\la \La P_\la w  }  \\
& + O \Bigg(  \Big( \frac{b \la^{\frac{k}{2}} + a\nu + b^2 + \nu^k}{\la} \Big) \| w \|_{H^2}^2  \\
&+ \Big( \frac{ \abs{a} + \abs{b}}{\lam}  \Big) \|  w \|_{H^2}^2 \| \bs w \|_{\HH}  + \Big( \frac{ \abs{a} + \abs{b}}{\lam}  \Big)  \| w \|_{H^2}^2 \| \bs w \|_{\HH}^2  \Bigg) . 
}
\end{claim}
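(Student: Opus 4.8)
\textbf{Proof proposal for Claim~\ref{c:commw2}.}

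The plan is to mimic the structure of the proof of Claim~\ref{c:vir} (equivalently Claim~\ref{c:commwdot2}), but now carrying the computation at the level of the squared operator $\LL_\Phi^2$. First I would reduce the commutator with the ansatz-dependent operator $\LL_\Phi^2$ to a commutator with the frozen operator $\LL_\lam^2$. Using the factorization $\LL_\Phi^2 = \LL_\lam^2 + (\LL_\Phi^2 - \LL_\lam^2)$ and the pointwise bounds $\abs{\cos 2\Phi - \cos 2Q_\la} \lesssim \abs{\La Q_\la(\cdots)} + (\cdots)$ from Lemma~\ref{l:cos2Phi}, together with $\abs{\p_t\Phi - \p_t Q_\la}$ bounds of the type used in Claim~\ref{c:vir} (via Lemma~\ref{l:modc2} and the explicit expression~\eqref{eq:dotPhi-ptPhi}), one shows that
\EQ{
\Big| \tfrac12 \ang{ w \mid [\p_t, \LL_\Phi^2] w} - \tfrac12 \ang{ w \mid [\p_t, \LL_\lam^2] w} \Big| \lesssim \Big( \tfrac{b \la^{k/2} + a\nu + b^2 + \nu^k}{\la} \Big) \| w \|_{H^2}^2 + \Big( \tfrac{\abs{a}+\abs{b}}{\lam}\Big)\big( \| w\|_{H^2}^2 \| \bs w\|_{\HH} + \| w\|_{H^2}^2\| \bs w\|_{\HH}^2\big).
}
The key point, as in Claim~\ref{c:vir}, is that $[\p_t,\LL_\Phi^2] - [\p_t,\LL_\lam^2]$ involves differences of the potential-type terms $P_\Phi - P_\lam$ (and their derivatives) which are controlled pointwise by $\La Q_\la$-weighted expressions; the $H^2$ pairing then absorbs these against $\| w\|_{H^2}^2$ after dividing the integral into $r \lesssim \sqrt{\lam}$ and $r \gtrsim \sqrt{\lam}$ exactly as in~\eqref{eq:f''Phi-f''Qla}.

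Next I would compute the frozen commutator $\tfrac12\ang{ w \mid [\p_t, \LL_\lam^2] w}$ explicitly. Writing $\LL_\lam^2 = \LL_0^2 + \Ks_\lam$ (cf.~\eqref{eq:LL2-def},~\eqref{eq:K-def}) and noting $[\p_t,\LL_0^2]=0$, we have $[\p_t,\LL_\lam^2] = [\p_t,\Ks_\lam]$, where $\Ks_\lam w = 2P_\lam\LL_0 w - 2\p_r P_\lam \p_r w + (P_\lam^2 - \De P_\lam)w$. Using the scaling identity $\p_t P_\lam = \tfrac{-\la'}{\lam}\La P_\lam + 2\tfrac{-\la'}{\lam}P_\lam$ (and analogously for $\p_r P_\lam$, $P_\lam^2$, $\De P_\lam$), one expands $\p_t \Ks_\lam$ in terms of $\La$-derivatives of the constituent functions. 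Pairing with $w$, separating out the factor $-\la'/\lam = b/\lam + (b+\la')/\lam$, and using Lemma~\ref{l:modc2} to control $(b+\la')/\lam$ against $\| \bs w\|_{\HH}$ and the admissible error powers, the leading term becomes $\tfrac{b}{\lam}$ times the quantity $\ang{w \mid \Ks_\la w} + \ang{w \mid P_\la^2 w} + \ang{w \mid \La P_\la \LL_0 w - \p_r(\La P_\la)\p_r w + \tfrac12(-\De\La P_\la)w + P_\la \La P_\la w}$. Here one should be careful to track that the derivative $\p_t$ landing on $P_\lam$ inside $\De P_\lam = \Delta P_\lam$ produces $\La(\De P_\lam) + 2\De P_\lam$, and that $\La(\De P_\lam)$ combines with the other pieces to give $-\De(\La P_\lam)$ modulo lower-order terms proportional to $\De P_\lam$ itself; these remainders are all of size $O(\tfrac{b}{\lam})\|w\|_{H^2}^2$ and are harmless since they feed into the coercive term $\ang{w\mid\Ks_\lam w}$ part of $\Hs_2$.

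The main obstacle I expect is the bookkeeping in the second step: unlike the first-order case of Claim~\ref{c:vir}, here $\Ks_\lam$ has three structurally different terms, each producing several summands when $\p_t$ acts, and one must verify that after pairing with $w$ and integrating by parts the cross terms reorganize precisely into the stated form $\ang{w \mid \La P_\la\LL_0 w - \p_r(\La P_\la)\p_r w + \tfrac12(-\De\La P_\la)w + P_\la\La P_\la w}$ — i.e.\ into an object which is itself (up to the $\La$ weight) of the shape $\ang{w \mid \Ks_{\text{aux}} w}$ that Lemma~\ref{l:opA} and the localized coercivity for $\LL^2$ are designed to handle via the virial correction $\Vs_{2,\la}$. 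Once the identity is established, the error terms are routine: they come from (i) the $\LL_\Phi$ vs $\LL_\lam$ replacement above, (ii) the Lemma~\ref{l:modc2} substitutions for $(b+\la')$, $(a-\mu')$, $(b'+\gamma_k\nu^k/\la)$, $(a'+\ti\gamma_k\nu^k/\mu)$, and (iii) the pointwise bounds $\abs{P_\la}+r\abs{\p_r P_\la}+r^2\abs{\p_r^2 P_\la}\lesssim r^{-2}$ localized appropriately, all of which produce exactly the error budget displayed in the statement. Combining (i)–(iii) with the explicit frozen-commutator identity yields the claim.
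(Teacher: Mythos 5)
Your proposal follows essentially the same route as the paper's proof: first replace $\LL_\Phi^2$ by $\LL_\lam^2$ using the pointwise comparison of Lemma~\ref{l:cos2Phi} and the modulation estimates (mirroring~\eqref{eq:comm-est1}), then note $[\p_t,\LL_\lam^2]=[\p_t,\Ks_\lam]$ and expand via the scaling identity for $\p_t P_\lam$, extracting the $b/\lam$ leading coefficient and controlling $(b+\la')/\lam$ with Lemma~\ref{l:modc2}. Aside from a trivial sign typo in the decomposition $-\la'/\lam = b/\lam - (b+\la')/\lam$ (you wrote $+$), the structure, the key lemmas invoked, and the error accounting all match the paper's argument.
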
 
\begin{proof} 
First we claim the estimate 
\EQ{ \label{eq:commPhi-Qla} 
\Big| \frac{1}{2} \ang{  w \mid [\p_t, \LL_\Phi^2]  w} & -   \frac{1}{2} \ang{  w \mid [\p_t, \LL_\lam^2]  w} \Big| \lesssim \Big( \frac{b \la^{\frac{k}{2}} + a\nu + b^2 + \nu^k}{\la} \Big) \| w \|_{H^2}^2  \\
&+ \Big( \frac{ \abs{a} + \abs{b}}{\lam}  \Big) \|  w \|_{H^2}^2 \| \bs w \|_{\HH}  + \Big( \frac{ \abs{a} + \abs{b}}{\lam}  \Big)  \| w \|_{H^2}^2 \| \bs w \|_{\HH}^2 
}
To see this note that for a general function $\Psi(t)$, 
\EQ{
\LL_\Psi^2 w &= ( -\De + \frac{1}{r^2} f'(\Psi)) ( -\De w+ \frac{1}{r^2} f'(\Psi) w) \\
& = \De^2 w - \frac{2}{r^2} f'(\Psi) \De w - 2 \p_r( \frac{1}{r^2} f'(\Psi)) \p_r w + \Big( - \De( \frac{1}{r^2} f'(\Psi)) + \frac{1}{r^4} (f'(\Psi))^2\Big) w
}
and hence, 
\EQ{
[\p_t, \LL^2_\Phi] w -   [\p_t, \LL_\lam^2]  w&= - \frac{2}{r^2} ( f''(\Phi) \p_t \Phi    -  f''(Q_\la) \p_t Q_\la) \De w \\
& \quad  -  2 \p_r\Big( \frac{1}{r^2} ( f''(\Phi) \p_t \Phi  -  f''(Q_\la) \p_t Q_\la)\Big) \p_r w  \\
&\quad - \De\big( \frac{1}{r^2} (f''(\Phi) \p_t \Phi  -  f''(Q_\la) \p_t Q_\la)\big) w \\
&\quad  + 2\frac{1}{r^4} \Big(f'(\Phi) f''(\Phi) \p_t \Phi  -  f'(Q_\la) f''(Q_\la) \p_t Q_\la \Big) w 
}
The estimate now follows from a nearly identical argument as the one used to prove~\eqref{eq:comm-est1}. 

Next we compute $[  \p_t, \LL^2_\la] w$. First we express $\LL^2_\la$ as follows. To ease notation we recall the operator $P_\lam$ as in~\eqref{eq:Plam} and we recall that  the operator $ \Ks_{\lam}$ as in~\eqref{eq:K-def} is as follows. 
\EQ{
\LL_\la^2 w &=  \Big( - \De + \frac{k^2}{r^2} + P_\lam\Big)^2w \\
&  =  \Big( -\De + \frac{k^2}{r^2}\Big)^2w  + 2 P\big( -\De + \frac{k^2}{r^2}\big) w  - 2 \p_r P \p_r w  + (-\De P) w + P^2 w  \\
&=: \Big(-\De + \frac{k^2}{r^2}\Big)^2  w + \Ks_\la w 
}
Then, $
[ \p_t, \LL_\lam^2]w  = [\p_t, \Ks_\lam]w  
$ 
and a straightforward computation reveals that 
\EQ{
\frac{1}{2} \ang{ w \mid [\p_t, \LL_\la^2] w} &= \frac{-\la'}{\la}  \ang{w \mid \Ks_\la w}   +\frac{-\la'}{\la} \ang{ w \mid P_\la^2 w}  \\
&  \quad +  \frac{-\la'}{\la} \ang{ w \mid  \La P_\la\LL_0 w  -  \p_r (\La P_\la) \p_r w  +  \frac{1}{2} (-\De \La P_\la) w +  P_\la \La P_\la w  } \\
& = \frac{b}{\la}  \ang{w \mid \Ks_\la w}   +\frac{b}{\la} \ang{ w \mid P_\la^2 w}  \\
& \quad  +  \frac{b}{\la} \ang{ w \mid  \La P_\la\LL_0 w  -  \p_r (\La P_\la) \p_r w  +  \frac{1}{2} (-\De \La P_\la) w +  P_\la \La P_\la w  } \\
&\quad - \frac{ b+ \la'}{\la}  \ang{w \mid \Ks_\la w}   -\frac{b+\la'}{\la} \ang{ w \mid P_\la^2 w}  \\
&  \quad -  \frac{b+\la'}{\la} \ang{ w \mid  \La P_\la\LL_0 w  -  \p_r (\La P_\la) \p_r w  +  \frac{1}{2} (-\De \La P_\la) w +  P_\la \La P_\la w  }
}
To conclude we use Lemma~\ref{l:mod2} to estimate the last three lines above as follows, 
\EQ{
\abs{\frac{ b+ \la'}{\la}  \ang{w \mid \Ks_\la w}} & + \abs{\frac{ b+ \la'}{\la}  \ang{ w \mid P_\la^2 w}}   \\
& + \abs{ \frac{b+\la'}{\la} \ang{ w \mid  \La P_\la\LL_0 w  -  \p_r (\La P_\la) \p_r w  +  \frac{1}{2} (-\De \La P_\la) w +  P_\la \La P_\la w  }}\\
&\lesssim \frac{1}{\lam} \Big(\abs{a} + \abs{b} ) \| \bs w \|_{\HH} +( \abs{a} + \abs{b})\nu^{2k-1}  + ( \abs{a}^5 + \abs{b}^5) \nu^{k-2} \Big) \| w \|_{H^2}^2 .
}
This completes the proof. 
\end{proof}

To conclude, we combine  Claim~\ref{c:Psi2Ldotw}, Claim~\ref{c:Psi1L2w}, Claim~\ref{c:commwdot2}, and Claim~\ref{c:commw2} to deduce that 
\EQ{
\Es_2'(t) &= - \frac{b}{\la} \ang{ \La_0 \dot w \mid P_{\la} \dot w} + \frac{b}{\lam} \ang{ \dot w \mid P_\lam \dot w}  \\
& \quad + \frac{b}{\la}  \ang{w \mid \Ks_\la w}   +\frac{b}{\la} \ang{ w \mid P_\la^2 w}  \\
& \quad  +  \frac{b}{\la} \ang{ w \mid  \La P_\la\LL_0 w  -  \p_r (\La P_\la) \p_r w  +  \frac{1}{2} (-\De \La P_\la) w +  P_\la \La P_\la w  }  \\
&\quad  + O \Bigg( \Big( \frac{b^2 \nu^{k-1}}{\la^2}  +  \frac{\nu^{2k-1}}{\la^2} + \frac{b^4}{\la^2}+\frac{a^4}{\la^2} \Big)  \| \bs w \|_{\HH^2}\Bigg)  +  O \Bigg(  \Big( \frac{b \la^{\frac{k}{2}} + a\nu + b^2 + \nu^k}{\la} \Big) \| \bs w \|_{\HH^2}^2     \Bigg)   \\ 
  &\quad  + O \Bigg(  \| \bs w \|_{\HH^2}  \| \bs w \|_{\HH} \Big( \frac{b^2}{\la^2} + \frac{a^2}{\la^2} + \frac{\nu^{2k}}{\la^2} \Big)  +  \| \bs w \|_{\HH^2}  \| \bs w \|_{\HH}^2  \Big( \frac{\abs{a} + \abs{b} + \nu^k}{\lam^2} \Big) \Bigg) \\
  &\quad   +O \Bigg( \frac{1}{\la} \| \bs w \|_{\HH^2}^2 \| w \|_{H}     + \| \bs w \|_{\HH^2}^3 \| w \|_H    +  \frac{ \abs{a} + \abs{b}}{\lam}   \| \bs w \|_{\HH^2}^2 \| \bs w \|_{\HH}^2 \Bigg),
}
which completes the proof. 
\end{proof}

\begin{proof}[Proof of Lemma~\ref{l:V2'}]

Recall that the second order virial correction is defined as
\EQ{
b \Vs_{2, \la} = b\ang{ \A_0(\la) w \mid \LL_\la \dot w}  - 2 b \ang{\A_0(\la) \dot w \mid \LL_\la w}.
}
In the computations below we will make use of the following version of the equation for $\bs w$, 
\EQ{ \label{eq:w-eq5} 
\p_t w &=  \dot w + \Psi_1, \\
\p_t \dot w &= -\LL_\lam w +  \ti \Psi_2,
}
where $\Psi_1$ is defined as usual as in~\eqref{eq:Psi1} and  $\ti \Psi_2$ is defined as 
\EQ{ \label{eq:tiPsi2-def2} 
 \ti \Psi_2 = (-\p_t \dot \Phi + \De \Phi -\frac{1}{r^2} f( \Phi)) - \frac{1}{r^2}( f( \Phi + w) - f( \Phi) - f'(Q_\lam) w).
}
We compute, 
\EQ{
(b \Vs_{2, \la} )' &= b'\ang{ \A_0(\la) w \mid \LL_\la \dot w}  - 2 b' \ang{\A_0(\la) \dot w \mid \LL_\la w} \\
& \quad + \frac{b \la'}{\lam} \ang{( \lam \p_ \lam \A_0(\la)) w \mid \LL_\la \dot w} - 2 \frac{b \la'}{\la} \ang{ (\la\p_\la\A_0(\la)) \dot w \mid \LL_\la w}  \\
& \quad +  b\ang{ \A_0(\la) w \mid [\p_t, \LL_\la] \dot w}  - 2 b \ang{\A_0(\la) \dot w \mid [\p_t, \LL_\la] w} \\
& \quad + b\ang{ \A_0(\la) \p_t w \mid \LL_\la \dot w} - 2 b \ang{\A_0(\la) \p_t \dot w \mid \LL_\la w}    \\
& \quad + b\ang{ \A_0(\la)  w \mid  \LL_\la \p_t  \dot w} - 2 b \ang{\A_0(\la) \dot w \mid \LL_\la \p_t w}   .
}
Using the equation~\eqref{eq:w-eq5} and noting that $\ang{\A_0(\la) \LL_\lam w \mid \LL_\la w}=0$ the above can be expanded and rearranged as 
\EQ{ \label{eq:vir2-ident} 
(b \Vs_{2, \la} )' &= b'\ang{ \A_0(\la) w \mid \LL_\la \dot w}  - 2 b' \ang{\A_0(\la) \dot w \mid \LL_\la w} \\
& \quad + \frac{b \la'}{\lam} \ang{( \lam \p_ \lam \A_0(\la)) w \mid \LL_\la \dot w} - 2 \frac{b \la'}{\la} \ang{ (\la\p_\la\A_0(\la)) \dot w \mid \LL_\la w}  \\
& \quad +  b\ang{ \A_0(\la) w \mid [\p_t, \LL_\la] \dot w}  - 2 b \ang{\A_0(\la) \dot w \mid [\p_t, \LL_\la] w} \\
& \quad +   b\ang{ \A_0(\la) \Psi_1 \mid \LL_\la \dot w}- 2 b \ang{\A_0(\la) \dot w \mid \LL_\la \Psi_1}  \\
& \quad + b\ang{ \A_0(\la)  w \mid  \LL_\la \ti \Psi_2}  - 2 b \ang{\A_0(\la) \ti \Psi_2 \mid \LL_\la w} \\
& \quad - b\ang{\A_0(\la) \dot w \mid \LL_\la \dot w}     - b\ang{ \A_0(\la)  w \mid  \LL_\la^2 w}. 
}
We estimate the second to last term above as follows.  As usual we write.  
\EQ{
\LL_\la = (-\De + \frac{k^2}{r^2}) + \frac{1}{r^2}( f'(Q_\la) - k^2) =: \LL_0 + P_{\la} .
}
 Using~\eqref{eq:A-pohozaev} along with the estimate~\eqref{eq:vir-new}  we have 
\EQ{
 - b\ang{\A_0(\la) \dot w \mid \LL_\la \dot w}  &=  + b\ang{\A_0(\la) \dot w \mid -\LL_0 \dot w}  - b\ang{\A_0(\la) \dot w \mid P_\la \dot w} \\
 & \le c_0 \frac{b}{\la} \| \dot w \|_{H}^2  - \frac{b}{\la} \int_0^{R \la}\Big( ( \p_r \dot w)^2 + \frac{k^2}{r^2} \dot w^2  \Big) \, \rdr  - \frac{b}{\lam} \ang{\Lam_0 \dot w \mid P_\la \dot w}
}

Next, consider the last term in~\eqref{eq:vir2-ident}. Writing as usual $\LL_\la^2 = \LL_0^2 + \Ks_\lam$ we have  
\EQ{
 - b\ang{ \A_0(\la)  w \mid  \LL_\la^2 w} &=  - b\ang{ \A_0(\la)  w \mid  \LL_0^2 w}  - b\ang{ \A_0(\la)  w \mid  \Ks_\la w} \\
 & =  - b\ang{ \A_0(\la)  w \mid  \LL_0^2 w}  - b\ang{ (\A_0(\la) - \lam^{-1} \La_0)  w \mid  \Ks_\la w} - \frac{b}{\lam}\ang{  \Lam_0  w \mid  \Ks_\la w} 
}
Using the Pohozaev-type inequality~\eqref{eq:A-pohozaev2} along with the estimate~\eqref{eq:vir-new2} we have 
\EQ{ \label{eq:6-term-vir2} 
 - b\ang{ \A_0(\la)  w \mid  \LL_\la^2 w} & \le c_0 \frac{b}{\lam} \| w \|_{H^2}^2 -  2\frac{b}{\lam} \int_0^{R\lam} ( \LL_0 w)^2  \, r\, \ud r - \frac{b}{\lam}\ang{  \Lam_0  w \mid  \Ks_\la w}
}
Consider the last term on the right above. Since $\Ks_\lam$ is symmetric, it follow that 
\EQ{
-\ang{  \Lam  w \mid  \Ks_\la w}  = +\frac{1}{2} \ang{ w \mid [\Lam, \Ks_\lam] w}  +  \ang{ w \mid \Ks_\lam w}
}
and hence, 
\EQ{
- \frac{b}{\lam}\ang{  \Lam_0  w \mid  \Ks_\la w} &= - \frac{b}{\lam} \ang{  \Lam  w \mid  \Ks_\la w}  - \frac{b}{\lam} \ang{ w \mid \Ks w}  =  \frac{1}{2} \frac{b}{\lam} \ang{ w \mid [\Lam, \Ks_\lam] w}
}
A computation gives that 
\EQ{
\frac{1}{2} [ \La, \Ks] w &=   [ \La, P\LL_0] w  - [\La,  \p_r P \p_r] w  + \frac{1}{2} [\La, (-\De P)] w + \frac{1}{2}[\La,  P^2] w   \\
&  = \La P \LL_0 w  - \p_r (\La P) \p_r w   + \frac{1}{2}(- \De \La P) w +  P \La P w  - \Ks_\la w  + P^2 w 
}
And thus we arrive at the identity, 
\EQ{
- \frac{b}{\lam}\ang{  \Lam_0  w \mid  \Ks_\la w} &=\frac{1}{2} \frac{b}{\lam} \ang{ w \mid [\Lam, \Ks_\lam] w} \\
& = - \frac{b}{\lam} \ang{ w \mid \Ks_\lam w}  + \frac{b}{\lam} \ang{w \mid P_\lam^2 w}   \\
&\quad + \frac{b}{\lam} \ang{w \mid \La P_\lam \LL_0 w -  \p_r (\La P_\lam) \p_r w + \frac{1}{2}(- \De \La P_\lam) w +  P_\lam \La P_\lam w} 
}
Combing the above we arrive at the following estimate for the last term in~\eqref{eq:vir2-ident}, 
\EQ{ \label{eq:7-term-vir2}
 - b\ang{ \A_0(\la)  w \mid  \LL_\la^2 w} & \le c_0 \frac{b}{\lam} \| w \|_{H^2}^2 -  2\frac{b}{\lam} \int_0^{R\lam} ( \LL_0 w)^2  \, r\, \ud r \\
&\quad  - \frac{b}{\lam} \ang{ w \mid \Ks_\lam w}  + \frac{b}{\lam} \ang{w \mid P_\lam^2 w}   \\
&\quad + \frac{b}{\lam} \ang{w \mid \La P_\lam \LL_0 w -  \p_r (\La P_\lam) \p_r w + \frac{1}{2}(- \De \La P_\lam) w +  P_\lam \La P_\lam w}
}

Next, we treat the remaining terms in~\eqref{eq:vir2-ident}. We begin with terms on the first line. The first term can be rewritten as, 
\EQ{
b'\ang{ \A_0(\la) w \mid \LL_\la \dot w} = b'\ang{ \p_r \A_0(\lam) w \mid \p_r \dot w} + b'\ang{ r^{-1} \A_0(\lam) w \mid r^{-1} f'(Q_\lam) \dot w} 
}
Using the above along with the estimate~\eqref{eq:prA0} we deduce the estimate, 
\EQ{
\abs{b'\ang{ \A_0(\la) w \mid \LL_\la \dot w} } \lesssim  \frac{\nu^{k}}{\lam} ( \| w \|_{H^2} + \frac{1}{\lam} \|w \|_{H})  \|\dot w\|_{H} +  \abs{b' + \gamma_k \frac{\nu^{k}}{\lam}}  ( \| w \|_{H^2} + \frac{1}{\lam} \|w \|_{H})  \|\dot w\|_{H}
}
Applying the estimate~\eqref{eq:b'est} to the right above and arguing similarly for the second term on the right hand side of~\eqref{eq:vir2-ident} we conclude that 
\EQ{
\Big| b'\langle \A_0(\la) w &\mid \LL_\la \dot w \rangle - 2 b' \ang{\A_0(\la) \dot w \mid \LL_\la w} \Big| \lesssim  \frac{\nu^{k}}{\lam} ( \| w \|_{H^2} + \frac{1}{\lam} \|w \|_{H})  \|\dot w\|_{H} \\
&  \quad +\Big( \frac{\abs{a}  + \abs{b}}{\la}   \| \dot w \|_{L^2}  +  \frac{1}{\la} \| \bs w \|_{\HH}^2 + \frac{\nu^{2k-o(1)}}{\lam}  +  \frac{b^4}{\lam} + \frac{a^4}{\lam}\Big) ( \| w \|_{H^2} + \frac{1}{\lam} \|w \|_{H})  \|\dot w\|_{H} \\
& \lesssim  \frac{ \nu^k + a^4 +b^4}{\lam} \| \bs w \|_{\HH^2}^2   +  \frac{\abs{a}  + \abs{b}}{\la}   \| \dot w \|_{L^2}  \| \bs w \|_{\HH^2}^2+ \frac{1}{\lam} \| \bs w \|_{\HH}^2 \| \bs w \|_{\HH^2}^2 \\
& \quad +  \frac{\nu^k + a^4 + b^4}{\lam^2}  \| w \|_H \| \bs w \|_{\HH^2}  + \frac{\abs{a}  + \abs{b}}{\la^2}   \| \bs w \|_{\HH}^2 \| \bs w \|_{\HH^2}  + \frac{1}{\lam^2} \|\bs w \|_{\HH}^3 \| \bs w \|_{\HH^2}
}
We handle the second line in~\eqref{eq:vir2-ident} using an identical argument, based again on~\eqref{eq:prA0}, yields the estimates, 
\EQ{
\Big| \frac{b \la'}{\lam} \langle( \lam \p_ \lam \A_0(\la)) w &\mid \LL_\la \dot w\rangle  - 2 \frac{b \la'}{\la} \ang{ (\la\p_\la\A_0(\la)) \dot w \mid \LL_\la w}  \Big|  \\
& \lesssim \frac{b^2}{\lam} ( \| \bs w \|_{\HH^2} + \frac{1}{\lam} \|w \|_{H}) \| \bs w \|_{\HH^2}  + \frac{b\abs{ b + \la'} }{\lam} ( \| \bs w \|_{\HH^2} + \frac{1}{\lam} \|w \|_{H}) \| \bs w \|_{\HH^2}  \\
&  \lesssim \frac{b^2+a^2 + \nu^k}{\lam} \| \bs w \|_{\HH^2}^2   + \frac{b^2 + a^2 + \nu^k}{\lam} \| \bs w \|_{\HH} \| \bs w \|_{\HH^2}^2  \\
&\quad  + \frac{b^2 + a^2 + \nu^k}{\lam^2} \|w \|_{H} \| \bs w \|_{\HH^2}  +     \frac{b^2 + a^2 + \nu^k}{\lam^2} \| \bs w \|_{\HH}^2 \| \bs w \|_{\HH^2}  
}
where in the last line we used~\eqref{eq:b+la'}. The exact same argument again takes care of the third line in~\eqref{eq:vir2-ident}, using now that $[\p_t, \LL_\la] = -\la' \la^{-1} r^{-2} f''(Q_\lam) \Lam Q_\lam$ to give
\EQ{
\Big|  b\ang{ \A_0(\la) w \mid [\p_t, \LL_\la] \dot w}  &- 2 b \ang{\A_0(\la) \dot w \mid [\p_t, \LL_\la] w} \Big| \\
&\lesssim \frac{b^2+a^2 + \nu^k}{\lam} \| \bs w \|_{\HH^2}^2   + \frac{b^2 + a^2 + \nu^k}{\lam} \| \bs w \|_{\HH} \| \bs w \|_{\HH^2}^2  \\
&\quad  + \frac{b^2 + a^2 + \nu^k}{\lam^2} \|w \|_{H} \| \bs w \|_{\HH^2}  +     \frac{b^2 + a^2 + \nu^k}{\lam^2} \| \bs w \|_{\HH}^2 \| \bs w \|_{\HH^2}  
}
Next, we use the same argument together with~\eqref{eq:Psi1-H} and~\eqref{eq:Psi1-H2} to estimate the fourth line in~\eqref{eq:vir2-ident}, 
\EQ{
& \Big|  b \ang{ \A_0(\la) \Psi_1 \mid \LL_\la \dot w}- 2 b \ang{\A_0(\la) \dot w \mid \LL_\la \Psi_1}   \Big| \lesssim \abs{b} \Big( \| \Psi_1 \|_{H^2} + \frac{1}{\lam} \| \Psi_1 \|_{H} \Big) \| \dot w \|_{H}  \\
 & \lesssim \abs{b}   \bigg( \frac{(\abs{a} + \abs{b} ) }{\lam^2} \| \bs w \|_{\HH} +   \frac{(\abs{a} + \abs{b} ) }{\lam^2} \| \bs w \|_{\HH}^2   +(\abs{a} + \abs{b}) \Big( \frac{\nu^{2k-1}+ b^2 \nu^{k-1} +  b^4+ a^4}{\lam^2}   \Big) \bigg) \| \dot w \|_{H}  \\
& \lesssim  \frac{a^6 + b^6 + (\abs{a} + \abs{b})( \abs{b} \nu^{2k-1} + \abs{b}^3  \nu^{k-1})}{\lam^2}  \|\dot w \|_{H} + \frac{a^2 + b^2}{\lam^2} \Big( \| \bs w \|_{\HH} + \| \bs w \|_{\HH}^2\Big)  \| \dot w \|_{H}
 }
 Finally, we estimate the fifth line in~\eqref{eq:vir2-ident} as follows. First, we have 
 \EQ{
 \Big|  b\ang{ \A_0(\la)  w \mid  \LL_\la \ti \Psi_2}  - 2 b \ang{\A_0(\la) \ti \Psi_2 \mid \LL_\la w} \Big| \lesssim \abs{b} \| \ti \Psi_2 \|_H \Big( \| w \|_{H^2} + \frac{1}{\lam} \| w \|_H \Big) 
 }
 Next from the definition of ~$\ti \Psi_2$ in~\eqref{eq:tiPsi2-def2} we have, 
\EQ{ \label{eq:tiPsi2-2} 
\| \ti \Psi_2 \|_H \lesssim  \|-\p_t \dot \Phi + \De \Phi -\frac{1}{r^2} f( \Phi) \|_H  + \| r^{-2}( f( \Phi + w) - f( \Phi) - f'(Q_\lam) w) \|_H
}
By a similar argument used to arrive at the estimate~\eqref{eq:tiPsi2-1} we have 
\EQ{
\| r^{-2}( f( \Phi + w) - f( \Phi) - f'(Q_\lam) w) \|_H \lesssim \Big( 1 + \frac{a^2 + b^2 + \nu^k}{\lam} \Big) \|w \|_{H^2} + \Big( 1 + \| \bs w \|_{\HH}\Big) \| w \|_{H^2}^2 
}
Thus, using the above along with the estimate~\eqref{eq:ptdotPhi-H} to handle the first term on the right of~\eqref{eq:tiPsi2-2} we obtain, 
\EQ{
 \Big|  &b\ang{ \A_0(\la)  w \mid  \LL_\la \ti \Psi_2}  - 2 b \ang{\A_0(\la) \ti \Psi_2 \mid \LL_\la w} \Big| \\
& \lesssim   \Bigg(  \Big( 1 + \frac{a^2 + b^2 + \nu^k}{\lam} \Big) \|w \|_{H^2} + \| w \|_{H^2}^2 +  \| \bs w \|_{\HH} \| w \|_{H^2}^2  \\
&\quad + \frac{ \nu^{2k-1}}{\la^2} + \frac{b^2 \nu^{k-1}}{\la^2}  + \frac{b^4}{\la^2} + \frac{a^4}{\la^2}    +  \frac{\abs{a}  + \abs{b}}{\la^2}   \| \bs w \|_{\HH}     +  \frac{1}{\la^2} \| \bs w \|_{\HH}^2\Bigg)  \Big(\abs{b} \| w \|_{H^2} + \frac{\abs{b}}{\lam} \| w \|_H \Big)  \\
& \lesssim  \frac{\abs{a}^5 + \abs{b}^5 + \abs{b} \nu^{2k-1} + \abs{b}^3 \nu^{k-1}}{\lam^2} \| w \|_{H^2}  + \frac{\abs{a}^5 + \abs{b}^5 + \abs{b} \nu^{2k-1} + \abs{b}^3 \nu^{k-1}}{\lam^2} \frac{\| \bs w \|_{\HH}}{\lam}  \\
&\quad +  \Big( \abs{b} + \frac{ \abs{a}^3 + \abs{b}^3 +  \nu^{\frac{3}{2}k}}{\lam} \Big) \| w \|_{H^2}^2  ++ \Big( \abs{b} + \frac{ \abs{a}^3 + \abs{b}^3 +  \nu^{\frac{3}{2}k}}{\lam} \Big) \frac{ \| \bs w \|_{\HH}}{\lam} \| w \|_{H^2}  \\
&\quad + \abs{b}  \| w \|_{H^2}^3  +  \frac{\abs{b}}{\lam}  \| \bs w \|_{\HH}\| w \|_{H^2}^2 +   \abs{b}   \| \bs w \|_{\HH}\| w \|_{H^2}^3   +  \frac{\abs{b}}{\lam}  \| \bs w \|_{\HH} \| w \|_H\| w \|_{H^2}^2 \\
&\quad + \frac{a^2 + b^2}{\lam^2} \| \bs w \|_{\HH} \| w \|_{H^2} + \frac{ a^2 + b^2}{\lam^2}   \frac{  \| \bs w \|_{\HH}}{\lam} \| w \|_{H} + \frac{\abs{b}}{\lam^2} \| \bs w\|_{\HH}^2 \| w \|_{H^2}   + \frac{\abs{b}}{\lam^3} \| \bs w\|_{\HH}^2 \| w \|_H
}
Combining all of the estimates for the terms on the right-hand side of~\eqref{eq:vir2-ident} yields, 
\EQ{
(b \Vs_{2, \la})'(t)  &\le c_0 \frac{b}{\lam} \| \bs w \|_{\HH^2}^2  \\
&\quad -  2\frac{b}{\lam} \int_0^{R\lam} ( \LL_0 w)^2  \, r\, \ud r  - \frac{b}{\lam} \ang{ w \mid \Ks_\lam w}  - \frac{b}{\la} \int_0^{R \la}\Big( ( \p_r \dot w)^2 + \frac{k^2}{r^2} \dot w^2  \Big) \, \rdr \\
&\quad  - \frac{b}{\lam} \ang{\Lam_0 \dot w \mid P_\la \dot w}+ \frac{b}{\lam} \ang{w \mid P_\lam^2 w}   \\
&\quad + \frac{b}{\lam} \ang{w \mid \La P_\lam \LL_0 w -  \p_r (\La P_\lam) \p_r w + \frac{1}{2}(- \De \La P_\lam) w +  P_\lam \La P_\lam w}  \\
&\quad +C_1 \Big( \frac{\abs{a}^5 + \abs{b}^5 + \abs{b} \nu^{2k-1} + \abs{b}^3 \nu^{k-1}}{\lam^2}\Big)\Big( \| \bs w \|_{\HH^2} +   \frac{\| \bs w \|_{\HH}}{\lam}  \Big) \\
& \quad +C_1  \Big( \abs{a} + \abs{b} +  \frac{b^2+a^2 + \nu^k}{\lam} \Big)\Big(  \| \bs w \|_{\HH^2}+ \frac{ \| \bs w \|_{\HH}}{\lam} + \frac{ \| \bs w \|_{\HH}^2}{\lam^2}\Big)  \| \bs w \|_{\HH^2}   \\
& \quad + C_1 \Big( \abs{a} + \abs{b} \Big)\Big( \| \bs w \|_{\HH^2} + \frac{ \| \bs w \|_{\HH}}{\lam} +  \frac{ \| \bs w \|_{\HH}^2}{\lam^2} \Big) \| \bs w \|_{\HH^2}^2 + C_1\frac{ a^2 + b^2}{\lam^2} \frac{\| \bs w \|_{\HH}^2}{\lam} \\
&\quad   +C_1 \frac{1}{\lam} \| \bs w \|_{\HH}^2 \| \bs w \|_{\HH^2}^2   + C_1\frac{1}{\lam^2} \| \bs w \|_{\HH}^3 \| \bs w\|_{\HH^2}  + C_1\frac{\abs{b}}{\lam^3} \| \bs w\|_{\HH}^3 
}
as claimed. 
 \end{proof}

\subsection{Energy estimates for $\bs w(t)$ in $\bs \Lam^{-1} \HH$} 


Define the weighted energy functional 
\EQ{ \label{eq:E3} 
\E_3(t) := \frac{1}{2} \ang{ \La_0 \dot w \mid \La_0 \dot w} + \frac{1}{2} \ang{ \La w \mid \LL_{\Phi} \La w} 
}

\begin{lem} \label{l:w-E3} There exists a uniform constant $C>0$ with the following property.  Let $\bs w \in \HH \cap  \HH^2 \cap \bs \Lam^{-1} \HH$. 
\EQ{
\frac{1}{2}\| \bs w \|_{\bs \La^{-1} \HH}^2 - C \| w \|_H^2 \le  \E_3(t)  \le \| \bs w \|_{\bs \La^{-1} \HH}^2 + C \| w \|_H^2
}
\end{lem}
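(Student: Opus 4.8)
The inequality to establish is a two-sided comparison between $\E_3(t)$, defined in~\eqref{eq:E3}, and $\|\bs w\|_{\bs\Lam^{-1}\HH}^2 = \|\La w\|_H^2 + \|\La_0 \dot w\|_{L^2}^2$, with an error term controlled by $\|w\|_H^2$. The plan is to handle the two summands of $\E_3(t)$ separately. The kinetic term $\frac12\ang{\La_0\dot w \mid \La_0\dot w} = \frac12\|\La_0\dot w\|_{L^2}^2$ is \emph{exactly} half of the corresponding term in $\|\bs w\|_{\bs\Lam^{-1}\HH}^2$, so it requires no work at all. All the content is in the potential term $\frac12\ang{\La w \mid \LL_\Phi \La w}$, which I want to compare with $\frac12\|\La w\|_H^2 = \frac12\ang{\LL_0 \La w \mid \La w}$.

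\textbf{Key steps.} First I would write $\LL_\Phi = \LL_0 + (\LL_\Phi - \LL_0)$, where $\LL_\Phi - \LL_0 = r^{-2}(f'(\Phi) - k^2) = \tfrac{k^2}{r^2}(\cos 2\Phi - 1)$, a bounded-below potential perturbation. Thus
\EQ{
\ang{\La w \mid \LL_\Phi \La w} = \|\La w\|_H^2 + \Big\langle \La w \,\Big|\, \tfrac{k^2}{r^2}(\cos 2\Phi - 1)\La w \Big\rangle.
}
The correction term is the crux: I need $\big|\ang{\La w \mid r^{-2}(\cos 2\Phi - 1)\La w}\big| \lesssim \|w\|_H^2$. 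Using the pointwise bound on the potential coming from the structure of $\Phi$ — namely $|\cos 2\Phi - 1| = 2\sin^2\Phi \lesssim \min(1, r^2/\lambda^2)$ near the inner scale and the analogous decay from $Q_\mu$, so that $r^{-2}|\cos 2\Phi - 1| \lesssim (\text{integrable weight})$ — together with the Hardy-type inequality $\|r^{-1}\La w\|_{L^2}\cdot(\text{localized weights})$, one reduces to an estimate of the form $\int_0^\infty r^{-2}|\cos 2\Phi-1|\,(\La w)^2\,\tfrac{\ud r}{r}\cdot r \lesssim \|\La w\|_{L^2_{\text{weighted}}}^2$. Here the key observation is that $\La w = r\p_r w$ and the weighted quantity $\int (\La w)^2 r^{-2}\,r\,\ud r = \int (\p_r w)^2\, r\,\ud r \le \|w\|_H^2$. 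Combined with boundedness of the potential (after noting $|\cos 2\Phi - 1|\lesssim 1$ uniformly), I get
\EQ{
\Big| \Big\langle \La w \,\Big|\, \tfrac{k^2}{r^2}(\cos 2\Phi - 1)\La w \Big\rangle \Big| \lesssim \int_0^\infty \frac{(\La w)^2}{r^2}\,r\,\ud r = \int_0^\infty (\p_r w)^2\, r\,\ud r \le \|w\|_H^2.
}
This controls the potential term by $\tfrac12\|\La w\|_H^2 \pm C\|w\|_H^2$, and adding back the (exact) kinetic contribution gives the two-sided bound with constants $\tfrac12$ below and $1$ above (after absorbing/relabeling constants, using $\tfrac12 \le 1$ in the upper bound). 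The hypothesis $\bs w \in \HH\cap\HH^2\cap\bs\Lam^{-1}\HH$ guarantees all quantities appearing are finite, and in particular $\p_r w \in H$ so $\p_r w(r)\to 0$ appropriately at the endpoints, justifying the Hardy inequality as in Lemma~\ref{l:wHH2}.

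\textbf{Main obstacle.} The only genuinely delicate point is the correction-term estimate: one must be slightly careful that the weight $r^{-2}|\cos 2\Phi - 1|$ is not merely bounded but that pairing it against $(\La w)^2$ can be dominated by $\|w\|_H^2$ rather than, say, $\|\La w\|_H^2$ — the latter would be useless since it would be comparable to the leading term and destroy the lower bound. The resolution is precisely that $|\cos 2\Phi - 1| \lesssim 1$ lets one extract a full factor of $r^{-2}$, turning $r^{-2}(\La w)^2 = (\p_r w)^2$, and $\|\p_r w\|_{L^2}\le \|w\|_H$ by definition of the $H$-norm. So the estimate is in fact not sharp and there is room to spare; no localization or use of the orthogonality conditions~\eqref{eq:law}--\eqref{eq:muwdot} is needed here, which is why this lemma — unlike the coercivity statements — requires no smallness of $\eta_0$ and no virial correction. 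The remaining bookkeeping (collecting the two summands, fixing the numerical constants $\tfrac12$ and $1$) is routine.
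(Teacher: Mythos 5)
Your proof is correct and matches the paper's own argument: both expand $2\E_3(t) = \|\La_0\dot w\|_{L^2}^2 + \|\La w\|_H^2 + \ang{(f'(\Phi)-k^2)r^{-2}\La w \mid \La w}$ and bound the cross term by $\|r^{-1}\La w\|_{L^2}^2 = \|\p_r w\|_{L^2}^2 \le \|w\|_H^2$ using only boundedness of $f'(\Phi)-k^2$. The intermediate musings about $\min(1,r^2/\lambda^2)$-type decay and localized Hardy weights are unnecessary, as you yourself note at the end — the crude bound $|\cos 2\Phi - 1|\lesssim 1$ already closes the estimate.
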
 
\begin{proof} 
We expand, 
\EQ{
2 \E_3(t)  &= \| \La_0 \dot w \|_{L^2}^2  + \| \Lam w \|_H^2  + \ang{ (f'(\Phi) - k^2)r^{-2} \Lam w \mid \Lam w} 
}
Noting that 
\EQ{
\abs{\ang{ (f'(\Phi) - k^2)r^{-2} \Lam w \mid \Lam w} } \lesssim \| r^{-1} \Lam w \|_{L^2}^2  = \| \p_r w \|_{L^2}^2  \le \| w \|_H^2
}
completes the proof. 
\end{proof} 

The main result of this section is the following lemma. 

\begin{lem}  \label{l:E3} Let $J \subset \R$ be an interval on which $\bs u(t)$ satisfies the hypothesis of Lemma~\ref{l:mod2}.  Assume in addition that $\bs u(t) \in \HH \cap \HH^2 \cap \bs \Lam^{-1} \HH$ for all $t \in J$.  Let $a, b, \la, \mu$  and $\bs w(t) \in  \HH \cap \HH^2 \cap \bs \Lam^{-1} \HH$ be given by Lemma~\ref{l:mod2} and let $\nu:= \la/\mu$ as usual. Then, 
\EQ{
\abs{\E_3'(t) } &\lesssim  \| \La_0 \dot w \|_{L^2}  \| w \|_{H^2}  + \| w \|_{H^2} \| \dot w \|_{L^2} + \frac{\abs{a} + \abs{b}}{\lam} \| w \|_H^2  \\
& \quad + (\abs{a} + \abs{b}) \Big( \frac{\nu^{2k-1}}{\lam} + \frac{b^2 \nu^{k-1}}{\lam} +  \frac{b^4}{\lam} + \frac{a^4}{\lam}  \Big)\Bigg)\big( \| w \|_{H} + \| \Lam w \|_H \big) \\
& \quad +  \frac{ \abs{a} + \abs{b}}{\lam}  \| \bs w \|_{\HH}^2 + \frac{ \abs{a} + \abs{b}}{\lam}  \| \bs w \|_{\HH} \| \Lam w \|_{H}  \\
&\quad +  \Big(  \frac{ \nu^{2k-1}}{\la} + \frac{b^2 \nu^{k-1}}{\la}  + \frac{b^4}{\la} + \frac{a^4\nu}{\la} \Big) \| \La_0 \dot w \|_{L^2}  \\
&\quad +  \frac{\abs{a}  + \abs{b}}{\la}   \| \bs w \|_{\HH}\| \La_0  \dot w \|_{L^2} +   \frac{1}{\la} \| \bs w \|_{\HH}^2  \| \La_0 \dot  w \|_{L^2} +  \| w \|_{H^2} \|w \|_H   \| \La_0 \dot  w \|_{L^2}
}
uniformly on $J$. 
\end{lem}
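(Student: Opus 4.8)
\textbf{Proof strategy for Lemma~\ref{l:E3}.} The plan is to differentiate $\E_3(t)$ directly using the equation~\eqref{eq:weq3} for $\bs w$, as was done for $\Es_1$ in Lemma~\ref{l:E1'}, but now working with $\La w$ and $\La_0 \dot w$ in place of $w$ and $\dot w$, and crucially \emph{not} inserting any virial correction (consistent with the remark in the introduction that the $\bs \Lam^{-1}\HH$ estimate will not need one). The first step is to commute $\La$ (resp. $\La_0$) past the equation. From $\p_t w = \dot w + \Psi_1$ we get $\p_t (\La w) = \La \dot w + \La \Psi_1$, and since $\La_0 = \La + 1$ we may rewrite this as $\p_t(\La w) = \La_0 \dot w - \dot w + \La \Psi_1$. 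For the second equation, $\p_t(\La_0 \dot w) = \La_0(-\LL_\Phi w) + \La_0 \Psi_2$, and we use the commutator identity from the preliminaries, $[\LL_U, \La_0] h = 2 \LL_U h + 2 k^2 r^{-2} \sin(2U)\,\La U\, h$, with $U = \Phi$, to write $\La_0 \LL_\Phi w = \LL_\Phi \La_0 w + 2 \LL_\Phi w + 2k^2 r^{-2}\sin(2\Phi)\La\Phi\, w$, and then $\LL_\Phi \La_0 w = \LL_\Phi \La w + \LL_\Phi w$. The upshot is a system of the schematic form $\p_t(\La w) = \La_0 \dot w + (\text{l.o.t.})$, $\p_t(\La_0 \dot w) = -\LL_\Phi(\La w) + (\text{l.o.t.})$, where the lower-order terms are $\La\Psi_1$, $\La_0\Psi_2$, $\dot w$, $\LL_\Phi w$, and the explicit commutator term $r^{-2}\sin(2\Phi)\La\Phi\, w$, the last of which is controlled pointwise using $|\La\Phi| \lesssim 1$ and $|\sin 2\Phi| \lesssim r/\lam$ (cf.~\eqref{eq:sin2Phi}), so it contributes $O(\lam^{-1}\|w\|_H^2)$-type terms.

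The second step is the actual differentiation. Writing $\E_3 = \tfrac12\ang{\La_0\dot w \mid \La_0 \dot w} + \tfrac12 \ang{\La w \mid \LL_\Phi \La w}$, we compute
\[
\E_3'(t) = \ang{\p_t(\La_0\dot w) \mid \La_0\dot w} + \ang{\p_t(\La w) \mid \LL_\Phi \La w} + \tfrac12 \ang{\La w \mid [\p_t, \LL_\Phi]\La w}.
\]
The two leading terms $-\ang{\LL_\Phi(\La w)\mid \La_0\dot w}$ and $\ang{\La_0\dot w \mid \LL_\Phi \La w}$ cancel (after symmetry of $\LL_\Phi$), exactly as in the proof of~\eqref{eq:E1'1}; here is the one place where the mismatch between $\La$ and $\La_0$ matters, but it only produces the harmless extra term $-\ang{\dot w \mid \LL_\Phi \La w}$, which is bounded by $\|w\|_{H^2}\|\dot w\|_{L^2}$ after an integration by parts in $\LL_\Phi$ (moving a derivative off $\La w$ and using $\|\LL_\Phi \La w\|$-type control, or directly $\|\La w\|_{H}\le \|w\|_{H^2}$ together with $\|\dot w\|_{L^2}$). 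What remains to estimate are: (i) $\ang{\La_0 \Psi_2 \mid \La_0 \dot w}$ and $\ang{\La_0 \Psi_1 \mid \LL_\Phi \La w}$-type forcing pairings; (ii) the commutator term $\tfrac12\ang{\La w \mid [\p_t,\LL_\Phi]\La w}$; (iii) the leftover lower-order contributions from step one.

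For (i), rather than invoking the delicate orthogonality-based cancellations of Lemma~\ref{l:modc2} (which were needed for the sharp $\HH$ estimate), we bound crudely: $\|\La_0 \Psi_2\|_{L^2} \le \|\Psi_2\|_H$ and $\|\La_0 \Psi_1\|_{L^2}\le \|\Psi_1\|_H$, and then quote the $H$-norm bounds already established --- namely~\eqref{eq:Psi1-H} for $\|\dot\Phi - \p_t\Phi\|_H$ and~\eqref{eq:ptdotPhi-H} together with~\eqref{eq:w^2L21} (for the $w^2$ nonlinearity, whose $H$-norm, i.e. $\|r^{-1}(\cdots)\|_{L^2}$, is $\lesssim \|w\|_H^2$) to control $\|\Psi_2\|_H$. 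This is why the bound in the statement carries the $\|w\|_{H^2}$ and $\|\bs w\|_{\HH}^2$ factors rather than the sharper $\lam^{-1}$-weighted combinations: since we are in the coarse weighted space we can afford $\|\dot w\|_{L^2}$, $\|w\|_{H^2}$, and $\|\La_0\dot w\|_{L^2}$ on the right. For (ii), the commutator $[\p_t,\LL_\Phi]\La w = -2k^2 r^{-2}\sin(2\Phi)\,\p_t\Phi\,\La w$ is handled exactly as in Claim~\ref{c:vir}/Claim~\ref{c:commwdot2}: split $\p_t\Phi = \p_t Q_\lam + (\p_t\Phi - \p_t Q_\lam)$, use the pointwise bounds~\eqref{eq:f''Phi-f''Qla} and the formula for $-\p_t\Phi + \p_t Q_\lam$ from the proof of Claim~\ref{c:vir}, and apply Lemma~\ref{l:modc2}, giving $O\big((\lam^{-1}(\abs{a}+\abs{b}) + \cdots)\|\La w\|_H^2\big) = O(\cdots\|w\|_{H^2}^2)$; since we don't need the virial cancellation here, the term $\sim (b/\lam)\ang{\La w \mid P_\lam \La w}$ produced by the $\p_t Q_\lam$ piece need not be cancelled --- it is simply absorbed into the $\frac{\abs{a}+\abs{b}}{\lam}\|\bs w\|_\HH^2$ contribution. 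For (iii), each leftover term ($\ang{\dot w \mid \cdots}$, $\ang{\LL_\Phi w \mid \cdots}$) is bounded by Cauchy--Schwarz together with $\|\La w\|_H \lesssim \|w\|_{H^2}$, $\|\LL_\Phi \La w\|_{L^2}\lesssim \|w\|_{H^2}$, and $\|\La_0 \dot w\|_{L^2}$, producing the first line of the claimed bound. Collecting everything and using $\|\La w\|_H \le \|w\|_{H^2}$ and $\|\La_0 \dot w\|_{L^2} \le \|\dot w\|_H$ where convenient yields the stated inequality.

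\textbf{Main obstacle.} The only genuine subtlety is keeping track, in the term $\ang{\La_0 \Psi_2 \mid \La_0 \dot w}$, of the fact that the worst piece of $\Psi_2$, namely $-\p_t\dot\Phi + \De\Phi - r^{-2}f(\Phi)$, must be estimated in $H$ rather than $L^2$ (hence~\eqref{eq:ptdotPhi-H} with its extra $\lam^{-1}$ loss), and that the cubic-in-$w$ remainder $r^{-2}(f(\Phi+w)-f(\Phi)-f'(\Phi)w)$, when hit by the full $\La_0$ and paired with $\La_0\dot w$, requires the $\|w\|_{H^2}\|w\|_H\|\La_0\dot w\|_{L^2}$ term --- this is exactly the last term in the statement and comes from bounding $\|r^{-1}\p_r(\cdots)\|_{L^2} + \|r^{-2}(\cdots)\|_{L^2}$ via~\eqref{eq:w^2L22}--\eqref{eq:w^2L23}. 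No new ideas beyond those already deployed for $\Es_1$ and $\Es_2$ are needed; the estimate is strictly softer because no virial correction and no sharp tracking of $\lam$-powers is required.
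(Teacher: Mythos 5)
Your high-level plan --- apply $\bs\La = (\La, \La_0)$ to~\eqref{eq:weq3}, differentiate $\E_3$, and estimate the resulting terms with no virial correction --- matches the paper's, and your identification of the various pieces (the commutator terms, the leftover $-\ang{\dot w \mid \LL_\Phi \La w}$, the forcing pairings) is correct. However, the step where you bound the forcing terms rests on false inequalities. You assert $\|\La_0\Psi_2\|_{L^2}\le\|\Psi_2\|_H$, $\|\La_0\Psi_1\|_{L^2}\le\|\Psi_1\|_H$, $\|\La w\|_H\le\|w\|_{H^2}$, and $\|\La_0\dot w\|_{L^2}\le\|\dot w\|_H$. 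None of these hold, because $\La_0 = 1 + r\p_r$ and $\La = r\p_r$ carry the unbounded weight $r$: for a bump $f$ of unit amplitude and width located near $r=R$, $\|f\|_H = O(1)$ while $\|\La_0 f\|_{L^2}\sim R$, so $\La_0$ is not bounded $H\to L^2$, and similarly $\La$ is not bounded $H^2\to H$. This is exactly why $\bs\La^{-1}\HH$ is introduced as a strictly stronger norm than $\HH$, and why the statement of Lemma~\ref{l:E3} keeps $\|\La w\|_H$ and $\|\La_0\dot w\|_{L^2}$ as free-standing quantities on the right instead of majorizing them by $\|w\|_{H^2}$ and $\|\dot w\|_H$.

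The paper instead proves, in a separate auxiliary lemma placed immediately before Lemma~\ref{l:E3}, direct estimates for $\|\La(\dot\Phi - \p_t\Phi)\|_H$~\eqref{eq:Psi1-LaH}, $\|\La_0(-\p_t\dot\Phi + \De\Phi - r^{-2}f(\Phi))\|_{L^2}$~\eqref{eq:ptdotPhi-Lam0L2}, and $\|\La_0\big(r^{-2}(f(\Phi+w)-f(\Phi)-f'(\Phi)w)\big)\|_{L^2}$~\eqref{eq:f-est-Lam0-L2}, by re-running the arguments of Lemma~\ref{l:ptdotPhiL2} and Lemma~\ref{l:w^2est} directly on the $\La$/$\La_0$-weighted quantities. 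The reason these succeed is \emph{structural}, not functional-analytic: every ingredient of $\Psi_1$ and of the explicit part of $\Psi_2$ is a rescaled decaying profile ($\La Q_\la$, $\La_0\La Q_\mu$, $A_\la$, $B_\la$, $\ti B_\mu$, and so on) on which $\La_0$ acts harmlessly. Your ``Main obstacle'' paragraph does arrive at the right decomposition $\La_0(r^{-2}F) = -r^{-2}F + r^{-1}\p_r F$ for the quadratic piece, but then cites~\eqref{eq:w^2L22} and~\eqref{eq:w^2L23}, which bound $r^{-2}\p_r F$ and $r^{-3}F$ --- off by a power of $r$ from the quantities you actually need. And even granting the false reductions, the bound $\|\Psi_2\|_H\lesssim\lam^{-2}(\nu^{2k-1}+\dots)$ from~\eqref{eq:ptdotPhi-H} is worse by a full factor $\lam^{-1}$ than $\|\La_0\Psi_2\|_{L^2}\lesssim\lam^{-1}(\nu^{2k-1}+\dots)$ from~\eqref{eq:ptdotPhi-Lam0L2}, so your route would not reproduce the claimed estimate.
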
 

The proof of Lemma~\ref{l:E3} requires the following estimates, which we collect here. 
\begin{lem} The following estimates hold true. 
\EQ{ \label{eq:Psi1-LaH} 
\| \Lam(\dot \Phi - \p_t \Phi) \|_H &\lesssim   \frac{(\abs{a} + \abs{b} ) }{\lam} \| \bs w \|_{\HH} +   \frac{(\abs{a} + \abs{b} ) }{\lam} \| \bs w \|_{\HH}^2  \\
&\quad +(\abs{a} + \abs{b}) \Big( \frac{\nu^{2k-1}}{\lam} + \frac{b^2 \nu^{k-1}}{\lam} +  \frac{b^4}{\lam} + \frac{a^4}{\lam}  \Big)
 }
\EQ{ \label{eq:ptdotPhi-Lam0L2} 
\| \Lam_0( - \p_t \dot \Phi + \De \Phi  -\frac{1}{r^2} f(\Phi)  )\|_{L^2} &\lesssim   \frac{ \nu^{2k-1}}{\la} + \frac{b^2 \nu^{k-1}}{\la}  + \frac{b^4}{\la} + \frac{a^4\nu}{\la}   \\
&\quad  +  \frac{\abs{a}  + \abs{b}}{\la}   \| \bs w \|_{\HH}     +  \frac{1}{\la} \| \bs w \|_{\HH}^2 
}
\EQ{\label{eq:f-est-Lam0-L2} 
\| \Lam_0(  r^{-2}( f( \Phi + w) - f( \Phi) - f'( \Phi) w)) \|_{L^2}  & \lesssim  \frac{1}{\lam} \| w \|_{H}^2 +  \| w \|_{H^2} \|w \|_H   
}
\end{lem}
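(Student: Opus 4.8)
The plan is to prove the three estimates \eqref{eq:Psi1-LaH}, \eqref{eq:ptdotPhi-Lam0L2}, and \eqref{eq:f-est-Lam0-L2} in the style of the corresponding un-weighted estimates \eqref{eq:Psi1-H}, \eqref{eq:ptdotPhiL2}, and \eqref{eq:w^2L2}, the only new ingredient being that one commutes the scaling generators $\Lam$ or $\Lam_0$ through the structure of the ansatz and nonlinearity. For \eqref{eq:Psi1-LaH}, the key observation is that $\Lam$ essentially commutes with the rescaled profiles: for a fixed profile $F$ we have $\Lam(F_{\U\la}) = (\Lam_0 \Lam Q)_{\U\la}$-type identities, so that applying $\Lam$ to the explicit expression \eqref{eq:dotPhi-ptPhi} for $\dot\Phi - \p_t\Phi$ merely replaces each profile $G_{\U\la}$ (resp.\ $G_{\U\mu}$) by $(\Lam_0 G)_{\U\la}$ (resp.\ $(\Lam_0 G)_{\U\mu}$), up to terms where a $\nu$-power is differentiated in $t$ — but those are already accounted for since $\nu' = \nu(\la'/\la - \mu'/\mu)$ and we have good control of $\la', \mu'$ via Lemma~\ref{l:modc2}. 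By Lemma~\ref{l:ABB}, $\Lam_0 \Lam Q$, $\Lam_0 A$, $\Lam_0 \Lam A$, and the analogous quantities for $B, \ti B$ all lie in $H$ with the same scaling behavior as the original profiles, so the estimate \eqref{eq:Psi1-LaH} follows by exactly the computation used for \eqref{eq:Psi1-H}, with every $H$-norm bound on a profile replaced by the corresponding bound on its $\Lam_0$-image. Since \eqref{eq:Psi1-H} and \eqref{eq:Psi1-LaH} have \emph{identical} right-hand sides, no loss of powers of $\la$ occurs: $\Lam$ acting on $F_{\U\la}$ does not produce a factor $\la^{-1}$, unlike $\p_r$.

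For \eqref{eq:ptdotPhi-Lam0L2}, I would apply $\Lam_0$ to the long expansion \eqref{eq:ptdotPhi} for $-\p_t\dot\Phi + \De\Phi - r^{-2}f(\Phi)$. The profile terms are handled as above: $\Lam_0$ maps each $L^2$-normalized profile $G_{\U\la}$ to $(\Lam_0 G)_{\U\la}$, and Lemma~\ref{l:ABB} guarantees $\Lam_0 A, \Lam_0 B, \Lam_0 \ti B \in L^2$ with the same decay, so no powers of $\la$ are lost. The three ``$f$-difference'' lines at the end of \eqref{eq:ptdotPhi} require the observation that $\Lam_0 (r^{-2} h) = r^{-2}(\Lam_0 h - 2h) = r^{-2}\Lam h$, reducing matters to estimating $r^{-2}\Lam$ applied to the same $f$-differences. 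One then commutes $\Lam = r\p_r$ past the trigonometric structure exactly as in the proof of Lemma~\ref{l:fest1}: differentiating $\sin 2Q_\mu - 4(r/\mu)^k$ etc.\ in $r$ costs one factor $r^{-1}$ but also improves a power of $r$ in the Taylor remainder (cf.\ the pointwise bounds \eqref{eq:sinrk}), so the net scaling is unchanged, as already witnessed by Lemma~\ref{l:fest2}, where $r^{-2}\p_r$ of these expressions was bounded by $\nu^{2k}\la^{-2}$ — precisely the weighted analogue. Combining the profile bounds with Lemma~\ref{l:modc2} for the coefficients $(a-\mu')$, $(b+\la')$, $(a'+\ti\gamma_k\nu^k/\mu)$, $(b'+\gamma_k\nu^k/\la)$ yields \eqref{eq:ptdotPhi-Lam0L2} with the same right-hand side as \eqref{eq:ptdotPhiL2}.

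For \eqref{eq:f-est-Lam0-L2}, I would start from the identity \eqref{eq:fident}, $f(\Phi+w) - f(\Phi) - f'(\Phi)w = k^2\sin 2\Phi \sin^2 w + \tfrac{k^2}{2}(\sin 2w - 2w)\cos 2\Phi$, and compute $\Lam_0$ of $r^{-2}$ times this, using again $\Lam_0(r^{-2}h) = r^{-2}\Lam h$ so it suffices to bound $r^{-2}\Lam$ of the right side. Differentiating with $r\p_r$ produces terms of the schematic types $r^{-2}\cos 2\Phi\, r\p_r\Phi\, \sin^2 w$, $r^{-2}\sin 2\Phi \cos w \sin w\, r\p_r w$, and the analogous $\cos 2w$-type terms, exactly the terms appearing in the proof of \eqref{eq:w^2L22}. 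Using the pointwise bounds $|r^{-1}\sin 2\Phi| \lesssim \la^{-1}$ from \eqref{eq:sin2Phi} and $|r^{-1}r\p_r\Phi| = |\p_r\Phi| \lesssim \la^{-1}$ (established in the proof of Lemma~\ref{l:w^2est}), together with $\| r^{-1}w\|_{L^\infty} \lesssim \|\p_r w\|_H$, $\|w\|_{L^\infty}\lesssim \|w\|_H$ from Lemma~\ref{l:wHH2}, one arrives at the pointwise bound
\[
\Big| \Lam_0\big( r^{-2}(f(\Phi+w) - f(\Phi) - f'(\Phi)w)\big) \Big| \lesssim \frac{1}{\la}\frac{|w|^2}{r} + \frac{|w|^3}{r^2} + \frac{1}{\la}\frac{|\p_r w||w|}{1} + \frac{|\p_r w||w|^2}{r},
\]
whose $L^2$ norm is controlled by $\la^{-1}\|w\|_H^2 + \|w\|_{H^2}\|w\|_H$ after invoking \eqref{eq:w3} and \eqref{eq:w/rinfty}; note the $\|w\|_{H^2}\|w\|_H$ term absorbs both the cubic piece and the $\la$-free quadratic-gradient piece, matching the stated right-hand side.

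The main obstacle I anticipate is purely bookkeeping: verifying that applying $\Lam$ or $\Lam_0$ to the profile terms genuinely does not lose a power of $\la$ — i.e.\ that the ``dangerous'' $\la^{-1}$ that $\p_r$ would produce is absent because $\Lam = r\p_r$ and the extra $r$ exactly compensates — and then carefully re-running the long case analysis of Lemmas~\ref{l:fest1}, \ref{l:fest2}, \ref{l:w^2est}, and \ref{l:modc2} with the commuted operators in place. There is no conceptual difficulty beyond what has already been done; the risk is simply in propagating the correct $\nu$- and $\la$-powers through the many terms of \eqref{eq:ptdotPhi}. I would organize the write-up by first recording the commutation identities $\Lam(F_{\U\la}) = (\Lam_0 F)_{\U\la}$ and $\Lam_0(r^{-2}h) = r^{-2}\Lam h$ as a short preliminary observation, then citing Lemma~\ref{l:ABB} for $L^2 \cap H$ membership of $\Lam_0 A$, $\Lam_0\Lam A$ etc., and finally stating that each of the three estimates follows ``by the same argument as \eqref{eq:Psi1-H}/\eqref{eq:ptdotPhiL2}/\eqref{eq:w^2L2}, respectively, using these identities and Lemma~\ref{l:modc2}.''
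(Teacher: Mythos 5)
Your approach — commute $\Lam$ (resp.\ $\Lam_0$) through the explicit expansions \eqref{eq:dotPhi-ptPhi}, \eqref{eq:ptdotPhi} and the nonlinearity, invoke Lemma~\ref{l:ABB} for $L^2\cap H$ membership of $\Lam A, \Lam_0 A$, etc., and re-run the earlier estimates using Lemma~\ref{l:modc2} for the coefficients — is exactly what the paper does; its proof is a one-line citation to Lemma~\ref{l:ptdotPhiL2} and Lemma~\ref{l:w^2est}, and your observation that $\Lam$, unlike $\p_r$, costs no factor of $\la^{-1}$ on the rescaled profiles is the correct key point. Two algebraic slips to correct in the write-up, neither of which affects the estimates: the commutation identity is $\Lam(G_{\U\la}) = (\Lam G)_{\U\la}$, not $(\Lam_0 G)_{\U\la}$ (the two differ by $G_{\U\la}$, which Lemma~\ref{l:ABB} controls anyway); and $\Lam_0(r^{-2}h) = r^{-2}(\Lam h - h)$, not $r^{-2}\Lam h$, so a $-r^{-2}h$ term is missing — but that term is precisely the un-weighted quantity already bounded by \eqref{eq:ptdotPhiL2} or \eqref{eq:w^2L2}, so the estimates still close.
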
 
\begin{proof} The estimates~\eqref{eq:Psi1-LaH} and~\eqref{eq:ptdotPhi-Lam0L2} follow from the same arguments used to prove Lemma~\ref{l:ptdotPhiL2}. The estimate~\eqref{eq:f-est-Lam0-L2} follows from the argument used to prove Lemma~\ref{l:w^2est}. 
\end{proof} 
We now prove Lemma~\ref{l:E3}. 
\begin{proof} 
Applying $\bs \La$ to~\eqref{eq:weq3} we arrive at the system of equations, 
\EQ{ \label{eq:Law} 
\p_t \La w &= \La_0 \dot w - \dot w + \La \Psi_1  \\
\p_t \La_0 \dot w &=   -\LL_\Phi  \La w - \LL_{\Phi} w + [\La , \LL_\Phi] w + \La_0 \Psi_2
}
Differentiating~\eqref{eq:E3} and using~\eqref{eq:Law} we arrive at the identity, 
\EQ{ \label{eq:E3'1} 
\E_3'(t) &= \ang{ \p_t \Lam_0 \dot w \mid \Lam_0  \dot w} + \ang{ \p_t \Lam w \mid \LL_{\Phi} \Lam w} +  \frac{1}{2} \ang{ \La w \mid [\p_t, \LL_\Phi] \La w} \\ 
& = \ang{ -\LL_\Phi  \La w\mid \Lam_0  \dot w} +  \ang{- \LL_{\Phi} w\mid \Lam_0  \dot w}+  \ang{  [\La , \LL_\Phi] w\mid \Lam_0  \dot w}+  \ang{ \La_0 \Psi_2 \mid \Lam_0  \dot w} \\
& \quad + \ang{ \La_0 \dot w \mid \LL_{\Phi} \Lam w} + \ang{ - \dot w  \mid \LL_{\Phi} \Lam w} + \ang{ \La \Psi_1 \mid \LL_{\Phi} \Lam w}  +  \frac{1}{2} \ang{ \La w \mid [\p_t, \LL_\Phi] \La w} \\ 
& =   \ang{  [\La , \LL_\Phi] w\mid \Lam_0  \dot w} - \ang{ [ \LL_{\Phi}, \Lam] w \mid \dot w} + \ang{ \dot w \mid \LL_{\Phi} w}  +  \frac{1}{2} \ang{ \La w \mid [\p_t, \LL_\Phi] \La w} \\
&\quad +  \ang{ \La \Psi_1 \mid \LL_{\Phi} \Lam w} +  \ang{ \La_0 \Psi_2 \mid \Lam_0  \dot w}  
}
For the first term on the right-hand side of~\eqref{eq:E3'1} we have 
\EQ{
\ang{ [  \LL_\Phi,  \La] w \mid \La_0 \dot w }  =  2 \ang{ \La_0 \dot w \mid  \LL_\Phi w}  +  2\ang{ \La_0 \dot w \mid \frac{ k^2 \sin 2 \Phi  \La \Phi}{r^2} w }
}
It follows that 
\EQ{
\abs{\ang{ \La_0 \dot w \mid [  \LL_\Phi,  \La] w} } \lesssim  \| \La_0 \dot w \|_{L^2}  \| w \|_{H^2} 
}
Similarly, 
\EQ{
\abs{\ang{ [ \LL_{\Phi}, \Lam] w \mid \dot w}} + \abs{\ang{ \dot w \mid \LL_{\Phi} w}} \lesssim \| w \|_{H^2} \| \dot w \|_{L^2} 
}
For the fourth term on the right-hand side of~\eqref{eq:E3'1} we have 
\EQ{
\frac{1}{2} \ang{ \La w \mid [\p_t, \LL_\Phi] \La w} = -\frac{1}{2} \ang{ \La w \mid  \frac{2k^2}{r^2} \sin2 \Phi \p_t \Phi  \La w}
}
Note the pointwise estimate, 
\EQ{
\abs{\p_t \Phi} &\lesssim \frac{\abs{b}}{\lam} + \frac{\abs{a} \nu}{\lam} + \frac{\abs{b+\la'}}{\lam} + \frac{ \nu \abs{a- \mu'}}{\lam} + \abs{b} \abs{ b' + \gamma_k \frac{\nu^k}{\lam}} + \abs{a} \abs{ a' + \ti \gamma_k \frac{\nu^{k}}{\mu}}   \lesssim \frac{\abs{a} + \abs{b}}{\lam} 
}
which yields, 
\EQ{
\abs{ \frac{1}{2} \ang{ \La w \mid [\p_t, \LL_\Phi] \La w} } \lesssim  \frac{\abs{a} + \abs{b}}{\lam} \abs{\ang{ \p_r w \mid \p_r w}} \lesssim  \frac{\abs{a} + \abs{b}}{\lam} \| w \|_H^2 
}
We estimate the fifth term as follows, 
\EQ{
\big|  \ang{ \La \Psi_1 \mid \LL_{\Phi} \Lam w} \big|& \lesssim \big( \| \Lam \Psi_1 \|_{H} + \| \Psi_1 \|_H \big)\big( \| w \|_{H} + \| \Lam w \|_H \big)  \\
& \lesssim (\abs{a} + \abs{b}) \Big( \frac{\nu^{2k-1}}{\lam} + \frac{b^2 \nu^{k-1}}{\lam} +  \frac{b^4}{\lam} + \frac{a^4}{\lam}  \Big)\Bigg)\big( \| w \|_{H} + \| \Lam w \|_H \big) \\
& \quad +  \frac{ \abs{a} + \abs{b}}{\lam}  \| \bs w \|_{\HH}^2 + \frac{ \abs{a} + \abs{b}}{\lam}  \| \bs w \|_{\HH} \| \Lam w \|_{H} 
}
where in the last line we used~\eqref{eq:Psi1-LaH} and~\eqref{eq:Psi1-H}. 

Finally, for the sixth term on the right-hand side of~\eqref{eq:E3'1} we have 
\EQ{
&\big|\ang{ \La_0 \Psi_2 \mid \Lam_0  \dot w}  \big| \lesssim \| \La_0 \Psi_2 \|_{L^2} \| \La_0 \dot w \|_{L^2} \\
& \lesssim \Big( \| \Lam_0( - \p_t \dot \Phi + \De \Phi  -\frac{1}{r^2} f(\Phi)  )\|_{L^2} + \| \Lam_0(  r^{-2}( f( \Phi + w) - f( \Phi) - f'( \Phi) w)) \|_{L^2}  \Big) \| \La_0 \dot  w \|_{L^2}  \\
& \lesssim  \Big(  \frac{ \nu^{2k-1}}{\la} + \frac{b^2 \nu^{k-1}}{\la}  + \frac{b^4}{\la} + \frac{a^4\nu}{\la} \Big) \| \La_0 \dot w \|_{L^2}  \\
&\quad +  \frac{\abs{a}  + \abs{b}}{\la}   \| \bs w \|_{\HH}\| \La_0  \dot w \|_{L^2} +   \frac{1}{\la} \| \bs w \|_{\HH}^2  \| \La_0 \dot  w \|_{L^2} +  \| w \|_{H^2} \|w \|_H   \| \La_0 \dot  w \|_{L^2} 
}
Combining these estimates for the terms on the right-hand side of~\eqref{eq:E3'1} we arrive at the estimate, 
\EQ{
\abs{ \Es_3'(t)} &\lesssim  \| \La_0 \dot w \|_{L^2}  \| w \|_{H^2}  + \| w \|_{H^2} \| \dot w \|_{L^2} + \frac{\abs{a} + \abs{b}}{\lam} \| w \|_H^2  \\
& \quad + (\abs{a} + \abs{b}) \Big( \frac{\nu^{2k-1}}{\lam} + \frac{b^2 \nu^{k-1}}{\lam} +  \frac{b^4}{\lam} + \frac{a^4}{\lam}  \Big)\Bigg)\big( \| w \|_{H} + \| \Lam w \|_H \big) \\
& \quad +  \frac{ \abs{a} + \abs{b}}{\lam}  \| \bs w \|_{\HH}^2 + \frac{ \abs{a} + \abs{b}}{\lam}  \| \bs w \|_{\HH} \| \Lam w \|_{H}  \\
&\quad +  \Big(  \frac{ \nu^{2k-1}}{\la} + \frac{b^2 \nu^{k-1}}{\la}  + \frac{b^4}{\la} + \frac{a^4\nu}{\la} \Big) \| \La_0 \dot w \|_{L^2}  \\
&\quad +  \frac{\abs{a}  + \abs{b}}{\la}   \| \bs w \|_{\HH}\| \La_0  \dot w \|_{L^2} +   \frac{1}{\la} \| \bs w \|_{\HH}^2  \| \La_0 \dot  w \|_{L^2} +  \| w \|_{H^2} \|w \|_H   \| \La_0 \dot  w \|_{L^2}
}
as claimed. 
\end{proof}

\subsection{Final construction of $\bs u_c$ and proof of Theorem~\ref{t:refined} } \label{s:construct}
In this section we use the estimate on the modulations parameters from Lemma~\ref{l:modc2} together with the energy estimates proved in the previous three subsections to construct the blow up solution described in Theorem~\ref{t:refined}. 

We define the formal modulation parameters as the unique solution to the nonlinear system, 
\EQ{ \label{eq:mod-sys} 
\mu_{\app}'(t) &= a_{\app}(t)\\
\lam_{\app}'(t) &= -b_{\app}(t) \\
a_{\app}'(t)& = - \gamma_k \lam_{\app}(t)^k \mu_{\app}(t)^{-k-1} \\
b_{\app}'(t) & = - \gamma_k \lam_{\app}(t)^{k-1} \mu_{\app}(t)^{-k} 
 }
 To solve this system we note that the explicit quadruplet $(\ti \mu_{\app}, \ti \lam_{\app}, \ti a_{\app}, \ti b_{\app})$ given by, 
\EQ{
\ti \lam_{\app}(t)&:= q_k t^{-\frac{2}{k-2}} \\
\ti b_{\app} (t) &:=  - \ti \la_{\app}'(t)  =  \rho_k  \ti \la_{\app}(t)^{\frac{k}{2}}  \\
\ti \mu_{\app} (t) &:= 1 -\frac{k}{2(k+2)} \ti \lam_{\app}(t)^2 \\
\ti a_{\app}(t) &:=  \ti \mu_{\app}'(t) = \frac{k}{k+2} \rho_k\ti  \lam_{\app}^{\frac{k}{2} + 1}
}
solves the approximate system, 
\EQ{
\ti \mu_{\app}'(t) &= \ti a_{\app}(t)\\
\ti \lam_{\app}'(t) &= - \ti b_{\app}(t) \\
\ti a_{\app}'(t)& = - \gamma_k \ti \lam_{\app}(t)^k \\
\ti b_{\app}'(t) & = - \gamma_k \ti  \lam_{\app}(t)^{k-1}
}
where we remind the reader of the explicit constants, 
 \EQ{
 q_k &:= \left( \frac{k-2}{2}\right)^{-\frac{2}{k-2}} \rho_k^{-\frac{2}{k-2}}, \quad 
 \rho_k = \Big( \frac{8k}{\pi}  \sin( \pi/k) \Big)^{\frac{1}{2}} , \quad \gamma_k = \frac{k}{2} \rho_k^2.
 }
 By standard ODE arguments, one may then construct a unique solution to the system~\eqref{eq:mod-sys} satisfying, 
 \EQ{ \label{eq:mod-form-asy} 
 \la_{\app}(t) &= \ti \lam_{\app}(t) ( 1+ O(  \ti \lam_{\app}(t)^{2-\eps}) )   \mas t \to \infty\\
 \mu_{\app}(t) &= \ti \mu_{\app}(t) + O(  \ti \lam_{\app}(t)^{3- \eps})   \mas t \to \infty\\
 a_{\app}(t) & = \ti a_{\app}(t)  + O(  \ti \lam_{\app}(t)^{\frac{k}{2} + 2-\eps})  \mas t \to \infty \\
 b_{\app}(t) & = \ti b_{\app}(t) + O(  \ti \lam_{\app}(t)^{\frac{k}{2} + 2-\eps})\mas t \to \infty
 }
 where $\eps>0$ is a constant that can be chosen as small as we like. 
We define the ansatz, 
\EQ{
\bs \Phi_{\app}(t,  \cdot) :=  \bs \Phi( \mu_{\app}(t), \lam_{\app}(t), a_{\app}(t), b_{\app}(t)) 
}
where $\bs\Phi( \mu, \lam, a, b)$ is defined in~\eqref{eq:Phidef}.  Note that for $t_0>0$ sufficiently large we have $ \E( \bs \Phi_{\app}(t_0))< \infty$ and in fact,  
\EQ{\label{eq:d0} 
 \bfd_+( \bs \Phi_{\app}(t_0))  \lesssim b_{\app}(t_0)^2  +  \la_{\app}(t_0)^{k}  + \abs{b_{\app}(t_0)} \lam_{\app}(t_0)^{\frac{k}{2} - 1} \lesssim t_0^{-\frac{2(k-1)}{k-2}} 
}
Let $\bs u_0(t)$ denote the solution to~\eqref{eq:wmk} with initial data at time $t_0 \gg1$ given 
\EQ{
\bs u_{0}(t_0)  = \bs \Phi_{\app}(t_0) \in \HH \cap \HH^2 \cap \bs \La^{-1} \HH
}
and denote by $J_{0, \max} \ni t_0$ the  maximal interval of existence of $\bs u_0(t)$.  It follows from~\eqref{eq:d0} that for each $t_0>0$ large enough, there exists an interval $J_0$ with $t_0 \in J_0 \subset J_{\max, 0}$ on which the conditions of Lemma~\ref{l:mod2} are satisfied. Thus on $J$ we may define modulation  parameters, $\mu(t), \lam(t), a(t), b(t)$ and $ \bs w \in \HH \cap \HH^2 \cap  \bs\Lam \HH$ so that 
\EQ{ \label{eq:w-def} 
\bs u_0(t) = \bs \Phi( \mu(t), \la(t), a(t), b(t)) + \bs w(t)
}
where 
\EQ{
0 = \ang{ \La Q_{\U \mu} \mid w}= \ang{ \La Q_{\U \lam} \mid w} = \ang{ \La Q_{\U \mu} \mid \dot w}= \ang{ \La Q_{\U \lam} \mid \dot w} 
}
 Note that at time $t_0$ we have 
 \EQ{
 \bs w(t_0) = 0  \mand (\mu(t_0), \lam(t_0), a(t_0), b(t_0) )  = (\mu_{\app} (t_0), \lam_{\app} (t_0), a_{\app} (t_0), b_{\app}(t_0) ) 
  }

 The main ingredient in the proof of Theorem~\ref{t:refined} is the following proposition, which will be proved by a bootstrap argument. 
 
 \begin{prop} \label{p:boot}  There exists a time $T_0 >0$ sufficiently large with the following property. For any $t_0> T_0$ denote by $\bs u_{0}(t)$ the unique solution to~\eqref{eq:wmk} with initial data at time $t_0> T_0$ given by $\bs u_0(t_0) = \bs  \Phi_{\app}(t_0)$. Let $J_0 \subset J_{\max, 0} $ denote the time interval on which the modulation parameters $\mu(t), \lam(t), a(t), b(t)$ are defined as above. Then $[T_0, t_0] \subset J_0 \subset J_{\max, 0}$ and we have the estimates, 
 \EQ{
 \abs{ \la(t) - \la_{\app}(t)} \lesssim \lam_{\app}(t)^{k-1}  &, \quad  \abs{ \mu(t) - \mu_{\app}(t) }  \lesssim  \lam_{\app}(t)^{k}    \\
 \abs{ a(t) - a_{\app}(t)} \lesssim   a_{\app}(t) \lam_{\app}(t)^{k-2} &,   \quad 
 \abs{ b(t) - b_{\app}(t)} \lesssim b_{\app}(t) \lam_{\app}(t)^{k-2} 
 }
 uniformly on $t \in [T_0, t_0]$.  Moreover, the estimates,  
 \EQ{
 \|  \bs u_{0}(t) -  \bs \Phi(\mu(t), \lam(t), a(t), b(t)) \|_{\HH} &\lesssim  \lam_{\app}(t)^{\frac{3k}{2} - 1}  \\
 \|  \bs u_{0}(t) -  \bs\Phi(\mu(t), \lam(t), a(t), b(t)) \|_{\HH^2} &\lesssim  \lam_{\app}(t)^{\frac{3k}{2} - 2}  \\
 \| \bs \Lam \bs u_{0}(t) - \bs \Lam \Phi(\mu(t), \lam(t), a(t), b(t)) \|_{\HH} &\lesssim  \lam_{\app}(t)^{k-1}
 }
 hold uniformly on $t \in [T_0, t_0]$, i.e, with a constant independent of $t, t_0$. 
 \end{prop}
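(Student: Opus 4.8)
The statement is a closed bootstrap, so the plan is to fix the bootstrap region and propagate the three energy bounds and the four modulation bounds using the estimates assembled in Sections~\ref{s:modulation}--\ref{s:construct}. Concretely, let $T_0$ be large (to be chosen), fix $t_0>T_0$, and on the interval $J_0$ produced by Lemma~\ref{l:mod2} define the bootstrap assumptions
\EQ{ \label{eq:boot-assume}
\| \bs w(t) \|_{\HH} &\le K \lam_{\app}(t)^{\frac{3k}{2}-1}, \quad
\| \bs w(t) \|_{\HH^2} \le K \lam_{\app}(t)^{\frac{3k}{2}-2}, \quad
\| \bs \Lam \bs w(t) \|_{\HH} \le K \lam_{\app}(t)^{k-1}, \\
\abs{\la(t) - \la_{\app}(t)} &\le K \lam_{\app}(t)^{k-1}, \quad \abs{\mu(t) - \mu_{\app}(t)} \le K\lam_{\app}(t)^k, \\
\abs{a(t) - a_{\app}(t)} &\le K a_{\app}(t) \lam_{\app}(t)^{k-2}, \quad \abs{b(t) - b_{\app}(t)} \le K b_{\app}(t)\lam_{\app}(t)^{k-2}
}
for a large constant $K$, and let $[T_1, t_0]$ be the maximal subinterval of $J_0$ containing $t_0$ on which \eqref{eq:boot-assume} holds with $K$ replaced by $2K$. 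The goal is to show that on $[T_1, t_0]$ the bounds \eqref{eq:boot-assume} in fact hold with $K$ (i.e.\ strict improvement), which by continuity and the endpoint values $\bs w(t_0)=0$, $(\mu,\la,a,b)(t_0) = (\mu_{\app},\la_{\app},a_{\app},b_{\app})(t_0)$ forces $T_1 = T_0$, and also that one never exits $J_0$ before reaching $T_0$ (the latter because the improved bounds keep $\bfd_+(\bs u_0(t))$ small, so Lemma~\ref{l:mod2} continues to apply). Note that on this region $\mu \simeq 1$, $\nu = \la/\mu \simeq \la \simeq \la_{\app} \simeq t^{-\frac{2}{k-2}}$, $\abs{a} \lesssim \la^{k/2+1}$, $\abs{b} \lesssim \la^{k/2}$, and $\la_{\app}' \simeq -\la_{\app}^{1+k/2}/... $ is negative, so $b>0$ and the monotonicity terms in Propositions~\ref{p:Hs1'},~\ref{p:H2'} have a favorable sign.

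\textbf{Step 1: modulation bounds from energy bounds.} Assuming \eqref{eq:boot-assume} with constant $2K$, feed $\| \bs w \|_{\HH} \lesssim K\la^{\frac{3k}{2}-1}$ into Lemma~\ref{l:modc2}. The key point is that every term on the right of \eqref{eq:a-mu'}--\eqref{eq:b'est} is then of strictly smaller order than what is needed: e.g.\ $\abs{a-\mu'} \lesssim (\abs{a}+\abs{b}) \| \bs w\|_{\HH} + \dots \lesssim \la^{k/2}\la^{\frac{3k}{2}-1} + \la^{k/2}\nu^{2k} + \la^{5k/2}\nu^{k-2} \lesssim \la^{k/2} \cdot \la^{k-1}\cdot \text{(small)}$, which beats the target $a_{\app}\la^{k-2}\simeq \la^{k/2+1}\la^{k-2}$ once $k\ge 4$ — actually one checks each exponent against the target and finds a net positive power of $\la$, hence a gain by choosing $T_0$ large. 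Then integrate the differential inequalities $\abs{(\la-\la_{\app})'} = \abs{\la' + b_{\app}} \le \abs{\la'+b} + \abs{b - b_{\app}}$ etc.\ backwards from $t_0$ where the differences vanish; since $\la_{\app}$ is monotone and the integrands decay like a negative power of $t$ strictly faster than $\la_{\app}^{k-1}/t$, Gr\"onwall (or direct integration, comparing with $\int_t^{t_0} \la_{\app}(s)^{\#}\,ds$) closes the improved modulation estimates with constant $K$, provided $T_0$ is large. This step is essentially bookkeeping of exponents and uses only Lemma~\ref{l:modc2} and \eqref{eq:mod-form-asy}.

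\textbf{Step 2: energy bounds from modulation bounds.} Using $\Hs_1 \simeq \| \bs w\|_{\HH}^2$ (Lemma~\ref{l:w-E1-H1}), integrate \eqref{eq:Hs1'} from $t_0$ (where $\Hs_1(t_0)=0$) forward to $t$: the monotonicity term $\frac{b}{\la}\int_0^{R\la}(\cdots) + \frac{b}{\la}\ang{(f'(Q_\la)-k^2)r^{-2}w^2}$ combined with the coercivity Lemma~\ref{l:loc-coerce} (specifically \eqref{eq:L-loc2}, localized at scale $r_1\la$ for $r_1$ small relative to $R$) is $\ge -c_0\frac{b}{\la}\|w\|_H^2$ with $c_0$ as small as we wish; absorbing $c_0\frac{b}{\la}\|w\|_H^2$ terms, the remaining forcing terms in \eqref{eq:Hs1'} are, after inserting \eqref{eq:boot-assume} and Step~1, all bounded by $\frac{b}{\la}\la^{3k-2}\cdot(\text{small}) \lesssim \frac{b}{\la}\la^{3k-2}\la^{\delta}$ for some $\delta>0$; since $\frac{b}{\la}\simeq \la^{k/2-1}\cdot \la \simeq -\la'/\la$ up to constants, i.e.\ $\int_t^{t_0}\frac{b(s)}{\la(s)}\la(s)^{3k-2+\delta}\,ds \lesssim \la(t)^{3k-2+\delta'}$, we recover $\|\bs w(t)\|_{\HH}^2 \lesssim \la(t)^{3k-2+\delta'} \ll K^2\la(t)^{3k-2}$, improving the first bound. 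The same scheme with $\Hs_2$ (Lemma~\ref{l:w-E1-H1}'s analogue, Proposition~\ref{p:H2'}, coercivity Lemma~\ref{l:coerce2}) gives $\|\bs w\|_{\HH^2}^2 \lesssim \la^{3k-4+\delta'}$, and with $\E_3$ (Lemma~\ref{l:E3}, coercivity as in Lemma~\ref{l:w-E3}) gives $\|\bs\Lam\bs w\|_{\HH}^2 \lesssim \la^{2k-2+\delta'} + C\|w\|_H^2 \lesssim \la^{2k-2+\delta'}$; here one must be slightly careful because Lemma~\ref{l:E3} has no virial correction, but the "bad" terms there carry an $a+b$ or a $\|w\|_{H^2}$ factor and integrate against $dt$ with enough decay, and the term $\|\La_0\dot w\|_{L^2}\|w\|_{H^2}$ is handled by Cauchy--Schwarz and the already-improved $\HH$ and $\HH^2$ bounds. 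In all three cases the gain $\la^{\delta'}$ is what lets us replace $2K$ by $K$.

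\textbf{Main obstacle.} The delicate part is not any single estimate but the consistency of the whole exponent ledger: one must verify that for \emph{every} error term appearing in Lemma~\ref{l:modc2}, Propositions~\ref{p:Hs1'},~\ref{p:H2'}, and Lemma~\ref{l:E3}, after substituting $\abs{a}\simeq\la^{k/2+1}$, $\abs{b}\simeq\la^{k/2}$, $\nu\simeq\la$, and the bootstrap sizes of $\bs w$, the resulting power of $\la$ strictly exceeds the target power — and crucially that the $\HH$, $\HH^2$, $\bs\Lam^{-1}\HH$ estimates close \emph{simultaneously} (the $\HH^2$ energy inequality contains $\|\bs w\|_{\HH}$ and $\|\bs w\|_{\HH^2}$ cross terms, and the weighted estimate contains $\|w\|_{H^2}$, so the bootstrap must be run as a coupled system, improving all three norms at once rather than sequentially). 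The order of the argument therefore matters: within one pass of the bootstrap, first improve the modulation bounds (Step~1 uses only $\|\bs w\|_{\HH}$), then improve $\|\bs w\|_{\HH}$, then $\|\bs w\|_{\HH^2}$ using the improved $\|\bs w\|_{\HH}$, then $\|\bs\Lam\bs w\|_{\HH}$ using the improved $\HH$ and $\HH^2$ bounds. The requirement $k\ge 4$ enters precisely in guaranteeing these net powers are positive (it is what makes $A, B, \ti B \in L^2$ and what gives room in exponents like $\nu^{k-2}$, $3k-2 > 2k$, etc.). Once Proposition~\ref{p:boot} is established, Theorem~\ref{t:refined} follows by the standard compactness argument: take $t_0 = t_n \to \infty$, extract from $\{\bs u_0^{(n)}\}$ a subsequence converging in $\HH$ on compact subsets of $[T_0,\infty)$ (using the uniform $\HH^2$ bound for compactness), obtain the limiting solution $\bs u_c$, which inherits all the uniform bounds of Proposition~\ref{p:boot} and hence the decomposition and estimates \eqref{eq:mod-est}--\eqref{eq:w-est}; global existence and backward scattering then follow from the uniform bounds together with the $\HH$ small-data / threshold theory and the classification result of~\cite{JL1}.
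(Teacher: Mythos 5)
Your high-level plan — a bootstrap on $[T_1,t_0]$ with endpoint values exactly at the $\app$-parameters, feeding the modulation estimates of Lemma~\ref{l:modc2} and the monotonicity estimates of Propositions~\ref{p:Hs1'},~\ref{p:H2'} and Lemma~\ref{l:E3} into one another — is the same as the paper's. The reordering (modulation first, then energy, rather than the paper's energy-first order) is harmless, since in a single bootstrap pass all improvements are derived from the $2K$-bounds and not from each other. The cascade $\HH \to \HH^2 \to \bs\Lam^{-1}\HH$ that you flag is also the one the paper uses.

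The genuine gap is your identification of the mechanism that closes the bootstrap. You claim that after substitution, the forcing terms in $\Hs_1'$ are bounded by $\tfrac{b}{\la}\la^{3k-2}\la^{\delta}$ for some $\delta>0$, and that integrating this power gain is what upgrades $2K$ to $K$. That is false for the two critical terms. The Pohozaev/coercivity term you absorb costs $c_0\tfrac{b}{\la}\|w\|_H^2 \simeq c_0 K^2\la_{\app}^{\frac{7k}{2}-3}$, and the linear source term $C_1\big(\tfrac{\nu^{2k-1}}{\la}+\tfrac{b^2\nu^{k-1}}{\la}+\cdots\big)\|\bs w\|_{\HH}$ is of size $C_1 K\la_{\app}^{\frac{7k}{2}-3}$; after integration both give exactly $\la_{\app}^{3k-2}$, i.e.\ \emph{the same power} as the target $K^2\la_{\app}^{3k-2}$. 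There is no $\la_{\app}^{\delta}$ gain, and no choice of $T_0$ will manufacture one. The improvement must come from freely tunable constants, not powers of $\la_{\app}$: the quadratic term is beaten because $c_0$ can be made arbitrarily small by taking $R$ large in the construction of $\A_0(\la)$ (Lemma~\ref{l:opA}), and the linear source term is beaten because one may take the bootstrap constant $K$ large relative to the fixed $C_1$ (linear beats quadratic in $K$). The same phenomenon recurs in the modulation bootstrap: for instance $\int_t^{t_0}|a'-a_{\app}'|$ returns exactly $\la_{\app}^{\frac{3k}{2}-1}$ with no extra decay, and the strict improvement there relies on the explicit numerical gain $\tfrac{k}{3k-2}<\tfrac{k}{k+2}$ (valid since $k>2$) together with $T_0$ large to kill the $(1+o(1))$ prefactors, and $C_0$ large to absorb the $C_0$-independent contributions from Lemma~\ref{l:modc2}. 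Without making the $(c_0\ \text{small},\ K\ \text{large},\ \text{explicit constants}<1)$ mechanism explicit, the critical terms in your Step~2 do not close: one would only recover the assumed bound, not a strict improvement.
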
   
 
 \begin{cor} \label{c:boot} 
 Let $T_0$ be as in Proposition~\ref{p:boot}.  Let $t_0>T_0$ and denote by $\bs u_{0}(t)$ the unique solution to~\eqref{eq:wmk} with initial data at time $t_0> T_0$ given by $\bs u_0(t_0) = \bs  \Phi_{\app}(t_0)$. Then, the estimates, 
 \EQ{
 \|  \bs u_{0}(t) -  \bs \Phi_{\app}(t) \|_{\HH} &\lesssim  t^{-1}   \\
 \|  \bs u_{0}(t) - \bs \Phi_{\app}(t)  \|_{\HH^2} &\lesssim  t^{-1}   \\
 \| \bs \Lam \bs u_{0}(t) - \bs \Lam \bs \Phi_{\app}(t)  \|_{\HH} &\lesssim  t^{-1} 
 }
 hold uniformly on $t \in [T_0, t_0]$, i.e, with a constant independent of $t, t_0$.
 \end{cor}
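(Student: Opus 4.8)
The plan is to deduce Corollary~\ref{c:boot} from Proposition~\ref{p:boot} by a routine comparison of the two ansatze $\bs \Phi(\mu(t), \lam(t), a(t), b(t))$ and $\bs \Phi_{\app}(t) = \bs \Phi(\mu_{\app}(t), \lam_{\app}(t), a_{\app}(t), b_{\app}(t))$, combined with the triangle inequality. First I would write
\EQ{
\bs u_0(t) - \bs \Phi_{\app}(t) = \big( \bs u_0(t) - \bs \Phi(\mu(t), \lam(t), a(t), b(t)) \big) + \big( \bs \Phi(\mu(t), \lam(t), a(t), b(t)) - \bs \Phi_{\app}(t) \big)
}
and estimate the two pieces separately in each of the three norms $\HH$, $\HH^2$, and $\bs \Lam^{-1}\HH$. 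The first piece is controlled directly by the three displayed bounds in Proposition~\ref{p:boot}, and since $\lam_{\app}(t) = q_k t^{-\frac{2}{k-2}}(1 + O(t^{-\frac{4}{k-2}+\eps}))$ and $k \ge 4$, we have $\lam_{\app}(t)^{\frac{3k}{2}-2} \lesssim t^{-1}$ (the worst exponent being $\frac{3k}{2}-2 \ge 4$ hence $\frac{2}{k-2}(\frac{3k}{2}-2) \ge 1$ for all $k \ge 4$; in fact strictly bigger, so this term is $o(t^{-1})$), and similarly $\lam_{\app}(t)^{\frac{3k}{2}-1} \lesssim t^{-1}$ and $\lam_{\app}(t)^{k-1} \lesssim t^{-1}$.

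Next I would bound the second piece, the difference of the two ansatze. Using the parameter estimates from Proposition~\ref{p:boot}, namely $\abs{\lam - \lam_{\app}} \lesssim \lam_{\app}^{k-1}$, $\abs{\mu - \mu_{\app}} \lesssim \lam_{\app}^{k}$, $\abs{a - a_{\app}} \lesssim a_{\app}\lam_{\app}^{k-2}$, and $\abs{b - b_{\app}} \lesssim b_{\app}\lam_{\app}^{k-2}$, together with the asymptotics $a_{\app} \sim \lam_{\app}^{\frac{k}{2}+1}$ and $b_{\app} \sim \lam_{\app}^{\frac{k}{2}}$ from~\eqref{eq:mod-form-asy} and the explicit formulas for $\ti a_{\app}, \ti b_{\app}$, one differentiates $\bs \Phi$ with respect to its four parameters and applies the bounds on $\Lam Q, A, B, \ti B$ from Lemma~\ref{l:ABB}. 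Concretely, $\bs \Phi$ depends smoothly on $(\mu, \lam, a, b)$ in $\HH \cap \HH^2 \cap \bs\Lam^{-1}\HH$ on the relevant parameter range (with the $\lam$-derivative in $\HH$ costing a factor $\lam^{-1}$, and in $\HH^2$ costing $\lam^{-2}$), so that
\EQ{
\| \bs \Phi(\mu, \lam, a, b) - \bs \Phi_{\app} \|_{\HH} &\lesssim \frac{\abs{\lam - \lam_{\app}} + \abs{\mu - \mu_{\app}}}{\lam_{\app}} + \abs{a - a_{\app}} + \abs{b - b_{\app}} \\
&\lesssim \lam_{\app}^{k-2} + \lam_{\app}^{\frac{k}{2}+1}\lam_{\app}^{k-2} + \lam_{\app}^{\frac{k}{2}}\lam_{\app}^{k-2} \lesssim \lam_{\app}^{\frac{k}{2}+k-2},
}
which for $k \ge 4$ satisfies $\lam_{\app}^{\frac{k}{2}+k-2} \lesssim \lam_{\app}^{4} \lesssim t^{-1}$; the same reasoning with an extra $\lam_{\app}^{-1}$ gives the $\HH^2$ bound, and the $\bs\Lam^{-1}\HH$ bound is handled identically (it is even easier, scaling like the $\HH$ bound). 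One should double check that the leading $\mu$-contribution does not degrade things: since $\mu_{\app}, \mu \simeq 1$, no small denominator appears there.

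The main obstacle—though it is more bookkeeping than genuine difficulty—is making the dependence of $\bs \Phi$ on its parameters quantitatively precise in all three norms simultaneously, in particular tracking the powers of $\lam$ that appear when the $\lam$- and $\nu$-derivatives hit the rescaled profiles $Q_\lam, A_\lam, B_\lam$ and $\ti B_\mu$ (recall $\nu = \lam/\mu$, so $\p_\lam$ and $\p_\mu$ both act on the $\nu^k$ factors). Every such estimate reduces to the pointwise bounds in Lemma~\ref{l:ABB} and the elementary $\HH$-, $\HH^2$-, and $\bs\Lam^{-1}\HH$-norm computations for rescaled functions, so no new analytic input is needed; one simply verifies that each term is $\lesssim \lam_{\app}^{m}$ with $\frac{2m}{k-2} \ge 1$. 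Since $k \ge 4$ forces every relevant exponent $m$ to be at least $4 > \frac{k-2}{2} \cdot 1 = \frac{k-2}{2}$ when $k = 4$ (and the margin only improves for larger $k$), all contributions are in fact $O(t^{-1})$, which completes the proof once the pieces are assembled via the triangle inequality.
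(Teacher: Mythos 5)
Your proposal follows essentially the same route as the paper: split $\bs u_0(t) - \bs\Phi_{\app}(t)$ by the triangle inequality into the error $\bs w(t) = \bs u_0(t) - \bs\Phi(\mu,\lam,a,b)$ (controlled directly by Proposition~\ref{p:boot}) plus the ansatz difference $\bs\Phi(\mu,\lam,a,b) - \bs\Phi_{\app}$ (controlled by a Lipschitz estimate in the modulation parameters together with the parameter bounds from Proposition~\ref{p:boot}). The paper phrases the Lipschitz step in a scale-invariant, square-root form, $\| \bs\Phi(\mu,\lam,a,b) - \bs\Phi_{\app}\|_{\HH} \lesssim \abs{\lam/\lam_{\app}-1}^{1/2} + \abs{\mu/\mu_{\app}-1}^{1/2} + \abs{a-a_{\app}}^{1/2} + \abs{b-b_{\app}}^{1/2}$, which is weaker but simpler to state and suffices; you instead prove a sharper linear-in-parameter bound with explicit $\lam^{-1}$ (for $\HH$, $\bs\Lam^{-1}\HH$) and $\lam^{-2}$ (for $\HH^2$) prefactors, which is correct and in fact more transparent about why the $\HH^2$ case (the tightest, giving exactly $t^{-1}$ when $k=4$) still works.

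One bookkeeping slip: in your chain
$\lam_{\app}^{k-2} + \lam_{\app}^{\frac{k}{2}+1}\lam_{\app}^{k-2} + \lam_{\app}^{\frac{k}{2}}\lam_{\app}^{k-2} \lesssim \lam_{\app}^{\frac{k}{2}+k-2}$
the claimed dominant term has the wrong sign: since $\lam_{\app}\ll 1$, the sum is controlled by the \emph{smallest} exponent, namely $\lam_{\app}^{k-2}$ (not $\lam_{\app}^{3k/2-2}$). This is harmless, because $\lam_{\app}^{k-2}\simeq t^{-2}\lesssim t^{-1}$ for $k\ge 4$, and the corresponding dominant term for the $\HH^2$ bound, $\lam_{\app}^{k-3}\simeq t^{-2(k-3)/(k-2)}$, equals exactly $t^{-1}$ at $k=4$ and improves for larger $k$. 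A minor inaccuracy with the same flavor: you divide $\abs{\mu-\mu_{\app}}$ by $\lam_{\app}$, but since $\mu\simeq 1$ the $\mu$-Lipschitz constant is $O(1)$ and no $\lam^{-1}$ factor is needed there; again this only overcounts and so does no damage. Apart from these slips the argument is correct and proves the claimed $t^{-1}$ bounds.
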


\begin{proof}[Proof of Proposition~\ref{p:boot}]
The proof will proceed via a bootstrap argument. Let $T_0>0$ be large enough so that for all $t_0>T_0$ the conditions of Lemma~\ref{l:mod2} hold for~$\bs u(t_0)$ at $t_0$, and thus on some (possibly) small interval containing $t_0$. Let $C_0>0$ be a constant to be fixed below, and let $T_1<t_0$ be the smallest time such that $T_1 \ge T_0$ and such that 
\EQ{ \label{eq:mod-boot} 
\abs{\lam(t) -\lam_{\app}(t)}  \le 2 C_0  \lam_{\app}(t)^{k-1} &, \quad 
\abs{ \mu(t)- \mu_{\app}(t)} \le 2 C_0 \lam_{\app}(t)^{k}\\
\abs{ a(t) - a_{\app}(t)} \le   2 C_0  k \abs{a_{\app}(t)} \lam_{\app}(t)^{ k -2}   &, \quad 
 \abs{ b(t) - b_{\app}(t)} \le   2 C_0 \frac{k}{2}\abs{b_{\app}(t)} \lam_{\app}(t)^{ k -2}   
 } 
 and 
 \EQ{ \label{eq:w-boot} 
 \| \bs w(t) \|_{\HH}^2& \le 2C_0^2  \lam_{\app}(t)^{3k -2} , \\
  \| \bs w(t) \|_{\HH^2}^2 &\le 2C_0^2  \lam_{\app}(t)^{3k -4} , \\
  \| \bs w(t) \|_{\bs \Lam \HH}^2 &\le 2C_0^2  \lam_{\app}(t)^{2k-2}
}
A standard argument shows that the solution along with the modulation parameters are well defined on $[T_1, t_0] \subset J_0$. The goal is to obtain a strict improvement to these estimates, which proves that $T_1 = T_0$. 

First, we obtain improvements to the estimates~\eqref{eq:w-boot}. 
 Applying the bootstrap assumptions to the right-hand side of the estimate for $\Hs_1'(t)$ in Proposition~\ref{p:Hs1'} we arrive at the estimate,  
\EQ{
\Hs_1'(t) &\ge   \frac{b(t)}{\lambda(t)}\int_0^{R\lambda(t)}\Big((\partial_r w)^2 + \frac{k^2}{r^2}w^2\Big)(t) \udr + \frac{b(t)}{\lam(t)} \int_0^\infty (f'(Q_{\lam(t)}) - k^2) \frac{w(t)^2}{r^2} \, \udr \\
&\quad   -c_0 \frac{\abs{b_{\app}(t)}}{\la_{\app}(t)} \| w(t) \|_H^2   - C_1 \lam_{\app}(t)^{2k-2} \| \bs w (t) \|_{\HH}  \\
& \ge - c_0 C_0^2 \lam_{\app}(t)^{\frac{7}{2} k - 3} - C_1 C_0 \lam_{\app}(t)^{\frac{7}{2} k - 3} \ge - c_0 C_0^2 \lam_{\app}(t)^{\frac{7}{2} k - 3}
}
where the second-to-last estimate follows from the localized coercivity estimate~\eqref{eq:L-loc1} after rescaling it by $\lam(t)$ and observing that the bootstrap assumptions~\eqref{eq:mod-boot} imply that $b(t)>0$ on $[T_1, t_0]$. The last estimate is achieved by taking $C_0$ large enough relative to $C_1$. Using Lemma~\ref{l:w-E1-H1}, the above estimate for $\Hs_1'(t)$ and the fact that $\Hs_1(t_0) = 0$ we have, for any $t \in [T_1, t_0]$, 
\EQ{
\| \bs w(t) \|_{\HH}^2 & \le C \Hs_1(t)  =  - C\int_{t}^{t_0}  \Hs_1'(s) \, \ud s \\
& \le c_0 C C_0^2 \int_{t}^{t_0}  \lam_{\app}(s)^{\frac{7}{2} k - 3}  \le c_0 2 C_0^2   \lam_{\app}(t)^{3k-2} 
}
where we have allowed the value of $c_0$ to change from line to line, but still noting it can be taken as small as we like in Proposition~\ref{p:Hs1'} independently of the relevant constants in this proof.  We have improved the first line in~\eqref{eq:w-boot}. The improvement in the second line of~\eqref{eq:w-boot} is obtained in a nearly identical manner, using this time the estimate in Proposition~\ref{p:H2'} along with~\eqref{eq:E2H2} and  the localized coercivity estimates~\eqref{eq:L-loc1} and~\eqref{eq:L2-loc1} on the right-hand side of the estimate~\eqref{eq:H2'}. Finally, the last estimate in~\eqref{eq:w-boot} follows from Lemma~\ref{l:w-E3}, Lemma~\ref{l:E3} and the bootstrap assumption for the modulation parameters,~\eqref{eq:mod-boot}. 

Next, we improve the esimates~\eqref{eq:mod-boot}. Under the bootstrap assumptions we obtain the following estimates on the interval $[T_1, t_0]$, 
\EQ{ \label{eq:mod-boot2} 
\abs{\la'(t) + b(t)} &\lesssim \lam_{\app}(t)^{2k-1}\\ 
\abs{ a(t) - \mu'(t)} &\lesssim \lam_{\app}(t)^{2k-1} \\ 
\abs{ b'(t) + \gamma_k \frac{\nu(t)^k}{\lam(t)}} &\lesssim \lam_{\app}(t)^{2k-2} \\
\abs{ a'(t) +  \gamma_k \frac{\nu(t)^k}{\mu(t)}} &\lesssim \lam_{\app}(t)^{2k-2}
}

First, using the bootstrap assumptions~\eqref{eq:mod-boot} along with~\eqref{eq:mod-boot2} we have, 
\EQ{
\abs{ \lam(t) - \lam_{\app}(t)}& \le \int_t^{t_0} \abs{\la'(\tau) + b_{\app}(\tau)} \, \ud \tau \\
& \le\int_t^{t_0} \abs{b(\tau) - b_{\app}(\tau)} \, \ud \tau + \int_t^{t_0} \abs{\la'(\tau) + b(\tau)} \, \ud \tau  \\
& \le 2 C_0 (1+o(1))\int_t^{t_0}  \frac{k}{2} \rho_k \lam_{\app}(\tau)^{\frac{3}{2} k -2} \, \ud \tau  \\
& \le \frac{k}{k-1}(1+o(1)) C_0  \lam_{\app}(t)^{k-1}  
}
where $o(1)$ means a constant that can me made as small as we like by taking $T_0$ large.  This is a strict improvement to the first inequality in~\eqref{eq:mod-boot} as long as $T_0$ is chosen large enough so that $\frac{k}{k-1}(1+o(1))<2$. 
Similarly, 
\EQ{
\abs{\mu(t) - \mu_{\app}(t)} & \le \int_t^{t_0} \abs{a(\tau) - a_{\app}(\tau)} \ud \tau + \int_{t}^{t_0} \abs{ a(\tau) - \mu'(\tau)} \, \ud \tau  \\
& \le 2 C_0( 1+ o(1)) \int_{t_0}^t \frac{k}{k+2}k   \rho_k \lam_{\app}(\tau)^{\frac{3}{2} k - 1} \, \ud \tau \\ 
& \le  \frac{k}{k+2} ( 1+ o(1)) 2 C_0 \lam_{\app}(t)^k 
}
which strictly improves the second inequality in~\eqref{eq:mod-boot}. Next, 
\EQ{
\abs{b(t) - b_{\app}(t)}  &\le \int_t^{t_0} \abs{b'(s) - b_{\app}'(s)} \, \ud s  \\
& \le \int_t^{t_0}\gamma_k \Big|\frac{\lam(s)^{k-1}}{\mu(s)^k} - \frac{\lam_{\app}(s)^{k-1}}{\mu_{\app}(s)^{k}}\Big| \, \ud s \,  \ud \tau + \int_t^{t_0}\ \Big|b'(s) + \gamma_k\frac{\nu(s)^k}{\lam(s)}\Big| \, \ud s   \\
& \le \int_t^{t_0}\gamma_k \mu(s)^{-k}\Big|\lam(s)^{k-1} - \lam_{\app}(s)^{k-1}\Big| \, \ud s  \\
&\quad + \int_t^{t_0}\gamma_k \lam_{\app}(s)^{k-1}\Big|\frac{1}{\mu(s)^k} -\frac{1}{\mu_{\app}(s)^k}\Big| \, \ud s  
 + \int_t^{t_0} \Big|b'(s) + \gamma_k\frac{\nu(s)^k}{\lam(s)}\Big| \, \ud s 
}
Note that for $\ell \ge 1$. 
\EQ{
 \abs{\lam^{ \ell} -\lam_{\app}^{\ell}}  &\le  \ell(1+ o(1))  \abs{\lam - \lam_{\app}}\lam_{\app}^{\ell-1} \le 2 C_0 \ell (1+ o(1))\lam_{\app}^{k+ \ell -2}  \\
 \abs{\frac{1}{\mu^{\ell}} - \frac{1}{\mu_{\app}^{\ell}}} &\le \ell (1 + o(1)) \abs{\mu- \mu_{\app}}  \le 2 C_0 \ell (1+o(1)) \lam_{\app}^k 
}
Using these inequalities and the estimate~\eqref{eq:mod-boot2} above along with the fact that $\gamma_k = k/2 \rho_k^2$ we have, 
\EQ{
\abs{b(t) - b_{\app}(t)}  &\le  2C_0(1+ o(1)) \frac{k(k-1)}{2} \rho_k^2 \int_{t}^{t_0} \lam_{\app}(s)^{2k-3}  \, \ud s  \\
&\quad + 2C_0(1+ o(1)) \frac{k^2}{2} \rho_k^2 \int_{t}^{t_0} \lam_{\app}(s)^{2k-1}  \, \ud s  + C\int_{t_0}^t   \lam_{\app}(s)^{2k-2} \, \ud s \\
& \le 2C_0(1+ o(1)) \frac{k(k-1)}{2} \rho_k^2 \int_{t}^{t_0} \lam_{\app}(s)^{2k-3}  \, \ud s \\
& \le 2 C_0( 1+o(1)) \frac{k-1}{\frac{3}{2} k - 2} \frac{k}{2}    b_{\app}(t) \lam_{\app}(t)^{k-2}
}
which is a strict improvement over the bootstrap assumption for $b(t)$ as long as $T_0$ is chosen large enough. Finally, we estimate, 
\EQ{
\abs{a(t) - a_{\app}(t)} &\le \int_{t_0}^t  \abs{a'( s) - a_{\app}'(s)} \, \ud s  \\
& \le\int_{t_0}^t  \gamma_k \abs{\frac{\lam(s)^k}{\mu(s)^{k+1}} - \frac{\lam_{\app}(s)^k}{\mu_{\app}(s)^{k+1}}} \, \ud s + \int_{t_0}^t  \abs{a'( s) + \gamma_k  \frac{\nu(s)^k}{\mu(s)}} \, \ud s \\
& \le 2 C_0(1+ o(1)) \frac{k^2}{2} \rho_k^2 \int_{t_0}^t \lam_{\app}(s)^{2k-1} \, \ud s  \\
& \le \frac{k+2}{3k-2} (1+ o(1)) 2 C_0 k a_{\app}(t) \lam_{\app}(t)^{ k -2} 
}
which improves the bootstrap assumption for $a(t)$. We conclude that in fact, $T_1 = T_0$. 
\end{proof} 

\begin{proof}[Proof of Corollary~\ref{c:boot}]
For the first estimate, we simply apply the conclusions of Proposition~\ref{p:boot} along with the estimate, 
\EQ{
\| \bs  \Phi(\mu(t), \lam(t), a(t), b(t)) - \bs \Phi_{\app}(t) \|_{\HH} & \lesssim  \abs{ \frac{\lam(t)}{\lam_{\app}(t)} - 1}^{\frac{1}{2}} +  \abs{ \frac{\mu(t)}{\mu_{\app}(t)} - 1}^{\frac{1}{2}}  \\
&\quad + \abs{ a(t) - a_{\app}(t)}^{\frac{1}{2}} + \abs{b(t) - b_{\app}(t)}^{\frac{1}{2}} 
}
The corresponding estimates in $\HH^2$ and $\bs \Lam^{-1} \HH$ are identical. 
\end{proof} 

\begin{proof}[Proof of Theorem~\ref{t:refined}] 
We begin by constructing the solution using a version of the classical weak convergence/compactness argument introduced by Martel~\cite{Martel05} and Merle~\cite{Merle90}.  Let $t_n \to \infty$ be the sequence defined by $t_0 = T_0$, $t_n := 2^{2n} T_0$. Let $\bs u_n(t)$ denote the solution to~\eqref{eq:wmk} with initial data at time $t_n$ given by 
$
\bs u(t_n) := \bs \Phi_{\app}(t_n)
$. 
By Corollary~\ref{c:boot} we have, for all $m \ge 1$, the estimate
\EQ{ \label{eq:mt12} 
\sup_{t \in [t_{m-1}, t_{m}]} t^{\frac{1}{2}}  \| \bs u_m(t) - \bs \Phi_{\app}(t) \|_{\HH \cap \HH^2 \cap \bs \Lam^{-1} \HH} \le 2^{-(m-1)} T_0^{-\frac{1}{2}}
}
and in particular for all $n \ge 1$, 
\EQ{
\sup_{t \in [T_0, t_{n}]} t^{\frac{1}{2}}  \| \bs u_n(t) - \bs \Phi_{\app}(t) \|_{\HH \cap \HH^2 \cap \bs \Lam^{-1} \HH} \le  2 T_0^{-\frac{1}{2}}
}
By taking $T_0>0$ large enough we can (after possibly extracting a subsequences) find a solution $\bs u_c(t) \in \HH \cap \HH^2 \cap \bs \Lam^{-1} \HH$ to~\eqref{eq:wmk} on the time interval $[T_0, \infty)$  so that $\bs u_n(t) \rightharpoonup \bs u_c(t)$ in $\HH \cap \HH^2 \cap \bs \Lam^{-1} \HH$  for all $t \in [T_0, \infty)$; see~\cite[Corollary A6]{JJ-AJM} for details. It follows from weak convergence and the estimate~\eqref{eq:mt12} that the solution $\bs u_c(t)$ satisfies, 
\EQ{ \label{eq:t12} 
 \| \bs u_c(t) - \bs \Phi_{\app}(t) \|_{\HH \cap \HH^2 \cap \bs \Lam^{-1} \HH}  \lesssim t^{-\frac{1}{2}}  
}
for all $t \in [T_0, \infty)$. 

The function $\bs u_c(t)$ will be our desired 2-bubble solution, but we must improve the estimate~\eqref{eq:t12}. First, we apply the modulation lemma, i.e. Lemma~\ref{l:mod2} to the solution $\bs u_c(t)$ on any interval $[T, \infty)$ with $T \ge T_0$. Using the estimate~\eqref{eq:t12} and the estimate~\eqref{eq:mod-bound} from the proof of Lemma~\ref{l:mod2} we obtain modulation parameters $(\mu_c(t), \lam_c(t), a_c(t), b_c(t))$ satisfying, 
\EQ{ \label{eq:mod-uc}
\abs{ \frac{\lam_c(t)}{\lam_{\app}(t)} - 1}  &= o(1)  \mas t \to \infty \\
\abs{ \frac{\mu_c(t)}{\mu_{\app}(t)} - 1}  &= o(1)  \mas t \to \infty \\
\abs{a_c(t) - a_{\app}(t) } &\lesssim o(1)  \lam_{\app}(t)^{\frac{k}{2}}  \mas t  \to \infty\\
\abs{ b_c(t) - b_{\app}(t) } &\lesssim o(1) \lam_{\app}(t)^{\frac{k}{2}} \mas t \to \infty
}
Note that although we have $a_{\app}(t) \simeq \lam_{\app}(t)^{\frac{k}{2} +1}$ the bounds above only yield the preliminary estimate $\abs{a_c(t)} =o(1) \lam_{\app}(t)^{\frac{k}{2}}$, but this will be sufficient for our purposes. 
Moreover, for $$\bs w_c(t) = \bs u_c(t) - \bs \Phi( \mu_c(t), \lam_c(t), a_c(t), b_c(t))$$ we have the preliminary  \emph{quantitative} estimate, 
\EQ{ \label{eq:wc-bound1} 
 \| w_c(t)  \|_{H} + \nu_c(t)^{-\frac{k}{2}} \| \dot w_c \|_{L^2} \lesssim  \bfd_+( \bs u_c (t)) \lesssim t^{-\frac{1}{2}} 
}
given by Lemma~\ref{l:mod2} and the estimate~\eqref{eq:t12} -- here $\nu_c(t) = \lam_c(t)/ \mu_c(t)$ as usual. Now  that we have estimates for the sizes $\mu_c(t) \simeq 1$, $\lam_c(t) \simeq t^{-\frac{2}{k-2}}$, $\abs{a_c(t)} \lesssim t^{-\frac{k}{k-2}}$ and $b_c(t) \simeq t^{-\frac{k}{k-2}}$, and the bound~\eqref{eq:wc-bound1}, we use Proposition~\ref{p:Hs1'} to obtain an improvement. Indeed, by Lemma~\ref{l:w-E1-H1} we have, 
\EQ{ \label{eq:wc-bound2} 
\| \bs w_c(t) \|_{\HH}^2 \lesssim -\int_t^{\infty}  \Hs_1'(s) \, \ud s
}
Next, using that $b_c(t)>0$ on $[T_0, \infty)$ we can use the localized coercivity estimate~\eqref{eq:L-loc1} on the right-hand side of~\eqref{eq:Hs1'} in Proposition~\ref{p:Hs1'} to obtain the estimate, 
\EQ{
-\Hs_1'(s)  \le \frac{c_0}{s}  \| \bs w \|_{\HH}^2 + C s^{-\frac{2}{k-2}(\frac{7}{2} k - 3)} 
}
where $c_0>0$ is a constant that can be made as small as we like. Note that above we have used crucially the preliminary quantitative estimate $\| \dot w_c(t) \|_{L^2} \lesssim \nu_c(t)^{\frac{k}{2}} \lesssim t^{-\frac{k}{k-2}}$ from~\eqref{eq:wc-bound1} and~\eqref{eq:mod-uc} to control several of the terms on the right-hand side of~\eqref{eq:Hs1'}. Inserting the above into~\eqref{eq:wc-bound2} we obtain, 
\EQ{
\| \bs w_c(t) \|_{\HH}^2 \lesssim t^{-\frac{2}{k-2}( 3k-2) } + c_0 \int_t^\infty s^{-1} \| \bs w_c(s) \|_{\HH}^2 \, \ud s 
}
Multiplying through by $t^{\frac{1}{2}}$ we have 
\EQ{
t^{\frac{1}{2}}\| \bs w_c(t) \|_{\HH}^2 \lesssim t^{-\frac{2}{k-2}( 3k-2) + \frac{1}{2} } + c_0 t^{\frac{1}{2}}\int_t^\infty s^{-1} s^{-\frac{1}{2}} s^{\frac{1}{2}} \| \bs w_c(s) \|_{\HH}^2 \, \ud s 
}
Defining $f(t) := \sup_{\tau \in [t, \infty)}\tau^{\frac{1}{2}}\| \bs w_c(\tau) \|_{\HH}^2$, we know by the preliminary quantitative estimate~\eqref{eq:wc-bound1} that $f(t) \lesssim 1$.  The above then yields the inequality, 
\EQ{
f(t) \lesssim t^{-\frac{2}{k-2}( 3k-2) + \frac{1}{2} }  + c_0 f(t) 
}
which implies that $f(t) \lesssim t^{-\frac{2}{k-2}( 3k-2) + \frac{1}{2} }$ by taking $c_0>0$ small enough.  In other words, we have proved the bound 
\EQ{ \label{eq:wc-bound3} 
\| \bs w_c(t) \|_{\HH} \lesssim \lam_{\app}(t)^{3k-2}
}
as desired. In the same fashion we obtain the claimed bounds 
\begin{align}
\| \bs w_c(t) \|_{\HH^2}^2 &\lesssim \lam_{\app}(t)^{3k-4} \label{eq:wc-bound-H2}    \\
  \| \bs w_c(t) \|_{\bs \Lam^{-1} \HH}^2 &\lesssim \lam_{\app}(t)^{2k-2} \label{eq:wc-bound-LaH} 
\end{align} 
using now the established estimate~\eqref{eq:wc-bound3} as input into Proposition~\ref{p:H2'} to prove~\eqref{eq:wc-bound-H2} and then~\eqref{eq:wc-bound3} and~\eqref{eq:wc-bound-H2} as input into Lemma~\ref{l:w-E3} and Lemma~\ref{l:E3} to prove~\eqref{eq:wc-bound-LaH}. 

Finally, we improve~\eqref{eq:mod-uc} establish the refined bounds~\eqref{eq:mod-est} and~\eqref{eq:mod'}. Inserting the estimates ~\eqref{eq:wc-bound3} along with~\eqref{eq:mod-uc} into Lemma~\ref{l:modc2} we see that $(\mu_c(t), \lam_c(t), a_c(t), b_c(t))$ satisfy the differential inequalities, 
\EQ{
\abs{\la_c'(t) + b_c(t)} + \abs{ a_c(t) - \mu_c'(t)}  &\lesssim \lam_{\app}(t)^{2k-1},  \\
\abs{ b_c'(t) + \gamma_k \frac{\nu_c(t)^k}{\lam_c(t)}} + \abs{ a_c'(t) + \gamma_k \frac{\nu_c(t)^k}{\mu_c(t)}} &\lesssim \lam_{\app}(t)^{2k-2} 
}
An ODE argument together with the estimates~\eqref{eq:mod-form-asy} for $(\mu_{\app}(t), \lam_{\app}(t), a_{\app}(t), b_{\app}(t))$ lead to~\eqref{eq:mod-est}. 
This completes the proof of Theorem~\ref{t:refined}. 
\end{proof}

\bibliographystyle{plain}
\bibliography{researchbib}

\begin{thebibliography}{10}

\bibitem{ChowHale}
S.-N. Chow and J.~K. Hale.
\newblock {\em {Methods of Bifurcation Theory}}, volume 251 of {\em Grundlehren
  der mathematischen Wissenschaften}.
\newblock Springer, 1982.

\bibitem{CTZduke}
D.~Christodoulou and A.~S. Tahvildar-Zadeh.
\newblock On the asymptotic behavior of spherically symmetric wave maps.
\newblock {\em Duke Math. J.}, 71(1):31--69, 1993.

\bibitem{CTZcpam}
D.~Christodoulou and A.~S. Tahvildar-Zadeh.
\newblock On the regularity of spherically symmetric wave maps.
\newblock {\em Comm. Pure Appl. Math.}, 46(7):1041--1091, 1993.

\bibitem{CKLS1}
R.~C{\^o}te, C.~Kenig, A.~Lawrie, and W.~Schlag.
\newblock Characterization of large energy solutions of the equivariant wave
  map problem: {I}.
\newblock {\em Amer. J. Math.}, 137(1):139--207, 2015.

\bibitem{CMM11}
R.~C{\^o}te, Y.~Martel, and F.~Merle.
\newblock Construction of multi-soliton solutions for the ${L}^2$-supercritical
  {gKdV} and {NLS} equations.
\newblock {\em Rev. Mat. Iberoam.}, 27(1):273--302, 2011.

\bibitem{CM}
R.~C\^{o}te and C.~Mu\~{n}oz.
\newblock Multi-solitons for nonlinear {K}lein-{G}ordon equations.
\newblock {\em Forum Math. Sigma}, 2:e15, 38, 2014.

\bibitem{GeGr17}
D.-A. Geba and M.~G. Grillakis.
\newblock {\em An introduction to the theory of wave maps and related geometric
  problems}.
\newblock World Scientific Publishing Co. Pte. Ltd., Hackensack, NJ, 2017.

\bibitem{JJ-APDE}
J.~Jendrej.
\newblock Construction of two-bubble solutions for the energy-critical {NLS}.
\newblock {\em Anal. PDE}, 10(8):1923--1959, 2017.

\bibitem{moi-gkdv}
J.~Jendrej.
\newblock Dynamics of strongly interacting unstable two-solitons for
  generalized {K}orteweg-de {V}ries equations.
\newblock {\em arXiv E-Prints}, 2018.

\bibitem{JJ-Pisa}
J.~Jendrej.
\newblock Nonexistence of two-bubbles with opposite signs for the radial
  energy-critical wave equation.
\newblock {\em Ann. Sc. Norm. Super. Pisa Cl. Sci. (5)}, 18(2):735--778, 2018.

\bibitem{JJ-AJM}
J.~Jendrej.
\newblock Construction of two-bubble solutions for energy-critical wave
  equations.
\newblock {\em Amer. J. Math.}, 141(1):55--118, 2019.

\bibitem{JKL1}
J.~Jendrej, M.~Kowalczyk, and Lawrie A.
\newblock Dynamics of strongly interacting kink-antikink pairs for scalar
  fields on a line.
\newblock {\em arXiv E-Prints}, 2019.

\bibitem{JL1}
J.~Jendrej and A.~Lawrie.
\newblock Two-bubble dynamics for threshold solutions to the wave maps
  equation.
\newblock {\em Invent. Math.}, 213(3):1249--1325, 2018.

\bibitem{JL2-uniqueness}
J.~Jendrej and A.~Lawrie.
\newblock Uniquness of two-bubble wave maps.
\newblock {\em arXiv E-Prints}, 2020.

\bibitem{JLR1}
J.~Jendrej, A.~Lawrie, and C.~Rodriguez.
\newblock Bubbling dynamics for wave maps with prescribed radiation.
\newblock {\em arXiv E-Prints}, 2019.

\bibitem{JM-19}
J.~Jendrej and M.~Martel.
\newblock Construction of multi-bubble solutions for the energy-critical wave
  equation in dimension 5.
\newblock {\em arXiv E-Prints}, 2019.

\bibitem{KlaMac93}
S.~Klainerman and M.~Machedon.
\newblock Space-time estimates for null forms and the local existence theorem.
\newblock {\em Comm. Pure Appl. Math.}, 46(9):1221--1268, 1993.

\bibitem{KlaMac94}
S.~Klainerman and M.~Machedon.
\newblock Smoothing estimates for null forms and applications.
\newblock {\em Internat. Math. Res. Notices}, (9), 1994.

\bibitem{KlaMac95}
S.~Klainerman and M.~Machedon.
\newblock Smoothing estimates for null forms and applications.
\newblock {\em Duke Math. J.}, 81(1):99--133, 1995.

\bibitem{KlaMac97}
S.~Klainerman and M.~Machedon.
\newblock On the regularity properties of a model problem related to wave maps.
\newblock {\em Duke Math. J.}, 87(3):553--589, 1997.

\bibitem{KlaSel97}
S.~Klainerman and S.~Selberg.
\newblock Remark on the optimal regularity for equations of wave maps type.
\newblock {\em Comm. Partial Differential Equations}, 22(5-6):901--918, 1997.

\bibitem{KlaSel02}
S.~Klainerman and S.~Selberg.
\newblock Bilinear estimates and applications to nonlinear wave equations.
\newblock {\em Commun. Contemp. Math.}, 4(2):223--295, 2002.

\bibitem{Kri04}
J.~Krieger.
\newblock Global regularity of wave maps from {$\bold R^{2+1}$} to {$H^2$}.
  {S}mall energy.
\newblock {\em Comm. Math. Phys.}, 250(3):507--580, 2004.

\bibitem{KrMiao-Duke}
J.~Krieger and S.~Miao.
\newblock On stability of blow up solutions for the critical co-rotational wave
  maps problem.
\newblock {\em To appear in Duke Math.}, preprint 2018.

\bibitem{KS}
J.~Krieger and W.~Schlag.
\newblock {\em Concentration Compactness for critical wave maps}.
\newblock EMS Monographs. European Mathematical Society, Z\"urich, 2012.

\bibitem{KST}
J.~Krieger, W.~Schlag, and D.~Tataru.
\newblock Renormalization and blow up for charge one equivariant wave critical
  wave maps.
\newblock {\em Invent. Math.}, 171(3):543--615, 2008.

\bibitem{LO1}
A.~Lawrie and S.-J. Oh.
\newblock A refined threshold theorem for {$(1+2)$}-dimensional wave maps into
  surfaces.
\newblock {\em Comm. Math. Phys.}, 342(3):989--999, 2016.

\bibitem{MS}
N.~Manton and P.~Sutcliffe.
\newblock {\em Topological solitons}.
\newblock Cambridge Monographs on Mathematical Physics. Cambridge University
  Press, Cambridge, 2004.

\bibitem{Martel05}
Y.~Martel.
\newblock Asymptotic ${N}$-soliton-like solutions of the subcritical and
  critical generalized {K}orteweg-de {V}ries equations.
\newblock {\em Amer. J. Math.}, 127(5):1103--1140, 2005.

\bibitem{MM06}
Y.~Martel and F.~Merle.
\newblock Multi solitary waves for nonlinear {S}chr\"odinger equations.
\newblock {\em Ann. Inst. H. Poincar\'e Anal. Non Lin\'eaire}, 23(6):849--864,
  2006.

\bibitem{MaMe16-arma}
Y.~Martel and F.~Merle.
\newblock Construction of multi-solitons for the energy-critical wave equation
  in dimension 5.
\newblock {\em Arch. Ration. Mech. Anal.}, 222(3):1113--1160, 2016.

\bibitem{MaMeTs}
Y.~Martel, F.~Merle, and T.-P. Tsai.
\newblock Stability in {$H^1$} of the sum of {$K$} solitary waves for some
  nonlinear {S}chr\"odinger equations.
\newblock {\em Duke Math. J.}, 133(3):405--466, 2006.

\bibitem{MR18}
Y.~Martel and P.~Rapha\"{e}l.
\newblock Strongly interacting blow up bubbles for the mass critical nonlinear
  {S}chr\"{o}dinger equation.
\newblock {\em Ann. Sci. \'{E}c. Norm. Sup\'{e}r. (4)}, 51(3):701--737, 2018.

\bibitem{Merle90}
F.~Merle.
\newblock Construction of solutions with exactly $k$ blow-up points for the
  {S}chr{\"o}dinger equation with critical nonlinearity.
\newblock {\em Commun. Math. Phys.}, 129(2):223--240, 1990.

\bibitem{Ngu-17}
T.~V. Nguyen.
\newblock Strongly interacting multi-solitons with logarithmic relative
  distance for the gkdv equation.
\newblock {\em Nonlinearity}, 30(12), 2017.

\bibitem{Pil-19}
M.~Pillai.
\newblock A continuum of infinite time blow-up solutions to the energy critical
  wave maps equation.
\newblock {\em arXiv E-Prints}, 2019.

\bibitem{RR}
P.~Rapha{\"e}l and I.~Rodnianski.
\newblock Stable blow up dynamics for the critical co-rotational wave maps and
  equivariant {Y}ang-{M}ills problems.
\newblock {\em Publ. Math. Inst. Hautes \'Etudes Sci.}, pages 1--122, 2012.

\bibitem{RaSz11}
P.~Rapha{\"e}l and J.~Szeftel.
\newblock Existence and uniqueness of minimal mass blow up solutions to an
  inhomogeneous ${L}^2$-critical {NLS}.
\newblock {\em J. Amer. Math. Soc.}, 24(2):471--546, 2011.

\bibitem{RS}
I.~Rodnianski and J.~Sterbenz.
\newblock On the formation of singularities in the critical ${O}(3)$
  $\sigma$-model.
\newblock {\em Ann. of Math.}, 172:187--242, 2010.

\bibitem{ShSt00}
J.~Shatah and M.~Struwe.
\newblock {\em {Geometric Wave Equations}}, volume~2 of {\em Courant Lecture
  Notes in Mathematics}.
\newblock AMS, 2000.

\bibitem{STZ92}
J.~Shatah and A.~Tahvildar-Zadeh.
\newblock Regularity of harmonic maps from the {M}inkowski space into
  rotationally symmetric manifolds.
\newblock {\em Comm. Pure Appl. Math.}, 45(8):947--971, 1992.

\bibitem{STZ94}
J.~Shatah and A.~S. Tahvildar-Zadeh.
\newblock On the {C}auchy problem for equivariant wave maps.
\newblock {\em Comm. Pure Appl. Math.}, 47(5):719--754, 1994.

\bibitem{ST1}
J.~Sterbenz and D.~Tataru.
\newblock Energy dispersed large data wave maps in $2+1$ dimensions.
\newblock {\em Comm. Math. Phys.}, (1):139--230, 2010.

\bibitem{ST2}
J.~Sterbenz and D.~Tataru.
\newblock Regularity of wave maps in $2+1$ dimensions.
\newblock {\em Comm. Math. Phys.}, (1):231--264, 2010.

\bibitem{Struwe}
M.~Struwe.
\newblock Equivariant wave maps in two space dimensions.
\newblock {\em Comm. Pure Appl. Math.}, 56(7):815--823, 2003.

\bibitem{Tao1}
T.~Tao.
\newblock Global regularity of wave maps. {I}. {S}mall critical {S}obolev norm
  in high dimension.
\newblock {\em Internat. Math. Res. Notices}, (6):299--328, 2001.

\bibitem{Tao2}
T.~Tao.
\newblock Global regularity of wave maps {II}: {S}mall energy in two
  dimensions.
\newblock {\em Comm. Math. Phys.}, 224(2):443--544, 2001.

\bibitem{Tao37}
T.~Tao.
\newblock Global regularity of wave maps {III}--{VII}.
\newblock {\em Preprints}, 2008--2009.

\bibitem{Tat01}
D.~Tataru.
\newblock On global existence and scattering for the wave maps equation.
\newblock {\em Amer. J. Math.}, 123(1):37--77, 2001.

\bibitem{Tat98}
Daniel Tataru.
\newblock Local and global results for wave maps. {I}.
\newblock {\em Comm. Partial Differential Equations}, 23(9-10):1781--1793,
  1998.

\end{thebibliography}

\bigskip
\centerline{\scshape Jacek Jendrej}
\smallskip
{\footnotesize
 \centerline{CNRS and LAGA, Université Sorbonne Paris Nord}
\centerline{99 av Jean-Baptiste Cl\'ement, 93430 Villetaneuse, France}
\centerline{\email{jendrej@math.univ-paris13.fr}}
} 
\medskip 
\centerline{\scshape Andrew Lawrie}
\smallskip
{\footnotesize
 \centerline{Department of Mathematics, Massachusetts Institute of Technology}
\centerline{77 Massachusetts Ave, 2-267, Cambridge, MA 02139, U.S.A.}
\centerline{\email{alawrie@mit.edu}}
}

\end{document}